\setlist[enumerate,1]{label={\upshape(\roman*)}}
    \newcommand{\Rmnum}[1]
    {\expandafter\@slowromancap\romannumeral #1@}
\newtheorem{thm}{Theorem}[section]
\newtheorem{prop}[thm]{Proposition}
\newtheorem{lemma}[thm]{Lemma}
\newcounter{foo}[subsection]
\newcounter{fooo}[section]
\newcounter{foooo}[section]
\newcounter{fooooo}[section]
\newtheorem{step}[fooo]{Step}
\newtheorem{stepp}[foo]{Step}
\newtheorem{steppp}[foooo]{Step}
\newtheorem{stepppp}[fooooo]{Step}
\newtheorem{cor}[thm]{Corollary}
\newtheorem{example}[thm]{Example}
\newtheorem{defin}[thm]{Definition}
\theoremstyle{definition}
\newtheorem{remark}[thm]{Remark}
\title[
]{Weakly distance-regular digraphs whose underlying graphs are distance-regular, \Rmnum{2}}
\date{}
\thanks{*Corresponding author}
\author[
]{Qing Zeng}
\address{{Department of Mathematical Sciences, Tsinghua University, Beijing 100084, China, and Laboratory of Mathematics and Complex Systems (MOE),~School of Mathematical Sciences\\Beijing Normal University\\Beijing 100875\\China}}
\email{qingz@mail.bnu.edu.cn}
\author[
]{Yuefeng Yang*}
\address{School of Science\\China University of Geosciences\\Beijing 100083\\China}
\email{yangyf@cugb.edu.cn}
\author[
]{Kaishun Wang}
\address{Laboratory of Mathematics and Complex Systems (MOE),~School of Mathematical Sciences\\Beijing Normal University\\Beijing 100875\\China}
\email{wangks@bnu.edu.cn}
\begin{document}

\begin{abstract}

Weakly distance-regular digraphs are a natural directed version of distance-regular graphs. 
In \cite{YZW23}, we classified all commutative weakly distance-regular digraphs whose underlying graphs are Hamming graphs, folded $n$-cubes, or Doob graphs.
In this paper, we  classify all commutative weakly distance-regular digraphs whose underlying graphs are Johnson graphs or folded Johnson graphs.
\end{abstract}

\keywords{Weakly distance-regular digraph; association scheme; Johnson graph}

\subjclass[2010]{05E30}

\maketitle
\section{Introduction}

A \emph{digraph} $\Gamma$ is a pair $(V(\Gamma),A(\Gamma))$, where $V(\Gamma)$ is a finite nonempty set of vertices and $A(\Gamma)$ is a set of ordered pairs ({\em arcs}) $(x,y)$ with distinct vertices $x$ and $y$. For any arc $(x,y)\in A(\Gamma)$, if $A(\Gamma)$ also contains an arc $(y,x)$, then $\{(x,y),(y,x)\}$ can be viewed as an {\em edge}.
We say that $\Gamma$ is an \emph{undirected graph} or a {\em graph} if $A(\Gamma)$ is a symmetric relation. For a digraph $\Gamma$, we form the {\em underlying graph} of $\Gamma$ with the same vertex set, and there is an edge between vertices $x$ and $y$ whenever $(x,y)\in A(\Gamma)$ or $(y,x)\in A(\Gamma)$. 

A \emph{path} of length $r$ from a vertex $x$ to a vertex $y$ in $\Gamma$ is a finite sequence of vertices $(x=w_{0},w_{1},\ldots,w_{r}=y)$ such that $(w_{t-1}, w_{t})\in A(\Gamma)$ for $1\leq t\leq r$.   A digraph (resp. graph) is said to be \emph{strongly connected} (resp. \emph{connected}) if, for any two vertices $x$ and $y$, there is a path from $x$ to $y$. A path $(w_{0},w_{1},\ldots,w_{r})$ is called a \emph{circuit} of length $r+1$ if $(w_{r},w_0)\in A(\Gamma)$. The \emph{girth} of $\Gamma$ is the length of a shortest circuit in $\Gamma$.

The length of a shortest path from $x$ to $y$ is called the \emph{distance} from $x$ to $y$ in $\Gamma$, denoted by $\partial_\Gamma(x,y)$. The maximum value of the distance function in $\Gamma$ is called the \emph{diameter} of $\Gamma$. For $0\leq i\leq d$,  we define $\Gamma_{i}$ to be the set of ordered pairs $(x,y)$ and $\Gamma_{i}(x)$ to be the set of vertices $y$ with $\partial_{\Gamma}(x,y)=i$, where $d$ is the diameter of $\Gamma$. Let $\tilde{\partial}_{\Gamma}(x,y):=(\partial_{\Gamma}(x,y),\partial_{\Gamma}(y,x))$ be the \emph{two-way distance} from $x$ to $y$, and $\tilde{\partial}(\Gamma):=\{\tilde{\partial}_{\Gamma}(x,y)\mid x,y\in V(\Gamma)\}$ the \emph{two-way distance set} of $\Gamma$. For any $\tilde{i}\in\tilde{\partial}(\Gamma)$, we define $\Gamma_{\tilde{i}}$ to be the set of ordered pairs $(x,y)$ with $\tilde{\partial}_{\Gamma}(x,y)=\tilde{i}$. 


In \cite{KSW03}, Wang and Suzuki proposed a natural directed version of a distance-regular graph without bounded diameter, i.e., a weakly distance-regular digraph. A strongly connected digraph $\Gamma$ is said to be \emph{weakly distance-regular}  if the configuration $\mathfrak{X}(\Gamma)=(V(\Gamma),\{\Gamma_{\tilde{i}}\}_{\tilde{i}\in\tilde{\partial}(\Gamma)})$ is a non-symmetric association scheme. We call $\mathfrak{X}(\Gamma)$ the \emph{attached scheme} of $\Gamma$. A weakly distance-regular digraph is \emph{commutative} if its attached scheme is commutative. 
For more information, see \cite{MW24,YL24,YF22,KSW03,HS04,KSW04,YYF16,YYF18,YYF20,YYF22,ZYW22,YZW23}. 


Wang and Suzuki \cite{KSW03} proposed a question when an orientation of a distance-regular graph defines a weakly distance-regular digraph. In \cite{YZW23},  we initiated this project, and classified all commutative weakly distance-regular digraphs whose underlying graphs  are Hamming graphs, folded $n$-cubes, or Doob graphs. We also proved that the  commutative weakly distance-regular digraphs whose underlying graphs are complete graphs have diameter $2$ and girth $g\leq3$ (See \cite[Proposition 3.5]{YZW23}). In this paper, we continue this project, and classify all commutative weakly distance-regular digraphs whose underlying graphs are Johnson graphs or folded Johnson graphs of diameter at least two.




\begin{thm}\label{main}
Let $\Gamma$ be a commutative weakly distance-regular digraph. 
Then $\Gamma$ has a Johnson graph or a folded Johnson graph of diameter at least two as its underlying graph if and only if $\Gamma$ is isomorphic to ${\rm Cay}(\mathbb{Z}_6,\{1,2\})$ or ${\rm Cay}(\mathbb{Z}_6,\{1,4\})$.
\end{thm}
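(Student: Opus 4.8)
We prove the forward implication; the reverse one amounts to checking directly that $\mathrm{Cay}(\mathbb{Z}_6,\{1,2\})$ and $\mathrm{Cay}(\mathbb{Z}_6,\{1,4\})$ are commutative weakly distance-regular with underlying graph $J(4,2)$, which we do at the end. So let $\Sigma$ be the underlying graph of $\Gamma$, and assume $\Sigma=J(n,k)$ with $2\le k\le n-k$, or $\Sigma=\bar{J}(2k,k)$ with $k\ge 4$, so that $\mathrm{diam}(\Sigma)\ge 2$. The principle used throughout is that, since $\mathfrak{X}(\Gamma)$ is an association scheme, each distance relation $\Sigma_i$ of $\Sigma$ is a union of relations $\Gamma_{\tilde{j}}$; hence $\partial_\Sigma$ is a function of $\tilde{\partial}_\Gamma$, the parameters of $\Sigma$ are determined by the intersection numbers of $\mathfrak{X}(\Gamma)$, and $\Sigma_1(x)=\bigcup_{r\ge 1}\Gamma_{(1,r)}(x)\cup\bigcup_{s\ge 2}\Gamma_{(s,1)}(x)$ for every vertex $x$. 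We also use the combinatorial data of $\Sigma$: it has valency $k(n-k)$ (resp.\ $k^2$), the subgraph of $\Sigma$ induced on $\Sigma_1(x)$ is a rook's graph, and any two vertices at distance $2$ have exactly $\mu=4$ common neighbours, which induce a quadrangle.

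We will first localize the arc structure and constrain the girth $g$. A directed triangle of $\Gamma$ sits inside a triangle of $\Sigma$, and $J(n,k)$ has only the two triangle shapes ``three $k$-sets inside a common $(k{+}1)$-set'' and ``three $k$-sets inside a common $(k{-}1)$-set'' (with analogues in the folded case). Feeding this, the rook's-graph structure of $\Sigma_1(x)$, and the quadrangle $\mu$-graphs into the general theory of commutative weakly distance-regular digraphs developed in \cite{YYF16,YYF18,YYF20,YYF22,ZYW22,YZW23} --- in particular the results bounding the girth and restricting which two-way distances may be attached to a pair of adjacent vertices --- we expect to deduce that $g\le 3$ and that only a short list of two-way distances occurs on edges of $\Sigma$ (in the surviving examples it is $\{(1,2),(2,1)\}$ or $\{(1,2),(2,1),(1,3),(3,1)\}$). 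The mechanism is that the directed triangles and the return walks, traced through the rigid local graph and the quadrangle $\mu$-graphs, force the intersection numbers of $\mathfrak{X}(\Gamma)$ into a very narrow range unless $n$ and $k$ are small.

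We will then bound the parameters and finish with a finite check. Every maximal clique of $J(n,k)$ has size $k+1$ or $n-k+1$, and once such a clique has size at least $4$ the analysis of commutative weakly distance-regular digraphs with complete underlying graph in \cite[Proposition~3.5]{YZW23} limits how the digon relation and the pure-arc relations can meet it; combined with the valency identity for $\Sigma$ and the restrictions from the previous step, this should force $k=2$, rule out $J(n,2)$ for $n\ge 5$ and all folded Johnson graphs, and leave only $\Sigma=J(4,2)$, the octahedron $K_{2,2,2}$. For that graph, counting arcs gives $6k_\Gamma=a+12$ with $a$ the number of digons, so $a\in\{0,6\}$; the case $a=6$ (i.e.\ $k_\Gamma=3$) we eliminate directly, while for $a=0$ (i.e.\ $k_\Gamma=2$) an enumeration up to isomorphism of the $2$-out-regular orientations of $K_{2,2,2}$ that are commutative weakly distance-regular leaves exactly $\mathrm{Cay}(\mathbb{Z}_6,\{1,2\})$ and $\mathrm{Cay}(\mathbb{Z}_6,\{1,4\})$.

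The main obstacle is the uniform bound on the parameters: disposing of the entire infinite family $J(n,k)$ with $k\ge 3$ (equivalently $n\ge 6$) together with all folded Johnson graphs, rather than the routine finite verification at the end. Within this, the case in which $\Gamma$ has a digon ($\Gamma_{(1,1)}\ne\emptyset$, hence $g=2$) is the most delicate, since one must then control how the digons and the pure arcs simultaneously partition both the rook's-graph neighbourhood of a vertex and the quadrangle $\mu$-graphs; this is also exactly the sub-case ($a=6$) that must be ruled out on the octahedron, and it is where we expect the bulk of the casework.
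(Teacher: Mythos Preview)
Your proposal is an outline rather than a proof, and it contains a genuine gap at the key step where you try to bound the parameters $n,k$. You write that ``once such a clique has size at least~$4$ the analysis of commutative weakly distance-regular digraphs with complete underlying graph in \cite[Proposition~3.5]{YZW23} limits how the digon relation and the pure-arc relations can meet it.'' But \cite[Proposition~3.5]{YZW23} applies to a commutative weakly distance-regular digraph whose \emph{entire} underlying graph is complete. The subdigraph of $\Gamma$ induced on a maximal clique of $\Sigma$ has no reason to be weakly distance-regular: the two-way distance $\tilde{\partial}_\Gamma(x,y)$ between two vertices of the clique is computed in all of $\Gamma$, not in the clique, and the intersection numbers $p^{\tilde{h}}_{\tilde{i},\tilde{j}}$ need not restrict to intersection numbers of the induced subdigraph. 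So this proposition cannot be invoked here, and the step ``this should force $k=2$'' is unsupported.

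More broadly, the phrases ``we expect to deduce that $g\le 3$'' and ``feeding this \ldots\ into the general theory'' do not constitute an argument. The paper's route is substantially more hands-on: it first classifies the set $T=\{q+1:(1,q)\in\tilde{\partial}(\Gamma)\}$ of arc types as one of $\{3\}$, $\{3,4\}$, $\{2,3,4\}$ (Proposition~\ref{arcpure}) via a delicate analysis of the quadrangle $\mu$-graphs (Lemmas~\ref{fenlei}--\ref{22}) and a nontrivial bound on arc types (Lemma~\ref{arc}, proved in Appendix~A); then, for $T=\{3\}$, it shows $\Sigma=J(m,2)$ by explicit counting in the two clique families $Y_1,Y_2$ (Section~5); finally it rules out $m>4$ by computing the intersection matrices $B_{12},B_{21},B_{33}$ and using their commutativity. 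Your arc-counting on $J(4,2)$ is fine for the endgame, but the reduction to $J(4,2)$ requires this machinery, not an appeal to results about complete underlying graphs.
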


\begin{remark}
According to Theorem \ref{main},  no commutative weakly distance-regular digraph has a  folded Johnson graph of diameter at least two as its underlying graph.
\end{remark}


The remainder of this paper is organized as follows. In Section 2, we provide the required notation, concepts, and preliminary results for association schemes and distance-regular graphs. Let $\Gamma$  be a commutative weakly distance-regular digraph with a distance-regular graph $\Sigma$ of diameter at least two as its underlying graph. In Section 3, we discuss the induced subdigraph of $\Gamma$ on $\{x,z\}\cup(\Sigma_1(x)\cap\Sigma_1(z))$ with $(x,z)\in\Gamma_2$ under the certain property on $\Sigma$, and give some results which are used frequently in the sequel. Then we set $\Sigma$ to be a Johnson graph or a folded Johnson graph. In Section 4, applying the results in Section 3, we  give all the possibilities of $T$, that is $T=\{3\}$,  $T=\{3,4\}$ or $T=\{2,3,4\}$, where $T=\{q+1\mid (1,q)\in\tilde{\partial}(\Gamma)\}$. In Section 5, we show that $\Sigma$ is $J(m,2)$ if $T=\{3\}$. In Section 6, applying the results in Section 5, we prove that $\Gamma$ is ${\rm Cay}(\mathbb{Z}_6,\{1,4\})$ when $T=\{3\}$; and $\Gamma$ is ${\rm Cay}(\mathbb{Z}_6,\{1,2\})$ when $T=\{3,4\}$ or $T=\{2,3,4\}$. Therefore, we complete the proof of Theorem \ref{main}. 

\section{Preliminaries}

In this section, we present some notation, concepts, and basic results for association schemes and distance-regular graphs that we shall use in this paper.

\subsection{Association schemes}

A \emph{$d$-class association scheme} $\mathfrak{X}$ is a pair $(X,\{R_{i}\}_{i=0}^{d})$, where $X$ is a finite set, and each $R_{i}$ is a
nonempty subset of $X\times X$ satisfying the following axioms (see \cite{EB84,PHZ96,PHZ05} for a background of the theory of association schemes):
\begin{itemize}
\item [{\rm(i)}] $R_{0}=\{(x,x)\mid x\in X\}$ is the diagonal relation;

\item [{\rm(ii)}] $X\times X=R_{0}\cup R_{1}\cup\cdots\cup R_{d}$, $R_{i}\cap R_{j}=\emptyset~(i\neq j)$;

\item [{\rm(iii)}] for each $i$, $R_{i}^{\top}=R_{i^{*}}$ for some $0\leq i^{*}\leq d$, where $R_{i}^{\top}=\{(y,x)\mid(x,y)\in R_{i}\}$;

\item [{\rm(iv)}] for all $i,j,l$, the cardinality of the set $$P_{i,j}(x,y):=R_{i}(x)\cap R_{j^{*}}(y)$$ is constant whenever $(x,y)\in R_{l}$, where $R(u)=\{v\mid (u,v)\in R\}$ for $R\subseteq X\times X$ and $u\in X$. This constant is denoted by $p_{i,j}^{l}$.
\end{itemize}
A $d$-class association scheme is also called an association scheme with $d$ classes (or even simply a scheme). The integers $p_{i,j}^{l}$ are called the \emph{intersection numbers} of $\mathfrak{X}$. We say that $\mathfrak{X}$ is \emph{commutative} if $p_{i,j}^{l}=p_{j,i}^{l}$ for all $0\leq i,j,l\leq d$. The subsets $R_{i}$ are called the \emph{relations} of $\mathfrak{X}$. For each $i$, the integer $k_{i}=p_{i,i^{*}}^{0}$ is called the \emph{valency} of $R_{i}$.   A relation $R_{i}$ is called \emph{symmetric} if $i=i^{*}$, and \emph{non-symmetric} otherwise. An association scheme is called \emph{symmetric} if all relations are symmetric, and \emph{non-symmetric} otherwise. We say that $\mathfrak{X}$ is \emph{primitive} if the digraph $(X,R_i)$ is strongly connected for all $1\leq i\leq d$, and \emph{imprimitive} otherwise.


Now we list the basic properties of intersection numbers.

\begin{lemma}\label{intersection numners}
{\rm (\cite[Chapter \Rmnum{2}, Proposition 2.2]{EB84})} Let $(X,\{R_{i}\}_{i=0}^{d})$ be an association scheme. For all $h,i,j,l,m\in\{0,1,\ldots,d\},$ the following hold{\rm:}
\begin{enumerate}
\item\label{intersection numners 1} $k_{i}k_{j}=\sum_{h=0}^{d}p_{i,j}^{h}k_{h}$;

\item\label{intersection numners 2} $p_{i,j}^{l}k_{l}=p_{l,j^{*}}^{i}k_{i}=p_{i^{*},l}^{j}k_{j}$;


\item\label{intersection numners 4} $\sum_{r=0}^{d}p_{i,l}^{r}p_{m,r}^{j}=
    \sum_{t=0}^{d}p_{m,i}^{t}p_{t,l}^{j}$. 
\end{enumerate}
\end{lemma}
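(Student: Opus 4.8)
The plan is to encode everything through the adjacency matrices of the relations and to read off the three identities from elementary matrix algebra. For each $i$ let $A_i$ be the $\{0,1\}$-matrix indexed by $X\times X$ with $(A_i)_{x,y}=1$ exactly when $(x,y)\in R_i$. Axiom (iv) is precisely the statement that $A_iA_j=\sum_{h=0}^{d}p_{i,j}^{h}A_h$, axiom (iii) gives $A_i^{\top}=A_{i^*}$, and axioms (i)--(ii) give $\sum_{h=0}^{d}A_h=J$ together with the linear independence of $A_0,\dots,A_d$ (their supports are pairwise disjoint). First I would record that $k_i$ is the common row sum of $A_i$, i.e. $A_i\mathbf{1}=k_i\mathbf{1}$ for the all-ones vector $\mathbf{1}$; this is immediate since $k_i=p_{i,i^*}^{0}=|R_i(x)\cap R_i(x)|=|R_i(x)|$. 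Part (i) then follows at once: applying the fundamental relation to $\mathbf{1}$ gives $k_ik_j\mathbf{1}=A_iA_j\mathbf{1}=\sum_{h}p_{i,j}^{h}k_h\mathbf{1}$, and comparing the two sides yields $k_ik_j=\sum_{h}p_{i,j}^{h}k_h$.

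For part (ii) the key object is the trace of a triple product. Multiplying $A_iA_j=\sum_{h}p_{i,j}^{h}A_h$ by $A_{l^*}$ and taking traces, and using that $\operatorname{tr}(A_hA_{l^*})=|R_h\cap R_l|=\delta_{h,l}|X|k_l$ (because $(A_{l^*})_{y,x}=(A_l)_{x,y}$), I obtain the master formula $\operatorname{tr}(A_aA_bA_{c^*})=|X|\,k_c\,p_{a,b}^{c}$. Up to the common factor $|X|$, the three quantities $p_{i,j}^{l}k_l$, $p_{l,j^*}^{i}k_i$ and $p_{i^*,l}^{j}k_j$ are then exactly the traces $\operatorname{tr}(A_iA_jA_{l^*})$, $\operatorname{tr}(A_lA_{j^*}A_{i^*})$ and $\operatorname{tr}(A_{i^*}A_lA_{j^*})$. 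These three traces coincide: invariance of trace under transposition sends $A_iA_jA_{l^*}$ to $A_lA_{j^*}A_{i^*}$ (since $(A_i)^{\top}=A_{i^*}$ and $(A_{l^*})^{\top}=A_l$), while a further transposition followed by cyclic invariance identifies the third with the first. Dividing by $|X|$ gives the chain of equalities in (ii).

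Part (iii) is nothing but associativity of matrix multiplication. Expanding $A_m(A_iA_l)$ one way gives $\sum_{j}\big(\sum_{r}p_{i,l}^{r}p_{m,r}^{j}\big)A_j$, while expanding $(A_mA_i)A_l$ the other way gives $\sum_{j}\big(\sum_{t}p_{m,i}^{t}p_{t,l}^{j}\big)A_j$; since $A_m(A_iA_l)=(A_mA_i)A_l$ and the $A_j$ are linearly independent, the coefficients of each $A_j$ must agree, which is the asserted identity. I expect no serious obstacle: all three parts reduce to short matrix manipulations once the dictionary between axiom (iv) and the product $A_iA_j=\sum_{h}p_{i,j}^{h}A_h$ is in place. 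The one point demanding care is the index bookkeeping in part (ii), where the transpose operation $i\mapsto i^*$ must be tracked through each factor; it is also worth noting that commutativity of $\mathfrak{X}$ is nowhere used, so the arguments---in particular the ordering $A_m(A_iA_l)=(A_mA_i)A_l$ in part (iii)---must respect the (possibly non-commutative) order of the factors throughout.
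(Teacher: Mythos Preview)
Your argument is correct in all three parts: the row-sum identity for (i), the trace identity $\operatorname{tr}(A_aA_bA_{c^*})=|X|\,k_c\,p_{a,b}^{c}$ together with transposition and cyclic invariance for (ii), and associativity of the Bose--Mesner algebra for (iii) are exactly the standard verifications, and your index bookkeeping in (ii) checks out. The paper itself does not supply a proof of this lemma---it merely cites \cite[Chapter~\Rmnum{2}, Proposition~2.2]{EB84}---so there is no in-paper argument to compare against; your approach is essentially the one found in that reference.
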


\subsection{Distance-regular graphs}

A connected graph $\Gamma$ of diameter $d$ is said to be  \emph{distance-regular}  if there are integers $b_{i-1}$, $c_i$ for $1\leq i\leq d$ such that there are exactly $c_i$ neighbours of $y$ in $\Gamma_{i-1}(x)$ and $b_i$ neighbours of $y$ in $\Gamma_{i+1}(x)$ for any $(x,y)\in\Gamma_{i}$. This definition implies the existence of numbers $a_i$ such that there are exactly $a_i$ neighbours of $y$ in $\Gamma_{i}(x)$ for $0\leq i\leq d$. They do not depend on $x$ and $y$ because $b_0$ is the valency of $\Gamma$ and $a_i$
is determined in terms of $b_i$ and $c_i$ by the equation $b_{0}=c_{i}+a_{i}+b_{i}$, where $c_0=b_d=0$. We observe $a_0=0$ and $c_1=1$. The array $\iota(\Gamma)=\{b_0,b_1,\ldots,b_{d-1}; c_1,c_2,\ldots,c_d\}$ is called the \emph{intersection array} of $\Gamma$. An important property of distance-regular graphs is that $(V(\Gamma),\{\Gamma_{i}\}_{i=0}^{d})$ is a symmetric association scheme. We call a distance-regular graph $\Gamma$ \emph{primitive} if the association scheme $(V(\Gamma),\{\Gamma_{i}\}_{i=0}^{d})$ is primitive, and \emph{imprimitive} otherwise. One can easily observe that $a_{i}=p_{1,i}^{i}$ $(0\leq i\leq d)$, $b_{i}=p_{1,i+1}^{i}$ $(0\leq i\leq d-1)$ and $c_{i}=p_{1,i-1}^{i}$ $(1\leq i\leq d)$. For more detailed results on such special graphs, we refer the reader to \cite{NB74,AEB98,DKT16}.

The Johnson graph is a classical example of distance-regular graphs. Johnson graphs have found their applications in various topics of combinatorial geometry, Ramsey theory, coding theory, and other branches of modern combinatorics. The definition of a  Johnson graph is as follows. Let $X$ be an $n$-set. The {\em Johnson graph} $J(n,e)$ of the $e$-sets in $X$ has vertex set ${X\choose e}$, the collection of $e$-subsets of $X$. Two vertices $x$ and $y$ are adjacent whenever $|x\cap y|=e-1$.
Note that $J(n,e)$ is isomorphic to $J(n,n-e)$, and $J(n,1)$ is a clique. Therefore we restrict to $n\geq2e$ and $e\geq2$.
The following theorem determines the intersection numbers of a Johnson graph.

\begin{thm}\label{johnson}
{\rm (\cite[Theorem 9.1.2]{AEB98})} Let $n$ and $e$ be two positive integers with $n\geq2e$ and $e\geq2$. Then, the Johnson graph $J(n,e)$ is  a distance-regular graph of diameter $e$, and has intersection array given by $b_i=(e-i)(n-e-i)$, $c_i=i^2$ and $a_i=i(n-2i)$ for $0\leq i\leq e$.
\end{thm}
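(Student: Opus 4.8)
The plan is to first pin down the metric structure of $J(n,e)$ explicitly, and then read off the intersection numbers by a single direct counting argument. The key observation is the distance formula $\partial_{J(n,e)}(x,y) = e - |x \cap y|$ for vertices $x,y \in \binom{X}{e}$. The upper bound comes from an explicit path: if $|x \cap y| = e - i$, then $|x \setminus y| = |y \setminus x| = i$, and swapping the elements of $x \setminus y$ for those of $y \setminus x$ one at a time produces a sequence $x = w_0, w_1, \ldots, w_i = y$ of length $i$ in which consecutive sets differ in exactly one element, hence are adjacent. For the lower bound I would note that adjacent vertices $u, u'$ satisfy $|u \cap y| - |u' \cap y| \in \{-1, 0, 1\}$, so along any walk the size of the intersection with $y$ changes by at most one per step; since it must grow from $e - i$ to $e$, at least $i$ steps are needed. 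Thus $\Gamma_i(x) = \{y : |x \cap y| = e - i\}$, and the diameter is $e$, the value $|x \cap y| = 0$ being attainable precisely because $n \geq 2e$.

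Next, to establish distance-regularity and compute $b_i$, $c_i$, $a_i$ at once, I would fix a pair $(x,y) \in \Gamma_i$, so $|x \cap y| = e - i$, and classify the neighbors $z$ of $y$. Each such $z$ arises by deleting one element $b \in y$ and inserting one element $a \in X \setminus y$. Relative to $(x,y)$ the underlying $n$-set splits into four blocks: the common part $x \cap y$ of size $e - i$, the two sets $x \setminus y$ and $y \setminus x$ each of size $i$, and the exterior $X \setminus (x \cup y)$ of size $n - e - i$. The deleted element $b$ lies in $x \cap y$ or in $y \setminus x$, while the inserted element $a$ lies in $x \setminus y$ or in the exterior. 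Tracking the effect on the intersection with $x$ is immediate: deletion of $b$ lowers $|z \cap x|$ by one exactly when $b \in x$, and insertion of $a$ raises it by one exactly when $a \in x$. This sorts the four combinations by the resulting distance $\partial(x,z)$.

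Counting the block sizes then yields all three parameters simultaneously. The combination $b \in y \setminus x$, $a \in x \setminus y$ gives $|z \cap x| = e - i + 1$, i.e.\ $\partial(x,z) = i-1$, contributing $c_i = i \cdot i = i^2$; the combination $b \in x \cap y$, $a \in X \setminus (x \cup y)$ gives $|z \cap x| = e - i - 1$, i.e.\ $\partial(x,z) = i+1$, contributing $b_i = (e-i)(n-e-i)$; and the two remaining combinations both preserve $|z \cap x| = e - i$, i.e.\ $\partial(x,z) = i$, contributing $a_i = (e-i)\cdot i + i \cdot (n - e - i) = i(n - 2i)$. Because these counts depend only on $i$ and not on the particular pair $(x,y)$, the graph is distance-regular with the asserted intersection array. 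There is no genuine obstacle here; the only points demanding care are the lower bound in the distance formula and the bookkeeping in the four-way case split, both of which are elementary.
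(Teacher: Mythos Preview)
Your proof is correct and is exactly the standard argument one finds in the reference the paper cites. Note, however, that the paper itself does not prove this theorem: it is stated as a quotation from \cite[Theorem~9.1.2]{AEB98} and used as background, with no proof given in the paper. So there is no ``paper's own proof'' to compare against; your argument supplies what the paper deliberately omits, and it does so cleanly.
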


The only imprimitive Johnson graphs are the graphs $J(2e,e)$ which are antipodal, they are double covers  of the {\em folded Johnson graphs} $\bar{J}(2e,e)$ (see \cite[Appendix A.5]{AEB98} for definitions of antipodal distance-regular graphs and double covers). The vertices of  $\bar{J}(2e,e)$ are the partitions of a $2e$-set $X$ into two $e$-sets, two partitions being adjacent whenever their common refinement is a partition of $X$ into four sets of sizes $1$, $e-1$, $1$, $e-1$. The vertex $\{x,X\setminus x\}$ in $\bar{J}(2e,e)$ can be represented as $x$, where  $x\in{X\choose e}$.
Then $x$ and $y$ are adjacent whenever $|x\cap y|\in\{1,e-1\}$. Note that  $\bar{J}(2e,e)$ is a clique if $e\leq3$. Therefore we restrict to  $e\geq4$. The following theorem determines the intersection numbers of a folded Johnson graph.

\begin{thm}\label{folded johnson}
{\rm (\cite[Proposition 9.1.5]{AEB98})} 
Let $e$  be a positive integer with  $e\geq4$.  Then the
folded Johnson graph $\bar{J}(2e,e)$ is distance-regular with diameter $d=[e/2]$,
intersection array given by $b_i=(e-i)^2$, $c_i=i^2$ and $a_i=i(2e-2i)~(0\leq i\leq e/2)$
 but $c_d=2d^2$ and $a_d=e^2-2d^2$ if $e$ is even.
\end{thm}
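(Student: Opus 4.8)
The plan is to realize $\bar{J}(2e,e)$ as the antipodal quotient (folded graph) of $J(2e,e)$ and to transport the intersection numbers of $J(2e,e)$, supplied by Theorem \ref{johnson} with $n=2e$ (so $b_i=(e-i)^2$, $c_i=i^2$, $a_i=i(2e-2i)$ for $0\le i\le e$), through the quotient map. First I would record the distance function of $J(2e,e)$: for $x,y\in\binom{X}{e}$ one has $\partial_{J}(x,y)=e-|x\cap y|$, so the unique vertex at distance $e$ from $x$ is $X\setminus x$. Hence the relation ``$\partial_{J}(x,y)\in\{0,e\}$'' is an equivalence relation whose classes are exactly the pairs $\{x,X\setminus x\}$; that is, $J(2e,e)$ is antipodal with classes of size two, and $\bar{J}(2e,e)$ is precisely the graph obtained by identifying each $x$ with $X\setminus x$. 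Computing $|(X\setminus x)\cap(X\setminus y)|$ and $|(X\setminus x)\cap y|$ shows that for the images $\bar x,\bar y$ the quotient distance is
\[
\partial_{\bar J}(\bar x,\bar y)=\min\bigl(|x\cap y|,\,e-|x\cap y|\bigr)=\min\bigl(\partial_{J}(x,y),\,e-\partial_{J}(x,y)\bigr),
\]
whose maximum over all pairs is $[e/2]$; this already yields the diameter $d=[e/2]$.

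Next I would show that the quotient map $\pi\colon J(2e,e)\to\bar{J}(2e,e)$ is a local isomorphism, which is what legitimises computing intersection numbers by lifting. Since $e\ge4$, no two neighbours of a fixed vertex $y$ can lie in one antipodal class: if $w$ and $X\setminus w$ were both adjacent to $y$ then $|y\cap w|=e-1$ and $|y\cap w|=1$, forcing $e=2$. Therefore $\pi$ restricts to a bijection from the neighbourhood of $y$ onto the neighbourhood of $\bar y$; in particular $\bar{J}(2e,e)$ is regular of valency $b_0=e^2$. Consequently, for any $\bar y$ with $\partial_{\bar J}(\bar x,\bar y)=i$ and any lift $y$, the neighbours of $\bar y$ correspond bijectively to the neighbours of $y$, and $\partial_{\bar J}(\bar x,\bar w)=\min(\partial_{J}(x,w),e-\partial_{J}(x,w))$ is determined by $\partial_{J}(x,w)\in\{i-1,i,i+1\}$, whose multiplicities among the neighbours of $y$ are $c_i,a_i,b_i$ of $J(2e,e)$. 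Counting the images therefore gives constant intersection numbers (establishing distance-regularity); I would read off $b_i$ and $c_i$ and then set $a_i=e^2-b_i-c_i$.

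For $0\le i<d$ there is no folding. Choosing the lift $y$ with $\partial_{J}(x,y)=i$ (the closer of the two, which exists since $i<e/2\le e-i$), the neighbours of $y$ at distances $i-1,i,i+1$ from $x$ project to quotient distances $i-1,i,i+1$ respectively, because $2i+2\le e$ in this range. Hence $\bar c_i=c_i=i^2$, $\bar b_i=b_i=(e-i)^2$ and $\bar a_i=a_i=i(2e-2i)$ for $0\le i\le d-1$. The substance of the argument is the behaviour at $i=d$, where the two ends of the fibre become equidistant and paths fold back, so I would split on the parity of $e$. If $e$ is even, then $d=e/2$, both lifts of $\bar y$ lie at distance $e/2$ from $x$, and the $c_d=d^2$ neighbours of $y$ at distance $d-1$ \emph{together with} the $b_d=(e-d)^2=d^2$ neighbours at distance $d+1$ all map to quotient distance $d-1$, while the neighbours at distance $d$ remain at distance $d$; this yields $\bar c_d=c_d+b_d=2d^2$ and $\bar a_d=e^2-2d^2$, the claimed exceptional values. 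If $e$ is odd, then $d=(e-1)/2$, exactly one lift $y$ of $\bar y$ lies at distance $d$ from $x$; its $c_d=d^2$ neighbours at distance $d-1$ descend unchanged, but its $b_d$ neighbours at distance $d+1=(e+1)/2$ fold back to quotient distance $d$, so $\bar c_d=d^2$, $\bar b_d=0$ and $\bar a_d=e^2-d^2$. In both cases $\bar b_d=0$, confirming that $d$ is indeed the diameter.

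The main obstacle is exactly this last computation. One must (i) justify the local-isomorphism claim carefully, so that counting neighbours of a single lift is valid, and (ii) keep precise track of which neighbours ``wrap around'' under the minimum in the quotient distance, since this bookkeeping is what produces the doubling $\bar c_d=2d^2$ in the even case and the merging of the $a$- and $b$-columns in the odd case. Everything else is a direct transcription of the intersection array of $J(2e,e)$ from Theorem \ref{johnson}.
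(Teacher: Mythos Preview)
The paper does not supply its own proof of this theorem; it is quoted verbatim from \cite[Proposition 9.1.5]{AEB98} and used as background. Your argument is correct and is essentially the standard one: realise $\bar{J}(2e,e)$ as the antipodal quotient of $J(2e,e)$, use $e\ge4$ to see that the quotient map is a local isomorphism (no two neighbours of a vertex are antipodal), and then read off the intersection numbers from Theorem~\ref{johnson} via $\partial_{\bar J}(\bar x,\bar y)=\min(\partial_J(x,y),\,e-\partial_J(x,y))$, with the folding at $i=d$ producing the exceptional values. One small remark: surjectivity of the local map uses that complementation $x\mapsto X\setminus x$ is an automorphism of $J(2e,e)$, so every neighbour of $\bar y$ is represented by a neighbour of a fixed lift $y$; you implicitly assume this but it is worth a sentence. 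Also note that for odd $e$ the formula $a_d=d(2e-2d)$ in the statement is only nominal, since your computation correctly gives $\bar a_d=a_d+b_d=e^2-d^2$; this is a looseness in the quoted statement rather than in your proof.
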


In the remainder of this section, we assume that  $\Sigma$ denotes a Johnson graph $J(n,e)$ with $n\geq2e$ and $e\geq2$, or a folded Johnson graph $\bar{J}(2e,e)$ with $e\geq4$. Let $m=n$ if $\Sigma$ is a Johnson graph $J(n,e)$, and $m=2e$ if $\Sigma$ is a folded Johnson graph $\bar{J}(2e,e)$. Then we may set ${X\choose e}$ to be the vertex set of $\Sigma$, where $X$ is an $m$-set. 
Let $x$ be an $e$-subset of $X$. Denote $x(\alpha,\beta)=(x\backslash\alpha)\cup\beta$ for $\alpha\subseteq x$ and $\beta\subseteq X\setminus x$. For convenience's sake, we write $x(a,b)$ instead of $x(\{a\},\{b\})$, where $a\in x$ and $b\in X\setminus x$.
For $y=x(a_1,b_1)\in\Sigma_1(x)$ with $a_1\in x$ and $b_1\in X\setminus x$, we set $Y_1(x,y)=\{x(a,b_1)\mid a\in x\setminus\{a_1\}\}$ and $Y_2(x,y)=\{x(a_1,b)\mid b\in X\setminus(x\cup\{b_1\})\}$.

Now we give some preliminary results for $\Sigma$.

\begin{lemma}\label{sigma1}
Suppose $(x,y)\in\Sigma_1$. Then $\Sigma_1(x)\cap\Sigma_1(y)=Y_1(x,y)\cup Y_2(x,y)$, $|Y_1(x,y)|=e-1$ and $|Y_2(x,y)|=m-e-1$. Moreover, the following hold:
\begin{enumerate}
\item\label{sigma11}  The distance in $\Sigma$ between two distinct vertices in  $Y_i(x,y)$ is $1$ for $i=1,2$;

\item\label{sigma12} The distance in $\Sigma$ between a vertex in $Y_1(x,y)$ and a vertex in $Y_2(x,y)$ is $2$.
\end{enumerate}
%
\end{lemma}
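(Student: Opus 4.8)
The plan is to work directly from the combinatorial description of the Johnson and folded Johnson graphs in terms of $e$-subsets of the $m$-set $X$, treating the two cases together wherever possible. Fix $(x,y)\in\Sigma_1$ and write $y=x(a_1,b_1)$ with $a_1\in x$, $b_1\in X\setminus x$; this normal form is available since $|x\cap y|=e-1$ (in the folded case one must first check that the alternative $|x\cap y|=1$ does not occur for $e\geq4$, because then $|x\setminus y|=e-1\geq3$ would force $\partial_\Sigma$ to behave differently — but for adjacency we may always replace $y$ by $X\setminus y$ so that $|x\cap y|=e-1$, and in fact $|x\cap y|=1$ with $e\geq4$ already gives $|x\cap y|=e-1$ only when $e=2$, which is excluded, so the normal form is legitimate). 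First I would identify $\Sigma_1(x)\cap\Sigma_1(y)$: a vertex $z$ adjacent to both $x$ and $y$ with $z=x(\alpha,\beta)$ must have $|\alpha|=|\beta|=1$, and analysing whether the single swapped-in element equals $b_1$ or not, and whether the single swapped-out element equals $a_1$ or not, shows $z\in Y_1(x,y)$ (swap in $b_1$, swap out something other than $a_1$) or $z\in Y_2(x,y)$ (swap out $a_1$, swap in something other than $b_1$). The remaining combination ($z=y$ or $z=x$) is excluded since $z$ must differ from both. The cardinalities $|Y_1(x,y)|=e-1$ and $|Y_2(x,y)|=m-e-1$ are then immediate from counting the choices of $a\in x\setminus\{a_1\}$ and $b\in X\setminus(x\cup\{b_1\})$ respectively, and one checks $Y_1\cap Y_2=\emptyset$. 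As a sanity check, $|Y_1|+|Y_2|=m-2=a_1$, matching Theorem \ref{johnson} (with $m=n$) and Theorem \ref{folded johnson} (with $m=2e$, noting $a_1=2e-2$), which confirms we have captured all common neighbours.

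For part \eqref{sigma11}, take two distinct vertices $x(a,b_1),x(a',b_1)\in Y_1(x,y)$ with $a\neq a'$ in $x\setminus\{a_1\}$; their intersection has size $e-1$ (they differ only in that one contains $a'$ and the other contains $a$), so $|x(a,b_1)\cap x(a',b_1)|=e-1$, giving distance $1$ in the Johnson case; in the folded case $|x(a,b_1)\cap x(a',b_1)|=e-1\in\{1,e-1\}$ likewise gives adjacency. The same computation with two vertices $x(a_1,b),x(a_1,b')\in Y_2(x,y)$, $b\neq b'$ in $X\setminus(x\cup\{b_1\})$, again yields intersection size $e-1$ and hence distance $1$. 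For part \eqref{sigma12}, take $u=x(a,b_1)\in Y_1(x,y)$ and $v=x(a_1,b)\in Y_2(x,y)$; then $u\setminus v=\{a_1,b_1\}$ and $v\setminus u=\{a,b\}$ (using $a\neq a_1$, $b\neq b_1$, $a\notin\{b,b_1\}$, $b\notin\{a,a_1\}$), so $|u\cap v|=e-2$. In the Johnson graph this means $\partial_\Sigma(u,v)=2$ by Theorem \ref{johnson}. In the folded Johnson graph $|u\cap v|=e-2$; since $e\geq4$ we have $e-2\geq2$ and $e-2\leq e-2<e-1$, so $e-2\notin\{1,e-1\}$, hence $u\not\sim v$; but $u$ and $v$ have the common neighbour $x$, so $\partial_\Sigma(u,v)=2$ (one must also rule out $\partial_\Sigma(u,v)=0$, i.e. $u=v$ or $u=X\setminus v$: equality is impossible and $|u\cap(X\setminus v)|=e-(e-2)=2\neq e$ unless $e=2$, excluded).

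The main obstacle I anticipate is the bookkeeping in the folded case, where each vertex is really an unordered pair $\{x,X\setminus x\}$, so every intersection-size statement must be checked against \emph{both} representatives and against the adjacency condition $|x\cap y|\in\{1,e-1\}$ rather than the single condition $|x\cap y|=e-1$. This is where one has to use $e\geq4$ essentially: for small $e$ the sets $\{1,e-1\}$ and the values $e-2$, $2$ collide, and the clean dichotomy $Y_1(x,y)$ versus $Y_2(x,y)$ could break down (indeed for $e\leq3$ the folded Johnson graph is a clique and the lemma is vacuous or false). Once the representative-bookkeeping is handled, every remaining step is a one-line intersection-size count, so the proof is short; I would organise it as: (1) normal form and description of the common-neighbour set, (2) the two cardinality counts and disjointness, (3) the within-$Y_i$ adjacency, (4) the across-$Y_i$ distance-$2$ claim, each with the Johnson and folded subcases stated explicitly.
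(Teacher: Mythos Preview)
Your proposal is correct and is exactly the approach the paper has in mind: the paper's own proof is the single line ``It is immediate from the definitions of a Johnson graph and a folded Johnson graph,'' and your plan is simply a careful unpacking of that verification via intersection sizes. (The parenthetical about the normal form in the folded case is a bit garbled, but the essential point---that one may always pass to the representative of $y$ with $|x\cap y|=e-1$, since $y$ and $X\setminus y$ are the same vertex---is right and is all that is needed.)
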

\begin{proof}
It is immediate from the definitions of a Johnson graph and a folded Johnson graph.
\end{proof}

\begin{lemma}\label{iwuguan}
Let $(x,y)\in\Sigma_1$. Then $Y_i(x,y)=Y_i(y,x)$ for $i\in\{1,2\}$.
\end{lemma}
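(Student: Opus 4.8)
The plan is to unwind the definitions of $Y_1$ and $Y_2$ and exhibit the symmetry of the labelling $x(\alpha,\beta)$ directly. Write $y=x(a_1,b_1)$ with $a_1\in x$ and $b_1\in X\setminus x$, so that $y=(x\setminus\{a_1\})\cup\{b_1\}$, and hence $x\setminus y=\{a_1\}$ and $y\setminus x=\{b_1\}$. The crucial observation is that $x$ is recovered from $y$ as $x=y(b_1,a_1)$ with $b_1\in y$ and $a_1\in X\setminus y$; that is, passing from the pair $(x,y)$ to the pair $(y,x)$ simply interchanges the roles of $a_1$ and $b_1$. After this, one only has to compare the two families of $e$-sets produced by the definition.

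For $Y_1$, by definition $Y_1(y,x)=\{\,y(b,a_1)\mid b\in y\setminus\{b_1\}\,\}$, and $y\setminus\{b_1\}=x\setminus\{a_1\}$. For any $a\in x\setminus\{a_1\}$ one computes
\[
y(a,a_1)=\bigl(\bigl((x\setminus\{a_1\})\cup\{b_1\}\bigr)\setminus\{a\}\bigr)\cup\{a_1\}=(x\setminus\{a\})\cup\{b_1\}=x(a,b_1),
\]
so $Y_1(y,x)=\{x(a,b_1)\mid a\in x\setminus\{a_1\}\}=Y_1(x,y)$. The computation for $Y_2$ is parallel: since $y\cup\{a_1\}=x\cup\{b_1\}$ we have $X\setminus(y\cup\{a_1\})=X\setminus(x\cup\{b_1\})$, and for any $b$ in this set $y(b_1,b)=(y\setminus\{b_1\})\cup\{b\}=(x\setminus\{a_1\})\cup\{b\}=x(a_1,b)$, whence $Y_2(y,x)=Y_2(x,y)$.

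There is no real obstacle here; the statement is a bookkeeping identity for the notation introduced just before it. The one point that warrants a word is the folded Johnson graph, where a vertex is the partition $\{x,X\setminus x\}$ and the expression $y=x(a_1,b_1)$ already involves choosing a representative of the neighbour of $x$. Here one notes that since $e\geq4$ exactly one of $|x\cap y|$ and $|x\cap(X\setminus y)|$ equals $e-1$, so the elements $a_1\in x\setminus y$ and $b_1\in y\setminus x$ are uniquely determined; consequently the $e$-set identities above are independent of the choice of representative and descend without change to the level of vertices, giving $Y_i(x,y)=Y_i(y,x)$ for $i\in\{1,2\}$ in both cases.
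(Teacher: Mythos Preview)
Your proof is correct and follows essentially the same approach as the paper: write $y=x(a_1,b_1)$, observe $x=y(b_1,a_1)$, and verify the two set equalities by direct computation. Your added remark on representatives in the folded case is extra care the paper does not spell out, but the argument is otherwise identical.
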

\begin{proof}
Since $(x,y)\in\Sigma_1$, there exist $a_1\in x$ and $b_1\in X\setminus x$ such that $y=x(a_1,b_1)$, which implies $x=y(b_1,a_1)$. Then
\begin{align}
Y_1(y,x)=\{y(a,a_1)\mid a\in y\setminus\{b_1\}\}&=\{x(a,b_1)\mid a\in x\setminus\{a_1\}\}=Y_1(x,y),\nonumber\\
Y_2(y,x)=\{y(b_1,b)\mid b\in X\setminus(y\cup\{a_1\})\}&=\{x(a_1,b)\mid b\in X\setminus(x\cup\{b_1\})\}=Y_2(x,y).\nonumber
\end{align}

This completes the proof of the lemma.
\end{proof}

\begin{lemma}\label{xyz}
Let $(x,y)\in\Sigma_1$ and $\{i,j\}=\{1,2\}$. Suppose $z\in Y_i(x,y)$. Then $Y_i(x,y)\cup\{y\}=Y_i(x,z)\cup\{z\}$ and $Y_j(x,y)\cap Y_h(x,z)=\emptyset$ for all $h\in\{1,2\}$.
\end{lemma}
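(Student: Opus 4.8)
The plan is to prove both assertions by a direct unwinding of the definitions of $Y_1$ and $Y_2$, splitting into the cases $i=1$ and $i=2$. Fix the representation $y=x(a_1,b_1)$ with $a_1\in x$ and $b_1\in X\setminus x$, so that $Y_1(x,y)=\{x(a,b_1)\mid a\in x\setminus\{a_1\}\}$ and $Y_2(x,y)=\{x(a_1,b)\mid b\in X\setminus(x\cup\{b_1\})\}$.

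Suppose first $i=1$, so $z=x(a_2,b_1)$ for some $a_2\in x\setminus\{a_1\}$; this is precisely the representation of $z$ relative to $x$ that occurs in the definitions of $Y_1(x,z)$ and $Y_2(x,z)$. Adjoining $y=x(a_1,b_1)$ to $Y_1(x,y)$ restores the index $a_1$, and adjoining $z=x(a_2,b_1)$ to $Y_1(x,z)$ restores the index $a_2$, so both $Y_1(x,y)\cup\{y\}$ and $Y_1(x,z)\cup\{z\}$ equal $\{x(a,b_1)\mid a\in x\}$; this is the first assertion. For the second (here $j=2$), I would note that every set in $Y_2(x,y)$ omits $b_1$ while every set in $Y_1(x,z)$ contains $b_1$, and that every set in $Y_2(x,y)$ omits $a_1$ while every set in $Y_2(x,z)$ contains $a_1$; hence $Y_2(x,y)$ is disjoint from $Y_1(x,z)$ and from $Y_2(x,z)$. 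The case $i=2$ is entirely parallel: now $z=x(a_1,b_2)$ with $b_2\in X\setminus(x\cup\{b_1\})$, one gets $Y_2(x,y)\cup\{y\}=\{x(a_1,b)\mid b\in X\setminus x\}=Y_2(x,z)\cup\{z\}$, and for $j=1$ one compares membership of $b_1$ and of $a_1$ in the $e$-subsets of $Y_1(x,y)$ versus those of $Y_1(x,z)$ and $Y_2(x,z)$.

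Finally, when $\Sigma=\bar{J}(2e,e)$ one must check that identifying each $e$-subset with its complement does not merge any two of the vertices involved. Here $e\geq4$, and any two of the $e$-subsets occurring in a fixed one of the two cases meet in at least $e-2\geq2$ points (in the case $i=1$ they all contain $b_1$, and throughout they share all but at most two elements of $x$); hence none of them is the complement of another, and the set-equalities and disjointness established for $e$-subsets carry over verbatim to vertices. The only point requiring real care is this folded-graph check, together with being explicit about which representation of $z$ is used so that $Y_1(x,z)$ and $Y_2(x,z)$ are well defined; beyond that, the argument is as immediate as the proof of Lemma \ref{sigma1}.
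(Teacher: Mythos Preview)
Your argument is correct and follows essentially the same approach as the paper: fix $y=x(a_1,b_1)$, write $z=x(a_2,b_1)$ (or $z=x(a_1,b_2)$ in the other case), and compare the explicit descriptions of $Y_1(x,z)$ and $Y_2(x,z)$ with those of $Y_1(x,y)$ and $Y_2(x,y)$. The paper does only the case $i=1$ and declares the other similar, exactly as you do. Your additional check that in the folded case $\bar J(2e,e)$ no two of the relevant $e$-subsets are complementary is more careful than the paper, which leaves this implicit (it is already hidden in the cardinality counts of Lemma~\ref{sigma1}).
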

\begin{proof}
Since $(x,y)\in\Sigma_1$, there exist $a_1\in x$ and $b_1\in X\setminus x$ such that $y=x(a_1,b_1)$, which implies $$Y_1(x,y)=\{x(a,b_1)\mid a\in x\setminus\{a_1\}\}~\mbox{and}~ Y_2(x,y)=\{x(a_1,b)\mid b\in X\setminus(x\cup\{b_1\})\}.$$ Since the proofs of $z\in Y_1(x,y)$ and $z\in Y_2(x,y)$ are similar, we only need to consider the case   $z\in Y_1(x,y)$. Then there exists $a_2\in x\setminus\{a_1\}$ such that $z=x(a_2,b_1)$, and so $$Y_1(x,z)=\{x(a,b_1)\mid a\in x\setminus\{a_2\}\}~\mbox{and}~ Y_2(x,z)=\{x(a_2,b)\mid b\in X\setminus(x\cup\{b_1\})\}.$$ Then $Y_1(x,y)\cup\{y\}=Y_1(x,z)\cup\{z\}$ and $Y_2(x,y)\cap Y_h(x,z)=\emptyset$ for all $h\in\{1,2\}$. 
\end{proof}

\section{General results}




In the remainder of this paper, we always assume that $\Gamma$ is a commutative weakly distance-regular digraph whose underlying graph $\Sigma$ is  distance-regular.

To distinguish the notation between $\Gamma$ and $\Sigma$, let $p_{\tilde{i},\tilde{j}}^{\tilde{h}},P_{\tilde{i},\tilde{j}}(x,y)$ denote the symbols belonging to $\Gamma$ with $\tilde{h},\tilde{i},\tilde{j}\in\tilde{\partial}(\Gamma)$ and $x,y\in V(\Gamma)$, and $a_i,c_i$ the symbols belonging to $\Sigma$ with $0\leq i\leq d$, where $d$ is the diameter of $\Sigma$.

An arc $(x,y)$ of $\Gamma$ is of {\em type} $(1,q)$ if $\partial_{\Gamma}(y,x)=q$. 
A path (circuit) is called \emph{pure} if it consists of one type of arcs, and \emph{mixed} otherwise. 
We say that $(w_{0},w_{1},\ldots,w_{r})$ is a \emph{non-path} if neither $(w_{0},w_{1},\ldots,w_{r})$ nor $(w_{r},w_{r-1},\ldots,w_{0})$ is a path in $\Gamma$.

Let $(x,z)\in\Sigma_1\cup\Sigma_2$ and $C(x,z)=\Sigma_1(x)\cap\Sigma_1(z)$. If $y\in C(x,z)$, then there are six cases:
\begin{itemize}
\item [{\rm C1)}] $y\in P_{(1,p),(1,p)}(x,z)$, i.e., $(x,y,z)$ is a  pure path;
\item [{\rm C2)}] $y\in P_{(p,1),(p,1)}(x,z)$, i.e., $(z,y,x)$ is a  pure path;
\item [{\rm C3)}] $y\in P_{(1,p),(1,q)}(x,z)$, i.e., $(x,y,z)$ is a  mixed path;
\item [{\rm C4)}] $y\in P_{(q,1),(p,1)}(x,z)$, i.e., $(z,y,x)$ is a  mixed path;
\item [{\rm C5)}] $y\in P_{(1,r),(s,1)}(x,z)$, i.e., $(x,y,z)$ is a non-path with $(x,y)\in A(\Gamma)$;
\item [{\rm C6)}] $y\in P_{(s,1),(1,r)}(x,z)$, i.e., $(x,y,z)$ is a non-path with $(y,x)\in A(\Gamma)$.
\end{itemize}
Here, $p,q \geq 1$ with $p\neq q$ and $r,s\geq 2$.

Set $(x,z)\in\Gamma_{\tilde{h}}$ with $\tilde{h}\in\tilde{\partial}(\Gamma)$. By Cases C1--C6, we have
\begin{align}\label{a1c2}\tag{3.1}
\sum_{1\in\{p_1,p_2\},1\in\{q_1,q_2\}}p^{\tilde{h}}_{(p_1,p_2),(q_1,q_2)}=
\begin{cases}
a_1,~(x,z)\in\Sigma_1,\\
c_2,~(x,z)\in\Sigma_2.
\end{cases}
\end{align}

In the remainder of this section, we always assume that $\Sigma$ has the following property.
\begin{itemize}
\item For any  $(x,z)\in\Sigma_2$, the induced subgraph on $\{x,z\}\cup C(x,z)$ is isomorphic to $J(4,2)$.
\end{itemize}

Next we discuss the induced subdigraph of $\Gamma$ on $\{x,z\}\cup C(x,z)$ with $(x,z)\in\Sigma_2$, and give some results which are used frequently in the sequel.
 By the property of $\Sigma$, we have $c_2=4$.

The following lemma is immediate from the property of $\Sigma$.

\begin{lemma}\label{sigma2}
Let $(x,z)\in\Sigma_2$ and $y_1\in C(x,z)$. Then there exists a unique vertex $y_2\in C(x,z)$ such that  $(y_1,y_2)\in \Sigma_2$.  Moreover, if $C(x,z)=\{y_1, y_2, y_3, y_4\}$, then $(y_3,y_4)\in\Sigma_2$,  $C(y_1,y_2)=\{x,z,y_3,y_4\}$ and $C(y_3,y_4)=\{x,z,y_1,y_2\}$.
\end{lemma}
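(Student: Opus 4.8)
The plan is to read everything off the octahedral structure of $J(4,2)$. Since $(x,z)\in\Sigma_2$, the standing hypothesis on $\Sigma$ says the subgraph induced on $\{x,z\}\cup C(x,z)$ is isomorphic to $J(4,2)\cong K_{2,2,2}$, which has six vertices, is $4$-regular, and in which every vertex has a unique non-neighbour; so $|C(x,z)|=4$, say $C(x,z)=\{y_1,y_2,y_3,y_4\}$, and ``being non-adjacent'' is a perfect matching on these six vertices consisting of three pairs. One of these pairs is $\{x,z\}$, because $x\not\sim z$ in $\Sigma$ and hence in the induced subgraph. I would label the remaining two pairs so that the unique non-neighbour of $y_1$ in the induced subgraph is $y_2$, and that of $y_3$ is $y_4$.

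First I would note that non-adjacency in the induced subgraph coincides with non-adjacency in $\Sigma$, since an induced subgraph inherits exactly the edges of $\Sigma$ on its vertex set. Thus $y_1\not\sim y_2$ in $\Sigma$; but $y_1,y_2\in C(x,z)$ share the common neighbour $x$, so $\partial_\Sigma(y_1,y_2)=2$, i.e. $(y_1,y_2)\in\Sigma_2$, and likewise $(y_3,y_4)\in\Sigma_2$. Uniqueness of $y_2$ is then immediate: $y_1$ has exactly one non-neighbour among the six vertices, and it lies in $C(x,z)$ because $y_1\sim x$ and $y_1\sim z$; every other vertex of $C(x,z)$ is adjacent to $y_1$ and so is at $\Sigma$-distance $1$ from it. Hence $y_2$ is the only vertex of $C(x,z)$ with $(y_1,y_2)\in\Sigma_2$.

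For the final assertions I would apply the hypothesis a second time, now to the pair $(y_1,y_2)\in\Sigma_2$: the subgraph induced on $\{y_1,y_2\}\cup C(y_1,y_2)$ is $J(4,2)$, so $|C(y_1,y_2)|=4$. On the other hand, each of $x,z,y_3,y_4$ is adjacent to both $y_1$ and $y_2$: for $x$ and $z$ this is the definition of $C(x,z)$, and for $y_3,y_4$ it holds because in the octahedron $y_3$ and $y_4$ are adjacent to every vertex except their own antipodes, which are $y_4$ and $y_3$ respectively, not $y_1$ or $y_2$. Therefore $\{x,z,y_3,y_4\}\subseteq C(y_1,y_2)$, and comparing cardinalities forces equality. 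Running the same argument with $(y_3,y_4)\in\Sigma_2$ yields $C(y_3,y_4)=\{x,z,y_1,y_2\}$.

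There is no genuine obstacle here; the lemma is essentially a bookkeeping consequence of the $J(4,2)$-hypothesis. The only point that needs a word of care is keeping distances measured in $\Sigma$ separate from distances measured inside the induced subgraph, which is handled throughout by the observation that two vertices of $C(x,z)$ already have a common neighbour, so their $\Sigma$-distance is $1$ precisely when they are adjacent and $2$ otherwise.
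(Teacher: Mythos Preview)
Your proof is correct and follows exactly the approach the paper intends: the paper simply states that the lemma ``is immediate from the property of $\Sigma$'' (the $J(4,2)$ hypothesis), and you have spelled out that immediacy in full detail via the octahedral structure of $J(4,2)\cong K_{2,2,2}$.
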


For $(a,b)\in\tilde{\partial}(\Gamma)$, we write $\Gamma_{a,b}$ (resp. $k_{a,b}$) instead of $\Gamma_{(a,b)}$  (resp. $k_{(a,b)}$).



%
%
%


\begin{lemma}\label{fenlei}
Suppose $(x,z)\in\Gamma_{2,i}$ for some $i\geq3$ and $ C(x,z) = \{y_1, y_2, y_3, y_4\}$. Then exactly one of the following holds:
\begin{enumerate}
\item\label{fenlei2} four pure paths from $x$ to $z$, i.e.,
$y_j \in P_{(1,p_j), (1, p_j)}(x,z) \text{ with } p_j \geq 2$ for $1\leq j\leq 4$;

\item\label{fenlei3} four mixed paths from $x$ to $z$, i.e., by reordering vertices $y_1, y_2, y_3, y_4$,
$y_1 \in P_{(1,p), (1, q)}(x,z)$,
$y_2 \in P_{(1,q), (1, p)}(x,z)$, $y_3 \in P_{(1,r), (1, s)}(x,z)$, $y_4 \in P_{(1,s), (1, r)}(x,z)$ with $p, q, r,s \geq 1$, $p\neq q$, and $r\neq s$;
\item\label{fenlei4} two mixed paths and two non-paths from $x$ to $z$, i.e., by reordering vertices $y_1, y_2, y_3, y_4$,
$y_1 \in P_{(1,p), (1, q)}(x,z)$,
$y_2 \in P_{(1,q), (1, p)}(x,z)$, $y_3 \in P_{(1,r), (s,1)}(x,z)$,  $y_4 \in P_{(s,1), (1,r)}(x,z)$ with $r, s \geq 2$, $p, q\geq1$,  and $p\neq q$;
\item\label{fenlei5} two pure paths and two mixed paths from $x$ to $z$, i.e., by reordering vertices $y_1, y_2, y_3, y_4$,
$y_1 \in P_{(1,p), (1, p)}(x,z)$, $y_2 \in P_{(1,q), (1, q)}(x,z)$, $y_3 \in P_{(1,r), (1,s)}(x,z)$,  $y_4 \in P_{(1,s), (1,r)}(x,z)$ with $p, q \geq 2$, $r, s\geq 1$ and $r\neq s$;
\item\label{fenlei6} two pure paths and two non-paths from $x$ to $z$, i.e., by reordering vertices $y_1, y_2, y_3, y_4$,
$y_1 \in P_{(1,p), (1, p)}(x,z)$, $y_2 \in P_{(1,q), (1, q)}(x,z)$, $y_3 \in P_{(1,r), (s,1)}(x,z)$, $y_4 \in P_{(s,1), (1,r)}(x,z)$ with $p, q,r,s \geq 2$.
\end{enumerate}
Moreover, in the case \ref{fenlei5}, $(y_1, y_2), (y_3, y_4)\in \Sigma_2$, and in the case \ref{fenlei6}, if $p \geq 3$ or $q \geq 3$, then $(y_1, y_2), (y_3, y_4)\in \Sigma_2$.
\end{lemma}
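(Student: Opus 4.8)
The plan is to argue combinatorially using Lemma \ref{sigma2} together with the commutativity of the attached scheme. Fix $(x,z)\in\Gamma_{2,i}$ with $i\geq 3$ and write $C(x,z)=\{y_1,y_2,y_3,y_4\}$. By Lemma \ref{sigma2} we may relabel so that $(y_1,y_2)\in\Sigma_2$ and $(y_3,y_4)\in\Sigma_2$, while all other pairs among $\{x,z,y_1,y_2,y_3,y_4\}$ lie in $\Sigma_1$. First I would record that, since $i\geq 3$, no $y_j$ can realize type C3 or C4 in the shape ``$(x,y_j,z)$ is a mixed path with one arc of type $(1,1)$'' in a way that would force $\partial_\Gamma(x,z)$ or $\partial_\Gamma(z,x)$ to be small; more precisely, each $y_j\in C(x,z)$ falls into exactly one of the six cases C1--C6 of the trichotomy set up before the lemma, and the constraint $(x,z)\in\Gamma_{2,i}$ with $i\geq 3$ rules out C2 (a pure path $(z,y_j,x)$ would give $\partial_\Gamma(z,x)=2$, forcing $i=2$) and C4 (a mixed path $(z,y_j,x)$ forces $\partial_\Gamma(z,x)=2$ as well, since $\partial_\Gamma(z,x)\le \partial_\Gamma(z,y_j)+\partial_\Gamma(y_j,x)$ with both summands realized by the path, but the reverse arcs of the path give $\partial_\Gamma(x,z)\le$ small — one must check the exact numerology, but the upshot is $i\leq 2$). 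Hence every $y_j$ is of type C1, C3, C5, or C6: that is, $(x,y_j)\in A(\Gamma)$ for every $j$, OR $(x,y_j,z)$ is a path/non-path with first arc leaving $x$. Actually the cleaner statement to establish first is: for each $j$, $(x,y_j)\in A(\Gamma)$ and the pair $(y_j,z)$ is either an arc or its reverse is, i.e. each $y_j$ lies in one of $P_{(1,p),(1,q)}(x,z)$ (cases C1,C3) or $P_{(1,r),(s,1)}(x,z)$ (case C5) or $P_{(s,1),(1,r)}(x,z)$ (case C6), with the $(s,1)$-type forced by $i\geq 3$ excluding C2,C4.

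Next I would use the ``partner'' structure from Lemma \ref{sigma2}. The key local fact is: if $(y_1,y_2)\in\Sigma_2$ then $C(y_1,y_2)=\{x,z,y_3,y_4\}$, so the same six-case analysis applies to the pair $(y_1,y_2)$ with $x$ (and $z$) playing the role of a common neighbour. This lets me transfer information: knowing the type of the arc (or non-arc) between $x$ and $y_1$ and between $x$ and $y_2$ constrains $\tilde\partial_\Gamma(y_1,y_2)$, and conversely. I would then invoke commutativity of $\mathfrak{X}(\Gamma)$ in the form of Lemma \ref{intersection numners}\eqref{intersection numners 2} (the valency identity $p_{\tilde i,\tilde j}^{\tilde l}k_{\tilde l}=p_{\tilde l,\tilde j^*}^{\tilde i}k_{\tilde i}=p_{\tilde i^*,\tilde l}^{\tilde j}k_{\tilde j}$) applied to $\tilde h=(2,i)$: the four vertices $y_1,y_2,y_3,y_4$ are distributed among the relevant $P_{\tilde i,\tilde j}(x,z)$, and the ``mixed'' type $(1,p)$-$(1,q)$ with $p\ne q$ must be balanced by its mirror $(1,q)$-$(1,p)$ because the scheme is commutative and $\Gamma_{p,q}^\top=\Gamma_{q,p}$. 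This pairing-up is exactly why, in each listed case, mixed paths come in matched pairs $(y_a\in P_{(1,p),(1,q)}, y_b\in P_{(1,q),(1,p)})$ and likewise non-paths come in matched pairs $(y_a\in P_{(1,r),(s,1)}, y_b\in P_{(s,1),(1,r)})$. So the enumeration reduces to: how many of the four $y_j$ are ``pure'' (C1), how many are ``properly mixed'' (C3, necessarily in matched pairs), and how many are ``non-path'' (C5/C6, necessarily in matched pairs)? Since the counts of properly-mixed and of non-path vertices are each even, and they sum (with the pure count) to $4$, the possibilities for (\#pure, \#mixed, \#nonpath) are $(4,0,0),(0,4,0),(0,0,4),(2,2,0),(2,0,2),(0,2,2)$. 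The case $(0,0,4)$ of four non-paths I would exclude: two disjoint matched non-path pairs among $C(x,z)$ would make $(x,z)$ a non-path-only configuration, contradicting $\partial_\Gamma(x,z)=2<\infty$ (one needs a genuine directed path $x\to\cdot\to z$ of length $2$, or else $\partial_\Gamma(x,z)$ exceeds $2$, contradicting $(x,z)\in\Sigma_2\cap\Gamma_{2,i}$ — here I use that $\Sigma$ is the underlying graph so $\partial_\Gamma(x,z)\geq\partial_\Sigma(x,z)=2$, and a length-$2$ directed path forces a $y_j$ of type C1 or C3). That leaves exactly the five cases \eqref{fenlei2}--\eqref{fenlei6} of the statement, after noting that in case \eqref{fenlei6} with two pure paths and two non-paths the pure paths could a priori share the same type $p=q$ or have $p\ne q$, both of which are subsumed in the stated ``$p,q,r,s\geq 2$''.

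For the ``moreover'' clauses I would argue as follows. In case \eqref{fenlei5} the two pure vertices $y_1\in P_{(1,p),(1,p)}(x,z)$ and $y_2\in P_{(1,q),(1,q)}(x,z)$: I must show $(y_1,y_2)\in\Sigma_2$, equivalently $\{y_1,y_2\}$ is the Lemma \ref{sigma2} partner pair, equivalently $(y_3,y_4)\in\Sigma_2$ too. Suppose not; then by Lemma \ref{sigma2} the partner pairs are, say, $\{y_1,y_3\}$ and $\{y_2,y_4\}$ with $(y_1,y_3),(y_2,y_4)\in\Sigma_2$. Now apply the six-case analysis to $(y_1,y_3)\in\Sigma_2$, whose common-neighbour set is $\{x,z,y_2,y_4\}$: from $x$'s side, $(x,y_1)\in A(\Gamma)$ (pure, type to $z$ is $(1,p)$) and $(x,y_3)\in A(\Gamma)$ with the mixed type $(1,r)$; from $z$'s side similarly. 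One then computes $\tilde\partial_\Gamma(y_1,y_3)$ via the two routes through $x$ and through $z$ and derives a contradiction with $p\geq 2$ (the pure route $y_1\to x$ would have to cost $p$ steps while $y_1\to\cdots\to y_3$ is forced shorter). I expect the cleanest route is a counting contradiction via Lemma \ref{intersection numners}\eqref{intersection numners 4} (the associativity of intersection numbers) applied to the configuration on $\{x,z,y_1,y_2,y_3,y_4\}$. Case \eqref{fenlei6} with $p\geq 3$ or $q\geq 3$ is handled by the same mechanism: a pure path of length $\geq 3$ in one direction between $x$ and a neighbour $y_j$ cannot coexist with a short path the other way unless $y_j$ is paired with the other pure vertex, again forcing $(y_1,y_2),(y_3,y_4)\in\Sigma_2$.

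The main obstacle I anticipate is \emph{case \eqref{fenlei6}'s ``moreover'' clause}, and more generally pinning down the partner structure: showing that the three combinatorial ``colours'' (pure / properly-mixed / non-path) are compatible with the rigid $J(4,2)$-partner pairing from Lemma \ref{sigma2} in exactly the claimed way. Here one cannot avoid a careful hands-on examination of the induced subdigraph on six vertices — tracking, for each of the three candidate partner-pairings of $\{y_1,y_2,y_3,y_4\}$, what arc-types are forced between the $y_j$'s and between $y_j$ and $\{x,z\}$, and eliminating the impossible configurations by the intersection-number identities of Lemma \ref{intersection numners} together with $i\geq 3$. Everything else is bookkeeping over the six cases C1--C6 and the parity argument forcing mixed and non-path vertices into matched pairs.
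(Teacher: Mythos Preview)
Your argument for the first statement is essentially the paper's: commutativity forces $|P_{(1,p),(1,q)}(x,z)|=|P_{(1,q),(1,p)}(x,z)|$ and $|P_{(1,r),(s,1)}(x,z)|=|P_{(s,1),(1,r)}(x,z)|$, so mixed paths and non-paths each occur in matched pairs; $i\geq 3$ kills C2 and C4 (and the degenerate $(1,1)$-with-backward-arc subcases); and $(0,0,4)$ is impossible since $\partial_\Gamma(x,z)=2$ demands at least one forward path. That part is fine, modulo the small slip where you briefly assert $(x,y_j)\in A(\Gamma)$ for every $j$ (false in C6; you self-correct a line later).

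The gap is in the ``moreover'' clauses. Your sketch there is vague, and the tool you reach for --- Lemma~\ref{intersection numners}\ref{intersection numners 4} --- is not what makes the argument go through. The paper's proof is a direct arc-type elimination. In case~\ref{fenlei5}, assuming for contradiction $(y_1,y_3)\in\Sigma_2$ (hence $(y_2,y_4)\in\Sigma_2$), one records $x\in P_{(p,1),(1,r)}(y_1,y_3)$ and $z\in P_{(1,p),(s,1)}(y_1,y_3)$; commutativity on $C(y_1,y_3)=\{x,z,y_2,y_4\}$ then forces $y_2,y_4$ into one of two explicit type-patterns. The same is done for $C(y_2,y_4)$, and the four resulting combinations are each shown to produce an arc between some $y_a,y_b$ of two incompatible types simultaneously. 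Case~\ref{fenlei6} with $p\geq 3$ or $q\geq 3$ is similar but uses in addition that a length-$3$ circuit through an arc of type $(1,p)$ with $p\geq 3$ is impossible. Your remark about ``the pure route $y_1\to x$ would have to cost $p$ steps'' is not the mechanism; nothing here is about distances along long paths, only about the types of single arcs in the $J(4,2)$ subgraph. You have the right setup (swap to the other $\Sigma_2$-pair via Lemma~\ref{sigma2}) but you still need to carry out this concrete four-case arc-type check rather than appeal to associativity.
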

\begin{proof}
By the commutativity of $\Gamma$, we get $|P_{(1,p),(1,q)}(x,z)|=|P_{(1,q),(1,p)}(x,z)|$ and $|P_{(1,r),(s,1)}(x,z)|=|P_{(s,1),(1,r)}(x,z)|$ for $r, s \geq 2$ and $p, q\geq1$ with $p\neq q$.  Since $\partial_{\Gamma}(z,x)=i\geq3$, one has $C(x,z)\cap(P_{(1,1),(s,1)}(x,z)\cup P_{(s,1),(1,1)}(x,z))=\emptyset$ for all $s\geq1$. By Cases C1--C6, the first statement holds. Then we only need to prove the second statement.

Suppose that \ref{fenlei5} holds. 
By Lemma \ref{sigma2}, $(y_1, y_2)\in \Sigma_2$ is equivalent to $(y_3, y_4)\in \Sigma_2$. Hence, by way of contradiction, we may assume that $(y_1,y_2)\in \Sigma_1$. Then $(y_1,y_3)\in \Sigma_2$ or $(y_1,y_4)\in \Sigma_2$. By replacing the roles of $r$ and $s$, we may assume that $(y_1,y_3)\in \Sigma_2$. Then $(y_2,y_4)\in \Sigma_2$.

Since $(y_1,y_3)\in \Sigma_2$, from Lemma \ref{sigma2}, we get $C(y_1, y_3) = \{x,z,y_2,y_4\}$. Since $x\in P_{(p,1),(1,r)}(y_1, y_3)$ and $z\in P_{(1,p),(s,1)}(y_1, y_3)$ with $r \neq s$, from the commutativity of $\Gamma$, we have
\begin{align}
y_2\in P_{(1,r),(p,1)}(y_1, y_3)  &\mbox{ and } y_4\in P_{(s,1),(1,p)}(y_1, y_3), \mbox{ or} \tag{3.2}\label{eq:2-1-1}\\
y_2\in P_{(s,1),(1,p)}(y_1, y_3)  &\mbox{ and } y_4\in P_{(1,r),(p,1)}(y_1, y_3).\tag{3.3}\label{eq:2-1-2}
\end{align}

Since
$(y_2,y_4)\in \Sigma_2$, from Lemma \ref{sigma2}, one has $C(y_2, y_4) = \{x,z,y_1,y_3\}$. Since $x\in P_{(q,1),(1,s)}(y_2, y_4)$ and $z\in P_{(1,q),(r,1)}(y_2, y_4)$, from the commutativity of $\Gamma$,  we get
\begin{align}
y_1\in P_{(1,s),(q,1)}(y_2, y_4) &\mbox{ and } y_3\in P_{(r,1),(1,q)}(y_2, y_4), \mbox{ or} \tag{3.4}\label{eq:2-2-1}\\
y_1\in P_{(r,1),(1,q)}(y_2, y_4) &\mbox{ and } y_3\in P_{(1,s),(q,1)}(y_2, y_4). \tag{3.5}\label{eq:2-2-2}
\end{align}

If \eqref{eq:2-1-1} holds, then $(y_1,y_2)\in\Gamma_{1,r}$ and $(y_2,y_3)\in\Gamma_{p,1}$ with $p\geq2$, which imply $(y_1,y_2)\notin\Gamma_{s,1}$ with $r\neq s$ and $(y_2,y_3)\notin\Gamma_{1,s}$, contrary to \eqref{eq:2-2-1} and \eqref{eq:2-2-2}. If \eqref{eq:2-1-2} holds, then $(y_1,y_2)\in\Gamma_{s,1}$ and $(y_1,y_4)\in\Gamma_{1,r}$, which imply $(y_1,y_2)\notin\Gamma_{1,r}$ with $r\neq s$ and $(y_1,y_4)\notin\Gamma_{q,1}$ with $q\geq2$, contrary to \eqref{eq:2-2-2} and \eqref{eq:2-2-1}. Thus, $(y_1,y_2)$, $(y_3,y_4)\in\Sigma_2$.

%
%
%
%

Suppose that \ref{fenlei6} holds with $p\geq3$ or $q\geq3$. 
By way of contradiction, we may assume $(y_1,y_2)\in\Sigma_1$. Then $(y_1,y_3)\in \Sigma_2$ or $(y_1,y_4) \in \Sigma_2$. In view of Lemma \ref{sigma2}, $(y_1,y_4) \in \Sigma_2$ is equivalent to $(y_2,y_3) \in \Sigma_2$, which implies $(y_1,y_3)\in \Sigma_2$ or $(y_2,y_3) \in \Sigma_2$.  By replacing the roles of $p$ and $q$, we may assume $(y_1,y_3)\in \Sigma_2$.

By Lemma \ref{sigma2}, we get $(y_2,y_4)\in \Sigma_2$, $C(y_1, y_3) = \{x,z,y_2,y_4\}$ and $C(y_2, y_4) =\{x,z,y_1, y_3\}$. Moreover,  we obtain
\begin{align}
x\in P_{(p,1),(1,r)}(y_1, y_3) \mbox{ and } z\in P_{(1,p),(1,s)}(y_1,y_3),\tag{3.6}\label{eq:2-3-1}\\
x\in P_{(q,1),(s,1)}(y_2,y_4) \mbox{ and } z\in P_{(1,q),(r,1)}(y_2, y_4).\tag{3.7}\label{eq:2-3-2}
\end{align}

By (\ref{eq:2-3-1}), one has
\begin{equation}\label{eq:2-3-3}
y_2\in P_{(1,r),(p,1)}(y_1, y_3) \text{ or } y_4\in P_{(1,r),(p,1)}(y_1, y_3).\nonumber
\end{equation}
If $y_2\in P_{(1,r),(p,1)}(y_1, y_3)$, then $(y_2, z, y_3)$ is a circuit in $\Gamma$ with $2\geq \partial_{\Gamma}(z,y_2) = q$ and $2\geq \partial_{\Gamma}(y_2,y_3) = p$, which is impossible. Hence, $y_4\in P_{(1,r),(p,1)}(y_1, y_3)$. Then $(y_3, y_4, z)$ is a circuit in $\Gamma$ with $2\geq \partial(y_4, y_3) = p$ and $2\geq \partial_{\Gamma}(z,y_4) = r$, and so $q \geq 3$.

By (\ref{eq:2-3-2}), one obtains
\begin{equation}\label{eq:2-3-4}
y_1\in P_{(r,1),(1,q)}(y_2, y_4) \text{ or } y_3\in P_{(r,1),(1,q)}(y_2, y_4) .\nonumber
\end{equation}
Note that $y_4\in P_{(1,r),(p,1)}(y_1, y_3)$. If $y_1\in P_{(r,1),(1,q)}(y_2, y_4)$, then $2 \geq r =\partial(y_4, y_1) =  q \geq 3$, a contradiction. If $y_3\in P_{(r,1),(1,q)}(y_2, y_4)$, then $2\geq p = \partial(y_4, y_3) = q\geq 3$, a contradiction. Thus, $(y_1,y_2)$, $(y_3,y_4)\in\Sigma_2$.

%

This completes the proof of the lemma.
\end{proof}

The commutativity of $\Gamma$ will be used frequently in the sequel, so we no longer refer to it for the sake of simplicity.

The following corollary is immediate by Lemma \ref{fenlei}.

\begin{cor}\label{fenlei*}
Let $(x,z)\in\Gamma_{2,i}$ for some $i\geq3$. Suppose $y_1,y_2\in C(x,z)$ and $(y_1,y_2)\in\Sigma_2$. If $y_1\in P_{(1,p),(1,p)}(x,z)$ for some $p\geq3$, then $y_2\in P_{(1,q),(1,q)}(x,z)$ for some $q\geq2$.
\end{cor}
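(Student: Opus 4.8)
\textbf{Proof proposal for Corollary \ref{fenlei*}.}

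The plan is to invoke Lemma \ref{fenlei} directly and eliminate all cases except \ref{fenlei5}. Since $(x,z)\in\Gamma_{2,i}$ with $i\geq 3$ and $C(x,z)=\{y_1,y_2,y_3,y_4\}$, exactly one of the five alternatives \ref{fenlei2}--\ref{fenlei6} of Lemma \ref{fenlei} holds, and I want to show that the hypothesis ``$y_1\in P_{(1,p),(1,p)}(x,z)$ for some $p\geq 3$'' forces case \ref{fenlei5} with $y_2$ paired as one of the two pure paths. First I would observe that $y_1$ lying on a \emph{pure} path from $x$ to $z$ immediately rules out cases \ref{fenlei3} (all four paths mixed) and \ref{fenlei4} (two mixed, two non-paths): in those cases no $y_j$ is of type $(1,p),(1,p)$. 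So only \ref{fenlei2}, \ref{fenlei5}, or \ref{fenlei6} can occur.

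Next I would use the additional structural information in Lemma \ref{fenlei}: we are given $(y_1,y_2)\in\Sigma_2$. In case \ref{fenlei5} the moreover-clause says $(y_1,y_2),(y_3,y_4)\in\Sigma_2$, and in case \ref{fenlei6}, since $p\geq 3$, the moreover-clause again gives $(y_1,y_2),(y_3,y_4)\in\Sigma_2$, so both remain consistent with $(y_1,y_2)\in\Sigma_2$; I must distinguish them by another argument. The key point is to determine which of the two ``$\Sigma_2$-pairs'' $y_1$ belongs to. Since $y_1$ is a pure-path vertex, in case \ref{fenlei5} $y_1$ is (after the relabeling in that case) one of the two pure vertices, hence its $\Sigma_2$-partner $y_2$ is the other pure vertex, giving $y_2\in P_{(1,q),(1,q)}(x,z)$ with $q\geq 2$ — exactly the conclusion. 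In case \ref{fenlei6} the two pure vertices and the two non-path vertices each form a $\Sigma_2$-pair (this is what the moreover-clause asserts when $p\geq 3$), so again the $\Sigma_2$-partner of the pure vertex $y_1$ is the other pure vertex $y_2\in P_{(1,q),(1,q)}(x,z)$ with $q\geq 2$ — still the desired conclusion. Finally, in case \ref{fenlei2} all four are pure paths, so whatever $y_2$ is, it satisfies $y_2\in P_{(1,q),(1,q)}(x,z)$ with $q\geq 2$ trivially. Hence in every surviving case the conclusion holds.

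The one subtle point I need to nail down carefully is that in cases \ref{fenlei5} and \ref{fenlei6} the $\Sigma_2$-partner of a pure vertex really is the \emph{other} pure vertex and not a mixed/non-path vertex; this is exactly the content of the ``moreover'' sentences in Lemma \ref{fenlei}, which identify $(y_1,y_2)$ (the pure pair, in that lemma's labeling) and $(y_3,y_4)$ as the two $\Sigma_2$-edges among $C(x,z)$. Combined with Lemma \ref{sigma2}, which says each vertex of $C(x,z)$ has a \emph{unique} $\Sigma_2$-partner inside $C(x,z)$, the partner of a pure vertex is forced to be the other pure vertex. So the main (minor) obstacle is bookkeeping: matching the vertex labels in the hypothesis of the corollary with the relabeled vertices in each case of Lemma \ref{fenlei}, and checking $q\geq 2$ holds in each instance (which it does, since in \ref{fenlei5} and \ref{fenlei6} the pure-path exponents are $\geq 2$, and in \ref{fenlei2} they are $\geq 2$ as well). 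No heavy computation is needed; everything reduces to a short case analysis on top of Lemmas \ref{sigma2} and \ref{fenlei}.
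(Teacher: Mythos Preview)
Your proposal is correct and matches the paper's approach exactly: the paper simply declares the corollary ``immediate by Lemma \ref{fenlei}'', and what you have written is precisely the unpacking of that claim---ruling out cases \ref{fenlei3} and \ref{fenlei4} because $y_1$ lies on a pure path, handling \ref{fenlei2} trivially, and using the ``moreover'' clauses of \ref{fenlei5} and \ref{fenlei6} (the latter triggered by $p\ge 3$) together with the uniqueness of $\Sigma_2$-partners from Lemma \ref{sigma2} to force $y_2$ to be the other pure vertex. There is nothing to add.
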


\begin{lemma}\label{22}
Suppose $(2,2)\in \tilde{\partial}(\Gamma)$. Then exactly one of the following holds:
\begin{enumerate}
\item\label{221} $p^{(2,2)}_{(1,1),(1,1)} = 4$;
\item\label{222} $p^{(2,2)}_{(1,2),(1,2)} = p^{(2,2)}_{(2,1),(2,1)} = 2$;
\item\label{223} $p^{(2,2)}_{(1,p),(1,q)} = p^{(2,2)}_{(1,q),(1,p)} = p^{(2,2)}_{(p,1),(q,1)} = p^{(2,2)}_{(q,1),(p,1)} = 1$ with $(p,q)\in\{(1,2),(2,3)\}$;
\item\label{224} $p^{(2,2)}_{(1,1),(1,1)}=2$ and $p^{(2,2)}_{(1,r),(r,1)} = p^{(2,2)}_{(r,1),(1,r)} = 1$ with $r\geq2$;
\item\label{225} $p^{(2,2)}_{(1,p),(1,p)}=p^{(2,2)}_{(p,1),(p,1)}=p^{(2,2)}_{(1,r),(r,1)} = p^{(2,2)}_{(r,1),(1,r)} = 1$ with $p\in\{2,3\}$ and $r\geq2$.
\end{enumerate}
\end{lemma}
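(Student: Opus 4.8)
The plan is to fix a pair $(x,z)\in\Gamma_{2,2}$. Then $(x,z)\in\Sigma_2$, so by the standing property $\{x,z\}\cup C(x,z)$ induces $J(4,2)$; in particular $c_2=|C(x,z)|=4$, and by Lemma~\ref{sigma2} the set $C(x,z)=\{y_1,y_2,y_3,y_4\}$ splits into two $\Sigma_2$-pairs (the antipodal pairs of this octahedron). Each $y_j$ satisfies exactly one of C1--C6 relative to $(x,z)$, so $y_j$ lies in one of $P_{(1,p),(1,q)}(x,z)$, $P_{(q,1),(p,1)}(x,z)$, $P_{(1,r),(s,1)}(x,z)$, $P_{(s,1),(1,r)}(x,z)$ for appropriate parameters, and \eqref{a1c2} says the corresponding intersection numbers sum to $4$. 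The key extra input is that $\tilde\partial_\Gamma(z,x)=(2,2)$ as well, so $(z,x)$ lies in the \emph{same} relation $\Gamma_{(2,2)}$.

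First I would record the symmetries of the $(2,2)$-intersection numbers: matching each common neighbour of $x$ and $z$ with its position relative to $(z,x)$, and combining with the commutativity of $\Gamma$, gives
$$p^{(2,2)}_{(1,p),(1,q)}=p^{(2,2)}_{(1,q),(1,p)}=p^{(2,2)}_{(p,1),(q,1)}=p^{(2,2)}_{(q,1),(p,1)}\qquad(p,q\geq1),$$
$$p^{(2,2)}_{(1,r),(s,1)}=p^{(2,2)}_{(s,1),(1,r)}=p^{(2,2)}_{(1,s),(r,1)}=p^{(2,2)}_{(r,1),(1,s)}\qquad(r,s\geq2).$$
Grouping the $P$-sets at $(x,z)$ into the blocks linked by these equalities, every block except $\{P_{(1,1),(1,1)}(x,z)\}$ has two or four members and contributes a multiple of that size to the total $4$; hence $p^{(2,2)}_{(1,1),(1,1)}$ is even, so equals $0$, $2$ or $4$. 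Enumerating the few ways to decompose $4$ accordingly then leaves, besides the five listed configurations, only a short list of ``forbidden shapes'' for the multiset of Cases realised by $y_1,y_2,y_3,y_4$, each shape carrying at most one still-unbounded parameter among $p,q,r,s$.

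It then remains to eliminate every forbidden shape, which is the heart of the proof. For a fixed shape I would pass to one of the two $\Sigma_2$-pairs in $C(x,z)$, say $\{y_1,y_2\}$, for which $C(y_1,y_2)=\{x,z,y_3,y_4\}$ by Lemma~\ref{sigma2}. The arcs joining $y_1,y_2$ to $x$ and to $z$ are prescribed by the shape, and the length-$2$ walks around the octahedron through $x$ or through $z$, together with $\partial_\Sigma(y_1,y_2)=2$, pin down $\tilde\partial_\Gamma(y_1,y_2)$. In the typical shape this gives $(y_1,y_2)\in\Gamma_{2,i'}$ with $i'\geq3$, whence Lemma~\ref{fenlei} and Corollary~\ref{fenlei*}, applied at $(y_1,y_2)$ whose common-neighbour set is $\{x,z,y_3,y_4\}$, force a distribution of $x,z,y_3,y_4$ among C1--C6 incompatible with the one already read off at $(x,z)$; in the remaining shapes one finds $(y_1,y_2)\in\Gamma_{2,2}$, and re-running the symmetry/parity analysis above at $(y_1,y_2)$, with $x$ and $z$ already placed, again produces a contradiction. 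The residual contradictions are elementary: a closed walk $u\to w\to u$ forces an edge, a short directed walk forces $\partial_\Gamma$ to be small, and in this way every forbidden value of the free parameter is excluded --- this is exactly what trims the parameter in (iii) to $(p,q)\in\{(1,2),(2,3)\}$ and the one in (v) to $p\in\{2,3\}$, while in (iv) no such obstruction appears and $r$ remains free.

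The main obstacle is precisely this elimination step: there is no single slick observation, and one must, for each forbidden shape, split according to which of $y_1,y_2,y_3,y_4$ are $\Sigma_2$-paired, keep track of all the two-way distances among the six vertices of the octahedron, and in each branch either invoke Lemma~\ref{fenlei}/Corollary~\ref{fenlei*} at an internal $\Sigma_2$-pair or re-use the symmetry and parity results. Extracting the \emph{sharp} parameter ranges in (iii) and (v), rather than a cruder boundedness statement, is the most delicate part, and it is where the $J(4,2)$ hypothesis on $\Sigma$ genuinely does the cutting.
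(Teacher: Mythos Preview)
Your overall strategy matches the paper's: fix $(x,z)\in\Gamma_{2,2}$, exploit the fourfold symmetry $p^{(2,2)}_{\tilde i,\tilde j}=p^{(2,2)}_{\tilde j,\tilde i}=p^{(2,2)}_{\tilde i^*,\tilde j^*}=p^{(2,2)}_{\tilde j^*,\tilde i^*}$ together with $c_2=4$ to enumerate the admissible distributions of $C(x,z)$ among C1--C6, and then eliminate a short list of forbidden shapes by looking at the octahedron more closely. Your parity observation on $p^{(2,2)}_{(1,1),(1,1)}$ is correct and useful, and the bound $p,q\leq 3$ for path-type legs (coming from $\partial_\Gamma(z,x)=2$) is what trims the parameter ranges.

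The one place where your plan would stall is your expectation that in the ``typical'' forbidden shape the internal $\Sigma_2$-pair $(y_1,y_2)$ lands in $\Gamma_{2,i'}$ with $i'\geq 3$, so that Lemma~\ref{fenlei} or Corollary~\ref{fenlei*} applies. In fact this never happens here: for every forbidden shape the octahedral walks force the relevant internal $\Sigma_2$-pair back into $\Gamma_{2,2}$, so Lemma~\ref{fenlei} (which requires $i\geq 3$) is simply unavailable. The paper's eliminations use only your two ``backup'' mechanisms, which are really the primary ones. First, all shapes with $p^{(2,2)}_{(1,3),(1,3)}\neq 0$ are killed at once by a $\Sigma_1$-adjacency plus short-circuit argument: for $y_1\in P_{(1,3),(1,3)}(x,z)$ and a $\Sigma_1$-neighbour $y\in C(x,z)$ lying on a backward path, either $(y_1,y,x)$ or $(y,y_1,z)$ is a $3$-circuit containing an arc of type $(1,3)$, which is impossible. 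Second, the mixed case $(p,q)=(1,3)$ and the shape $p^{(2,2)}_{(1,1),(1,1)}=2$, $p^{(2,2)}_{(1,2),(1,2)}=1$ are eliminated by passing to an internal pair, checking it lies in $\Gamma_{2,2}$, and re-invoking the $(2,2)$ constraints already established. So your plan is right in spirit, but you should drop the appeal to Lemma~\ref{fenlei} and promote the two mechanisms you listed as secondary to the front.
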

\begin{proof}
For all $\tilde{i},\tilde{j}\in\tilde{\partial}(\Gamma),$ we have
\begin{align}\label{3.5gongshi}\tag{3.9}
p^{(2,2)}_{\tilde{i},\tilde{j}}=
p^{(2,2)}_{\tilde{j}^*,\tilde{i}^*}=p^{(2,2)}_{\tilde{i}^*,\tilde{j}^*}=p^{(2,2)}_{\tilde{j},\tilde{i}}.
 \end{align}
Let $(x,z)\in\Gamma_{2,2}$ and $C(x,z) = \{y_1, y_2, y_3, y_4\}$. Then there is a path of length $2$ from $x$ to $z$ in $\Gamma$.

\textbf{Case 1.} There exists a mixed path from $x$ to $z$, i.e., $p^{(2,2)}_{(1,p),(1,q)}\neq0$ for some $p\neq q$.

Since $c_2=4$, from \eqref{a1c2} and \eqref{3.5gongshi}, we have \begin{equation}\label{eq:22-1}
p^{(2,2)}_{(1,p),(1,q)}  = p^{(2,2)}_{(q,1),(p,1)} = p^{(2,2)}_{(p,1),(q,1)} = p^{(2,2)}_{(1,q),(1,p)}= 1.\tag{3.10}
\end{equation}
Without loss of generality, we may assume that $p<q$. Since $(x,z)\in\Gamma_{2,2}$, we get $1\leq p<q\leq3$, and so $(p,q)=(1,2),(1,3)$ or $(2,3)$. By reordering $y_1, y_2, y_3, y_4$, we may assume that
$$y_1\in P_{(1,p),(1,q)}(x,z),  y_2\in P_{(q,1),(p,1)}(x,z),  y_3\in P_{(p,1),(q,1)}(x,z), y_4\in P_{(1,q),(1,p)}(x,z). $$


Suppose $(p,q)=(1,3)$. If $(y_1,y_2)\in \Sigma_1$, then $\partial_{\Gamma}(y_1,y_2)=1$ or $\partial_{\Gamma}(y_2,y_1)=1$, which implies $(y_1, y_2, x)$ or $(y_1, z, y_2)$   is a circuit in $\Gamma$, contrary to the fact that $(y_2,x),(y_1,z)\in\Gamma_{1,3}$. Hence, $(y_1,y_2)\notin \Sigma_1$, and so $(y_1,y_2)\in\Sigma_2$. By Lemma \ref{sigma2}, one has $C(y_1,y_2)=\{x,z,y_3,y_4\}$.

If $\partial_{\Gamma}(y_3,y_1)=1$, then $(y_1,z,y_3)$ is a circuit in $\Gamma$, contrary to the fact that $(y_1,z)\in\Gamma_{1,3}$. Thus,  $\partial_{\Gamma}(y_3,y_1)\neq 1$, and so $\partial_{\Gamma}(y_1,y_3)=1$. Then $(y_1,y_3,x)$ is a circuit in $\Gamma$, which implies $(y_1,y_3)\in \Gamma_{1,2}$. Since $(y_1,y_2)\in\Sigma_2$ and $(y_1, z, y_2, x)$ is a circuit in $\Gamma$, we get $(y_1,y_2)\in \Gamma_{2,2}$. Then $y_3\in P_{(1,2),(s,t)}(y_1,y_2)$ for some $1\in\{s,t\}$,  contrary to (\ref{eq:22-1}).

%

Therefore, $(p,q)=(1,2)$ or $(2,3)$, and so \ref{223} holds.

\textbf{Case 2.} Each path from $x$ to $z$ is pure.

By \eqref{3.5gongshi}, we get $p^{(2,2)}_{(1,p),(1,p)}=p^{(2,2)}_{(p,1),(p,1)}\neq0$ with $p\in\{1,2,3\}$. 

\textbf{Case 2.1.} There exists $y\in C(x,z)$ such that $(x,y,z)$  is a non-path.

Note that $y\in P_{(1,r),(l,1)}(x,z)$ or $P_{(l,1),(1,r)}(x,z)$ for some $r,l\geq2$. By \eqref{3.5gongshi}, one has $$p^{(2,2)}_{(1,r),(l,1)}=p^{(2,2)}_{(1,l),(r,1)}=p^{(2,2)}_{(r,1),(1,l)}=p^{(2,2)}_{(l,1),(1,r)}\neq0.$$
Since $c_2=4$ and $p^{(2,2)}_{(1,p),(1,p)}=p^{(2,2)}_{(p,1),(p,1)}\neq0$ with $p\in\{1,2,3\}$, from \eqref{a1c2}, we get $l=r$ and $p^{(2,2)}_{(1,r),(r,1)}=p^{(2,2)}_{(r,1),(1,r)}=1,$ and so \ref{224} or \ref{225} holds.

\textbf{Case 2.2.} For any $y\in C(x,z)$,  $(x,y,z)$ or $(z,y,x)$ is a pure path.

Since $c_2=4$ and $p^{(2,2)}_{(1,p),(1,p)}=p^{(2,2)}_{(p,1),(p,1)}\neq0$ with $p\in\{1,2,3\}$, from \eqref{a1c2}, there are six possibilities:
\begin{itemize}
\item [{\rm(a)}] $p^{(2,2)}_{(1,1),(1,1)}=4$;
\item[{\rm(b)}] $p^{(2,2)}_{(1,2),(1,2)}=p^{(2,2)}_{(2,1),(2,1)}=2$;
\item[{\rm(c)}] $p^{(2,2)}_{(1,3),(1,3)}=p^{(2,2)}_{(3,1),(3,1)}=2$;
\item [{\rm(d)}] $p^{(2,2)}_{(1,1),(1,1)}=2$ and $p^{(2,2)}_{(1,2),(1,2)}=p^{(2,2)}_{(2,1),(2,1)}=1$;
\item [{\rm(e)}] $p^{(2,2)}_{(1,1),(1,1)}=2$ and $p^{(2,2)}_{(1,3),(1,3)}=p^{(2,2)}_{(3,1),(3,1)}=1$;
\item [{\rm(f)}] $p^{(2,2)}_{(1,2),(1,2)}=p^{(2,2)}_{(2,1),(2,1)}=p^{(2,2)}_{(1,3),(1,3)}=p^{(2,2)}_{(3,1),(3,1)}=1$.
\end{itemize}

Suppose $p^{(2,2)}_{(1,3),(1,3)}\neq0$. Set $y_1\in P_{(1,3),(1,3)}(x,z)$.
By the above six possibilities, there exist  two distinct vertices $y,y'\in C(x,z)$ such that $(z,y,x)$ and $(z,y',x)$ are paths in $\Gamma$. In view of Lemma \ref{sigma2}, one has $y\in\Sigma_1(y_1)$ or $y'\in\Sigma_1(y_1)$. Without loss of generality, we may assume $y\in\Sigma_1(y_1)$. Then $\partial_{\Gamma}(y_1,y)=1$ or $\partial_{\Gamma}(y,y_1)=1$. It follows that $(y_1,y,x)$ or $(y,y_1,z)$ is a circuit in $\Gamma$, contrary to the fact that $(x,y_1),(y_1,z)\in\Gamma_{1,3}$. Thus, $p^{(2,2)}_{(1,3),(1,3)}=0$, and so  (c), (e) and (f) do not hold. Thus, (a), (b) or (d) holds.

To prove this lemma, it suffices to show that \ref{221} or \ref{222} holds. Then we only need to prove that (d) does not hold. Suppose for the contrary that (d) holds.  By reordering $y_1, y_2, y_3, y_4$, we may assume that
\begin{align}\label{gongshi22}
y_1\in P_{(1,2),(1,2)}(x,z),~ y_2\in P_{(2,1),(2,1)}(x,z),  ~y_3,y_4\in P_{(1,1),(1,1)}(x,z).\tag{3.11}
\end{align} If $y_j\in\Sigma_2(y_1)$ for some $j\in\{3,4\}$, since $(y_1,z,y_j,x)$ is a circuit in $\Gamma$, then $(y_1,y_j)\in\Gamma_{2,2}$, which implies $z\in P_{(1,2),(1,1)}(y_1,y_j)$, and so there exists $y\in C(x,z)$ such that $y\in P_{(1,2),(1,1)}(x,z)$, contrary to \eqref{gongshi22}. Hence, $y_j\notin\Sigma_2(y_1)$ for $j\in\{3,4\}$. By Lemma \ref{sigma2}, we get $y_2\in\Sigma_2(y_1)$, which implies $(y_3,y_4)\in\Sigma_2$, $C(y_1,y_2)=\{x,z,y_3,y_4\}$ and $C(y_3,y_4)=\{x,z,y_1,y_2\}$.
Since $(y_1,z,y_2,x)$ is a circuit in $\Gamma$, one has $(y_1,y_2)\in\Gamma_{2,2}$. Since $z\in P_{(1,2),(1,2)}(y_1,y_2)$ and $x\in P_{(2,1),(2,1)}(y_1,y_2)$, from (d), we obtain $y_3,y_4\in P_{(1,1),(1,1)}(y_1,y_2)$. Then $x,z,y_1,y_2\in P_{(1,1),(1,1)}(y_3,y_4)$, which implies $(y_3,y_4)\in\Gamma_{2,2}$ and $p^{(2,2)}_{(1,1),(1,1)}=4$, contrary to \eqref{gongshi22}. Thus, (d) does not hold.


This completes the proof of this lemma.
\end{proof}

\section{The types of arcs}


In the sequel,  we always assume that  $\Sigma$  is a Johnson graph $J(n,e)$ with $n\geq2e$ and $e\geq2$, or a folded Johnson graph $\bar{J}(2e,e)$ with $e\geq4$.
Let $m=n$ if $\Sigma$ is a Johnson graph $J(n,e)$, and $m=2e$ if $\Sigma$ is a folded Johnson graph $\bar{J}(2e,e)$. Set ${X\choose e}$ to be the vertex set of $\Sigma$, where $X$ is an $m$-set.
Note that $\Sigma$ satisfies the property in Section 3. Then $c_2=4$. 


Let $T$ be the set of integers $q+1$ such that $(1,q)\in\tilde{\partial}(\Gamma)$. That is, $T=\{q+1\mid (1,q)\in\tilde{\partial}(\Gamma)\}$.
The main result of this section is as follows, which determines the types of arcs.

\begin{prop}\label{arcpure}
The following hold:
\begin{enumerate}
\item \label{arcpure1} If $p^{(1,2)}_{(1,3),(1,3)}=0$, then $T=\{3\}$;
\item \label{arcpure2} If $p^{(1,2)}_{(1,3),(1,3)}\neq0$, then $T=\{3,4\}$ or $T=\{2,3,4\}$.
\end{enumerate}
\end{prop}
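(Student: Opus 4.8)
The plan is to analyze the structure of $T=\{q+1\mid(1,q)\in\tilde\partial(\Gamma)\}$ by working out exactly which arc types $(1,q)$ can occur, using the fact that $\Sigma$ is a Johnson or folded Johnson graph with $c_2=4$, so the machinery of Section 3 (Lemmas \ref{sigma2}, \ref{fenlei}, \ref{22} and Corollary \ref{fenlei*}) applies. First I would record the obvious constraints: since $\Gamma$ is strongly connected and $\Sigma$ is its underlying graph, $3\in T$ is impossible to avoid only if there are arcs of type $(1,2)$... actually more carefully, $3\in T$ should follow from girth considerations together with the hypothesis in part \ref{arcpure2} that $p^{(1,2)}_{(1,3),(1,3)}\ne0$, which forces $(1,3)\in\tilde\partial(\Gamma)$, i.e. $4\in T$; and since an arc of type $(1,3)$ sits in a circuit of length $4$, combined with the local $J(4,2)$-structure on $C(x,z)$ one deduces $(1,2)\in\tilde\partial(\Gamma)$, i.e. $3\in T$. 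So part \ref{arcpure2} reduces to showing $T\subseteq\{2,3,4\}$ and $3,4\in T$, while part \ref{arcpure1} reduces to showing that $p^{(1,2)}_{(1,3),(1,3)}=0$ forces $T=\{3\}$ exactly, i.e. no arc of type $(1,q)$ with $q\ge3$ and no edges ($q=0$, i.e. $2\in T$).

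The core of the argument should be an upper bound on $q$: I would show that if $(1,q)\in\tilde\partial(\Gamma)$ then $q\le3$, by the following mechanism. Take $(x,z)\in\Gamma_{1,q}$ with $q$ maximal. Pick a vertex $y_1\in C(x,z)$ (nonempty since $\Sigma$ is distance-regular of diameter $\ge2$ and $a_1>0$, or by looking at $\Sigma_1(x)\cap\Sigma_1(z)$ via Lemma \ref{sigma1}). One then classifies the role of $y_1$ among cases C1–C6 relative to the arc $(x,z)$; using Lemma \ref{sigma2} to pair off the four vertices of a $J(4,2)$-neighbourhood whenever $(x,z)\in\Sigma_2$, and using that a short circuit through $y_1$ would force $\partial_\Gamma(z,x)$ or some two-way distance to be small, contradicting maximality of $q$ or the girth. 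Concretely: an arc $(x,z)$ of type $(1,q)$ gives a circuit $x\to z\rightsquigarrow x$ of length $q+1$; if some $y\in C(x,z)$ yields a pure path $x\to y\to z$, then $(x,y)\in\Gamma_{1,p}$ with $p\ge2$ and we get a circuit of length $p+q$ or similar through $x,y,z$; iterating and combining with Lemma \ref{fenlei}'s dichotomies pins down the possible values. The arithmetic here is where I expect the values $3$ (and $4$) to emerge — essentially $q+1\le 4$ because the local structure $J(4,2)$ has girth $3$ and the induced subdigraph on $\{x,z\}\cup C(x,z)$ cannot support arcs whose reverse-distance exceeds $3$ without violating commutativity via Lemma \ref{22} or Corollary \ref{fenlei*}.

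For the split between \ref{arcpure1} and \ref{arcpure2}: in case \ref{arcpure1}, $p^{(1,2)}_{(1,3),(1,3)}=0$ means no pure $(1,3)$-path lies over an arc of type $(1,2)$; I would leverage this to rule out $(1,3)\in\tilde\partial(\Gamma)$ entirely (if an arc of type $(1,3)$ existed, its defining $4$-circuit $x\to z_1\to z_2\to z_3\to x$ would, via the $J(4,2)$ structure on a suitable $\Sigma_2$-pair and Corollary \ref{fenlei*}, produce a configuration forcing $p^{(1,2)}_{(1,3),(1,3)}\ne0$), and similarly to rule out edges, i.e. $2\notin T$ (an edge $(x,z)\in\Gamma_{1,1}\cap\Gamma_2$... rather, an arc of type $(1,1)$ is an edge; one shows edges and type-$(1,2)$ arcs cannot coexist in the required way, or that $T$ must be a single value, using that $T$ generates the cyclic structure of the attached scheme and Proposition-type results already in the literature). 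The main obstacle — and where I would spend the most care — is the case analysis ruling out $q\ge4$ and the delicate interplay in Lemma \ref{fenlei} cases \ref{fenlei5}–\ref{fenlei6} where pure paths of length-type $(1,p)$ with $p\ge3$ appear; Corollary \ref{fenlei*} is clearly designed to handle exactly this, so the proof should funnel every such configuration through it to force $(y_1,y_2)\in\Sigma_2$ and then extract a contradiction with the girth of $J(4,2)$ or with commutativity. The remaining work — showing $3\in T$ always, and that in case \ref{arcpure2} we cannot have $2\notin T$ forced while $4\in T$, giving exactly the two listed possibilities — should be a comparatively routine bookkeeping of which subsets of $\{2,3,4\}$ are closed under the scheme's multiplication, i.e. consistent with the intersection-number identities of Lemma \ref{intersection numners}.
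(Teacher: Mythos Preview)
Your high-level outline is in the right spirit, but there are two genuine gaps where the proposed mechanism is too optimistic.

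First, the bound on $q$. Your plan is to take an arc $(x,z)\in\Gamma_{1,q}$ with $q$ maximal, look at $C(x,z)\subseteq\Sigma_1(x)\cap\Sigma_1(z)$, and hope that Lemmas~\ref{fenlei}, \ref{22} and Corollary~\ref{fenlei*} squeeze out $q\le3$. But those lemmas analyse the induced subdigraph on $\{x,z\}\cup C(x,z)$ only when $(x,z)\in\Sigma_2$; for an arc $(x,z)\in\Sigma_1$ they say nothing directly. The paper's actual engine is Lemma~\ref{arc}: if $p^{(1,q-1)}_{(1,q),(1,h)}=0$ for all $h$, then $q\le3$. Its proof (Appendix~A) does not stay in a single $J(4,2)$-neighbourhood; one takes a circuit $(x_0,x_1,\ldots,x_q)$ with $(x_0,x_1)\in\Gamma_{1,q}$, uses Corollary~\ref{arckey*} to build inductively vertices $y_1,\ldots,y_q$ with $y_k\in C(y_{k-1},x_{k+1})\cap\Sigma_2(x_k)$, and then tracks explicit Johnson coordinates to show that the constraint $a_1\in y_k$, $b_1\notin y_k$ propagates, forcing $x_q=x_0(a_1,b_1)=x_1$, a contradiction. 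None of this is visible from the single-neighbourhood picture you describe, and the ``arithmetic where the values $3$ and $4$ emerge'' is this coordinate propagation, not a girth count on $J(4,2)$.

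Second, ruling out $2\in T$ in part~\ref{arcpure1}. You call this ``comparatively routine bookkeeping'', but it is Lemma~\ref{1211i} (proved in Appendix~B), a long three-case analysis showing $p^{(1,2)}_{(1,1),\tilde i}=0$ for $\tilde i\in\{(1,1),(1,2),(2,1)\}$; only then does one conclude that if $T=\{2,3\}$ the digraph would be undirected. Your suggestion that ``$T$ must be a single value'' via scheme-multiplication closure is not how this goes. Similarly, your mechanism for excluding $4\in T$ in part~\ref{arcpure1} --- that a $(1,3)$-arc would force $p^{(1,2)}_{(1,3),(1,3)}\ne0$ --- is not what happens: the paper first shows (Lemma~\ref{131i}) that $p^{(1,2)}_{(1,3),(1,3)}=0$ implies $p^{(1,2)}_{(1,3),(1,r)}=0$ for all $r$, and then Lemma~\ref{maxT=3} eliminates $q=3$ by another explicit Johnson-coordinate argument invoking Lemma~\ref{22}~\ref{225}. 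For part~\ref{arcpure2} the paper likewise needs three preparatory lemmas (\ref{1313-2*}, \ref{1313-2}, \ref{1313}) before Lemma~\ref{arc} can be applied to bound $\max T\le4$.
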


%
%

The following result is key in the proof of Proposition \ref{arcpure}, which will be proved in Appendix A.

\begin{lemma}\label{arc}
Let $q+1\in T$. If $p^{(1,q-1)}_{(1,q),(1,h)}=0$ for all $h\geq1$, then $q\leq3$.
\end{lemma}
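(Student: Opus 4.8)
## Proof proposal for Lemma 4.3

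The plan is to argue by contradiction: suppose $q+1\in T$, that $p^{(1,q-1)}_{(1,q),(1,h)}=0$ for every $h\geq 1$, and that $q\geq 4$. Under these hypotheses I want to derive a structural contradiction by looking at a short circuit built from an arc of type $(1,q)$. First I would fix an arc $(x,y)\in\Gamma_{1,q}$ and consider the underlying-graph neighbourhood structure around $x$ and $y$. Because $\Sigma$ is a Johnson or folded Johnson graph, $(x,y)\in\Sigma_1$ and Lemma \ref{sigma1} describes $C(x,y)=\Sigma_1(x)\cap\Sigma_1(y)=Y_1(x,y)\cup Y_2(x,y)$ precisely, together with the distances among these common neighbours. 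The key point to extract is that any vertex $w\in C(x,y)$ gives a two-step detour $(x,w,y)$ or $(y,w,x)$ in $\Sigma$, and one wants to read off from commutativity and the intersection numbers of the attached scheme what two-way distances $(x,w)$ and $(w,y)$ can have.

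The heart of the argument should be the hypothesis $p^{(1,q-1)}_{(1,q),(1,h)}=0$ for all $h\geq 1$. Interpreted in $\Gamma$, this says: if $(x,y)\in\Gamma_{1,q}$ then there is no vertex $w$ with $(x,w)\in\Gamma_{1,q}$ and $(w,y)\in\Gamma_{1,h}$ for any $h$ — i.e.\ one cannot ``continue'' an arc of type $(1,q)$ by another arc of type $(1,\ast)$ while staying at out-distance $q-1$ from the start along the $\Gamma_{1,q}$-digraph. Equivalently, in the digraph $(V(\Gamma),\Gamma_{1,q})$, the out-neighbourhood of the out-neighbourhood of $x$ meets $\Gamma_{q-1}(x)$ trivially, which forces the $\Gamma_{1,q}$-digraph to be very restricted — I expect it to collapse the possibilities for $q$ via the girth/short-circuit considerations introduced in Section 3 (pure vs.\ mixed paths, Cases C1--C6). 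Concretely, I would take a shortest circuit through an arc of type $(1,q)$: since $(x,y)\in\Gamma_{1,q}$ there is a path of length $q$ from $y$ back to $x$, and I want to show that such a path must pass through $C(x,y)$ or through the $J(4,2)$-subgraph on $\{x,z\}\cup C(x,z)$ for a suitable $z\in\Sigma_2(x)$, and then apply Lemma \ref{fenlei} (or Lemma \ref{22} if $(2,2)$-type arcs appear) to pin down the type data. The vanishing hypothesis should then contradict the existence of enough common neighbours guaranteed by $|Y_1(x,y)|=e-1$, $|Y_2(x,y)|=m-e-1$ and $c_2=4$, once $q\geq 4$.

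More precisely, the step I would carry out is: pick $z\in\Sigma_2(x)$ with $y\in C(x,z)$ (possible since $a_1\geq 1$ forces $\Sigma_1(x)\cap\Sigma_1(z)\ni y$ for suitable $z$, using the Johnson graph structure), so that the induced subgraph on $\{x,z\}\cup C(x,z)$ is $J(4,2)$ with $c_2=4$. Applying Lemma \ref{fenlei} to $(x,z)$ — noting $(x,z)\in\Gamma_{2,i}$ for some $i$ — the four common neighbours fall into one of the listed patterns, and since $y\in C(x,z)$ has $(x,y)\in\Gamma_{1,q}$ with $q\geq 4\geq 3$, the ``Moreover'' clauses of Lemma \ref{fenlei} and Corollary \ref{fenlei*} force a companion vertex $y'\in C(x,z)$ with $(y',y)\in\Sigma_2$ and prescribed type. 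Tracing the arcs around the resulting circuit through $x$, $y$, $z$, $y'$ and invoking $p^{(1,q-1)}_{(1,q),(1,h)}=0$ (applied to the arc $(x,y)$ and the vertex realizing the next step toward $x$ along the return path) should produce the contradiction, because that hypothesis kills exactly the configuration the $J(4,2)$-structure is forcing to exist.

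The main obstacle I anticipate is bookkeeping: correctly translating the single scalar hypothesis $p^{(1,q-1)}_{(1,q),(1,h)}=0$ into a statement about the return path from $y$ to $x$ and showing that this path is forced to take a first step of type $(1,h)$ landing at out-distance $q-1$ from $x$ (rather than, say, an arc of type $(h,1)$ or a step that leaves the relevant neighbourhood). This requires a careful case analysis on the first arc of a shortest $y\to x$ path and on whether that arc's endpoint lies in $C(x,y)$, in $\Sigma_2(x)$, or further away, using distance-regularity of $\Sigma$ (the intersection numbers $c_2=4$, $a_1$, $b_1$ from Theorems \ref{johnson} and \ref{folded johnson}) to bound where it can go. I expect the case $q\geq 4$ to be exactly where these bounds become incompatible with the $J(4,2)$-local structure, while $q\in\{2,3\}$ survives because the circuit is too short to force the forbidden continuation. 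Given that the paper defers this to Appendix A, I expect the full argument to be a somewhat intricate but elementary chase through these local configurations rather than anything requiring new ideas beyond Sections 2--3.
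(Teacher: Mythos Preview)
Your proposal has a genuine gap: the argument is far too local. You plan to pick a single $z\in\Sigma_2(x)$ with $y\in C(x,z)$, apply Lemma~\ref{fenlei} to the $J(4,2)$-subgraph on $\{x,z\}\cup C(x,z)$, find a companion vertex $y'$, and then ``trace arcs around'' a short circuit through $x,y,z,y'$. But the hypothesis $p^{(1,q-1)}_{(1,q),(1,h)}=0$ does not directly forbid any configuration inside a single $J(4,2)$-piece; what it forbids is that a two-step path $(x_0,x_1,x_2)$ with $(x_0,x_1)\in\Gamma_{1,q}$ lands at $\Sigma$-distance $1$ from $x_0$ (it forces $(x_0,x_2)\in\Sigma_2$, i.e.\ $\Gamma_{2,q-1}$). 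This is a constraint on how a path of $(1,q)$-arcs moves through $\Sigma$, and to get a contradiction for $q\geq 4$ you must follow an entire circuit of length $q+1$ through $(x_0,x_1,\ldots,x_q)$, not just one local piece. Your proposal never leaves the first neighbourhood.

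The paper's actual proof (Appendix A) proceeds in a way you have not anticipated. First, a substantial preliminary lemma (Lemma~\ref{arckey}, itself a four-step case analysis) shows that in each local $J(4,2)$-piece $\{x_0,x_2\}\cup C(x_0,x_2)$ along the circuit, the vertex $y_1\in C(x_0,x_2)\cap\Sigma_2(x_1)$ must also lie on a path from $x_0$ to $x_2$, with controlled arc types. This, together with a dichotomy lemma (Lemma~\ref{circuit}) reducing to either a pure circuit or one with no further $(1,q)$-arcs, lets one inductively build auxiliary vertices $y_1,y_2,\ldots,y_q$ with $y_k\in C(y_{k-1},x_{k+1})\cap\Sigma_2(x_k)$ along the whole circuit. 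The contradiction then comes from explicit Johnson-graph coordinates: writing $x_1=x_0(a_1,b_1)$ and tracking the sets $u_k,v_k$ such that $x_k=x_0(\{a_1\}\cup u_k,\{b_1\}\cup v_k)$, one proves inductively that every $x_k$ has this form, whence $x_q\in\Sigma_1(x_0)$ forces $x_q=x_0(a_1,b_1)=x_1$, absurd. None of this inductive coordinate-tracking machinery appears in your sketch, and without it there is no visible mechanism producing a contradiction specific to $q\geq 4$.
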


%
%


To prove Proposition \ref{arcpure}, we recall the definitions of pure arcs and mixed arcs introduced in \cite{YYF20}. An arc of type $(1,q)$ is said to be \emph{pure}, if every circuit of length $q+1$ containing it is pure; otherwise, this arc is said to be \emph{mixed}. We say that $(1,q)$ is \emph{pure} if any arc of type $(1,q)$ is pure; otherwise, we say that $(1,q)$ is \emph{mixed}.  


Next we divide the proof of Proposition \ref{arcpure} into two subsections.

\subsection{Proof of Proposition \ref{arcpure} \ref{arcpure1}}


\begin{lemma}\label{131i}
If $p^{(1,2)}_{(1,3),(1,3)}=0$, then $p^{(1,2)}_{(1,3),(1,r)}=0$ for all $r\geq1$.
\end{lemma}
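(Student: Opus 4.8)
The plan is to argue by contradiction. Suppose $p^{(1,2)}_{(1,3),(1,3)}=0$ but $p^{(1,2)}_{(1,3),(1,r)}\neq0$ for some $r\geq1$; since the case $r=3$ is excluded by hypothesis, and the case $r=2$ (giving a mixed circuit of length $3$, hence girth-related constraints) should be handled, we may focus on $r\neq 3$. Pick $(x,z)\in\Gamma_{1,2}$ with a vertex $y\in P_{(1,3),(1,r)}(x,z)$; then $(x,y,z)$ is a path with $(x,y)\in\Gamma_{1,3}$, $(y,z)\in\Gamma_{1,r}$, and $(z,x)\in\Gamma_{1,2}$, so $(x,y,z)$ is a mixed circuit of length $3$, giving girth $3$. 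The first task is to locate $y$ and $z$ geometrically inside $\Sigma$: since $(x,z)\in\Sigma_1$ and $y\in\Sigma_1(x)\cap\Sigma_1(z)$, Lemma \ref{sigma1} puts $y$ in $Y_1(x,z)\cup Y_2(x,z)$, and I would use Lemmas \ref{iwuguan} and \ref{xyz} to track how the sets $Y_i$ overlap as the base pair varies. The point is that within one $Y_i(x,z)$ all vertices are mutually adjacent (a clique), so there should be many more vertices $w$ adjacent to both $x$ and $y$, or to both $y$ and $z$, to which the same circuit structure must propagate.

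Next I would exploit the commutativity and the intersection-number identities (Lemma \ref{intersection numners}) together with \eqref{a1c2}: writing out $\sum p^{(1,2)}_{(1,p_1,\ldots)}$-type sums for the pair $(x,z)\in\Gamma_{1,2}$ forces $a_1$ to split among the Cases C1--C6, and the assumption $p^{(1,2)}_{(1,3),(1,3)}=0$ kills the "pure $(1,3)$" contribution. The key mechanism I expect to use: consider a triangle $x,y,z$ realizing the forbidden configuration and a fourth vertex $w$ in $\Sigma_1(x)\cap\Sigma_1(y)$ lying in the same $Y_i$-clique; then $(w,y)$ or $(y,w)$ is an arc, and chasing the circuit through $w$ (using that a circuit of length $4$ through a $(1,3)$-arc must be pure, by definition of pure/mixed arcs as recalled before Lemma \ref{arc}) should force $p^{(1,2)}_{(1,3),(1,3)}\neq0$ after all — the contradiction. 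Essentially, the non-pure circuit through $y$ can be "rotated" around the clique to produce a pure length-$4$ circuit on a $(1,3)$-arc, i.e. a genuine pure path of type $(1,3)$-$(1,3)$ realizing $P_{(1,3),(1,3)}$ for some pair in $\Gamma_{1,2}$.

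The main obstacle, I expect, is the bookkeeping needed to rule out the small cases $r=1$ and $r=2$ cleanly, and to handle the folded Johnson graph alongside $J(n,e)$ uniformly: when $r=1$ one gets $(y,z)\in\Sigma_1$ being an edge with $\partial_\Gamma(z,y)=1$, so $(x,y,z)$ has an edge and an honest $(1,3)$-arc, and one must show this still forces a pure length-$4$ circuit somewhere; when $r=2$ the circuit $(x,y,z)$ of length $3$ is mixed with a $(1,2)$-edge, and one must show the induced $J(4,2)$-structure on $\{x,z\}\cup C(x,z)$ (guaranteed by the property of $\Sigma$ assumed in Section 3, recalled at the start of Section 4) cannot support this without creating the excluded pure configuration. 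I would organize the argument by first fixing a witness pair, then enumerating the possible $\tilde\partial$-labels on the at-most-four common neighbours via Lemma \ref{22} or the Case C1--C6 dichotomy, and showing each branch either collapses $r$ to the excluded value $3$ or yields $p^{(1,2)}_{(1,3),(1,3)}\neq0$.
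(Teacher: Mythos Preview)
Your proposal has a basic setup error that cascades through the whole plan. You take $(x,z)\in\Gamma_{1,2}$ and $y\in P_{(1,3),(1,r)}(x,z)$, then assert ``$(z,x)\in\Gamma_{1,2}$, so $(x,y,z)$ is a mixed circuit of length $3$''. But $(x,z)\in\Gamma_{1,2}$ means $\partial_\Gamma(z,x)=2$, so $(z,x)$ is \emph{not} an arc and $(x,y,z)$ is not a circuit at all. Relatedly, your plan to use ``the induced $J(4,2)$-structure on $\{x,z\}\cup C(x,z)$'' is misplaced: that property applies to pairs in $\Sigma_2$, whereas here $(x,z)\in\Sigma_1$.

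The paper's argument is organized quite differently. One takes the arc $(x_0,x_1)\in\Gamma_{1,2}$, completes it to a length-$3$ circuit $(x_0,x_1,x_2)$ returning to $x_0$, and uses the extra vertex $x_2$ throughout. The first substantive step --- which your plan never reaches --- is to prove that $(x_2,y)\in\Gamma_{1,2}$. This uses Lemma~\ref{22} to rule out $(y,x_2)\in\Gamma_{2,2}$: if it were, the presence of $x_0\in P_{(1,r_2),(1,3)}(x_2,y)$ and $x_1\in P_{(1,r),(1,r_1)}(y,x_2)$ would force an incompatible option in the classification of $p^{(2,2)}$-numbers. Once $(x_2,y)\in\Gamma_{1,2}$ is known, the paper runs a counting (injection) argument in each of the two cases $r=2$ and $r=1$: one shows that for the appropriate pair of vertices $u,v$ both in $\Gamma_{1,2}$ (e.g.\ $(x_2,y)$ versus $(x_2,x_0)$), the set $P_{(1,2),(1,3)}(u,v)$ strictly contains $\{x_0\}\cup P_{(1,2),(1,3)}(u,v')$, contradicting equality of intersection numbers. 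Your ``rotate around a clique to manufacture a pure $(1,3)$--$(1,3)$ path'' idea is not how the contradiction actually arises; the argument never produces $p^{(1,2)}_{(1,3),(1,3)}\neq0$, it produces two pairs of the same type with $P$-sets of different sizes. The tools you need to bring in explicitly are Lemma~\ref{22} (for the claim about $(x_2,y)$) and Lemma~\ref{sigma1} (to force certain auxiliary vertices into $\Sigma_1$ rather than $\Sigma_2$ by tracking their $Y_i$-membership).
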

\begin{proof}
Assume the contrary, namely, $p^{(1,2)}_{(1,3),(1,r)}\neq0$ for some $r\geq1$. Let $(x_0,x_1,x_2)$  be a circuit with $(x_0,x_1)\in\Gamma_{1,2}$ and $(x_i,x_{i+1})\in\Gamma_{1,r_i}$ for $r_i\in\{1,2\}$ with  $i=1,2$ and $x_3=x_0$. Pick a vertex $y\in P_{(1,3),(1,r)}(x_0,x_1)$. Since $p^{(1,2)}_{(1,3),(1,3)}=0$ and $(x_0,y,x_1,x_2)$ is a circuit, one has $r\in\{1,2\}$.

We claim that $(x_2,y)\in\Gamma_{1,2}$. Suppose $(y,x_2)\in\Sigma_2$. Since $(x_0,y,x_1,x_2)$ is a circuit, we have $(y,x_2)\in\Gamma_{2,2}$. Since $x_0\in P_{(1,r_2),(1,3)}(x_2,y)$, from  Lemma \ref{22}, one gets $(x_2,x_0)\in\Gamma_{1,2}$, and so  $p^{(2,2)}_{(1,2),(1,3)}\neq0$. It follows that Lemma \ref{22} \ref{223} holds. Since $x_1\in P_{(1,r),(1,r_1)}(y,x_2)$  with $r,r_1\in\{1,2\}$, we get $p^{(2,2)}_{(1,r),(1,r_1)}\neq0$, and so $c_2\geq5$ from \eqref{a1c2} and  Lemma \ref{22} \ref{223}, contrary to the fact that $c_2=4$. Thus, $(y,x_2)\in\Sigma_1$. Since $3=\partial_{\Gamma}(y,x_0)\leq\partial_{\Gamma}(y,x_2)+1$, we have $(x_2,y)\in\Gamma_{1,t}$ with $t\geq2$. The fact $(y,x_1,x_2)$ is a circuit in $\Gamma$ implies $t=2$. Thus, our claim holds.


\textbf{Case 1.} $r=2$.

By the claim, one has $x_2\in P_{(1,r_1),(1,2)}(x_1,y)$ with $(x_1,y)\in\Gamma_{2,1}$, which implies $p^{(2,1)}_{(1,r_1),(1,2)}\neq0$. Since $x_2\in P_{(1,r_1),(1,r_2)}(x_1,x_0)$ was arbitrary, we may assume $x_2\in P_{(1,r_1),(1,2)}(x_1,x_0)$.

Since $p^{(1,2)}_{(1,2),(1,3)}=p^{(1,2)}_{(1,3),(1,2)}\neq0$, choose a vertex $z\in P_{(1,2),(1,3)}(x_2,x_0)$. If $(z,x_1)\in\Sigma_2$, then $(z,x_1)\in\Gamma_{2,2}$ since $x_0\in P_{(1,3),(1,2)}(z,x_1)$ and $x_2\in P_{(1,r_1),(1,2)}(x_1,z)$ with $r_1\in\{1,2\}$, contrary to  Lemma \ref{22}. Hence, $(z,x_1)\in\Sigma_1$. Since $3=\partial_{\Gamma}(x_0,z)\leq1+\partial_{\Gamma}(x_1,z)$, one has $(z,x_1)\in\Gamma_{1,l}$ with $l\geq2$. The fact that $(x_2,z,x_1)$ is a circuit implies $l=2$.

Since $x_1\in C(x_2,x_0)$, from Lemma \ref{sigma1}, we obtain $x_1\in Y_{i}(x_2,x_0)$ for some $i\in\{1,2\}$. Since $z,y\in C(x_2,x_0)$ and $z,y\in\Sigma_1(x_1)$, from Lemma \ref{sigma1} \ref{sigma11}, we get $z,y\in Y_{i}(x_2,x_0)$, and so
$(z,y)\in\Sigma_1$. The fact $3=\partial_{\Gamma}(x_0,z)\leq1+\partial_{\Gamma}(y,z)$ implies $(z,y)\in\Gamma_{1,t}$ for some $t\geq2$. Since $p^{(1,2)}_{(1,3),(1,3)}=0$ and $x_0\in P_{(1,3),(1,3)}(z,y)$, we have $t\neq2$.  The fact that $(z,y,x_1,x_2)$ is a circuit implies $t=3$, and so $z\in P_{(1,2),(1,3)}(x_2,y)$. Since $z\in P_{(1,2),(1,3)}(x_2,x_0)$ was arbitrary, we get $P_{(1,2),(1,3)}(x_2,y)\supseteq\{x_0\}\cup P_{(1,2),(1,3)}(x_2,x_0)$, a contradiction. 

\textbf{Case 2.} $r=1$.

By the claim, one has $x_1\in P_{(1,1),(1,r_1)}(y,x_2)$ with $(y,x_2)\in\Gamma_{2,1}$, which implies $p^{(2,1)}_{(1,1),(1,r_1)}\neq0$. Since $(x_1,x_0)\in\Gamma_{2,1}$ and $(x_1,x_2,x_0)$ is a path, we may assume $r_1=1$.
Since $p^{(1,2)}_{(1,3),(1,1)}\neq0$, from Lemma \ref{intersection numners} \ref{intersection numners 2}, we get $p^{(1,1)}_{(2,1),(1,3)}=p^{(1,1)}_{(3,1),(1,2)}=k_{1,2}p^{(1,2)}_{(1,3),(1,1)}/k_{1,1}\neq0$, which implies that there exists $z\in P_{(2,1),(1,3)}(x_1,x_2)$.

Suppose $(z,y)\in\Sigma_2$. Since $x_2\in P_{(1,3),(1,2)}(z,y)$ and $x_1\in P_{(1,2),(1,1)}(z,y)$, from Lemma \ref{22}, we get $(z,y)\notin\Gamma_{2,2}$, and so $(z,y)\in\Gamma_{2,3}$. Since $c_2=4$, by \eqref{a1c2}, one has
\begin{align}\label{gongshi2-1}
p^{(2,3)}_{(1,3),(1,2)}=p^{(2,3)}_{(1,2),(1,3)}=p^{(2,3)}_{(1,2),(1,1)}=p^{(2,3)}_{(1,1),(1,2)}=1.\tag{4.1}
\end{align}
Since $x_0\in C(x_1,x_2)$, from Lemma \ref{sigma1}, we obtain $x_0\in Y_{i}(x_1,x_2)$ for some $i\in\{1,2\}$. Since $y,z\in C(x_1,x_2)$ with $y\in\Sigma_1(x_0)$ and $z\in\Sigma_2(y)$, from Lemma \ref{sigma1} \ref{sigma11} and \ref{sigma12}, one has $y\in Y_i(x_1,x_2)$ and $z\in Y_j(x_1,x_2)$ with $j\neq i$, and so $(z,x_0)\in\Sigma_2$.
Since $x_2\in P_{(1,3),(1,r_2)}(z,x_0)$ with $r_2\in\{1,2\}$ and $x_1\in P_{(1,2),(2,1)}(z,x_0)$, from Lemma \ref{22}, we get $(z,x_0)\notin\Gamma_{2,2}$, which implies $(z,x_0)\in\Gamma_{2,3}$, and so $p^{(2,3)}_{(1,2),(2,1)}\neq0$.  \eqref{a1c2} and \eqref{gongshi2-1} imply $c_2\geq5$, a contradiction. Thus, $(z,y)\in\Sigma_1$.

Since $3=\partial_{\Gamma}(x_2,z)\leq1+\partial_{\Gamma}(y,z)$ and $\partial_{\Gamma}(y,z)\leq1+\partial_{\Gamma}(x_1,z)=3$, we obtain $(z,y)\in\Gamma_{1,t}$ for some $t\in\{2,3\}$. Since $x_2\in P_{(1,3),(1,2)}(z,y)$, from Case 1, we get $t=3$ and $y\in P_{(1,3),(1,1)}(z,x_1)$. Since $y\in P_{(1,3),(1,1)}(x_0,x_1)$ was arbitrary, we get $P_{(1,3),(1,1)}(z,x_1)\supseteq\{x_2\}\cup P_{(1,3),(1,1)}(x_0,x_1)$, a contradiction.
\end{proof}

\begin{lemma}\label{maxT=3}
If $p^{(1,2)}_{(1,3),(1,3)}=0$, then $\max T=3$.
\end{lemma}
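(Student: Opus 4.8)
The plan is to show that, under the standing hypothesis $p^{(1,2)}_{(1,3),(1,3)}=0$, the digraph $\Gamma$ has no arc of type $(1,q)$ with $q\geq3$; this gives $\max T\leq3$, and since $\Gamma$, being weakly distance-regular, is non-symmetric, it does possess an arc of type $(1,q)$ for some $q\geq2$, so in fact $\max T=3$. I would prove the statement ``no arc of type $(1,q)$ for $q\geq3$'' by induction on $q$.

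The inductive step is short. Let $q\geq4$ and assume $\Gamma$ has no arc of type $(1,q')$ for $3\leq q'<q$. Suppose $\Gamma$ has an arc of type $(1,q)$, so $q+1\in T$. If $p^{(1,q-1)}_{(1,q),(1,h)}\neq0$ for some $h\geq1$, then there exist arcs $a\to b$, $b\to c$, $a\to c$ of types $(1,q)$, $(1,h)$, $(1,q-1)$ respectively, and $a\to c$ is then an arc of type $(1,q-1)$ with $3\leq q-1<q$, contradicting the inductive hypothesis. Hence $p^{(1,q-1)}_{(1,q),(1,h)}=0$ for all $h\geq1$, and Lemma \ref{arc} forces $q\leq3$, a contradiction; so no arc of type $(1,q)$ exists.

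So all the weight falls on the base case $q=3$: showing $\Gamma$ has no arc of type $(1,3)$. Suppose $(x,y)\in\Gamma_{1,3}$ and fix a $\Gamma$-geodesic $y\to w_1\to w_2\to x$. Since $\partial_{\Gamma}(x,w_1)\leq\partial_{\Gamma}(x,y)+\partial_{\Gamma}(y,w_1)=2$, and $\partial_{\Gamma}(x,w_1)=1$ would put $y\in P_{(1,3),(1,h_1)}(x,w_1)$ with $(x,w_1)\in\Gamma_{1,2}$, hence $p^{(1,2)}_{(1,3),(1,h_1)}\neq0$, against Lemma \ref{131i}, we get $(x,w_1)\in\Gamma_{2,2}$. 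Put $z:=w_1$. Then both $y$ (on the forward path $x\to y\to z$) and $w_2$ (on the reverse path $z\to w_2\to x$) lie in $C(x,z)$, and $y$ witnesses $p^{(2,2)}_{(1,3),(1,h_1)}\neq0$ with $h_1:=\partial_{\Gamma}(z,y)$. Going through Lemma \ref{22}, an index $(1,3)$ can occur only in case \ref{223} (which then forces $h_1=2$) or case \ref{225} (which then forces $h_1=3$). In each of these two cases I would identify which of the four vertices of $C(x,z)$ play the roles of $y$ and $w_2$, apply Lemma \ref{sigma2} to the remaining pair of vertices, and trace the two-way distances among $x,y,z,w_2$ by means of circuit-length restrictions, pushing the configuration to a point where $p^{(1,2)}_{(1,3),(1,3)}\neq0$ or $c_2\geq5$ — both impossible.

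The main obstacle is exactly this case analysis in the base step. One must keep all six two-way distances within a small cluster of vertices under simultaneous control, repeatedly invoking the rigidity of the induced $J(4,2)$-subgraphs (Lemma \ref{sigma2}), the commutativity of $\Gamma$, the value $c_2=4$, and the lone global hypothesis $p^{(1,2)}_{(1,3),(1,3)}=0$; every intermediate vanishing relation (of the kind supplied by Lemma \ref{131i}) has to be generated en route rather than taken for granted.
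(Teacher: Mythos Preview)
Your overall architecture coincides with the paper's: reduce to the minimal offending $q$, dispose of $q\geq4$ via Lemma~\ref{arc}, and for $q=3$ pass to a pair $(x,z)\in\Gamma_{2,2}$ carrying $y\in P_{(1,3),(1,h_1)}(x,z)$, then split according to Lemma~\ref{22}. The sub-case Lemma~\ref{22}~\ref{223} with $(p,q)=(2,3)$ is indeed dispatched along your lines: the paper picks $x_3'\in P_{(1,2),(1,3)}(z,x)$, shows $(x_3',y)\in\Gamma_{2,2}$, and infers $p^{(2,2)}_{(1,3),(1,3)}\neq0$, hence $c_2\geq5$.

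The gap is in your treatment of Lemma~\ref{22}~\ref{225}. The six vertices $\{x,z\}\cup C(x,z)$ do not by themselves yield a contradiction, and the paper has to leave this cluster. After pinning the remaining pair $z_1,z_2\in C(x,z)$ into $P_{(1,r),(r,1)}(x,z)$ and $P_{(r,1),(1,r)}(x,z)$ and invoking Lemma~\ref{131i} to force $r\geq3$, the paper introduces a genuinely new vertex $w\in P_{(1,r),(3,1)}(x,z_1)\setminus\{y,w_2\}$ (existence by commutativity), and then several further auxiliary vertices $w_1,w_2,w'$, all tracked through the explicit Johnson-graph coordinate labeling $x_0(\alpha,\beta)$. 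Two sub-cases arise according to whether $w\in Y_1(x,z_1)$ or $w\in Y_2(x,z_1)$; each is closed by producing either a length-$3$ circuit containing an arc of type $(1,3)$, or a pair of vertices forced simultaneously into $\Sigma_1$ and $\Sigma_2$ relative to one another --- not by $p^{(1,2)}_{(1,3),(1,3)}\neq0$ or $c_2\geq5$ as you anticipate. Your plan to ``trace the two-way distances among $x,y,z,w_2$'' and appeal only to the $J(4,2)$ rigidity of Lemma~\ref{sigma2} does not account for these extra vertices or the coordinate bookkeeping, and without them case~\ref{225} does not close.
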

\begin{proof}
Since $\Gamma$ is not undirected, we have $\max T\geq3$. Assume the contrary, namely,  $q$ is the minimal integer such that $q+1\in T$ with $q\geq3$. If $q\geq4$, from  Lemma \ref{arc}, then $p^{(1,q-1)}_{(1,q),(1,h)}\neq0$ for some $h\geq1$, which implies  $q\in T$, contrary to the minimality of $q$.  Then $q=3$.


Let $(x_0,x_1,x_2,x_3)$ be a circuit such that $(x_0,x_1)\in\Gamma_{1,3}$ and $(x_i,x_{i+1})\in\Gamma_{1,r_i}$ for $r_i\geq1$ with $1\leq i\leq3$ and $x_4=x_0$.   Lemma \ref{131i} implies $(x_0,x_2)\in\Gamma_{2,2}$. Suppose $r_i\neq3$ for some $1\leq i\leq3$. Then Lemma \ref{22} \ref{223} holds. It follows that $r_1=2$ and $x_1\in P_{(1,3),(1,2)}(x_0,x_2)$. Therefore,  there exists $x'_3\in P_{(1,2),(1,3)}(x_2,x_0)$. Note that $(x'_3,x_0,x_1,x_2)$ is a circuit with $(x'_3,x_0),(x_0,x_1)\in\Gamma_{1,3}$. Since $p^{(1,2)}_{(1,3),(1,3)}=0$,  we obtain $(x'_3,x_1)\in\Gamma_{2,2}$, which implies $p^{(2,2)}_{(1,3),(1,3)}\neq0$. \eqref{a1c2} and Lemma \ref{22} \ref{223} imply $c_2\geq5$,  contrary to the fact that $c_2=4$. Thus, $r_i=3$ for all $1\leq i\leq3$. It follows that Lemma \ref{22} \ref{225} holds. Then we may set
 \begin{align}\tag{4.2}\label{5.1gongshi1}
z_1\in P_{(1,r),(r,1)}(x_0,x_2)~\mbox{and}~z_2\in P_{(r,1),(1,r)}(x_0,x_2)~\mbox{with}~r\geq2.
\end{align}

Since $(x_0,x_2)\in\Gamma_{2,2}$, we get $(x_0,x_2)\in\Sigma_2$. The fact $c_2=4$ implies $C(x_0,x_2)=\{x_1,x_3,z_1,z_2\}$. Since $(x_1,x_2,x_3,x_0)$ is a circuit consisting of arcs of type $(1,3)$, one has $(x_1,x_3)\in\Gamma_{2,2}$, and so $(x_1,x_3)\in\Sigma_2$. By Lemma \ref{sigma2}, we obtain $C(x_1,x_3)=\{x_0,x_2,z_1,z_2\}$. Since $x_2\in P_{(1,3),(1,3)}(x_1,x_3)$ and $x_0\in P_{(3,1),(3,1)}(x_1,x_3)$, from Lemma \ref{22} \ref{225}, we get $z_1\in P_{(1,r),(r,1)}(x_1,x_3)$ or $P_{(r,1),(1,r)}(x_1,x_3)$. Since $3=\partial_{\Gamma}(x_2,x_1)\leq1+\partial_{\Gamma}(z_1,x_1)$, we get \begin{align}\tag{4.3}\label{5.1gongshi2}z_1\in P_{(1,r),(r,1)}(x_1,x_3)~\mbox{and}~z_2\in P_{(r,1),(1,r)}(x_1,x_3).\end{align}
By \eqref{5.1gongshi1} and \eqref{5.1gongshi2}, we have $x_1\in P_{(1,3),(1,r)}(x_0,z_1)$ with $(x_0,z_1)\in\Gamma_{1,r}$, which implies $p^{(1,r)}_{(1,3),(1,r)}\neq0$. It follows from Lemma \ref{131i} that $r\neq2$, and so $r\geq3$.

%

Since $x_0\in{X\choose e},$ $(x_0,x_2)\in\Sigma_2$ and $x_1\in C(x_0,x_2)$, there exist distinct elements $a_1,a_2\in x_0$ and $b_1,b_2\in X\setminus x_0$ such that $x_1=x_0(a_1,b_1)~\mbox{and}~x_2= x_0(\{a_1,a_2\},\{b_1,b_2\}).$ The fact $x_3\in C(x_0,x_2)\cap\Sigma_2(x_1)$ implies $x_3=x_0(a_2,b_2)$. Since $z_1,z_2\in C(x_0,x_2)$, without loss of generality, we may assume $z_1=x_0(a_2,b_1)~\mbox{and}~z_2=x_0(a_1,b_2).$

By \eqref{5.1gongshi2}, we get $x_1\in P_{(1,3),(1,r)}(x_0,z_1)$ and $x_3\in P_{(3,1),(1,r)}(x_0,z_1)$. Then  there exists
\begin{align}\tag{4.4}\label{5.1gongshi3}w\in P_{(1,r),(3,1)}(x_0,z_1)~\mbox{with}~w\notin\{x_1,x_3\}.\end{align} By Lemma \ref{sigma1}, we have $w=x_0(a_3,b_1)$ for some $a_3\in x_0\setminus\{a_1,a_2\}$, or $w=x_0(a_2,b_3)$ for some $b_3\in X\setminus(x_0\cup\{b_1,b_2\})$.

%
%

\textbf{Case 1.} $w=x_0(a_3,b_1)$ for some $a_3\in x_0\setminus\{a_1,a_2\}$.

By \eqref{5.1gongshi2} and \eqref{5.1gongshi3}, we have $3=\partial_{\Gamma}(w,z_1)\leq\partial_{\Gamma}(w,x_1)+1$, which implies
$(x_1,w)\in\Gamma_{1,l}~\mbox{with}~l\geq2.$
Let $w_1=x_0(\{a_1,a_3\},\{b_1,b_2\})$.  Since $z_2=x_0(a_1,b_2)$, one gets $(z_2,w)\in\Sigma_2$ and $x_0,x_1,w_1\in C(z_2,w)$ with $(x_0,w_1)\in\Sigma_2$. By \eqref{5.1gongshi1}, \eqref{5.1gongshi2} and \eqref{5.1gongshi3}, we get $x_0\in P_{(1,r),(1,r)}(z_2,w)$ and $x_1\in P_{(1,r),(1,l)}(z_2,w)$ with $r\geq3$. Lemma \ref{22} \ref{225} implies $(z_2,w)\notin\Gamma_{2,2}$. It follows from Corollary \ref{fenlei*}  that $w_1\in P_{(1,s),(1,s)}(z_2,w)~\mbox{with}~s\geq2.$

Since $x_2=x_0(\{a_1,a_2\},\{b_1,b_2\})$, we get $(x_2,w)\in\Sigma_2$ and $ C(x_2,w)=\{x_1,z_1,w_1,w_2\}$ with $w_2=x_0(\{a_2,a_3\},\{b_1,b_2\})$. By \eqref{5.1gongshi1} and \eqref{5.1gongshi3}, one has $x_1\in P_{(3,1),(1,l)}(x_2,w)$ and $z_1\in P_{(1,r),(1,3)}(x_2,w)$. The fact $w_1\in P_{(1,s),(1,s)}(z_2,w)$ implies \begin{align}\tag{4.5}\label{5.1gongshi5}w_2\in P_{(1,l),(3,1)}(x_2,w).\end{align}


By \eqref{5.1gongshi3} and \eqref{5.1gongshi5}, we have $3=\partial_{\Gamma}(w,z_1)\leq1+\partial_{\Gamma}(w_2,z_1),$ which implies $(z_1,w_2)\in\Gamma_{1,t}$ for some $t\geq2$.  Since $w_2=x_0(\{a_2,a_3\},\{b_1,b_2\})$, one gets $(x_0,w_2)\in\Sigma_2$ and $C(x_0,w_2)=\{z_1,w,w',x_3\}$ with $w'=x_0(a_3,b_2)$. By \eqref{5.1gongshi1}, \eqref{5.1gongshi3} and \eqref{5.1gongshi5}, we get \begin{align}\tag{4.6}\label{5.1gongshi4}z_1\in P_{(1,r),(1,t)}(x_0,w_2)~\mbox{and}~w\in P_{(1,r),(1,3)}(x_0,w_2).\end{align} 
By Lemma \ref{22} \ref{225}, we have $(x_0,w_2)\notin\Gamma_{2,2}$. Hence, $x_3\in P_{(3,1),(1,s')}(x_0,w_2)$ for some $s'\geq2$, and so $w'\in P_{(1,s'),(3,1)}(x_0,w_2)$. If $r\neq t$ or $r\neq3$, from \eqref{5.1gongshi4}, then there exists $u\in P_{(1,t),(1,r)}(x_0,w_2)$ or $P_{(1,3),(1,r)}(x_0,w_2)$ with $u'\notin\{z_1,w,w',x_3\}$, a contradiction. Thus, $r=t=3$. By Lemma \ref{fenlei} \ref{fenlei6}, one gets $(z_1,w)\in\Sigma_2$, contrary to the fact that $(z_1,w)\in\Sigma_1$.

\textbf{Case 2.} $w=x_0(a_2,b_3)$ for some $b_3\in X\setminus(x_0\cup\{b_1,b_2\})$.

By \eqref{5.1gongshi2} and \eqref{5.1gongshi3}, we have $3=\partial_{\Gamma}(w,z_1)\leq\partial_{\Gamma}(w,x_3)+1$, which implies $(x_3,w)\in\Gamma_{1,l}$ for some $l\geq2$. Since $z_2=x_0(a_1,b_2)$, one gets $(z_2,w)\in\Sigma_2$ and $ C(z_2,w)=\{x_0,x_3,w',w_1\}$ with $w'=x_0(a_1,b_3)$ and $w_1=x_0(\{a_1,a_2\},\{b_2,b_3\})$.  By \eqref{5.1gongshi1}, \eqref{5.1gongshi2} and \eqref{5.1gongshi3}, we get $x_0\in P_{(1,r),(1,r)}(z_2,w)$ and $x_3\in P_{(1,r),(1,l)}(z_2,w)$. Lemma \ref{22} \ref{225} implies $(z_2,w)\notin\Gamma_{2,2}$. Since $(x_0,w_1)\in\Sigma_2$, from Lemma \ref{fenlei} \ref{fenlei2} and \ref{fenlei5},
\begin{align}\tag{4.7}\label{5.1gongshi6}
w_1\in P_{(1,s),(1,s)}(z_2,w)~\mbox{with}~s\geq2,~\mbox{and}~(z_2,w',w)~\mbox{is a path in}~ \Gamma.\end{align}

Since $x_1=x_0(a_1,b_1)$, we obtain $(x_1,w)\in\Sigma_2$ and $C(x_1,w)=\{x_0,z_1,w_2,w'\}$ with $w_2=x_0(\{a_1,a_2\},\{b_1,b_3\})$.  By \eqref{5.1gongshi2} and \eqref{5.1gongshi3}, we get $x_0\in P_{(3,1),(1,r)}(x_1,w)$ and $z_1\in P_{(1,r),(1,3)}(x_1,w)$. \eqref{5.1gongshi6} implies $w_2\in P_{(1,r),(3,1)}(x_1,w)$.

Since $x_2=x_0(\{a_1,a_2\},\{b_1,b_2\})$, we get $(x_2,w)\in\Sigma_2$ and $ C(x_2,w)=\{x_3,z_1,w_1,w_2\}$. By \eqref{5.1gongshi1} and \eqref{5.1gongshi3}, one has $z_1\in P_{(1,r),(1,3)}(x_2,w)$. Since $x_3\in P_{(1,3),(1,l)}(x_2,w)$, from Lemma \ref{22} \ref{225}, we have $(x_2,w)\notin\Gamma_{2,2}$.  Since $(w_2,w)\in\Gamma_{3,1}$, we have \begin{align}\tag{4.8}\label{5.1gongshi7}w_2\in P_{(1,s'),(3,1)}(x_2,w)~\mbox{with}~s'\geq2\end{align}

Since $x_3=x_0(a_2,b_2)$ and $w_2=x_0(\{a_1,a_2\},\{b_1,b_3\})$, one gets $(x_3,w_2)\in\Sigma_2$ and $ C(x_3,w_2)=\{z_1,w,x_2,w_1\}$. By \eqref{5.1gongshi7}, one has $w\in P_{(1,l),(1,3)}(x_3,w_2)$ and $x_2\in P_{(3,1),(1,s')}(x_3,w_2)$. Then $z_1$ or $w_1\in P_{(1,s'),(3,1)}(x_3,w_2)$. If $z_1\in P_{(1,s'),(3,1)}(x_3,w_2)$, from \eqref{5.1gongshi3} and \eqref{5.1gongshi7}, then $(z_1,w,w_2)$ is a circuit with $(w_2,z_1)\in\Gamma_{1,3}$, a contradiction; if $w_1\in P_{(1,s'),(3,1)}(x_3,w_2)$, from \eqref{5.1gongshi6} and \eqref{5.1gongshi7}, then $(w,w_2,w_1)$ is a circuit with $(w_2,w_1)\in\Gamma_{1,3}$, a contradiction.
\end{proof}


\begin{lemma}\label{1211i}
If $\max T=3$, then $p^{(1,2)}_{(1,1),\tilde{i}}=0$ for all $\tilde{i}\in\{(1,1),(1,2),(2,1)\}$.
\end{lemma}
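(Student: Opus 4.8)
The plan is to begin by collapsing the three vanishing claims into one geometric statement. Write $C(x,z)=\Sigma_1(x)\cap\Sigma_1(z)$. For $(x,z)\in\Gamma_{1,2}$ and any $y\in C(x,z)$ with $(x,y)\in\Gamma_{1,1}$ we have $y\in\Sigma_1(z)$, so $(z,y)\in A(\Gamma)$ or $(y,z)\in A(\Gamma)$; since $\max T=3$ every arc between $y$ and $z$ is of type $(1,1)$ or $(1,2)$, which forces $\tilde\partial_\Gamma(z,y)\in\{(1,1),(1,2),(2,1)\}$. Conversely, for $\tilde i$ in this set the set $P_{(1,1),\tilde i}(x,z)$ lies inside $\Gamma_{1,1}(x)\cap\Sigma_1(z)=\{y\in C(x,z):(x,y)\in\Gamma_{1,1}\}$, and the three $P$-sets are pairwise disjoint. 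Hence $\sum_{\tilde i}p^{(1,2)}_{(1,1),\tilde i}$ counts exactly the $y\in C(x,z)$ with $(x,y)\in\Gamma_{1,1}$, and it suffices to show this number is $0$; i.e. \emph{if $(x,z)\in\Gamma_{1,2}$ then no $y\in C(x,z)$ has $(x,y)\in\Gamma_{1,1}$}. (If convenient, Lemma \ref{intersection numners}\ref{intersection numners 2} and commutativity reduce the three products to a single one, e.g. $p^{(1,2)}_{(1,1),(1,1)}$.)

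So assume such a $y$ exists. Since $(x,z)\in\Gamma_{1,2}$, fix $w$ with $(z,w),(w,x)\in A(\Gamma)$; then $w\in C(x,z)$, and each of $(z,w)$, $(w,x)$ has type $(1,1)$ or $(1,2)$. I would then pin down the two-way distances on the pairs among $x,y,z,w$: from the $2$-cycle $\{x,y\}$, the arc $x\to z$, and the return path $z\to w\to x$ one reads off several directed paths of length $2$ (for instance $y\to x\to z$ and $w\to x\to y$), and these together with $\max T=3$ and the fact that a $\Sigma$-edge is either a $2$-cycle or a single arc of type exactly $(1,2)$ determine $\tilde\partial_\Gamma$ on $\{y,z\},\{y,w\},\{w,z\}$ once one decides which of the $\Sigma$-edges $\{x,w\}$, $\{z,w\}$ are $2$-cycles. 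The argument then splits according to whether $w=y$ and, when $w\neq y$, according to the positions of $y$ and $w$ in the two cliques $Y_1(x,z)$, $Y_2(x,z)$ provided by Lemma \ref{sigma1}.

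The workhorse of each branch is the same device as in Lemmas \ref{131i} and \ref{maxT=3}: whenever two of the relevant vertices lie in different parts $Y_1(x,z)$, $Y_2(x,z)$ (or an analogous $\Sigma_2$-pair is produced), Lemma \ref{sigma1}\ref{sigma12} gives $\Sigma$-distance $2$, the explicit directed paths of length $2$ show that pair is in $\Gamma_{2,2}$, and then Lemma \ref{22} forces its local structure into one of five rigid shapes; feeding in the types coming from $x$, $z$ and the extra arc $x\to y$, together with $c_2=4$ and Lemma \ref{sigma2}, one reaches either $c_2\geq5$, or a circuit containing an arc whose type is excluded by $\max T=3$, or an inequality of the shape $2\geq q\geq3$. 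When $y$ and $w$ instead lie in the same part, $(y,w)\in\Sigma_1$; here one studies the arc between $y$ and $w$ directly, tracking the overlaps of the neighbourhoods of $x$, $z$, $y$ via Lemma \ref{xyz} (and, if needed, passing to a third common neighbour to manufacture a $\Gamma_{2,2}$-pair), and again hits a contradiction. The case $w=y$ is handled similarly by bringing in one further common neighbour of $x,z$ (or of $x,y$) and repeating the $\Gamma_{2,2}$/Lemma \ref{22} analysis.

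The main obstacle is the bookkeeping rather than any single clean idea: once the choices of which edges are $2$-cycles and of the positions of $y,w$ are combined, several configurations survive the two-way-distance analysis and each must be shown to collapse. I expect the delicate sub-case to be $w\neq y$ with $y,w$ in the same $Y_i(x,z)$, where no $\Gamma_{2,2}$-pair is immediately present and one has to produce one (through a third common neighbour, or through a $\Sigma_2$-pair built from $x$, $z$ and one of $y,w$). Treating the Johnson and folded-Johnson cases uniformly is exactly what the Section 3 property buys us: $c_2=4$ and the $J(4,2)$ local structure on $\Sigma_2$-pairs let Lemmas \ref{sigma1}, \ref{sigma2}, \ref{22}, \ref{fenlei} and Corollary \ref{fenlei*} be applied without distinguishing the two families.
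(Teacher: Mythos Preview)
Your reformulation is correct and clean: showing $\Gamma_{1,1}(x)\cap C(x,z)=\emptyset$ for all $(x,z)\in\Gamma_{1,2}$ is exactly the content of the lemma. (The parenthetical claim that Lemma~\ref{intersection numners}\ref{intersection numners 2} reduces the three vanishing statements to one is not right, however: that identity relates $p^{(1,2)}_{(1,1),\tilde i}$ to $p^{(1,1)}_{(1,2),\tilde i^*}$, not to one another.)

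The paper's proof takes a genuinely different route. Rather than fixing a single $y$ and a return vertex $w$ and analysing the triangle, it proves an auxiliary lemma (Lemma~\ref{tildei}) governing which of $Y_1(x,y),Y_2(x,y)$ the sets $P_{\tilde i,\tilde j}(x,y)$ can occupy when certain of them are nonempty. The three values of~$\tilde i$ are then treated \emph{sequentially}: first $\tilde i=(1,2)$, then $\tilde i=(2,1)$ assuming the first case is finished, then $\tilde i=(1,1)$ assuming both. In each case the auxiliary lemma forces every $P_{\tilde i,\tilde j}(x,y)$ with $\tilde i,\tilde j\in\{(1,1),(1,2),(2,1)\}$ into the \emph{same} $Y_i(x,y)$; since $\max T=3$ this means $C(x,y)\subseteq Y_i(x,y)$, so the opposite clique $Y_j(x,y)$ is empty, contradicting $e\geq2$ and $m-e\geq2$. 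The contradiction is thus global (a clique is empty), not local.

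Your plan has real gaps precisely in the places you flag. The device ``produce a $\Gamma_{2,2}$-pair and invoke Lemma~\ref{22}'' is powerful in Lemmas~\ref{131i} and~\ref{maxT=3} because an arc of type $(1,3)$ immediately rules out many short return paths; that is what collapses Lemma~\ref{22} to a single option and drives those proofs. Here every arc in sight has type $(1,1)$ or $(1,2)$, so most of the five alternatives in Lemma~\ref{22} survive any single $\Gamma_{2,2}$-pair, and you then need another pair, and another, with no obvious termination. In particular, when $y$ and $w$ lie in the same $Y_i(x,z)$, or when $w=y$, the instruction ``bring in a third common neighbour'' is the whole difficulty: $|C(x,z)|=a_1=m-2$ is large, and deciding which neighbour, and why that one yields a contradiction rather than yet another case, is exactly what the paper's auxiliary lemma organises. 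The sequential dependency matters too: the paper's Case~3 (for $\tilde i=(1,1)$) leans heavily on already knowing $p^{(1,2)}_{(1,1),(1,2)}=p^{(1,2)}_{(1,1),(2,1)}=0$, and without that your return vertex $w$ has too many possible types to control. I would either develop your own version of the $Y_i$-containment lemma, or abandon the unified treatment and work through the three~$\tilde i$ in order, feeding each conclusion into the next.
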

\begin{proof}
The proof is rather long, and we shall prove it in Appendix B.
\end{proof}

Now we are ready to prove Proposition \ref{arcpure} \ref{arcpure1}.

\begin{proof}[Proof of Proposition \ref{arcpure} \ref{arcpure1}]
By Lemma \ref{maxT=3}, we get $\max T=3$. Then $T=\{3\}$ or  $T=\{2,3\}$.

Suppose $T=\{2,3\}$. By \eqref{a1c2}, Lemma \ref{1211i} and Lemma \ref{intersection numners} \ref{intersection numners 2}, we get
\begin{align}\tag{4.9}\label{proof in 6.1}
p^{(1,1)}_{\tilde{i},\tilde{j}}=0~\mbox{for}~\tilde{i},\tilde{j}\in\{(1,1),(1,2),(2,1)\}~{\rm with}~(\tilde{i},\tilde{j})\neq((1,1),(1,1)).
\end{align}  Let $(x,y)\in\Gamma_{1,1}$. Since $x\in{X\choose e}$, one has $y=x(a_1,b_1)$ for some $a_1\in x$ and $b_1\in X\setminus x$.
  Pick a vertex $z\in\Sigma_1(x)\setminus\{y\}$. Then $z=x(a,b)$ for some $a\in x$ and $b\in X\setminus x$ with $(a,b)\neq(a_1,b_1)$. If $a=a_1$ or $b=b_1$, by Lemma \ref{sigma1}, then $z\in C(x,y)$, which implies $z\in P_{(1,1),(1,1)}(x,y)$ from \eqref{proof in 6.1}. Now we consider the case that $a\neq a_1$ and  $b\neq b_1$. Let $w=x(a_1,b)$. Then $w\in C(x,y)$, and so $w\in P_{(1,1),(1,1)}(x,y)$ from \eqref{proof in 6.1}. Since $z\in C(x,w)$, from \eqref{proof in 6.1}, we obtain $z\in P_{(1,1),(1,1)}(x,w).$ Hence, $(x,z)\in\Gamma_{1,1}$. Since  $z\in\Sigma_1(x)\setminus\{y\}$ was arbitrary, $\Gamma$ is undirected, a contradiction. Thus, $T=\{3\}$.
\end{proof}

\subsection{Proof of Proposition \ref{arcpure} \ref{arcpure2}}


\begin{lemma}\label{1313-2*}
Suppose $p^{(1,2)}_{(1,3),(1,3)}\neq 0$. Then $(1,2)$ is pure and $p^{(2,2)}_{(1,3),(1,2)}\neq0$. Moreover, if $(w_0,w_1,w_2)$ is a circuit in $\Gamma$ with $(w_0,w_1)\in\Gamma_{1,2}$ and $v\in P_{(1,3),(1,3)}(w_0,w_1)$, then the following hold:
\begin{enumerate}
\item\label{1313-2*2} $(v,w_2)\in\Gamma_{2,2}$;

\item\label{1313-2*3} There exist vertices $v_1,v_2\in C(v,w_2)$ such that $v_1\in P_{(1,3),(1,3)}(w_2,w_0)\cap\Sigma_2(w_1)$ and $v_2\in P_{(1,3),(1,3)}(w_1,w_2)\cap\Sigma_2(w_0)$.
\end{enumerate}
\end{lemma}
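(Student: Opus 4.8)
\emph{Setup and part \ref{1313-2*2}.} The plan is to fix a length-$3$ circuit $(w_0,w_1,w_2)$ with $(w_0,w_1)\in\Gamma_{1,2}$ and a vertex $v\in P_{(1,3),(1,3)}(w_0,w_1)$, available since $p^{(1,2)}_{(1,3),(1,3)}\neq0$, and to read off everything from the local picture on $\{v,w_0,w_1,w_2\}\cup C(v,w_2)$. Since $(w_0,v),(v,w_1)\in\Gamma_{1,3}$ and $(w_1,w_2),(w_2,w_0)\in A(\Gamma)$, all of $v,w_2$ lie in $\Sigma_1(w_0)\cap\Sigma_1(w_1)$. If $(v,w_2)\in\Sigma_1$, then one of $(v,w_2),(w_2,v)$ is an arc; but $v\to w_2\to w_0$ would force $\partial_\Gamma(v,w_0)\le2$ while $\partial_\Gamma(v,w_0)=3$, and $w_1\to w_2\to v$ would force $\partial_\Gamma(w_1,v)\le2$ while $\partial_\Gamma(w_1,v)=3$. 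Hence $(v,w_2)\in\Sigma_2$, so $\partial_\Gamma(v,w_2),\partial_\Gamma(w_2,v)\ge2$; and the paths $v\to w_1\to w_2$, $w_2\to w_0\to v$ show both are $\le2$. This gives \ref{1313-2*2}: $(v,w_2)\in\Gamma_{2,2}$.

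\emph{Purity and $p^{(2,2)}_{(1,3),(1,2)}\neq0$.} Write $(w_1,w_2)\in\Gamma_{1,q_1}$ and $(w_2,w_0)\in\Gamma_{1,q_2}$. Since $\partial_\Gamma(w_0,w_1)=1$, the triangle inequality gives $q_1=\partial_\Gamma(w_2,w_1)\le2$ and $q_2=\partial_\Gamma(w_0,w_2)\le2$. Now $(2,2)\in\tilde\partial(\Gamma)$, so Lemma \ref{22} applies; as $w_1\in P_{(1,3),(1,q_1)}(v,w_2)$ and $w_0\in P_{(3,1),(q_2,1)}(v,w_2)$, we get $p^{(2,2)}_{(1,3),(1,q_1)}\neq0$ and, via \eqref{3.5gongshi}, $p^{(2,2)}_{(1,3),(1,q_2)}\neq0$. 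A direct inspection of the five cases of Lemma \ref{22} shows $p^{(2,2)}_{(1,3),(1,1)}=0$ in every one of them, so $q_1=q_2=2$ and the chosen circuit is pure; since it was an arbitrary length-$3$ circuit through a $(1,2)$-arc, $(1,2)$ is pure. Moreover $p^{(2,2)}_{(1,3),(1,2)}\neq0$, and a second inspection of Lemma \ref{22} shows this forces its case \ref{223} with $(p,q)=(2,3)$ to hold. In particular, by \eqref{a1c2} and $c_2=4$, the four vertices of $C(v,w_2)$ are distributed one apiece among the types $P_{(1,2),(1,3)},P_{(1,3),(1,2)},P_{(2,1),(3,1)},P_{(3,1),(2,1)}$ relative to $(v,w_2)$.

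\emph{Part \ref{1313-2*3}.} The arcs already recorded identify $w_1$ with the $P_{(1,3),(1,2)}$-vertex and $w_0$ with the $P_{(3,1),(2,1)}$-vertex of $C(v,w_2)$, so I would take $v_1$ to be the $P_{(2,1),(3,1)}$-vertex and $v_2$ the $P_{(1,2),(1,3)}$-vertex. From these types one reads off $(w_2,v_1)\in\Gamma_{1,3}$ and $(v_2,w_2)\in\Gamma_{1,3}$ immediately, so it remains only to prove $(v_1,w_0)\in\Gamma_{1,3}$ and $(w_1,v_2)\in\Gamma_{1,3}$: once these hold, $v_1\in\Sigma_1(w_0)$ and $v_2\in\Sigma_1(w_1)$, whence by the $\Sigma_2$-pairing of $C(v,w_2)$ from Lemma \ref{sigma2} — in which $w_0$ and $w_1$, being $\Sigma$-adjacent, lie in different pairs — $v_1$ must be the $\Sigma_2$-partner of $w_1$ and $v_2$ that of $w_0$, giving $v_1\in\Sigma_2(w_1)$ and $v_2\in\Sigma_2(w_0)$ and finishing the proof. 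To pin down the two outstanding arc types I would use the coordinate description of $\Sigma$ (Lemmas \ref{sigma1}, \ref{iwuguan}, \ref{xyz}) and the orientations already fixed on $\{v,w_0,w_1,w_2,v_1,v_2\}$, together with purity of $(1,2)$ and the triangle inequality in $\Gamma$, eliminating the alternative orientations of $(v_1,w_0)$ and $(w_1,v_2)$ — where necessary by invoking Lemma \ref{22} (and possibly Lemma \ref{fenlei}) at auxiliary $\Gamma_{2,2}$-pairs among these six vertices.

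\emph{Main obstacle.} Parts \ref{1313-2*2} and the purity statement are short, the leverage coming entirely from the vanishing of $p^{(2,2)}_{(1,3),(1,1)}$ in Lemma \ref{22}. The real work lies in Part \ref{1313-2*3}: since an orientation of a Johnson graph carries information invisible in the graph structure, ruling out the ``wrong'' orientations of $(v_1,w_0)$ and $(w_1,v_2)$ — equivalently, the wrong $\Sigma_2$-pairing of $C(v,w_2)$ — forces one to trace a web of length-$2$ and length-$3$ paths among $v,w_0,w_1,w_2,v_1,v_2$ and to reuse Lemma \ref{22} at several $\Gamma_{2,2}$-pairs, which is where I expect the length of the argument to reside.
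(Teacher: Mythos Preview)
Your treatment of \ref{1313-2*2}, of purity of $(1,2)$, and of $p^{(2,2)}_{(1,3),(1,2)}\neq0$ is correct and essentially identical to the paper's. You also correctly land in case \ref{223} of Lemma \ref{22} with $(p,q)=(2,3)$ and pick out the candidates $v_1\in P_{(2,1),(3,1)}(v,w_2)$ and $v_2\in P_{(1,2),(1,3)}(v,w_2)$.

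The gap is in part \ref{1313-2*3}. You declare that the ``real work'' is to establish $(v_1,w_0)\in\Gamma_{1,3}$ and $(w_1,v_2)\in\Gamma_{1,3}$, offer only a vague plan (coordinates, orientation casework, possibly Lemma \ref{fenlei}), and predict a lengthy argument. In fact this step is immediate, and the tool is the one you just used: rerun the circuit argument of part \ref{1313-2*2} on the $4$-circuit $(w_0,v,v_2,w_2)$, two of whose arcs, $(w_0,v)$ and $(v_2,w_2)$, are of type $(1,3)$. Exactly as before this forces $(w_0,v_2)\in\Gamma_{2,2}$, hence $(w_0,v_2)\in\Sigma_2$; Lemma \ref{sigma2} then fixes the $\Sigma_2$-pairing of $C(v,w_2)$, giving $(w_1,v_1)\in\Sigma_2$ as well, with $C(w_0,v_2)=\{v,w_2,w_1,v_1\}$. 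Now apply Lemma \ref{22}\,\ref{223} at $(w_0,v_2)$: one already knows $v\in P_{(1,3),(1,2)}(w_0,v_2)$, $w_2\in P_{(2,1),(3,1)}(w_0,v_2)$, and $(w_0,w_1)\in\Gamma_{1,2}$, which forces $w_1\in P_{(1,2),(1,3)}(w_0,v_2)$ and $v_1\in P_{(3,1),(2,1)}(w_0,v_2)$. Reading off the entries yields $(w_1,v_2),(v_1,w_0)\in\Gamma_{1,3}$ at once. So your anticipated ``web of paths'' and coordinate computations are unnecessary; the whole of \ref{1313-2*3} is a second application of the same $\Gamma_{2,2}$ observation at the auxiliary pair $(w_0,v_2)$, which you mention as a possibility but do not actually carry out.
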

\begin{proof}
Since $(v,w_1,w_2,w_0)$ is a circuit with $(v,w_1),(w_0,v)\in\Gamma_{1,3}$, we get $(v,w_2)\in\Gamma_{2,2}$. Then \ref{1313-2*2} holds. To prove that $(1,2)$ is pure, it suffices to show $(w_1,w_2),(w_2,w_0)\in\Gamma_{1,2}$.  By Lemma \ref{22}, one has $p^{(2,2)}_{(1,3),(1,1)}=0$, which implies $w_1\notin P_{(1,3),(1,1)}(v,w_2)$ and $w_0\notin P_{(1,1),(1,3)}(w_2,v)$. It follows that $(w_1,w_2),(w_2,w_0)\in\Gamma_{1,2}$.  Since $w_1\in P_{(1,3),(1,2)}(v,w_2)$, one gets $p^{(2,2)}_{(1,3),(1,2)}\neq0$. Hence, the first statement holds.

Since $w_0\in P_{(3,1),(2,1)}(v,w_2)$ and $w_1\in P_{(1,3),(1,2)}(v,w_2)$,  there exist vertices
\begin{align}\label{1313-2*-1}
v_1\in P_{(2,1),(3,1)}(v,w_2) ~\mbox{and}~ v_2\in P_{(1,2),(1,3)}(v,w_2).\tag{4.10}
 \end{align}
Note that $(v,w_2)\in\Sigma_2$. Since $c_2=4$, we get $ C(v,w_2)=\{w_0,w_1,v_1,v_2\}$. Since $(w_0,v,v_2,w_2)$ is a circuit with $(w_0,v),(v_2,w_2)\in\Gamma_{1,3}$, one gets $(w_0,v_2)\in\Gamma_{2,2}$, and so $(w_0,v_2)\in\Sigma_2$. Lemma \ref{sigma2} implies $ C(w_0,v_2)=\{w_1,w_2,v,v_1\}$ and $(w_1,v_1)\in\Sigma_2.$

Since $v\in P_{(1,3),(1,2)}(w_0,v_2)$, $w_2\in P_{(2,1),(3,1)}(w_0,v_2)$ and $(w_0,w_1)\in\Gamma_{1,2}$,  one has
\begin{align}\label{1313-2*-2}
w_1\in P_{(1,2),(1,3)}(w_0,v_2)~\mbox{and}~v_1\in P_{(3,1),(2,1)}(w_0,v_2).\tag{4.11}
\end{align}
\eqref{1313-2*-1} and \eqref{1313-2*-2} imply $v_1\in P_{(1,3),(1,3)}(w_2,w_0)$ and  $v_2\in P_{(1,3),(1,3)}(w_1,w_2)$. Thus, \ref{1313-2*3} holds.
\end{proof}

In the remainder of this subsection, we may assume that $(x_0,x_1,x_2)$  is a circuit  consisting of arcs of type $(1,2)$ and $y\in P_{(1,3),(1,3)}(x_0,x_1)$. By Lemma \ref{1313-2*} \ref{1313-2*2}, we have $(y,x_2)\in\Gamma_{2,2}$, and so $(y,x_2)\in\Sigma_2$.

\begin{lemma}\label{1313-2}
If $p^{(1,2)}_{(1,3),(1,3)}\neq0$, then $p^{(1,2)}_{(1,3),(1,3)}=p^{(1,2)}_{(2,1),(2,1)}=1$.
\end{lemma}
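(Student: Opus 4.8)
The plan is to set up the standard local configuration and extract contradictions from double‑counting in the induced $J(4,2)$'s. We are given that $(x_0,x_1,x_2)$ is a pure circuit of arcs of type $(1,2)$ and $y\in P_{(1,3),(1,3)}(x_0,x_1)$, so by Lemma \ref{1313-2*} we already know $(y,x_2)\in\Sigma_2$ and, via \ref{1313-2*3}, there are $v_1\in P_{(1,3),(1,3)}(x_2,x_0)\cap\Sigma_2(x_1)$ and $v_2\in P_{(1,3),(1,3)}(x_1,x_2)\cap\Sigma_2(x_0)$ with $C(y,x_2)=\{x_0,x_1,v_1,v_2\}$. The goal is to show the multiplicity of pure $(1,3)$-arc pairs over a $(1,2)$-arc is exactly $1$; the two claimed equalities are linked by commutativity and Lemma \ref{intersection numners}\ref{intersection numners 2}, since $p^{(1,2)}_{(1,3),(1,3)}=p^{(1,2)}_{(2,1),(2,1)}$ follows from $k_{1,3}=k_{3,1}$ together with the symmetry $p^{\tilde h}_{\tilde i,\tilde j}=p^{\tilde h^*}_{\tilde i^*,\tilde j^*}$. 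So it suffices to prove $p^{(1,2)}_{(1,3),(1,3)}\le 1$.

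First I would argue by contradiction: suppose there is a second vertex $y'\ne y$ with $y'\in P_{(1,3),(1,3)}(x_0,x_1)$. Then $y,y'\in C(x_0,x_1)$, and in the $J(4,2)$ on $\{x_0,x_1\}\cup C(x_0,x_1)$ the vertex $y'$ is either adjacent to $y$ (lying in the same $Y_i(x_0,x_1)$) or at distance $2$ from $y$. I would handle the distance‑$2$ case first: then $(y,y')\in\Sigma_2$, and since $(x_0,y,x_1)$ and $(x_0,y',x_1)$ give a circuit $(y,x_1,y',x_0)$ through $y,y'$ with all four arcs of type $(1,3)$, one gets $(y,y')\in\Gamma_{2,2}$ with $p^{(2,2)}_{(1,3),(1,3)}\ne 0$. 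But Lemma \ref{22} shows $p^{(2,2)}_{(1,3),(1,3)}=0$ (it appears in none of the five cases \ref{221}--\ref{225}), a contradiction. In the adjacent case $(y,y')\in\Sigma_1$; here $\partial_\Gamma(x_0,y')\le 1+\partial_\Gamma(y,y')$ forces $(y,y')$ or $(y',y)$ to have out‑distance $\ge 2$, and the circuit $(y,y',x_1,x_0)$ (or its reverse) through the $(1,3)$-arcs constrains that two‑way distance; I would push this through the constraint $c_2=4$ in the $J(4,2)$ on $\{x_0,y\}\cup C(x_0,y)$ exactly as in Lemma \ref{131i} and Lemma \ref{maxT=3}, using Lemma \ref{sigma1} to place $y'$ in a $Y_i(x_0,x_1)$ relative to $y$ and then re‑deriving a forbidden $p^{(2,2)}_{(1,3),(1,3)}\ne 0$ or an overflow $c_2\ge 5$.

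The main obstacle I expect is the adjacent case: unlike the clean $\Sigma_2$ case, it requires chasing the $(1,3)$-arc directions through several overlapping induced $J(4,2)$'s (those on $\{x_0,x_1\}$, $\{x_0,y\}$, $\{y,x_2\}$ and perhaps $\{x_1,y\}$) and repeatedly invoking Lemma \ref{22} to rule out $(2,2)$-configurations, much as in the Case 1/Case 2 analysis of Lemma \ref{maxT=3}. The bookkeeping of which pairs among $\{x_0,x_1,x_2,y,y',v_1,v_2\}$ lie in $\Sigma_1$ versus $\Sigma_2$, and matching the forced arc types against the allowed patterns in Lemma \ref{fenlei} and Corollary \ref{fenlei*}, is the delicate part. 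Once the contradiction is obtained in both cases, $p^{(1,2)}_{(1,3),(1,3)}=1$, and then $p^{(1,2)}_{(2,1),(2,1)}=p^{(1,2)}_{(1,3),(1,3)}=1$ follows from Lemma \ref{intersection numners}\ref{intersection numners 2} applied to $R_{1,3}$ and its transpose $R_{3,1}$ together with commutativity, completing the proof.
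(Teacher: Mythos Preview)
Your proposal has two genuine gaps.

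\textbf{First, the reduction to a single equality is wrong.} The identity $p^{(1,2)}_{(1,3),(1,3)}=p^{(1,2)}_{(2,1),(2,1)}$ does \emph{not} follow from $k_{1,3}=k_{3,1}$ and the symmetry $p^{\tilde h}_{\tilde i,\tilde j}=p^{\tilde h^*}_{\tilde j^*,\tilde i^*}$. That symmetry gives $p^{(1,2)}_{(1,3),(1,3)}=p^{(2,1)}_{(3,1),(3,1)}$ and $p^{(1,2)}_{(2,1),(2,1)}=p^{(2,1)}_{(1,2),(1,2)}$, which are unrelated; Lemma~\ref{intersection numners}\ref{intersection numners 2} does not link them either. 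The paper accordingly proves the two equalities \emph{separately}, in two independent steps, each by contradiction using explicit Johnson-graph coordinates $x_0(a_i,b_j)$ and an auxiliary vertex $w$ constructed from a hypothetical second witness.

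\textbf{Second, your $\Sigma_2$ case argument for $p^{(1,2)}_{(1,3),(1,3)}\le 1$ is incorrect.} If $y,y'\in P_{(1,3),(1,3)}(x_0,x_1)$ with $(y,y')\in\Sigma_2$, then $(x_1,y')\in\Gamma_{3,1}$ and $(y',x_0)\in\Gamma_{3,1}$, so neither is an arc and there is no circuit $(y,x_1,y',x_0)$. What you actually get is $x_0\in P_{(3,1),(1,3)}(y,y')$ and $x_1\in P_{(1,3),(3,1)}(y,y')$; even if you could show $(y,y')\in\Gamma_{2,2}$ (which is not immediate), this yields $p^{(2,2)}_{(1,3),(3,1)}\ne 0$, not $p^{(2,2)}_{(1,3),(1,3)}\ne 0$, and Lemma~\ref{22}\ref{224}--\ref{225} allow $p^{(2,2)}_{(1,r),(r,1)}\ne 0$ with $r=3$. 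So no contradiction arises this way. The paper's route instead feeds the second witness $z$ into Lemma~\ref{1313-2*}\ref{1313-2*3} to produce a companion $z_1\in\Sigma_2(x_1)$, pins down the coordinates of $z,z_1$ via Lemma~\ref{sigma1}, builds an explicit $w\in\Sigma_2(x_0)$ with $C(x_0,w)=\{y,z,y_1,z_1\}\subseteq(\Gamma_{1,3}\cup\Gamma_{3,1})(x_0)$, and only then forces a forbidden $p^{(2,2)}_{(1,3),(1,3)}\ne 0$ against Lemma~\ref{22}\ref{225} combined with Lemma~\ref{1313-2*}. You will need that coordinate machinery, for both steps.
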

\begin{proof}
Since $(y,x_2)\in\Sigma_2$, from Lemma \ref{1313-2*} \ref{1313-2*3},  there exist $y_1,y_2\in C(y,x_2)$ such that $y_1\in P_{(1,3),(1,3)}(x_2,x_0)\cap\Sigma_2(x_1)$ and  $y_2\in P_{(1,3),(1,3)}(x_1,x_2)\cap\Sigma_2(x_0)$. Since $c_2=4$, we get $C(y,x_2)=\{x_0,x_1,y_1,y_2\}$. Since $(x_0,y_2)\in\Sigma_2$, from Lemma \ref{sigma2}, one has $C(x_0,y_2)=\{x_1,x_2,y,y_1\}$.

Since $x_0\in{X\choose e}$,   $(x_0,y_2)\in\Sigma_2$ and $x_1\in C(x_0,y_2)$, there exist $a_1,a_2\in x_0$ with $a_1\neq a_2$ and $b_1,b_2\in X\setminus x_0$ with $b_1\neq b_2$ such that $x_1=x_0(a_1,b_1)$ and $y_2=x_0(\{a_1,a_2\},\{b_1,b_2\})$. Since $y_1\in C(x_0,y_2)\cap\Sigma_2(x_1)$, one gets $y_1=x_0(a_2,b_2)$. Then  $x_2=x_0(a_i,b_j)$ and $y=x_0(a_j,b_i)$ for $\{i,j\}=\{1,2\}$.

\begin{step}
{\rm Show that $p^{(1,2)}_{(1,3),(1,3)}=1$.}
\end{step}

Suppose for the contrary that $p^{(1,2)}_{(1,3),(1,3)}>1$. Let $z\in P_{(1,3),(1,3)}(x_0,x_1)$ with $z\neq y$. By Lemma \ref{1313-2*} \ref{1313-2*2}, we obtain $(z,x_2)\in\Sigma_2$. Lemma \ref{1313-2*} \ref{1313-2*3}  implies that there exists $z_1\in C(z,x_2)$ such that $z_1\in P_{(1,3),(1,3)}(x_2,x_0)\cap\Sigma_2(x_1)$. Since $z\in\Sigma_1(x_0)\cap\Sigma_1(x_1)\cap\Sigma_2(x_2)$ with $z\neq y$ and $z_1\in\Sigma_1(x_0)\cap\Sigma_1(x_2)\cap\Sigma_1(z)$, from Lemma \ref{sigma1}, we get
\begin{align}
&(i,j)=(1,2),~z=x_0(a_3,b_1)~\mbox{and}~z_1=x_0(a_3,b_2),~a_3\in x_0\setminus\{a_1,a_2\},~\mbox{or}\tag{4.12}\label{1313-2gongshi1}\\
&(i,j)=(2,1),~z=x_0(a_1,b_3)~\mbox{and}~z_1=x_0(a_2,b_3),~b_3\in X\setminus(x_0\cup\{b_1,b_2\}).\tag{4.13}\label{1313-2gongshi2}
\end{align}


Let \begin{align}\tag{4.14}\label{1313-2gongshi3}
w=
\begin{cases}
x_0(\{a_2,a_3\},\{b_1,b_2\}),~(i,j)=(1,2),\\
x_0(\{a_1,a_2\},\{b_2,b_3\}),~(i,j)=(2,1).
\end{cases}
\end{align}
By \eqref{1313-2gongshi1} and \eqref{1313-2gongshi2}, we have $(x_0,w)\in\Sigma_2$ and $ C(x_0,w)=\{y,z,y_1,z_1\}$. Note that $C(x_0,w)\subseteq(\Gamma_{1,3}\cup\Gamma_{3,1})(x_0)$. If there exists $u\in C(x_0,w)$ such that $u\in P_{(1,3),\tilde{i}}(x_0,w)\cup P_{(3,1),\tilde{i}}(x_0,w)$ with $\tilde{i}\notin\{(1,3),(3,1)\}$, then there exists $v\in C(x_0,w)$ such that $v\in P_{\tilde{i},(1,3)}(x_0,w)\cup P_{\tilde{i},(3,1)}(x_0,w)$, a contradiction. Thus, $C(x_0,w)\subseteq(\Gamma_{1,3}\cup\Gamma_{3,1})(w)$. If $(x_0,y,w)$ or $(x_0,z,w)$ is a pure path, from Lemma \ref{fenlei}, then $(x_0,w)\in\Gamma_{2,2}$ since $C(x_0,w)\setminus\{y,z\}\subseteq\Gamma_{3,1}(x_0)$, which implies $p^{(2,2)}_{(1,3),(1,3)}\neq0$, contrary to Lemma \ref{22} \ref{225} and Lemma \ref{1313-2*}. Then $y,z\in P_{(1,3),(3,1)}(x_0,w)$, and so $y_1,z_1\in P_{(3,1),(1,3)}(x_0,w)$.

Since $y_2=x_0(\{a_1,a_2),(b_1,b_2)\}$, from \eqref{1313-2gongshi3}, one gets $(y_2,w)\in\Sigma_1$. If $(y_2,w)\in A(\Gamma)$, then $(x_1,y_2,w,z)$ is a circuit with $(x_1,y_2),(w,z),(z,x_1)\in\Gamma_{1,3}$, which implies  $(w,x_1)\in\Gamma_{2,2}$, and so $p^{(2,2)}_{(1,3),(1,3)}\neq0$; if $(w,y_2)\in A(\Gamma)$, then $(y_2,x_2,z_1,w)$ is a circuit with $(y_2,x_2),(x_2,z_1),(z_1,w)\in\Gamma_{1,3}$, which implies  $(y_2,z_1)\in\Gamma_{2,2}$, and so $p^{(2,2)}_{(1,3),(1,3)}\neq0$, both contrary to  Lemma \ref{22} \ref{225} and Lemma \ref{1313-2*}. 

\begin{step}
{\rm  Show that $p^{(1,2)}_{(2,1),(2,1)}=1$.}
\end{step}

Suppose for the contrary that $p^{(1,2)}_{(2,1),(2,1)}>1$. Let $z\in P_{(2,1),(2,1)}(x_2,x_0)$ with $z\neq x_1$.   Since $(x_2,x_0,z)$ is a circuit in $\Gamma$ and $y_1\in P_{(1,3),(1,3)}(x_2,x_0)$, from Lemma \ref{1313-2*} \ref{1313-2*2}, we have $(y_1,z)\in\Sigma_2$. By Lemma \ref{1313-2*} \ref{1313-2*3},  there exist $z_1,z_2\in C(y_1,z)$ such that $z_1\in P_{(1,3),(1,3)}(x_0,z)\cap\Sigma_2(x_2)$ and $z_2\in P_{(1,3),(1,3)}(z,x_2)\cap\Sigma_2(x_0)$. Note that $x_2=x_0(a_i,b_j)$ and $y_1=x_0(a_2,b_2)$. Since $z\in\Sigma_1(x_0)\cap\Sigma_1(x_2)\cap\Sigma_2(y_1)$  with $z\neq x_1$, from Lemma \ref{sigma1}, we have \begin{align}
&(i,j)=(1,2),~z=x_0(a_1,b_3),~b_3\in X\setminus(x_0\cup\{b_1,b_2\}),~\mbox{or}\tag{4.15}\label{1313-2gongshi4}\\
&(i,j)=(2,1),~z=x_0(a_3,b_1),~a_3\in x_0\setminus\{a_1,a_2\}.\tag{4.16}\label{1313-2gongshi5}
 \end{align}
Since $z_1,z_2\in C(y_1,z)$ with $z_1\in\Sigma_2(x_2)$ and $z_2\in\Sigma_2(x_0)$, we get
\begin{align}
&(i,j)=(1,2),~z_1=x_0(a_2,b_3)~\mbox{and}~z_2=x_0(\{a_1,a_2\},\{b_2,b_3\}),~\mbox{or}\nonumber\\
&(i,j)=(2,1),~z_1=x_0(a_3,b_2)~\mbox{and}~z_2=x_0(\{a_2,a_3\},\{b_1,b_2\}).\nonumber
\end{align}


Since $y=x_0(a_j,b_i)$, from \eqref{1313-2gongshi4} and \eqref{1313-2gongshi5}, one gets $(y,z)\in\Sigma_2$ and $C(y,z)=\{x_0,x_1,z_1,w\}$, where  $$w=
\begin{cases}
x_0(\{a_1,a_2\},\{b_1,b_3\}),~(i,j)=(1,2),\\
x_0(\{a_1,a_3\},\{b_1,b_2\}),~(i,j)=(2,1).
\end{cases}$$ The fact $x_0\in P_{(3,1),(1,2)}(y,z)$ implies that there exists $u\in\{x_1,z_1,w\}$ such that $u\in P_{(1,2),(3,1)}(y,z)$. Since $(y,x_1)\in\Gamma_{1,3}$ and $(z_1,z)\in\Gamma_{1,3}$, we have $u=w$.

Since $y_2=x_0(\{a_1,a_2\},\{b_1,b_2\})$, by \eqref{1313-2gongshi4} and \eqref{1313-2gongshi5}, we get $(y_2,z)\in\Sigma_2$ and $C(y_2,z)=\{x_1,x_2,z_2,w\}$. Since  $x_2\in P_{(1,3),(2,1)}(y_2,z)$, there exists  $v\in\{x_1,z_2,w\}$ such that $v\in P_{(2,1),(1,3)}(y_2,z)$, contrary to the fact that $(y_2,x_1),(w,z),(z_2,z)\in\Gamma_{3,1}$.
\end{proof}

\begin{lemma}\label{1313}
Suppose that  $p^{(1,2)}_{(1,3),(1,3)}\neq0$. If $(s,t)\neq(3,3)$, then $p^{(1,2)}_{(1,s),(1,t)}=0$.
\end{lemma}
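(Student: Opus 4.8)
The plan is to argue by contradiction. Suppose $w\in P_{(1,s),(1,t)}(x_0,x_1)$ with $(s,t)\neq(3,3)$. Then $(x_0,w)\in\Gamma_{1,s}$ and $(w,x_1)\in\Gamma_{1,t}$, so $w\in C(x_0,x_1)$; moreover $w\neq x_2$ because $(x_0,x_2)\in\Gamma_{2,1}$, so $(w,x_2)\in\Sigma_1\cup\Sigma_2$ by Lemma \ref{sigma1}. Before splitting into cases I would record the facts already at hand. By Lemma \ref{1313-2*}, the label $(1,2)$ is pure and $p^{(2,2)}_{(1,3),(1,2)}\neq0$; combining this with \eqref{3.5gongshi}, \eqref{a1c2} and $c_2=4$ shows that the only nonzero numbers $p^{(2,2)}_{\tilde{i},\tilde{j}}$ with $1\in\tilde{i}$ and $1\in\tilde{j}$ are $p^{(2,2)}_{(1,2),(1,3)}=p^{(2,2)}_{(1,3),(1,2)}=p^{(2,2)}_{(2,1),(3,1)}=p^{(2,2)}_{(3,1),(2,1)}=1$. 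By Lemma \ref{1313-2}, $p^{(1,2)}_{(2,1),(2,1)}=1$. Finally $(w,x_1,x_2)$ and $(x_2,x_0,w)$ are directed paths of length $2$ and $(x_1,x_2,x_0,w)$ is a directed path of length $3$, so $\partial_\Gamma(w,x_2)\leq2$, $\partial_\Gamma(x_2,w)\leq2$ and $t\leq3$.

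Suppose first that $(w,x_2)\in\Sigma_2$. Combined with $\partial_\Gamma(w,x_2)\leq2$ and $\partial_\Gamma(x_2,w)\leq2$ this gives $(w,x_2)\in\Gamma_{2,2}$. Since $(x_0,w)\in\Gamma_{1,s}$ and $(x_2,x_0)\in\Gamma_{1,2}$ we have $x_0\in P_{(s,1),(2,1)}(w,x_2)$, and comparing the pair $((s,1),(2,1))$ with the list above forces $(s,1)=(3,1)$, i.e. $s=3$. If $t=2$ then $x_1\in P_{(1,2),(1,2)}(w,x_2)$, while if $t=1$ then $x_1\in P_{(1,1),(1,2)}(w,x_2)$; in either case the relevant $p^{(2,2)}$ would be nonzero, contradicting the list. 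Hence $t=3$, so $w\in P_{(1,3),(1,3)}(x_0,x_1)$, contradicting $(s,t)\neq(3,3)$.

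Suppose instead $(w,x_2)\in\Sigma_1$, so $\partial_\Gamma(w,x_2)=1$ or $\partial_\Gamma(x_2,w)=1$. If $(w,x_2)\in A(\Gamma)$, then $(w,x_2,x_0)$ is a circuit of length $3$ containing the arc $(x_2,x_0)$ of type $(1,2)$, hence pure by Lemma \ref{1313-2*}; therefore $(x_0,w)\in\Gamma_{1,2}$ and $(w,x_2)\in\Gamma_{1,2}$, and then $w$ and $x_1$ both lie in $P_{(2,1),(2,1)}(x_2,x_0)$, which is a singleton because $p^{(1,2)}_{(2,1),(2,1)}=1$. Thus $w=x_1$, contradicting $(w,x_1)\in A(\Gamma)$. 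If instead $(x_2,w)\in A(\Gamma)$, then the symmetric argument applied to the circuit $(w,x_1,x_2)$, the $(1,2)$-arc $(x_1,x_2)$, and the singleton $P_{(2,1),(2,1)}(x_1,x_2)$ yields $w=x_0$, again contradicting an arc relation.

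I expect the one slightly delicate point to be the preliminary bookkeeping: extracting from Lemma \ref{1313-2*} (through \eqref{3.5gongshi} and \eqref{a1c2}) the exact list of nonzero $p^{(2,2)}_{\tilde{i},\tilde{j}}$, and checking that the two directed length-$2$ paths really confine $\tilde{\partial}_\Gamma(w,x_2)$ to $(2,2)$. Once that is settled, each remaining step is a routine combination of the triangle inequality in $\Gamma$, the purity of $(1,2)$, and matching indices against that list; no auxiliary construction is needed.
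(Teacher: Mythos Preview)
Your proof is correct and follows essentially the same route as the paper's: both arguments pin down the $\Sigma_2$ case via Lemma~\ref{1313-2*} (which forces Lemma~\ref{22}~\ref{223} with $(p,q)=(2,3)$) and dispose of the $\Sigma_1$ case using purity of $(1,2)$ together with $p^{(1,2)}_{(2,1),(2,1)}=1$ from Lemma~\ref{1313-2}. The only cosmetic difference is that the paper first reduces to $s\in\{1,2\}$ by commutativity and then excludes $\Gamma_{2,2}$, whereas you handle the $\Gamma_{2,2}$ case head-on and read off $s=t=3$ from the intersection-number list; the two organizations are equivalent.
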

\begin{proof}
Assume the contrary, namely, $p^{(1,2)}_{(1,s),(1,t)}\neq0$. Let $z\in P_{(1,s),(1,t)}(x_0,x_1)$. Lemma \ref{1313-2*} implies that $(1,2)$ is pure. Since $(x_0,z,x_1,x_2)$ is a circuit in $\Gamma$,  we may assume $s\in\{1,2\}$ and $t\in\{2,3\}$. Since $x_0\in P_{(1,2),(1,s)}(x_2,z)$, from Lemmas \ref{22} and  \ref{1313-2*}, one has $(x_2,z)\notin\Gamma_{2,2}$. Since $(z,x_1,x_2,x_0)$ is a circuit in $\Gamma$, we get $(z,x_2)\in\Sigma_1$,  and so $(z,x_2)\in A(\Gamma)$ or $(x_2,z)\in A(\Gamma)$.

Note that $(1,2)$ is pure. Since $(x_0,z,x_2)$ or $(z,x_1,x_2)$ is a circuit in $\Gamma$ with $(x_2,x_0),(x_1,x_2)\in\Gamma_{1,2}$,  we get $(x_0,z),(z,x_2)\in\Gamma_{1,2}$ or $(z,x_1),(x_2,z)\in\Gamma_{1,2}$. Then $z,x_1\in P_{(2,1),(2,1)}(x_2,x_0)$ or $z,x_0\in P_{(2,1),(2,1)}(x_1,x_2)$, contrary to Lemma \ref{1313-2}.
\end{proof}

Now we are ready to prove Proposition \ref{arcpure} \ref{arcpure2}.

\begin{proof}[Proof of Proposition \ref{arcpure} \ref{arcpure2}]
Since $p^{(1,2)}_{(1,3),(1,3)}\neq0$, we have $\{3,4\}\subseteq T$. Then it suffices to show that $\max T=4$.

Assume the contrary, namely,  $q$ is the minimal integer such that $q+1\in T$ with $q\geq4$.  According to Lemma \ref{arc}, we get $p^{(1,q-1)}_{(1,q),(1,h)}\neq0$ for some $h\geq1$, which implies $q\in T$. By the minimality of $q$,  we have $q=4$. By Lemma \ref{arc} again, one has $p^{(1,3)}_{(1,4),(1,s)}=p^{(1,3)}_{(1,s),(1,4)}\neq0$ for some $s\geq1$.

Let $z_1\in P_{(1,s),(1,4)}(x_0,y)$. Suppose $(z_1,x_1)\in\Sigma_1$. Since $4=\partial_{\Gamma}(y,z_1)\leq1+\partial_{\Gamma}(x_1,z_1)$, one gets $(z_1,x_1)\in\Gamma_{1,t}$ for $t\geq3$, and so $z_1\in P_{(1,s),(1,t)}(x_0,x_1)$.  It follows from Lemma \ref{1313} that $s=t=3$. Then $z_1,y\in P_{(1,3),(1,3)}(x_0,x_1)$,  contrary to Lemma \ref{1313-2}. Thus, $(z_1,x_1)\in\Sigma_2$. Since $(z_1,y,x_1,x_2,x_0)$ is a circuit with  $(z_1,y)\in\Gamma_{1,4}$, we get $(z_1,x_1)\in\Gamma_{2,3}$.  Since $y\in P_{(1,4),(1,3)}(z_1,x_1)$ and  $x_0\in P_{(s,1),(1,2)}(z_1,x_1)$, from Lemma \ref{fenlei},  one obtains
\begin{align}\label{jiaochashu1}
p^{(2,3)}_{(1,4),(1,3)}=p^{(2,3)}_{(1,3),(1,4)}=p^{(2,3)}_{(s,1),(1,2)}=p^{(2,3)}_{(1,2),(s,1)}=1.\tag{4.17}
\end{align}

Let  $z_2\in P_{(1,s),(1,4)}(y,x_1)$.   If $(x_0,z_2)\in A(\Gamma)$, then $(x_0,z_2,x_1,x_2)$ is a circuit with $(z_2,x_1)\in\Gamma_{1,4}$, a contradiction; if $(z_2,x_0)\in A(\Gamma)$, then $(z_2,x_0,y)$ is a circuit with $(x_0,y)\in\Gamma_{1,3}$, a contradiction. Thus, $(x_0,z_2)\notin\Sigma_1$, and so $(x_0,z_2)\in\Sigma_2$. Since $x_1\in P_{(1,2),(4,1)}(x_0,z_2)$, from Lemma \ref{22}, we have $(x_0,z_2)\notin\Gamma_{2,2}$. The fact that $(x_0,y,z_2,x_1,x_2)$ is a circuit in $\Gamma$ implies $(x_0,z_2)\in\Gamma_{2,3}$. Since $x_1\in P_{(1,2),(4,1)}(x_0,z_2)$ and $c_2=4$, from \eqref{a1c2} and \eqref{jiaochashu1}, one gets $s=4$. Then $(z_2,x_1,x_2,x_0,y)$ is a circuit with $(z_2,x_1),(y,z_2)\in\Gamma_{1,4}$, and so $(z_2,x_2)\in\Gamma_{2,3}$. Since $x_1\in P_{(1,4),(1,2)}(z_2,x_2)$, we obtain $p^{(2,3)}_{(1,4),(1,2)}\neq0$, contrary to \eqref{jiaochashu1}.
\end{proof}

\section{One type of arcs}

Recall that  $\Sigma$ denotes a Johnson graph $J(n,e)$ with $n\geq2e$ and $e\geq2$, or a folded Johnson graph $\bar{J}(2e,e)$ with $e\geq4$. Let $m=n$ if $\Sigma$ is a Johnson graph $J(n,e)$, and $m=2e$ if $\Sigma$ is a folded Johnson graph $\bar{J}(2e,e)$.  Set ${X\choose e}$ to be the vertex set of $\Gamma$, where $X$ is an $m$-set.

The main result of this section is as follows.

\begin{prop}\label{e=2}
If $T=\{3\}$, then $\Sigma$ is the Johnson graph $J(m,2)$.
\end{prop}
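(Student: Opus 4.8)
The plan is to show that the diameter $e$ of the underlying graph must equal $2$; equivalently, to rule out $\Sigma=J(n,e)$ with $e\geq3$ and $\Sigma=\bar{J}(2e,e)$ with $e\geq4$. I use throughout that $T=\{3\}$ forces every arc of $\Gamma$ to have type $(1,2)$, so $\Gamma$ is an orientation of $\Sigma$ of girth $3$ in which every arc lies on a directed triangle; also $c_{2}=4$, and by Lemma~\ref{sigma1} for every edge $\{x,y\}$ of $\Sigma$ the set $C(x,y)=\Sigma_{1}(x)\cap\Sigma_{1}(y)$ is the disjoint union of two cliques $Y_{1}(x,y)$ (of size $e-1$) and $Y_{2}(x,y)$ (of size $m-e-1$), each mutually adjacent to $x$ and to $y$, with $\{x,y\}\cup Y_{1}(x,y)$ and $\{x,y\}\cup Y_{2}(x,y)$ the bottom and top cliques through $\{x,y\}$.

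First I would record the local picture at distance two. For $(x,z)\in\Sigma_{2}$ the induced subgraph on $\{x,z\}\cup C(x,z)$ is $J(4,2)$; write $C(x,z)=\{y_{1},y_{2},y_{3},y_{4}\}$ with $(y_{1},y_{2}),(y_{3},y_{4})\in\Sigma_{2}$. Feeding $T=\{3\}$ into Lemma~\ref{22} (when $(x,z)\in\Gamma_{2,2}$) and into Lemma~\ref{fenlei} together with Corollary~\ref{fenlei*} (when $(x,z)\in\Gamma_{2,i}$ with $i\geq3$), every case that would require an arc of type other than $(1,2)$ is eliminated, leaving the following admissible orientations of $\{x,z\}\cup\{y_{1},y_{2},y_{3},y_{4}\}$: if $(x,z)\in\Gamma_{2,2}$, then either two of the $y_{j}$ realise a $2$-path $x\to y_{j}\to z$ and two realise $z\to y_{j}\to x$, or one realises $x\to y_{j}\to z$, one realises $z\to y_{j}\to x$, one realises $x\to y_{j}\leftarrow z$ and one realises $x\leftarrow y_{j}\to z$; while if $(x,z)\in\Gamma_{2,i}$ with $i\geq3$, then either all four realise $x\to y_{j}\to z$, or two realise $x\to y_{j}\to z$, one realises $x\to y_{j}\leftarrow z$ and one realises $x\leftarrow y_{j}\to z$. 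Reading these off via~\eqref{a1c2} with $c_{2}=4$, and observing that $C(y_{1},y_{2})=\{x,z,y_{3},y_{4}\}$ and $C(y_{3},y_{4})=\{x,z,y_{1},y_{2}\}$ so the list applies to those pairs as well, one gets the basic engine: inside each copy of $J(4,2)$ the orientation is heavily constrained, and no digon may occur.

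Now assume $e\geq3$. Since $m-e\geq e\geq3$ (and $m=2e\geq8$ in the folded case), I may fix $x$, pairwise distinct $a_{1},a_{2},a_{3}\in x$ and distinct $b_{1},b_{2}\in X\setminus x$, and consider the three copies of $J(4,2)$ through $x$ and $x(\alpha,\{b_{1},b_{2}\})$ for $\alpha\in\{\{a_{1},a_{2}\},\{a_{1},a_{3}\},\{a_{2},a_{3}\}\}$; their common-neighbour sets are the three $2\times2$ subarrays of $\bigl(x(a_{i},b_{j})\bigr)_{i\in\{1,2,3\},\,j\in\{1,2\}}$ obtained by deleting one row, so any two of the three copies share a triangle through $x$, and together with the antipodal copies $C(y_{1},y_{2})$, $C(y_{3},y_{4})$, etc., they span a rigid finite configuration on which, by Lemma~\ref{sigma1}, the common neighbours of many pairs are forced. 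Applying the above list to every copy, using the directed-triangle requirement on each arc together with the constancy of the relevant intersection numbers of $\Gamma$ (those of the forms $p^{(1,2)}_{(1,2),(1,2)}$, $p^{(1,2)}_{(2,1),(2,1)}$, $p^{(2,2)}_{\tilde{i},\tilde{j}}$ and $p^{(2,\ell)}_{\tilde{i},\tilde{j}}$ with $\ell\geq3$), I would pin down the orientation and reach a contradiction: some common neighbour forced simultaneously into two incompatible types, equivalently some intersection number of $\Gamma$ forced both to vanish and to be positive; or a $2$-circuit, i.e.\ an arc of type $(1,1)$, which $T=\{3\}$ excludes; or a violation of~\eqref{a1c2}. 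The folded case proceeds by the same mechanism, with $m=2e$. Hence $e\leq2$, i.e.\ $\Sigma=J(m,2)$.

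The hard part is this last propagation: orienting the several overlapping copies of $J(4,2)$ consistently and tracking how the forced types of their common neighbours interact is a long and delicate case analysis, and the real difficulty is to extract a single contradiction valid for all Johnson graphs $J(n,e)$ with $e\geq3$ and all folded Johnson graphs $\bar{J}(2e,e)$ with $e\geq4$, rather than merely for the smallest of these graphs.
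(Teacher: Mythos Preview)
Your plan has a genuine gap: the local orientation analysis on overlapping copies of $J(4,2)$ is not by itself strong enough to separate $e=2$ from $e\geq3$. The orientation constraints you list for $\{x,z\}\cup C(x,z)$ are exactly the same whether $e=2$ or $e=17$; nothing in ``three overlapping $J(4,2)$'s through $x$'' forces a contradiction, because there are many locally consistent tournaments on such configurations. You yourself flag this (``the hard part is this last propagation''), but the difficulty is not merely that the case analysis is long; it is that without additional \emph{global} numerical input there is no contradiction to be found.

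What the paper actually does is extract that global input first. By a counting argument over the partition $\Sigma_1(x)=\bigcup_{z}(Y_1(x,z)\cup Y_2(x,z))$ (Lemma~\ref{e=2s}), it pins down the exact values $p^{(1,2)}_{(1,2),(1,2)}=m/4-1$ and $p^{(1,2)}_{(2,1),(2,1)}=m/4+1$, and more importantly the bisection formula $|\Gamma_{\tilde h}(x)\cap(Y_i(x,y)\cup\{y\})|=(|Y_i(x,y)|+1)/2$ (Lemma~\ref{inter}); a separate global computation via Lemma~\ref{intersection numners}~\ref{intersection numners 1} gives $p^{(3,3)}_{(1,2),(2,1)}=2$ (Lemma~\ref{k33}). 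These numerical facts are what drive the contradiction: they first force $m=2e$ (Step~\ref{eleq6}), then $e=4$ (Steps~\ref{eneq4}--\ref{f=4}), and only then does a finite case analysis on the octahedron-like configuration finish the job. The distance-$3$ constraint from Lemma~\ref{k33} is used repeatedly to kill putative paths of length $3$, something your purely local scheme has no analogue for. Until you can derive comparable quantitative constraints from weak distance-regularity, the propagation you describe will not terminate in a contradiction.
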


In the remainder of this section, we always assume that $T=\{3\}$. Next we divide this section into two subsections.

\subsection{Auxiliary lemmas of Proposition \ref{e=2}}

In this subsection, we prove some lemmas which are used frequently in the proof of Proposition \ref{e=2}.

\begin{lemma}\label{e=2s}
Let $|\Gamma_{\tilde{h}^*}(x)\cap Y_i(x,y)|=s_i$ for $i\in\{1,2\}$ with $(x,y)\in\Gamma_{\tilde{h}}$ and $\tilde{h}\in\{(1,2),(2,1)\}$. Then $$(e-2s_1)[4p^{(1,2)}_{(1,2),(1,2)}-(m+e-4)]=0~\mbox{and}~(m-e-2s_2)[4p^{(1,2)}_{(1,2),(1,2)}-(2m-e-4)]=0.$$
\end{lemma}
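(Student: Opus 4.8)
The plan is to fix an arc $(x,y)\in\Gamma_{\tilde h}$ with $\tilde h=(1,2)$ (the case $\tilde h=(2,1)$ being symmetric under taking $*$), and to count, in two ways, the edges inside the set $C(x,y)=\Sigma_1(x)\cap\Sigma_1(y)=Y_1(x,y)\cup Y_2(x,y)$, or rather the way the relation $\Gamma_{\tilde h}$ interacts with the two parts $Y_1$ and $Y_2$. Recall $T=\{3\}$, so every arc has type $(1,2)$ and $\Gamma_{1,2}^{\top}=\Gamma_{2,1}$; also $\Sigma$ has the $J(4,2)$-property from Section~3, so $c_2=4$ and each $y_1\in C(x,z)$ for $(x,z)\in\Sigma_2$ has a unique $\Sigma_2$-mate in $C(x,z)$ (Lemma~3.7). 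Since $T=\{3\}$, the only nonzero intersection numbers $p^{(1,2)}_{\tilde i,\tilde j}$ with $1\in\{\text{first coord of }\tilde i\}$, $1\in\{\text{first coord of }\tilde j\}$ are $p^{(1,2)}_{(1,2),(1,2)}$, $p^{(1,2)}_{(2,1),(2,1)}$ and possibly $p^{(1,2)}_{(1,2),(2,1)}=p^{(1,2)}_{(2,1),(1,2)}$; by \eqref{a1c2}, $p^{(1,2)}_{(1,2),(1,2)}+p^{(1,2)}_{(2,1),(2,1)}+2p^{(1,2)}_{(1,2),(2,1)}=a_1=e+m-e-2=m-2$ (using $a_1=2(m-2)/2$; more precisely $a_1=1\cdot(m-2)$ for $J(m,2)$-type data, but I will just keep it as $a_1$). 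The point is that these three numbers control how the $a_1$ common neighbours of $x,y$ split.

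First I would set up the local picture. Write $y=x(a_1,b_1)$. By Lemma~2.7, $Y_1(x,y)=\{x(a,b_1):a\in x\setminus\{a_1\}\}$ has size $e-1$ and $Y_2(x,y)=\{x(a_1,b):b\in X\setminus(x\cup\{b_1\})\}$ has size $m-e-1$; moreover $Y_i(x,y)=Y_i(y,x)$ (Lemma~2.8), and any two vertices of the same $Y_i$ are at $\Sigma$-distance $1$ while a $Y_1$-vertex and a $Y_2$-vertex are at distance $2$ (Lemma~2.6). Now for $w\in Y_i(x,y)$, the triangle $\{x,y,w\}$ is a circuit of length $3$ in $\Sigma$, and since every arc has type $(1,2)$, the induced subdigraph on $\{x,y,w\}$ is a directed triangle $x\to y\to w\to x$ or its reverse — in either case $(x,w)\in\Gamma_{1,2}$ or $(w,x)\in\Gamma_{1,2}$. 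Let $s_i:=|\Gamma_{\tilde h^*}(x)\cap Y_i(x,y)|=|\{w\in Y_i(x,y):(x,w)\in\Gamma_{2,1}\}|$, so $|Y_i(x,y)|-s_i$ of the vertices in $Y_i$ satisfy $(x,w)\in\Gamma_{1,2}$. The key structural input is that $Y_1(x,y)\cup\{y\}$ is a clique all of whose vertices lie at $\Sigma$-distance $1$ from $x$, and similarly $Y_2(x,y)\cup\{y\}$; within such a clique, by Lemma~2.9 the roles of $y$ and any $z\in Y_i(x,y)$ are interchangeable, so $s_i$ does not depend on the choice of $y$ inside the clique, only on the clique. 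This rigidity is what will force the two alternatives.

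Then I would count. Fix $i=1$. Consider the $(e-1)$-clique $K:=Y_1(x,y)\cup\{y\}$; every vertex of $K$ is a $\Sigma$-neighbour of $x$, and for distinct $u,v\in K$ we have $x\in C(u,v)$. Running over ordered pairs $(u,v)$ of distinct vertices of $K$ and asking whether $(u,v)\in\Gamma_{1,2}$ or $(v,u)\in\Gamma_{1,2}$, commutativity forces the number of $\Gamma_{1,2}$-arcs out of each vertex of $K$ that land in $K$ to be a constant, say $\ell_1$; then an in-out count in the clique gives $|K|\ell_1 = |K|(|K|-1-\ell_1)$, i.e. $\ell_1=(|K|-1)/2=(e-2)/2$ when $e$ is even, whereas if $e$ is odd this is impossible unless the clique structure degenerates. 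More usefully, I would count pairs $(w, w')$ with $w\in Y_1(x,y)$, $w'\in C(x,w)$, $(x,w')\in\Gamma_{1,2}$, in two ways: directly this is $\sum_w(\text{contribution})$, and via $p^{(1,2)}_{(1,2),(1,2)}$ it equals (number of $w$ with $(x,w)\in\Gamma_{1,2}$) times $p^{(1,2)}_{(1,2),(1,2)}$ restricted appropriately — the cross-terms between $Y_1$ and $Y_2$ vanish because a $Y_1$-vertex and a $Y_2$-vertex are at distance $2$, not $1$, so no triangle. Carrying this out inside $Y_1(x,y)\cup\{y\}$ (a clique of size $e-1$, all $\Sigma$-adjacent to $x$) yields, after using commutativity and Lemma~2.9 to see every vertex of this clique plays the role of $y$, a relation of the shape $s_1\cdot(\text{something}) = (e-1-s_1)\cdot 2p^{(1,2)}_{(1,2),(1,2)} - (\text{clique-internal count})$, and then isolating $p^{(1,2)}_{(1,2),(1,2)}$ gives the factorized identity $(e-2s_1)\bigl[4p^{(1,2)}_{(1,2),(1,2)}-(m+e-4)\bigr]=0$. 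The analogous count in the $(m-e-1)$-clique $Y_2(x,y)\cup\{y\}$ gives $(m-e-2s_2)\bigl[4p^{(1,2)}_{(1,2),(1,2)}-(2m-e-4)\bigr]=0$.

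The main obstacle I anticipate is bookkeeping the distinction between $\Gamma_{1,2}$ and $\Gamma_{2,1}$ arcs cleanly enough that the two-way counts close up; concretely, one must check that no vertex of $Y_1(x,y)$ can be joined to a vertex of $Y_2(x,y)$ by an arc (true since they are at $\Sigma$-distance $2$, but one must also verify $(1,2)\notin\tilde\partial$-obstructions don't sneak in through distance-$2$ pairs — here $T=\{3\}$ together with $c_2=4$ and Lemma~3.9 pins down the $(2,2)$ and $(2,3)$ cases), and that the quantity $s_i$ is genuinely independent of the representative, which is exactly the content of Lemma~2.9. Once those are in hand, the two displayed equations drop out by elementary double counting and rearrangement, with no case analysis beyond the clique parity.
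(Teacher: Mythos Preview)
Your structural setup is right: the cliques $Y_i(x,y)\cup\{y\}$, the split of each $Y_i$ into $s_i$ vertices of $\Gamma_{\tilde h^*}(x)$ and $|Y_i|-s_i$ of $\Gamma_{\tilde h}(x)$, and the fact that $Y_1$-vertices and $Y_2$-vertices are at $\Sigma$-distance $2$ are exactly the ingredients the paper uses. But the double count you sketch does not close: evaluating $\sum_{w\in Y_1(x,y)}|\{w'\in C(x,w):(x,w')\in\Gamma_{1,2}\}|$ ``directly'' and ``via $p^{(1,2)}_{(1,2),(1,2)}$'' are the same computation---the per-$w$ contribution \emph{is} an intersection-number sum $p^{\tilde\ell}_{(1,2),(1,2)}+p^{\tilde\ell}_{(1,2),(2,1)}$ with $\tilde\ell$ the type of $(x,w)$---so you obtain one expression, not two to equate. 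Your alternative idea, that commutativity forces each vertex of $K=Y_1(x,y)\cup\{y\}$ to have the same $\Gamma_{1,2}$-out-degree into $K$, is also unjustified: commutativity of the scheme is the identity $p^{\tilde k}_{\tilde i,\tilde j}=p^{\tilde k}_{\tilde j,\tilde i}$, not a regularity statement about induced subtournaments.

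The missing ingredient is the global valency $k_{\tilde h}=e(m-e)/2$. The paper's key step is that as $w$ ranges over $Y_1(x,y)$ the sets $Y_2(x,w)$ are pairwise disjoint and, together with $C(x,y)\cup\{y\}$, cover all of $\Sigma_1(x)$; this follows from Lemma~\ref{xyz}, which gives $Y_1(x,w)=Y_1(x,y)\cup\{y\}\setminus\{w\}$, hence $Y_2(x,w)\cap C(x,y)=\emptyset$ and $Y_2(x,w)\cap Y_2(x,w')=\emptyset$ for distinct $w,w'\in Y_1(x,y)$. Intersecting with $\Gamma_{\tilde h}(x)$ yields
\[
k_{\tilde h}=\sum_{w\in Y_1(x,y)}|\Gamma_{\tilde h}(x)\cap Y_2(x,w)|+|\Gamma_{\tilde h}(x)\cap\Sigma_1(y)|+1.
\]
Each summand $|\Gamma_{\tilde h}(x)\cap Y_2(x,w)|$ is then $|\Gamma_{\tilde h}(x)\cap\Sigma_1(w)|-|\Gamma_{\tilde h}(x)\cap Y_1(x,w)|$, where the first term is an intersection number depending only on the type of $(x,w)$ and the second is determined by $s_1$ via the clique identity above. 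Substituting and equating to $e(m-e)/2$ gives the first displayed identity; the same argument with $Y_1$ and $Y_2$ swapped gives the second. So the engine is not a double count inside the clique but a disjoint decomposition of the whole neighbourhood $\Gamma_{\tilde h}(x)$ indexed by the clique, closed against the known value of $k_{\tilde h}$.
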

\begin{proof}
Since $|\Gamma_{\tilde{h}^*}(x)\cap Y_i(x,y)|=s_i$, we have
\begin{align}\tag{5.1}\label{gongshisi1}
|\Gamma_{\tilde{h}}(x)\cap Y_i(x,y)|=|Y_i(x,y)|-s_i.\end{align}  Lemma \ref{intersection numners} \ref{intersection numners 2} implies $$p^{(1,2)}_{(1,2),(1,2)}=p^{(1,2)}_{(1,2),(2,1)}=p^{(1,2)}_{(2,1),(1,2)}=p^{(2,1)}_{(1,2),(2,1)}=p^{(2,1)}_{(2,1),(1,2)}=p^{(2,1)}_{(2,1),(2,1)}$$ and $p^{(2,1)}_{(1,2),(1,2)}=p^{(1,2)}_{(2,1),(2,1)}$. By Theorems \ref{johnson} and \ref{folded johnson}, we have $a_1=m-2$. Since $T=\{3\}$, from \eqref{a1c2}, we get $$m-2=p^{(1,2)}_{(1,2),(1,2)}+p^{(1,2)}_{(1,2),(2,1)}+p^{(1,2)}_{(2,1),(1,2)}+p^{(1,2)}_{(2,1),(2,1)}=3p^{(1,2)}_{(1,2),(1,2)}+p^{(1,2)}_{(2,1),(2,1)}.$$

Since $y\in\Sigma_1(x)$, there exist $a\in x$ and $b\in X\setminus x$ such that  $y=x(a,b)$. Let $w\in\Gamma_{\tilde{h}}(x)$.  Then $w=x(a',b')$ for some $a'\in x$ and $b'\in X\setminus x$.   If $a'=a$ or $b'=b$ with $(a,b)\neq(a',b')$, then $w\in\Sigma_1(y)$. If $a'\neq a$ and $b'\neq b$, then $w\in\Sigma_1(z)$ for $z\in\{x(a',b),x(a,b')\}$ with $x(a',b)\in Y_1(x,y)$ and $x(a,b')\in Y_2(x,y)$. If $(a',b')=(a,b)$, then $w\in\Sigma_1(z)$ for $z\in\{x(a'',b),x(a,b'')\}$ with $x(a'',b)\in Y_1(x,y)$ and $x(a,b'')\in Y_2(x,y)$, where $a''\in x\setminus\{a\}$ and $b''\in X\setminus(x\cup\{b\})$. Since $w\in\Gamma_{\tilde{h}}(x)$ was arbitrary, we obtain
\begin{align}\tag{5.2}\label{k12--1}
\Gamma_{\tilde{h}}(x)=\cup_{z\in Y_i(x,y)\cup\{y\}}(\Gamma_{\tilde{h}}(x)\cap\Sigma_1(z))
\end{align}
for $i\in\{1,2\}$.

By Lemma \ref{xyz},  we have $y\in Y_i(x,z)$ for all $z\in Y_i(x,y)$, which implies $Y_j(x,z)\cap Y_i(x,y)=\emptyset$, where $\{i,j\}=\{1,2\}$. By Lemma \ref{xyz} again, we get $Y_j(x,y)\cap(Y_1(x,z)\cup Y_2(x,z))=\emptyset$ and $Y_i(x,z)\setminus Y_i(x,y)=\{y\}$. It follows  that
\begin{align}
(Y_1(x,z)\cup Y_2(x,z))\setminus(Y_1(x,y)\cup Y_2(x,y))&=(Y_1(x,z)\cup Y_2(x,z))\setminus Y_i(x,y)\nonumber\\
&=Y_j(x,z)\cup(Y_i(x,z)\setminus Y_i(x,y))\nonumber\\
&=Y_j(x,z)\cup\{y\}\nonumber
\end{align}
for all $z\in Y_i(x,y)$. By Lemma \ref{sigma1}, we get
\begin{equation}\tag{5.3}\label{k12--0}
\begin{aligned}
&(\Gamma_{\tilde{h}}(x)\cap\Sigma_1(z))\setminus(\Gamma_{\tilde{h}}(x)\cap\Sigma_1(y))\\
=&(\Gamma_{\tilde{h}}(x)\cap(Y_1(x,z)\cup Y_2(x,z)))\setminus(\Gamma_{\tilde{h}}(x)\cap(Y_1(x,y)\cup Y_2(x,y)))\\
=&\Gamma_{\tilde{h}}(x)\cap[(Y_1(x,z)\cup Y_2(x,z))\setminus(Y_1(x,y)\cup Y_2(x,y))]\\
=&\Gamma_{\tilde{h}}(x)\cap(Y_j(x,z)\cup\{y\})
\end{aligned}
\end{equation}
for all $z\in Y_i(x,y)$. Since $y\in\Gamma_{\tilde{h}}(x)$, by \eqref{k12--1} and \eqref{k12--0}, we obtain
\begin{equation}\tag{5.4}\label{k12-1}
\begin{aligned}
\Gamma_{\tilde{h}}(x)&=\cup_{z\in Y_{i}(x,y)}[(\Gamma_{\tilde{h}}(x)\cap\Sigma_1(z))\setminus(\Gamma_{\tilde{h}}(x)\cap\Sigma_1(y))]\cup(\Gamma_{\tilde{h}}(x)\cap\Sigma_1(y))\\
&=\cup_{z\in Y_{i}(x,y)}(\Gamma_{\tilde{h}}(x)\cap Y_j(x,z))\cup(\Gamma_{\tilde{h}}(x)\cap\Sigma_1(y))\cup\{y\}.
\end{aligned}
\end{equation}

By \eqref{k12--0}, we have $(\Gamma_{\tilde{h}}(x)\cap Y_j(x,z))\cap(\Gamma_{\tilde{h}}(x)\cap\Sigma_1(y))=\emptyset$ for all $z\in Y_i(x,y)$.
For all $z\in Y_i(x,y)$, from Lemma \ref{xyz}, we obtain $Y_i(x,z)=Y_i(x,y)\cup\{y\}\setminus\{z\}$, which implies $z'\in Y_i(x,z)$ with $z'\in Y_i(x,y)\setminus\{z\}$, and so $Y_j(x,z)\cap Y_j(x,z')=\emptyset$.  Since $y\in\Gamma_{\tilde{h}}(x)$ and $y\in Y_i(x,z)$ for all $z\in Y_i(x,y)$, from \eqref{k12-1}, we get
\begin{align}\tag{5.5}\label{inter*}
k_{\tilde{h}}=&\sum_{z\in Y_{i}(x,y)}|\Gamma_{\tilde{h}}(x)\cap Y_j(x,z)|+|\Gamma_{\tilde{h}}(x)\cap\Sigma_1(y)|+1.
\end{align}

Since $y\in\Gamma_{\tilde{h}}(x)$ and $Y_i(x,z)=Y_i(x,y)\cup\{y\}\setminus\{z\}$ for all $z\in Y_i(x,y)$, from Lemma \ref{sigma1}, one has
\begin{align}\label{k12-3}
|\Gamma_{\tilde{h}}(x)\cap Y_j(x,z)|&=|(\Gamma_{\tilde{h}}(x)\cap\Sigma_1(z))|-|(\Gamma_{\tilde{h}}(x)\cap Y_i(x,z))|\nonumber\\
&=|(\Gamma_{\tilde{h}}(x)\cap\Sigma_1(z))|-|(\Gamma_{\tilde{h}}(x)\cap(Y_i(x,y)\cup\{y\}\setminus\{z\}))|\tag{5.6}\\
&=|(\Gamma_{\tilde{h}}(x)\cap\Sigma_1(z))|-(|\Gamma_{\tilde{h}}(x)\cap Y_i(x,y)|-|\Gamma_{\tilde{h}}(x)\cap\{z\}|+1).\nonumber
\end{align}

Since $T=\{3\}$, we get $\Gamma_{\tilde{h}}(x)\cap\Sigma_1(z)=P_{\tilde{h},(1,2)}(x,z)\cup P_{\tilde{h},(2,1)}(x,z)$  for all $z\in Y_i(x,y)$.
If $z\in\Gamma_{\tilde{h}}(x)\cap Y_i(x,y),$ by \eqref{gongshisi1} and \eqref{k12-3}, then
\begin{equation}\tag{5.7}
\begin{aligned}\label{gamma12x-1}
|\Gamma_{\tilde{h}}(x)\cap Y_j(x,z)|&=|P_{\tilde{h},(1,2)}(x,z)\cup P_{\tilde{h},(2,1)}(x,z)|-(|Y_i(x,y)|-s_i)\\
&=p^{\tilde{h}}_{\tilde{h},(1,2)}+p^{\tilde{h}}_{\tilde{h},(2,1)}-(|Y_i(x,y)|-s_i)\\
&=2p^{(1,2)}_{(1,2),(1,2)}-(|Y_i(x,y)|-s_i).
\end{aligned}
\end{equation}
Note that $m-2=3p^{(1,2)}_{(1,2),(1,2)}+p^{(1,2)}_{(2,1),(2,1)}$. If $z\in\Gamma_{\tilde{h}^*}(x)\cap Y_i(x,y),$ by \eqref{gongshisi1} and \eqref{k12-3}, then
\begin{equation}\tag{5.8}
\begin{aligned}\label{gamma12x-2}
|\Gamma_{\tilde{h}}(x)\cap Y_j(x,z)|&=|P_{\tilde{h},(1,2)}(x,z)\cup P_{\tilde{h},(2,1)}(x,z)|-(|Y_i(x,y)|-s_i+1)\\
&=p^{\tilde{h}^*}_{\tilde{h},(1,2)}+p^{\tilde{h}^*}_{\tilde{h},(2,1)}-(|Y_i(x,y)|-s_i+1)\\
&=(m-2)-2p^{(1,2)}_{(1,2),(1,2)}-(|Y_i(x,y)|-s_i+1).
\end{aligned}
\end{equation}

By Lemma \ref{sigma1}, we get    $|\Gamma_{\tilde{h}}(x)\cap\Sigma_1(y)|=|P_{\tilde{h},(1,2)}(x,y)\cup P_{\tilde{h},(2,1)}(x,y)|=p^{\tilde{h}}_{\tilde{h},(1,2)}+p^{\tilde{h}}_{\tilde{h},(2,1)}=2p^{(1,2)}_{(1,2),(1,2)}$. Since $T=\{3\}$, one has $z\in\Gamma_{\tilde{h}}(x)$ or $z\in\Gamma_{\tilde{h}^*}(x)$ for all $z\in Y_i(x,y)$. By \eqref{inter*}, \eqref{gamma12x-1} and \eqref{gamma12x-2}, we have
\begin{align}\label{zq}
k_{\tilde{h}}=&|\Gamma_{\tilde{h}}(x)\cap Y_{i}(x,y)|(2p_{(1,2),(1,2)}^{(1,2)}-|Y_i(x,y)|+s_i)\nonumber\\&+|\Gamma_{\tilde{h}^*}(x)\cap Y_{i}(x,y)|(m-3-2p_{(1,2),(1,2)}^{(1,2)}-|Y_i(x,y)|+s_i)+2p_{(1,2),(1,2)}^{(1,2)}+1.\tag{5.9}
\end{align}
By setting $i=1$ and $i=2$ in \eqref{zq} respectively, from Lemma \ref{sigma1} and \eqref{gongshisi1},  we get
\begin{align}
k_{\tilde{h}}=&2(e-2s_1)p^{(1,2)}_{(1,2),(1,2)}+s_1m-e(e-s_1)+2(e-2s_1),\tag{5.10}\label{k12-4}\\
k_{\tilde{h}}=&2(m-e-2s_2)p^{(1,2)}_{(1,2),(1,2)}-(m-2)(m-e-2s_2)+e(m-e-s_2).\tag{5.11}\label{k12-5}
\end{align}


In view of Theorems \ref{johnson} and \ref{folded johnson}, we have $b_0=e(m-e)$, and so $k_{\tilde{h}}=e(m-e)/2$.  Substituting $k_{\tilde{h}}$ into \eqref{k12-4} and \eqref{k12-5}, the desired results hold. 
\end{proof}

\begin{lemma}\label{inter}
$p^{(1,2)}_{(1,2),(1,2)}=m/4-1$ and $p^{(1,2)}_{(2,1),(2,1)}=m/4+1$.  Moreover, if $(x,y)\in\Sigma_1$ and $\tilde{h}\in\{(1,2),(2,1)\}$, then
\begin{align}
|\Gamma_{\tilde{h}}(x)\cap(Y_i(x,y)\cup\{y\})|=(|Y_i(x,y)|+1)/2=\left\{
\begin{array}{ll}
e/2, & \textrm{if}\ i=1,\\
(m-e)/2, & \textrm{if}\ i=2.
\end{array} \right.\nonumber
\end{align}
\end{lemma}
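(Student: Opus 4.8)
The plan is to extract the arithmetic from Lemma~\ref{e=2s} and then feed the resulting value of $p^{(1,2)}_{(1,2),(1,2)}$ back into the counting identities \eqref{k12-4}--\eqref{k12-5}. Write $p:=p^{(1,2)}_{(1,2),(1,2)}$ for brevity. Lemma~\ref{e=2s} gives
\[
(e-2s_1)\bigl[4p-(m+e-4)\bigr]=0,\qquad (m-e-2s_2)\bigl[4p-(2m-e-4)\bigr]=0 .
\]
First I would argue that the two bracketed linear-in-$p$ factors cannot both vanish: $4p=m+e-4$ and $4p=2m-e-4$ would force $m+e=2m-e$, i.e. $m=2e$, which is the folded-Johnson boundary case; but a folded Johnson graph has $e\geq4$, hence diameter $d=[e/2]\geq2$, and then one must check (using $c_d=2d^2$ for even $e$, or simply that $\bar J(2e,e)$ fails the ``induced $J(4,2)$ on $\{x,z\}\cup C(x,z)$'' property forced in Section~3 when $m=2e$) that this situation does not actually arise here, or alternatively derive a direct contradiction with $c_2=4$. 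So exactly one of the two brackets is nonzero, which means $e-2s_1=0$ or $m-e-2s_2=0$; in either case we get a parity/divisibility constraint, and combined with the surviving bracket we can solve for $p$.

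The cleaner route is: whichever bracket is zero, say $4p=m+e-4$ (the other case being symmetric), substitute into one of \eqref{k12-4}, \eqref{k12-5} together with $k_{\tilde h}=e(m-e)/2$ and $m-2=3p+p^{(1,2)}_{(2,1),(2,1)}$. Here I would combine \eqref{k12-4} and \eqref{k12-5}: subtracting them eliminates $k_{\tilde h}$ and yields a relation among $m,e,s_1,s_2,p$; together with $e=2s_1$ (or $m-e=2s_2$) and $m-2=3p+p^{(1,2)}_{(2,1),(2,1)}\geq 3p$ one pins down $p=m/4-1$, hence $p^{(1,2)}_{(2,1),(2,1)}=m-2-3p=m/4+1$. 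Note in particular this forces $4\mid m$. The symmetric bracket $4p=2m-e-4$ must, after the same substitution, give the same value of $p$ (or be excluded), so the two cases converge.

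For the ``moreover'' part, once $p=m/4-1$ is known, go back to \eqref{gongshisi1} and \eqref{k12-4}, \eqref{k12-5}: plugging $p=m/4-1$ into \eqref{k12-4} and using $k_{\tilde h}=e(m-e)/2$ forces $s_1=e/2$ (so $e$ even), hence $|\Gamma_{\tilde h}(x)\cap Y_1(x,y)|=|Y_1(x,y)|-s_1=(e-1)-e/2$... here I would instead phrase it via $\Gamma_{\tilde h}$ and $\Gamma_{\tilde h^*}$ partitioning $Y_1(x,y)$: since $T=\{3\}$ every vertex of $Y_i(x,y)$ lies in $\Gamma_{(1,2)}(x)$ or $\Gamma_{(2,1)}(x)$, and $y\in\Gamma_{\tilde h}(x)$ itself, so $|\Gamma_{\tilde h}(x)\cap(Y_i(x,y)\cup\{y\})| = (|Y_i(x,y)|-s_i)+1$ when $\tilde h$ is, say, $(1,2)$; forcing this to equal $(|Y_i(x,y)|+1)/2$ is exactly $s_i=|Y_i(x,y)|/2$... wait, $|Y_1|=e-1$ is odd, so one should be careful: the right count is that $y$ is counted and $Y_i(x,y)$ splits into $s_i$ in $\Gamma_{\tilde h^*}(x)$ and $|Y_i(x,y)|-s_i$ in $\Gamma_{\tilde h}(x)$, giving $|\Gamma_{\tilde h}(x)\cap(Y_i(x,y)\cup\{y\})|=|Y_i(x,y)|-s_i+1$, and from Lemma~\ref{e=2s} the vanishing bracket gives $s_1=e/2$, whence this equals $e-1-e/2+1=e/2=(|Y_1(x,y)|+1)/2$, and symmetrically $(m-e)/2$ for $i=2$. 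So the key is simply reading $s_i$ off the bracket that vanishes.

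The main obstacle I anticipate is the case split on which of the two brackets vanishes, and in particular ruling out (or reconciling) the degenerate possibility $m=2e$: one needs to be sure the folded Johnson case is genuinely handled, either by showing it contradicts $c_2=4$ together with the Section~3 property (that $\{x,z\}\cup C(x,z)$ induces $J(4,2)$ for $(x,z)\in\Sigma_2$), or by checking the arithmetic survives. Once past that, everything is linear algebra in $m,e,s_1,s_2$, and the divisibility conclusion $4\mid m$ drops out for free.
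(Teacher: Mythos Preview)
Your case analysis is tangled, and the crucial hard case is missing entirely.

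First, the logic: from ``the two linear-in-$p$ brackets cannot both vanish'' you cannot conclude ``exactly one is nonzero''; both being nonzero is perfectly consistent (and is in fact the desired outcome, since then $e-2s_1=0$ and $m-e-2s_2=0$ simultaneously). More importantly, the two bracket cases are \emph{not} symmetric. If the second bracket vanishes, $4p=2m-e-4$, then $p^{(1,2)}_{(2,1),(2,1)}=m-2-3p=(3e-2m)/4+1$, and since $T=\{3\}$ forces $p^{(1,2)}_{(2,1),(2,1)}\geq1$ you get $m\leq 3e/2$, contradicting $m\geq 2e$. Fine. But if the first bracket vanishes, $4p=m+e-4$, the same computation gives $p^{(1,2)}_{(2,1),(2,1)}=(m-3e)/4+1\geq1$, i.e.\ $m\geq 3e$, which does \emph{not} contradict $m\geq 2e$. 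So the inequality $p^{(1,2)}_{(2,1),(2,1)}\geq1$ alone does not rule this case out, and your assertion that ``the two cases converge'' to $p=m/4-1$ is simply false: $4p=m+e-4$ gives $p=m/4-1$ only when $e=0$.

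This is exactly where the paper does real work. Assuming $s_1\neq e/2$ (so $p=(m+e)/4-1$ and $m\geq 3e$), the paper proves a structural claim: $Y_1(u_1,u_2)\subseteq P_{(1,2),(1,2)}(u_1,u_2)$ for every $(u_1,u_2)\in\Gamma_{1,2}$. The argument uses the specific value $p=(m+e)/4-1$ together with $m\geq 3e$ to force $|\Gamma_{j,i}(u_i)\cap C(u_i,u_j)|=(m-e)/2\geq e>|Y_1(u_i,u_j)|$, locate a vertex in $Y_2$, and then squeeze the count $|\Gamma_{i,j}(u_i)\cap Y_1(u_i,u_j)|$ up to $e-1$ via Lemma~\ref{xyz}. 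The claim then self-destructs: picking $z\in Y_1(x',y')$ and applying it to both $(x',y')$ and $(x',z)$ gives $(z,y')\in\Gamma_{1,2}$ and $(y',z)\in\Gamma_{1,2}$, impossible. None of this is linear algebra in $m,e,s_1,s_2$; it is a genuine combinatorial obstruction you have not anticipated. Your ``main obstacle'' $m=2e$ is actually the easy part (and incidentally $m=2e$ is a legitimate value for both $J(2e,e)$ and $\bar J(2e,e)$, so you cannot exclude it on definitional grounds); the real obstacle is $m\geq 3e$.

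Once $s_1=e/2$ and $s_2=(m-e)/2$ are both established, equations \eqref{k12-4}--\eqref{k12-5} become tautologies and cannot be used to recover $p$. The paper instead uses the direct count $2p=|\Gamma_{\tilde h}(x)\cap C(x,y)|=(e-1-s_1)+(m-e-1-s_2)=m-s_1-s_2-2=m/2-2$.
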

\begin{proof}
By Theorems \ref{johnson} and \ref{folded johnson}, we  have $a_1=m-2$. Since $T=\{3\}$, from \eqref{a1c2} and Lemma \ref{intersection numners} \ref{intersection numners 2}, we get $$m-2=p^{(1,2)}_{(1,2),(1,2)}+p^{(1,2)}_{(1,2),(2,1)}+p^{(1,2)}_{(2,1),(1,2)}+p^{(1,2)}_{(2,1),(2,1)}=3p^{(1,2)}_{(1,2),(1,2)}+p^{(1,2)}_{(2,1),(2,1)}.$$
Let $(x,y)\in\Gamma_{\tilde{h}}$ with $\tilde{h}\in\{(1,2),(2,1)\}$.  Set
$|\Gamma_{\tilde{h}^*}(x)\cap Y_i(x,y)|=s_i$ for  $i\in\{1,2\}$.  Then
$|\Gamma_{\tilde{h}}(x)\cap Y_i(x,y)|=|Y_i(x,y)|-s_i.$

\begin{step}\label{7.3-2}
{\rm Show that $s_2=(|Y_2(x,y)|+1)/2$.}
\end{step}

Suppose not. Lemma \ref{sigma1} implies $s_2\neq (m-e)/2$. By Lemma \ref{e=2s}, we have  $p^{(1,2)}_{(1,2),(1,2)}=(2m-e)/4-1$. Since $m-2=3p^{(1,2)}_{(1,2),(1,2)}+p^{(1,2)}_{(2,1),(2,1)}$, we get $p^{(1,2)}_{(2,1),(2,1)}=(3e-2m)/4+1$. Since $T=\{3\}$, we have $p^{(1,2)}_{(2,1),(2,1)}\geq1$, which implies $m\leq 3e/2$, contrary to the fact that $m\geq 2e$.

\begin{step}\label{7.3-3}
{\rm Show that $s_1=(|Y_1(x,y)|+1)/2$.}
\end{step}

Suppose not. Lemma \ref{sigma1} implies $s_1\neq e/2$. By Lemma \ref{intersection numners} \ref{intersection numners 2} and Lemma \ref{e=2s}, we have  $p^{(1,2)}_{(1,2),(1,2)}=p^{(1,2)}_{(1,2),(2,1)}=p^{(1,2)}_{(2,1),(1,2)}=p^{(2,1)}_{(1,2),(2,1)}=p^{(2,1)}_{(2,1),(1,2)}=p^{(2,1)}_{(2,1),(2,1)}=(m+e)/4-1$. Since $m-2=3p^{(1,2)}_{(1,2),(1,2)}+p^{(1,2)}_{(2,1),(2,1)}$, from Lemma \ref{intersection numners} \ref{intersection numners 2}, one gets $p^{(1,2)}_{(2,1),(2,1)}=p^{(2,1)}_{(1,2),(1,2)}=(m-3e)/4+1$. Since $T=\{3\}$, we have $p^{(1,2)}_{(2,1),(2,1)}\geq1$, which implies  $m\geq 3e$.

We claim that $Y_1(u_1,u_2)\subseteq P_{(1,2),(1,2)}(u_1,u_2)$ for any $(u_1,u_2)\in\Gamma_{1,2}$. Let $\{i,j\}=\{1,2\}$. By Lemma \ref{sigma1}, we have \begin{align}|\Gamma_{j,i}(u_i)\cap(Y_1(u_i,u_j)\cup Y_2(u_i,u_j))|&=|P_{(j,i),(1,2)}(u_i,u_j)|+|P_{(j,i),(2,1)}(u_i,u_j)|\nonumber\\
&=p_{(j,i),(1,2)}^{(i,j)}+p_{(j,i),(2,1)}^{(i,j)}=(m-e)/2\geq e\nonumber\end{align}
since $m\geq3e$. Lemma \ref{sigma1} implies $|Y_1(u_i,u_j)|=e-1$. It follows that there exists $z_i\in\Gamma_{j,i}(u_i)\cap Y_2(u_i,u_j)$.

Let $t_i=|\Gamma_{i,j}(u_i)\cap Y_2(u_i,u_j)|$. Since $(u_i,u_j)\in\Gamma_{i,j}$ and $z_i\in\Gamma_{j,i}(u_i)\cap Y_2(u_i,u_j)$, from Lemmas  \ref{sigma1} and \ref{xyz}, we have \begin{align}1+t_i&=|\Gamma_{i,j}(u_i)\cap(Y_2(u_i,u_j)\cup\{u_j\})|=|\Gamma_{i,j}(u_i)\cap(Y_2(u_i,z_i)\cup\{z_i\})|\nonumber\\
&\leq |P_{(i,j),(1,2)}(u_i,z_i)\cup P_{(i,j),(2,1)}(u_i,z_i)|=p_{(i,j),(1,2)}^{(j,i)}+p_{(i,j),(2,1)}^{(j,i)}=(m-e)/2.\nonumber\end{align} Then $t_i\leq(m-e)/2-1$.

Let $t'_i=|\Gamma_{i,j}(u_i)\cap Y_1(u_i,u_j)|$. By Lemma \ref{sigma1}, we have  \begin{align}|\Gamma_{i,j}(u_i)\cap(Y_1(u_i,u_j)\cup Y_2(u_i,u_j))|&=|P_{(i,j),(1,2)}(u_i,u_j)|+|P_{(i,j),(2,1)}(u_i,u_j)|\nonumber\\
&=p^{(i,j)}_{(i,j),(1,2)}+p^{(i,j)}_{(i,j),(2,1)}=(m+e)/2-2.\nonumber\end{align} Then $t'_i=(m+e)/2-2-t_i\geq e-1$. The fact $|Y_1(u_i,u_j)|=e-1$  implies $t'_i=e-1$, and so $Y_1(u_i,u_j)\subseteq\Gamma_{i,j}(u_i)$. Since $\{i,j\}=\{1,2\}$, from Lemma \ref{iwuguan}, we get $Y_1(u_1,u_2)\subseteq\Gamma_{1,2}(u_1)\cap\Gamma_{2,1}(u_2)$. Thus, our claim holds.

Let $(x',y')\in\Gamma_{1,2}$ and $z\in Y_1(x',y')$. By the claim, we have $z\in P_{(1,2),(1,2)}(x',y')$, which implies $Y_1(x',z)\subseteq P_{(1,2),(1,2)}(x',z)$. In view of Lemma \ref{xyz}, we get $y'\in Y_1(x',z)\subseteq P_{(1,2),(1,2)}(x',z)$, contrary to the fact $(z,y')\in\Gamma_{1,2}$.

\begin{step}
{\rm We prove this lemma.}
\end{step}

By Lemma \ref{sigma1}, we get \begin{align}2p^{(1,2)}_{(1,2),(1,2)}&=p^{\tilde{h}}_{\tilde{h},(1,2)}+p^{\tilde{h}}_{\tilde{h},(2,1)}=|P_{\tilde{h},(1,2)}(x,y)|+|P_{\tilde{h},(2,1)}(x,y)|\nonumber\\
&=|\Gamma_{\tilde{h}}(x)\cap(Y_1(x,y)\cup Y_2(x,y))|=|\Gamma_{\tilde{h}}(x)\cap Y_1(x,y)|+|\Gamma_{\tilde{h}}(x)\cap Y_2(x,y)|\nonumber\\
&=(e-s_1-1)+(m-e-s_2-1)=m-s_1-s_2-2.\nonumber\end{align} By Lemma \ref{sigma1} and Steps \ref{7.3-2}, \ref{7.3-3},  we get $2p^{(1,2)}_{(1,2),(1,2)}=m/2-2$, and so $p^{(1,2)}_{(1,2),(1,2)}=m/4-1$. The fact $m-2=3p^{(1,2)}_{(1,2),(1,2)}+p^{(1,2)}_{(2,1),(2,1)}$ implies $p^{(1,2)}_{(2,1),(2,1)}=m/4+1$. Then the first statement holds.

Since $y\in\Gamma_{\tilde{h}}(x)$, from Steps \ref{7.3-2} and \ref{7.3-3}, one obtains $|\Gamma_{\tilde{h}}(x)\cap(Y_i(x,y)\cup\{y\})|=|Y_i(x,y)|-s_i+1=(|Y_i(x,y)|+1)/2$ for $i\in\{1,2\}$.  The second statement follows from Lemma \ref{sigma1}.
\end{proof}

\begin{lemma}\label{yi}
Let $(x,y)\in\Gamma_{1,2}$. Then $|P_{(1,2),(2,1)}(x,y)\cap Y_i(x,y)|=|P_{(2,1),(1,2)}(x,y)\cap Y_i(x,y)|$  and $|P_{(2,1),(2,1)}(x,y)\cap Y_i(x,y)|=|P_{(1,2),(1,2)}(x,y)\cap Y_i(x,y)|+1>0$ for $i\in\{1,2\}$.
\end{lemma}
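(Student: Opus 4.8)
The plan is a double count inside $C(x,y):=\Sigma_1(x)\cap\Sigma_1(y)$. Since $T=\{3\}$, for every $z\in C(x,y)$ the edges $xz$ and $yz$ of $\Sigma$ are oriented and every arc of $\Gamma$ has two-way distance $(1,2)$; hence $z$ lies in exactly one of $\Gamma_{1,2}(x),\Gamma_{2,1}(x)$ and in exactly one of $\Gamma_{1,2}(y),\Gamma_{2,1}(y)$. Consequently $C(x,y)$ is the disjoint union of $P_{(1,2),(1,2)}(x,y)$, $P_{(1,2),(2,1)}(x,y)$, $P_{(2,1),(1,2)}(x,y)$, $P_{(2,1),(2,1)}(x,y)$, where I use $P_{\tilde i,\tilde j}(x,y)=\Gamma_{\tilde i}(x)\cap\Gamma_{\tilde j^{*}}(y)$, so that for instance $P_{(1,2),(1,2)}(x,y)=\Gamma_{1,2}(x)\cap\Gamma_{2,1}(y)$ and $P_{(1,2),(2,1)}(x,y)=\Gamma_{1,2}(x)\cap\Gamma_{1,2}(y)$. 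By Lemma \ref{sigma1}, $C(x,y)=Y_1(x,y)\sqcup Y_2(x,y)$, so for a fixed $i\in\{1,2\}$ I put $A=|P_{(1,2),(1,2)}(x,y)\cap Y_i(x,y)|$, $B=|P_{(1,2),(2,1)}(x,y)\cap Y_i(x,y)|$, $C=|P_{(2,1),(1,2)}(x,y)\cap Y_i(x,y)|$ and $D=|P_{(2,1),(2,1)}(x,y)\cap Y_i(x,y)|$. The identity $\Gamma_{1,2}(x)\cap C(x,y)=P_{(1,2),(1,2)}(x,y)\sqcup P_{(1,2),(2,1)}(x,y)$ and its three analogues (with $\Gamma_{1,2}(x)$ replaced by $\Gamma_{2,1}(x)$, $\Gamma_{2,1}(y)$, $\Gamma_{1,2}(y)$) then yield $A+B=|\Gamma_{1,2}(x)\cap Y_i(x,y)|$, $C+D=|\Gamma_{2,1}(x)\cap Y_i(x,y)|$, $A+C=|\Gamma_{2,1}(y)\cap Y_i(x,y)|$ and $B+D=|\Gamma_{1,2}(y)\cap Y_i(x,y)|$.

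The four cardinalities on the right-hand sides are exactly what Lemma \ref{inter} (combined with Lemma \ref{iwuguan}) delivers. Applying the second statement of Lemma \ref{inter} to the edge $(x,y)\in\Sigma_1$ with $\tilde h=(1,2)$ and then $\tilde h=(2,1)$, and using $y\in\Gamma_{1,2}(x)$ (since $(x,y)\in\Gamma_{1,2}$), $y\notin\Gamma_{2,1}(x)$ and $y\notin Y_i(x,y)$ (as $y\notin C(x,y)$), I obtain $|\Gamma_{1,2}(x)\cap Y_i(x,y)|=(|Y_i(x,y)|+1)/2-1$ and $|\Gamma_{2,1}(x)\cap Y_i(x,y)|=(|Y_i(x,y)|+1)/2$. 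Since $Y_i(y,x)=Y_i(x,y)$ by Lemma \ref{iwuguan}, applying Lemma \ref{inter} to the edge $(y,x)\in\Sigma_1$ and using $x\in\Gamma_{2,1}(y)$, $x\notin\Gamma_{1,2}(y)$ and $x\notin Y_i(x,y)$ gives $|\Gamma_{2,1}(y)\cap Y_i(x,y)|=(|Y_i(x,y)|+1)/2-1$ and $|\Gamma_{1,2}(y)\cap Y_i(x,y)|=(|Y_i(x,y)|+1)/2$.

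Writing $N=(|Y_i(x,y)|+1)/2$, the four relations read $A+B=N-1$, $C+D=N$, $A+C=N-1$ and $B+D=N$. Subtracting the first from the third gives $B=C$, which is the first assertion; and then $D=N-C=N-B=N-(N-1-A)=A+1$, which is the second assertion, and in particular $D=A+1\ge 1>0$. Doing this for each $i\in\{1,2\}$ completes the proof. I do not anticipate a real obstacle: the argument is a short inclusion–exclusion whose content is entirely carried by Lemma \ref{inter}. The only point that needs care is bookkeeping — tracking which of $\Gamma_{1,2}(\cdot),\Gamma_{2,1}(\cdot)$ contains $y$ and which contains $x$ — since it is precisely the two resulting ``$-1$''s that produce the asymmetry $|P_{(2,1),(2,1)}(x,y)\cap Y_i(x,y)|=|P_{(1,2),(1,2)}(x,y)\cap Y_i(x,y)|+1$.
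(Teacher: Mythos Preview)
Your proof is correct and follows essentially the same approach as the paper's own proof: both arguments partition $Y_i(x,y)$ into the four sets $P_{\tilde a,\tilde b}(x,y)\cap Y_i(x,y)$, use Lemma~\ref{inter} (together with Lemma~\ref{iwuguan}) to compute $|\Gamma_{\tilde h}(x)\cap Y_i(x,y)|$ and $|\Gamma_{\tilde h}(y)\cap Y_i(x,y)|$, and then solve the resulting linear relations. Your setup with the four unknowns $A,B,C,D$ and the explicit system $A+B=N-1$, $C+D=N$, $A+C=N-1$, $B+D=N$ is in fact a slightly cleaner organization of the same computation the paper carries out via the parameters~$s_h$.
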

\begin{proof}
Let $\{h,l\}=\{1,2\}$. Since $y\in\Gamma_{1,2}(x)$, we get $|\Gamma_{h,l}(x)\cap Y_i(x,y)|+(l-1)=|\Gamma_{h,l}(x)\cap(Y_i(x,y)\cup\{y\})|=(|Y_i(x,y)|+1)/2$ from Lemma \ref{inter}. Set $|P_{(h,l),(l,h)}(x,y)\cap Y_i(x,y)|=s_h.$  Since $T=\{3\}$, we have $|P_{(h,l),(h,l)}(x,y)\cap Y_i(x,y)|+|P_{(h,l),(l,h)}(x,y)\cap Y_i(x,y)|=|\Gamma_{h,l}(x)\cap Y_i(x,y)|$, which implies
\begin{align}\tag{5.12}\label{yi1}
|P_{(h,l),(h,l)}(x,y)\cap Y_i(x,y)|=(|Y_i(x,y)|+1)/2-s_h-(l-1).
\end{align}

Since $x\in\Gamma_{2,1}(y)$, one gets $|\Gamma_{1,2}(y)\cap Y_i(y,x)|=|\Gamma_{1,2}(y)\cap(Y_i(y,x)\cap\{x\})|=(|Y_i(y,x)|+1)/2$ from Lemma \ref{inter}. Note that $|P_{(h,l),(l,h)}(x,y)\cap Y_i(x,y)|=s_h$. In view of Lemma \ref{iwuguan}, one has
\begin{align}
(|Y_i(x,y)|+1)/2&=(|Y_i(y,x)|+1)/2\nonumber\\
                &=|\Gamma_{1,2}(y)\cap Y_i(y,x)|\nonumber\\
                &=|P_{(1,2),(1,2)}(y,x)\cap Y_i(y,x)|+|P_{(1,2),(2,1)}(y,x)\cap Y_i(y,x)|\nonumber\\
                &=|P_{(2,1),(2,1)}(x,y)\cap Y_i(x,y)|+|P_{(1,2),(2,1)}(x,y)\cap Y_i(x,y)|\nonumber\\
                &=(|Y_i(x,y)|+1)/2-s_2+s_1\nonumber
\end{align}
by setting $(h,l)=(2,1)$ in \eqref{yi1}. It follows that $s_1=s_2$. Then $|P_{(1,2),(2,1)}(x,y)\cap Y_i(x,y)|=|P_{(2,1),(1,2)}(x,y)\cap Y_i(x,y)|$. \eqref{yi1} implies $|P_{(2,1),(2,1)}(x,y)\cap Y_i(x,y)|=|P_{(1,2),(1,2)}(x,y)\cap Y_i(x,y)|+1>0$.
%
\end{proof}

\begin{lemma}\label{k33}
$p^{(3,3)}_{(1,2),(2,1)}=2$.
\end{lemma}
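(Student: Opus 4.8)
The plan is to pin down $p^{(3,3)}_{(1,2),(2,1)}$ by a local analysis of the octahedral subgraphs $\{x,z\}\cup C(x,z)$ combined with the commutativity of $\Gamma$, the subtlety being to first guarantee that $(3,3)\in\tilde\partial(\Gamma)$ and that every pair realizing it sits at distance $2$ in $\Sigma$. Throughout I use freely that $T=\{3\}$ forces every arc of $\Gamma$ to have type $(1,2)$, so that $\Gamma$ is an orientation of $\Sigma$ of girth $3$, that $c_2=4$, and the valencies $p^{(1,2)}_{(1,2),(1,2)}=m/4-1$, $p^{(1,2)}_{(2,1),(2,1)}=m/4+1$ and $k_{1,2}=k_{2,1}=e(m-e)/2$ supplied by Lemma \ref{inter}.

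Granting $(3,3)\in\tilde\partial(\Gamma)$, the computation is short. Let $(x,z)\in\Gamma_{3,3}$. From $1\le\partial_\Sigma(x,z)\le\partial_\Gamma(x,z)=3$, together with the fact that an edge $xz$ would give an arc of type $(1,2)$ and hence $\tilde\partial_\Gamma(x,z)\in\{(1,2),(2,1)\}$, we get $\partial_\Sigma(x,z)\in\{2,3\}$; the value $3$ is excluded once $\Sigma=J(m,2)$ is available (Proposition \ref{e=2}) and in any case cannot occur alongside a $\Sigma_2$-realizing pair, since $p^{(3,3)}_{(1,2),(2,1)}$ is constant. So $(x,z)\in\Sigma_2$ and $|C(x,z)|=c_2=4$. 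Fix $y\in C(x,z)$. Since $T=\{3\}$, both arcs between $y$ and $\{x,z\}$ have type $(1,2)$, so $y$ falls under one of the cases C1, C2, C5, C6 of Section 3 (the mixed cases C3, C4 require two distinct arc types and do not occur). A C1 vertex yields a directed path $x\to y\to z$, so $\partial_\Gamma(x,z)\le 2$, a contradiction; symmetrically no C2 vertex exists. Hence, by \eqref{a1c2} applied to $(x,z)\in\Sigma_2$ with only the arc types $(1,2),(2,1)$ present, $p^{(3,3)}_{(1,2),(1,2)}+p^{(3,3)}_{(1,2),(2,1)}+p^{(3,3)}_{(2,1),(1,2)}+p^{(3,3)}_{(2,1),(2,1)}=c_2=4$, the first and last terms vanish, and the commutativity of $\Gamma$ gives $p^{(3,3)}_{(1,2),(2,1)}=p^{(3,3)}_{(2,1),(1,2)}$, whence $p^{(3,3)}_{(1,2),(2,1)}=2$.

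The main obstacle is therefore to produce $(3,3)\in\tilde\partial(\Gamma)$ realized by a $\Sigma_2$-pair. I would argue by contradiction: if every $(x,z)\in\Sigma_2$ had $\min\{\partial_\Gamma(x,z),\partial_\Gamma(z,x)\}=2$, then each $C(x,z)$ would contain a C1 or C2 vertex, so $\Gamma$ possesses a relation $\Gamma_{2,2}$ (or $\Gamma_{2,k}$ with $k\ge 3$). Applying Lemma \ref{22} — which, since $2\notin T$, leaves only case \ref{222}, i.e.\ $p^{(2,2)}_{(1,2),(1,2)}=p^{(2,2)}_{(2,1),(2,1)}=2$ whenever $\Gamma_{2,2}$ occurs — together with the valencies above and the identity $k_{1,2}^2=\sum_{\tilde h}p^{\tilde h}_{(1,2),(1,2)}k_{\tilde h}$ of Lemma \ref{intersection numners} \ref{intersection numners 1}, one is led to a numerical impossibility. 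Hence some $(x,z)\in\Sigma_2$ has $\partial_\Gamma(x,z),\partial_\Gamma(z,x)\ge 3$; for such a pair all four members of $C(x,z)$ are C5 or C6 vertices (two of each, by commutativity), and one must then check, using the octahedral structure of $\{x,z\}\cup C(x,z)$ and the strong connectivity of $\Gamma$, that there exist directed paths of length $3$ both from $x$ to $z$ and from $z$ to $x$, so that $\tilde\partial_\Gamma(x,z)=(3,3)$. This last extraction of length-$3$ paths is the delicate point: the orientation of the octahedron by itself does not always furnish them, so the argument has to lean on global connectivity (or on the classification being carried through in tandem).
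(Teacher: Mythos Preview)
Your final computation is correct and is exactly how the paper finishes too: once $p^{(3,3)}_{(1,2),(2,1)}\neq0$, a pair $(x,z)\in\Gamma_{3,3}$ has a $\Sigma$-neighbor $y$ of both endpoints, hence $(x,z)\in\Sigma_2$, and then \eqref{a1c2} with $p^{(3,3)}_{(1,2),(1,2)}=p^{(3,3)}_{(2,1),(2,1)}=0$ and commutativity gives the value $2$.

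The gap is in the existence step, and it is a real gap. First, you invoke Proposition~\ref{e=2} to exclude $\partial_\Sigma(x,z)=3$, but Lemma~\ref{k33} is one of the auxiliary lemmas used \emph{in} the proof of Proposition~\ref{e=2} (see Steps~\ref{eleq6} and the final step of Section~5.2), so this is circular. Second, your claim that with $T=\{3\}$ only case~\ref{222} of Lemma~\ref{22} survives is false: case~\ref{225} with $p=r=2$, namely $p^{(2,2)}_{(1,2),(1,2)}=p^{(2,2)}_{(2,1),(2,1)}=p^{(2,2)}_{(1,2),(2,1)}=p^{(2,2)}_{(2,1),(1,2)}=1$, is also consistent with $T=\{3\}$ (and in fact is the case that actually occurs, by Lemma~\ref{k12}). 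Third, you yourself flag that extracting a $(3,3)$-pair from a $\Sigma_2$-pair with both $\Gamma$-distances $\geq3$ is delicate; indeed the local octahedron argument does not pin down those distances, so as written the proof is incomplete.

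The paper bypasses all of this with a purely algebraic inequality. Expanding $k_{1,2}^2=k_{1,2}k_{2,1}$ via Lemma~\ref{intersection numners}~\ref{intersection numners 1} and cancelling (using Lemma~\ref{intersection numners}~\ref{intersection numners 2} and Lemma~\ref{inter}) yields
\[
k_{1,2}=(p^{(2,2)}_{(1,2),(2,1)}-p^{(2,2)}_{(1,2),(1,2)})k_{2,2}+(2p^{(2,3)}_{(1,2),(2,1)}-p^{(2,3)}_{(1,2),(1,2)})k_{2,3}+p^{(3,3)}_{(1,2),(2,1)}k_{3,3}-p^{(2,4)}_{(1,2),(1,2)}k_{2,4}.
\]
Now Lemma~\ref{22} (cases~\ref{222} and~\ref{225}) makes the first coefficient $\leq0$, and Lemma~\ref{fenlei} (cases~\ref{fenlei2} and~\ref{fenlei6}, the only ones compatible with $T=\{3\}$) makes the second $\leq0$; the last term is nonpositive. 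Since $k_{1,2}>0$, this forces $p^{(3,3)}_{(1,2),(2,1)}k_{3,3}>0$, and the final step is yours. This is the missing idea: prove nonvanishing globally via the valency identity rather than locally via an existence argument.
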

\begin{proof}
Let $M=\{(1,2),(2,1),(2,2),(2,3),(2,4)\}$ and $N=(M\cup\{(0,0),(3,2),(3,3)\})\setminus\{(2,4)\}$. Since $T=\{3\}$, from Lemma \ref{intersection numners} \ref{intersection numners 1}, we obtain
$(k_{1,2})^2=\sum_{\tilde{i}\in\tilde{\partial}(\Gamma)}p^{\tilde{i}}_{(1,2),(1,2)}k_{\tilde{i}}=
\sum_{\tilde{i}\in M}p^{\tilde{i}}_{(1,2),(1,2)}k_{\tilde{i}}$
and
$k_{1,2}k_{2,1}=\sum_{\tilde{i}\in\tilde{\partial}(\Gamma)}p^{\tilde{i}}_{(1,2),(2,1)}k_{\tilde{i}}=
\sum_{\tilde{i}\in N}p^{\tilde{i}}_{(1,2),(2,1)}k_{\tilde{i}}.$ The fact $\sum_{\tilde{i}\in M}p^{\tilde{i}}_{(1,2),(1,2)}k_{\tilde{i}}=\sum_{\tilde{i}\in N}p^{\tilde{i}}_{(1,2),(2,1)}k_{\tilde{i}}$ implies $(p^{(1,2)}_{(1,2),(1,2)}+p^{(2,1)}_{(1,2),(1,2)}-p^{(1,2)}_{(1,2),(2,1)}-p^{(2,1)}_{(1,2),(2,1)})k_{1,2}-p^{(0,0)}_{(1,2),(2,1)}k_{0,0}=
(p^{(2,2)}_{(1,2),(2,1)}-p^{(2,2)}_{(1,2),(1,2)})k_{2,2}+p^{(3,3)}_{(1,2),(2,1)}k_{3,3}+(p^{(2,3)}_{(1,2),(2,1)}+p^{(3,2)}_{(1,2),(2,1)}-p^{(2,3)}_{(1,2),(1,2)})k_{2,3}
-p^{(2,4)}_{(1,2),(1,2)}k_{2,4}$.
By Lemma \ref{intersection numners} \ref{intersection numners 2} and Lemma \ref{inter}, we get
\begin{equation}\tag{5.13}
\begin{aligned}\label{k331}
k_{1,2}=&(p^{(2,2)}_{(1,2),(2,1)}-p^{(2,2)}_{(1,2),(1,2)})k_{2,2}+(2p^{(2,3)}_{(1,2),(2,1)}-p^{(2,3)}_{(1,2),(1,2)})k_{2,3}\\
&+p^{(3,3)}_{(1,2),(2,1)}k_{3,3}-p^{(2,4)}_{(1,2),(1,2)}k_{2,4}.
\end{aligned}
\end{equation}

In view of Lemma \ref{22} \ref{222} and \ref{225}, one has $(p^{(2,2)}_{(1,2),(2,1)}-p^{(2,2)}_{(1,2),(1,2)})k_{2,2}\leq0$. Note that $c_2=4$.
If $k_{2,3}\neq0$, from Lemma \ref{fenlei} \ref{fenlei2} and \ref{fenlei6}, then $p^{(2,3)}_{(1,2),(1,2)}=4$ and $p^{(2,3)}_{(1,2),(2,1)}=p^{(2,3)}_{(2,1),(1,2)}=0$, or $p^{(2,3)}_{(1,2),(1,2)}=2$ and $p^{(2,3)}_{(1,2),(2,1)}=p^{(2,3)}_{(2,1),(1,2)}=1$,  which imply $(2p^{(2,3)}_{(1,2),(2,1)}-p^{(2,3)}_{(1,2),(1,2)})k_{2,3}\leq0$.  Since $k_{1,2}>0$, from \eqref{k331}, we obtain $p^{(3,3)}_{(1,2),(2,1)}\neq0$. Since $c_2=4$ and $p^{(3,3)}_{(1,2),(1,2)}=p^{(3,3)}_{(2,1),(2,1)}=0$, we get  $p^{(3,3)}_{(1,2),(2,1)}=p^{(3,3)}_{(2,1),(1,2)}=2$
\end{proof}

\subsection{Proof of Proposition \ref{e=2}}

In this subsection, we give a proof of Proposition \ref{e=2}.
Assume the contrary, namely, $\Sigma=\bar{J}(2e,e)$ with $e\geq4$, or $\Sigma=J(n,e)$ with  $e>2$. Then $m\geq2e>4$.  By Lemma \ref{intersection numners} \ref{intersection numners 2} and  Lemma \ref{inter}, we get $p^{(1,2)}_{(1,2),(1,2)}=p^{(1,2)}_{(2,1),(1,2)}=p^{(2,1)}_{(2,1),(1,2)}=m/4-1>0.$

\begin{stepp}\label{22cunzai}
{\rm Show that $p_{(1,2),(1,2)}^{(2,2)}\neq0$.}
\end{stepp}

Let $(x,y)\in\Gamma_{1,2}$. Since $p^{(1,2)}_{(1,2),(1,2)}>0$, there exists $z\in P_{(1,2),(1,2)}(x,y)$. Then $z\in Y_i(x,y)$ for some $i\in\{1,2\}$. By Lemma \ref{yi},  there exists $w\in P_{(2,1),(2,1)}(x,y)\cap Y_j(x,y)$ with $j\in\{1,2\}\setminus\{i\}$. Lemma \ref{sigma1} \ref{sigma12} implies $(z,w)\in\Sigma_2$. Since $y\in P_{(1,2),(1,2)}(z,w)$ and $x\in P_{(1,2),(1,2)}(w,z)$, one has $(z,w)\in\Gamma_{2,2}$, and so $p_{(1,2),(1,2)}^{(2,2)}\neq0$.

%

\begin{stepp}\label{eleq6}
{\rm Let $(x,x')\in\Gamma_{2,2}$ and $y\in P_{(1,2),(1,2)}(x',x)$.  Show that $|\Gamma_{1,2}(y)\cap Y_1(x,y)\cap \Sigma_2(x')|=1$ and $m = 2e$ when $\Gamma_{1,2}(y)\cap Y_1(x,y)\cap \Sigma_2(x')\neq\emptyset$.}
\end{stepp}
Since $(x,x')\in\Sigma_2$, there exist distinct elements $a_1,a_2\in x$ and $b_1,b_2\in X\setminus x$ such that $x'=x(\{a_1,a_2\},\{b_1,b_2\})$. Since $y\in C(x,x')$, we may assume $y=x(a_1,b_1)$.

We claim that $m=2e$, $z\in\Gamma_{2,1}(x)$ and $|P_{(2,1),(1,2)}(x,z)\cap Y_2(x,z)\cap\Sigma_3(x')|=m/4-2$ for all $z\in\Gamma_{1,2}(y)\cap Y_1(x,y)$ with $z\neq x(a_2,b_1)$.


Let $z=x(a,b_1)$ with $a\in x\setminus\{a_1,a_2\}$. By Step \ref{22cunzai}, there exists $y'\in P_{(1,2),(1,2)}(x,x')$. If there exists a vertex $w:=x(a,b)\in P_{(2,1),(2,1)}(x,z)$ for some $b\in X\setminus(x\cup\{b_1,b_2\})$, then $(x',y,z,w)$ and $(w,x,y',x')$ are paths in $\Gamma$, which imply $(x',w)\in\Gamma_{3,3}\cap\Sigma_3$, and so $p^{(3,3)}_{(1,2),(2,1)}=0$, contrary to Lemma \ref{k33}. Thus,
\begin{align}\tag{5.14}\label{7.1-1}
\Gamma_{2,1}(x)\cap\{x(a,b)\mid b\in X\setminus(x\cup\{b_1,b_2\})\}\subseteq P_{(2,1),(1,2)}(x,z).
\end{align} 

By Lemma  \ref{inter}, we have
\begin{align}\tag{5.15}\label{7.1*}
|\Gamma_{2,1}(x)\cap(Y_2(x,z)\cup\{z\})|=|\Gamma_{2,1}(x)\cap\{x(a,c)\mid c\in X\setminus x\}|=(m-e)/2,
 \end{align}
 which implies
\begin{align}\tag{5.16}\label{7.1-2}
|\Gamma_{2,1}(x)\cap\{x(a,b)\mid b\in X\setminus(x\cup\{b_1,b_2\})\}|\geq (m-e)/2-2.
\end{align}
 Since $y\in P_{(2,1),(1,2)}(x,z)$ and $p_{(2,1),(1,2)}^{(1,2)}=p_{(2,1),(1,2)}^{(2,1)}=m/4-1$, from \eqref{7.1-1} and \eqref{7.1-2}, one gets
 \begin{equation}\tag{5.17}
 \begin{aligned}
 m/4-1=&|P_{(2,1),(1,2)}(x,z)|\\
 \geq&1+|\Gamma_{2,1}(x)\cap\{x(a,b)\mid b\in X\setminus(x\cup\{b_1,b_2\})\}|\\
 \geq&(m-e)/2-1,
 \end{aligned}
 \end{equation}
and so $2e\leq m$.
The fact $m\geq2e$ implies $m=2e$ and $|\Gamma_{2,1}(x)\cap\{x(a,b)\mid b\in X\setminus(x\cup\{b_1,b_2\})\}|=(m-e)/2-2=m/4-2$. By \eqref{7.1-1}, we get
\begin{align}m/4-2=&|\{x(a,b)\in P_{(2,1),(1,2)}(x,z)\mid b\in X\setminus(x\cup\{b_1,b_2\})\}|\nonumber\\=&|P_{(2,1),(1,2)}(x,z)\cap Y_2(x,z)\cap\Sigma_3(x')|.\nonumber\end{align}
 Since $|\Gamma_{2,1}(x)\cap\{x(a,c)\mid c\in X\setminus x\}|=m/4$ from \eqref{7.1*}, we have $x(a,b_1),x(a,b_2)\in\Gamma_{2,1}(x)$, and so $z\in\Gamma_{2,1}(x)$. Thus, the claim holds.


By the claim, it suffices to show that $|\Gamma_{1,2}(y)\cap Y_1(x,y)\cap\Sigma_2(x')|=1$. Assume the contrary, namely,
there exist distinct vertices $u_1,u_2\in\Gamma_{1,2}(y)\cap Y_1(x,y)\cap\Sigma_2(x')$. Then $u_i=x(c_i,b_1)$  for some $c_i\in x\setminus\{a_1,a_2\}$ with $i\in\{1,2\}$. The claim implies $u_i\in\Gamma_{2,1}(x)$ and
\begin{align}\tag{5.18}\label{7.1-3}
|\{x(c_i,b)\in P_{(2,1),(1,2)}(x,u_i)\mid b\in X\setminus(x\cup\{b_1,b_2\})\}|=m/4-2.
\end{align}
Note that $(u_1,u_2)\in\Sigma_1$. If $(u_1,u_2)\in\Gamma_{1,2}$, then $u_1,y\in P_{(2,1),(1,2)}(x,u_2)$, which implies $|P_{(2,1),(1,2)}(x,u_2)|\geq m/4$ from \eqref{7.1-3}; if $(u_1,u_2)\in\Gamma_{2,1}$, then $u_2,y\in P_{(2,1),(1,2)}(x,u_1)$, which implies $|P_{(2,1),(1,2)}(x,u_1)|\geq m/4$ from \eqref{7.1-3}, both contrary to the fact that $p^{(2,1)}_{(2,1),(1,2)}=m/4-1$. This completes the proof of the step. 

\begin{stepp}\label{eneq4}
{\rm Show that $P_{(1,2),(1,2)}(x,y)\subseteq Y_1(x,y)$ and $P_{(2,1),(1,2)}(x,y)\cap Y_1(x,y)=\emptyset$ for $(x,y)\in\Gamma_{1,2}$ when $e\neq4$ or $m\neq8$.}
\end{stepp}

Suppose for the contrary that there exists $z\in P_{(1,2),(1,2)}(x,y)$ such that $z\notin Y_1(x,y)$. It follows from Lemma \ref{sigma1} that $z\in Y_2(x,y)$. 
By Lemma \ref{yi}, there exists $w\in P_{(2,1),(2,1)}(x,y)\cap Y_1(x,y)$. 
Lemma \ref{sigma1} \ref{sigma12} implies $(w,z)\in\Sigma_2$. Since $x\in P_{(1,2),(1,2)}(w,z)$ and $y\in P_{(1,2),(1,2)}(z,w)$, one gets $(w,z)\in\Gamma_{2,2}$, which implies $|\Gamma_{1,2}(y)\cap Y_1(w,y)\cap\Sigma_2(z)|\leq1$ from Step \ref{eleq6}.

Since $w\in Y_1(x,y)$, from Lemma \ref{iwuguan}, we have $w\in Y_1(y,x)$. By Lemmas \ref{iwuguan} and \ref{xyz}, we get $Y_1(y,w)=Y_1(y,x)\cup\{x\}\setminus\{w\}=Y_1(x,y)\cup\{x\}\setminus\{w\}$. Since $z\in Y_2(x,y)$,  from Lemma  \ref{sigma1} \ref{sigma12}, one has $$Y_1(y,w)\cap\Sigma_2(z)=(Y_1(x,y)\cup\{x\}\setminus\{w\})\cap\Sigma_2(z)=Y_1(x,y)\setminus\{w\}=Y_1(y,w)\setminus\{x\}.$$
Since $x\in\Gamma_{2,1}(y)$ and $w\in\Gamma_{1,2}(y)$,  from Lemma  \ref{inter}, we get \begin{align}e/2&=|\Gamma_{1,2}(y)\cap(Y_1(y,w)\cup\{w\})|=1+|\Gamma_{1,2}(y)\cap Y_1(y,w)|\nonumber\\
&=1+|\Gamma_{1,2}(y)\cap Y_1(y,w)\cap\Sigma_2(z)|.\nonumber\end{align} Since $e>2$, from Lemma \ref{iwuguan}, one gets  $|\Gamma_{1,2}(y)\cap Y_1(w,y)\cap\Sigma_2(z)|=e/2-1>0$. Since $|\Gamma_{1,2}(y)\cap Y_1(w,y)\cap\Sigma_2(z)|\leq1$, one gets $|\Gamma_{1,2}(y)\cap Y_1(w,y)\cap\Sigma_2(z)|=1$, and so $e=4$. By Step \ref{eleq6}, we obtain $m=2e=8$, a contradiction. Thus, $P_{(1,2),(1,2)}(x,y)\subseteq Y_1(x,y)$.

Since $p^{(1,2)}_{(1,2),(1,2)}=m/4-1$, one gets $|P_{(1,2),(1,2)}(x,y)\cap Y_1(x,y)|=m/4-1$, which implies $|P_{(2,1),(2,1)}(x,y)\cap Y_1(x,y)|=m/4$ from Lemma \ref{yi}. By Lemma \ref{sigma1}, one has
\begin{align}m/2-1&=m/4-1+m/4\nonumber\\
&=|(P_{(1,2),(1,2)}(x,y)\cap Y_1(x,y))\cup(P_{(2,1),(2,1)}(x,y)\cap Y_1(x,y))|\leq|Y_1(x,y)|\nonumber\\
&=e-1,\nonumber
\end{align}
which implies $m=2e$. Then $(P_{(1,2),(1,2)}(x,y)\cap Y_1(x,y))\cup(P_{(2,1),(2,1)}(x,y)\cap Y_1(x,y))=Y_1(x,y)$, and so $P_{(2,1),(1,2)}(x,y)\cap Y_1(x,y)=\emptyset$.

\begin{stepp}\label{f=4}
{\rm Show that $e=4$ and $m=8$. Moreover, if $(x,y)\in\Gamma_{1,2}$ and $\{i,j\}=\{1,2\}$, then $P_{(1,2),(1,2)}(x,y)\subseteq Y_i(x,y)$ and $P_{(1,2),(2,1)}(x,y)\cup  P_{(2,1),(1,2)}(x,y)\subseteq Y_j(x,y)$. } 
\end{stepp}

Suppose $e\neq4$ or $m\neq8$. Let $(x,y)\in\Gamma_{1,2}$. Since $p^{(1,2)}_{(1,2),(1,2)}>0$, from Step \ref{eneq4}, there exists $z\in P_{(1,2),(1,2)}(x,y)\cap Y_1(x,y)$. Then $x\in P_{(2,1),(1,2)}(z,y)$ with $(z,y)\in\Gamma_{1,2}$, and so $x\in Y_2(z,y)$ from Step \ref{eneq4}. Since $z\in Y_1(x,y)$, from Lemma \ref{iwuguan}, we have $z\in Y_1(y,x)$. By Lemma \ref{xyz}, we get $x\in Y_1(y,z)$, which implies $x\in Y_1(z,y)$ from Lemma \ref{iwuguan},  a contradiction. Thus, $e=4$ and $m=8$.

By Lemma \ref{sigma1}, we obtain $|Y_h(x,y)|=3$ for all $h\in\{1,2\}$. Since $p^{(1,2)}_{(1,2),(1,2)}=m/4-1=1$, we have $\emptyset\neq P_{(1,2),(1,2)}(x,y)\subseteq Y_i(x,y)$ for some $i\in\{1,2\}$. By Lemma \ref{yi},    we get $P_{(2,1),(2,1)}(x,y)\cap Y_i(x,y)\neq\emptyset$, and $P_{(2,1),(1,2)}(x,y)\cap Y_i(x,y)\neq\emptyset$ if and only if $P_{(1,2),(2,1)}(x,y)\cap Y_i(x,y)\neq\emptyset$. Since $|Y_i(x,y)|=3$, one gets $P_{(1,2),(2,1)}(x,y)\cap Y_i(x,y)=P_{(2,1),(1,2)}(x,y)\cap Y_i(x,y)=\emptyset$, and so $P_{(1,2),(2,1)}(x,y)\cup  P_{(2,1),(1,2)}(x,y)\subseteq Y_j(x,y)$ for $j\in\{1,2\}\setminus\{i\}$.

\begin{stepp}
{\rm We reach a contradiction.}
\end{stepp}

Let $(x,y)\in\Gamma_{1,2}$. Then there exist $a_1\in x$ and $b_1\in X\setminus x$ such that $y=x(a_1,b_1)$. Since $p^{(1,2)}_{(1,2),(1,2)}>0$, there exists $y_1\in P_{(1,2),(1,2)}(x,y).$ Let $y_1\in Y_i(x,y)$ for some $i\in\{1,2\}$.  By Lemma \ref{yi}, there exists  $y_2\in P_{(2,1),(2,1)}(x,y)\cap Y_j(x,y)$ for $j\in\{1,2\}\setminus\{i\}$.
 Then there exist $a_2\in x\setminus\{a_1\}$ and $b_2\in X\setminus(x\cup\{b_1\})$ such that $\{y_1,y_2\}=\{x(a_1,b_2),x(a_2,b_1)\}$. It follows that $(y_1,y_2)\in\Sigma_2$ and $C(y_1,y_2)=\{x,y,z_1,z_2\}$ with $z_1=x(a_2,b_2)$ and $z_2=x(\{a_1,a_2\},\{b_1,b_2\})$.

Since $y,z_1\in C(x,y_1)$ and $(y,z_1)\in\Sigma_2$, from Lemma \ref{sigma1} \ref{sigma12}, we have $y\in Y_s(x,y_1)$ and $z_1\in Y_t(x,y_1)$ with $\{s,t\}=\{1,2\}$. Since $y\in P_{(1,2),(2,1)}(x,y_1)$ with $(x,y_1)\in\Gamma_{1,2}$, from Step \ref{f=4}, one has $z_1\in P_{(1,2),(1,2)}(x,y_1)$ or $P_{(2,1),(2,1)}(x,y_1)$.

\textbf{Case 1.} $z_1\in P_{(1,2),(1,2)}(x,y_1)$.

Since $\{y_1,y_2\}=\{x(a_1,b_2),x(a_2,b_1)\}$ and $z_1=x(a_2,b_2)$, we have $y_1,y_2\in C(x,z_1)$ and $(y_1,y_2)\in\Sigma_2$. Lemma \ref{sigma1} \ref{sigma12} implies $y_1\in Y_s(x,z_1)$ and $y_2\in Y_t(x,z_1)$ with $\{s,t\}=\{1,2\}$. Since  $y_1\in P_{(1,2),(2,1)}(x,z_1)$ with $(x,z_1)\in\Gamma_{1,2}$, from Step \ref{f=4}, we get $y_2\in P_{(1,2),(1,2)}(x,z_1)$ or $y_2\in P_{(2,1),(2,1)}(x,z_1)$. The fact $(x,y_2)\in\Gamma_{2,1}$ implies $y_2\in P_{(2,1),(2,1)}(x,z_1)$. Note that $ C(y_1,y_2)=\{x,y,z_1,z_2\}$. Since $x\in P_{(2,1),(2,1)}(y_1,y_2),$ $y\in P_{(1,2),(1,2)}(y_1,y_2)$ and $z_1\in P_{(2,1),(1,2)}(y_1,y_2)$, we get $z_2\in P_{(1,2),(2,1)}(y_1,y_2)$.

Since $z_2=x(\{a_1,a_2\},\{b_1,b_2\})$, we get $(x,z_2)\in\Sigma_2$,  which implies $ C(x,z_2)=\{y,y_1,y_2,z_1\}$ from Lemma \ref{sigma2}. Since $y_1\in P_{(1,2),(1,2)}(x,z_2)$, $y_2\in P_{(2,1),(1,2)}(x,z_2)$ and $y,z_1\in\Gamma_{1,2}(x)$, one gets $(x,z_2)\notin\Gamma_{2,2}$. By Lemma \ref{fenlei} \ref{fenlei6}, one has $y\in P_{(1,2),(1,2)}(x,z_2)$  or  $z_1\in P_{(1,2),(1,2)}(x,z_2)$. It follows that $z_2\in P_{(1,2),(2,1)}(y_1,y)$ and $x\in P_{(2,1),(1,2)}(y_1,y)$ with $(y_1,y)\in\Gamma_{1,2}$, or $x\in P_{(2,1),(1,2)}(z_1,y_1)$ and $z_2\in P_{(1,2),(2,1)}(z_1,y_1)$ with $(z_1,y_1)\in\Gamma_{1,2}$. By Step \ref{f=4} and Lemma \ref{sigma1} \ref{sigma11},  we get $(z_2,x)\in\Sigma_1$, a contradiction.


\textbf{Case 2.} $z_1\in P_{(2,1),(2,1)}(x,y_1)$.

Since $y\in P_{(1,2),(1,2)}(y_1,y_2)$ and $x\in P_{(2,1),(2,1)}(y_1,y_2)$, we have $(y_1,y_2)\in\Gamma_{2,2}$. Note that $ C(y_1,y_2)=\{x,y,z_1,z_2\}$. Since $y=x(a_1,b_1)$ and $z_1=x(a_2,b_2)$, one gets $(y,z_1)\in\Sigma_2$. It follows from Lemma \ref{sigma2} that $C(y,z_1)=\{x,y_1,y_2,z_2\}$.

Suppose $(z_1,y_2)\in\Gamma_{1,2}$. Then $z_1,y\in P_{(1,2),(1,2)}(y_1,y_2)$. By Lemma \ref{22} \ref{222}, we get $p^{(2,2)}_{(1,2),(1,2)}=p^{(2,2)}_{(2,1),(2,1)}=2$. 
Then $z_2\in P_{(2,1),(2,1)}(y_1,y_2)$.
Note that $(y,z_1)\in\Sigma_2$ and $C(y,z_1)=\{x,y_1,y_2,z_2\}$. Since $x\in P_{(2,1),(2,1)}(y,z_1)$, $y_1\in P_{(2,1),(1,2)}(y,z_1)$ and $y_2\in P_{(1,2),(2,1)}(y,z_1)$,  one has $(z_1,y)\in\Gamma_{2,3}$. Since $T=\{3\}$, from Lemma \ref{fenlei} \ref{fenlei6}, we get $z_2\in P_{(2,1),(2,1)}(y,z_1)$. Since $x,z_2\in P_{(2,1),(1,2)}(y_1,y)$ with $(y_1,y)\in\Gamma_{1,2}$, we have $p^{(1,2)}_{(2,1),(1,2)}=m/4-1>1$, which implies  $m>8$, contrary to Step \ref{f=4}. Thus, $(z_1,y_2)\in\Gamma_{2,1}$.

Note that $ C(y_1,y_2)=\{x,y,z_1,z_2\}$. Since $y\in P_{(1,2),(1,2)}(y_1,y_2)$, $x\in P_{(2,1),(2,1)}(y_1,y_2)$ and $z_1\in P_{(1,2),(2,1)}(y_1,y_2)$, we have
\begin{align}\tag{5.19}\label{e=2step51}z_2\in P_{(2,1),(1,2)}(y_1,y_2).\end{align}
Note that $(y,z_1)\in\Sigma_2$ and $C(y,z_1)=\{x,y_1,y_2,z_2\}$. Since $x\in P_{(2,1),(2,1)}(y,z_1)$, $y_2\in P_{(1,2),(1,2)}(y,z_1)$ and $y_1\in P_{(2,1),(1,2)}(y,z_1)$, we have
 \begin{align}\tag{5.20}\label{e=2step52}z_2\in P_{(1,2),(2,1)}(y,z_1).\end{align}
 Note that $z_1\in P_{(1,2),(1,2)}(y_2,x)$ and $y\in P_{(2,1),(2,1)}(y_2,x)$ with $(y_2,x)\in\Gamma_{1,2}$. Since $p^{(1,2)}_{(1,2),(2,1)}>0$, from Step \ref{f=4} and Lemma \ref{sigma1} \ref{sigma12}, there exists
 \begin{align}\tag{5.21}\label{e=2step53}w\in P_{(1,2),(2,1)}(y_2,x)\end{align}
 with $(w,z_1)\in\Sigma_2$ and $w\neq y$. Since $y_2\in\{x(a_1,b_2),x(a_2,b_1)\}$, $z_1=x(a_2,b_2)$ and $y=x(a_1,b_1)$, we have
 $$w=
\begin{cases}
x(a_1,b_3),~\mbox{if}~y_2=x(a_1,b_2),\\
x(a_3,b_1),~\mbox{if}~y_2=x(a_2,b_1),
\end{cases}$$
where $b_3\in X\setminus(x\cup\{b_1,b_2\})$ and $a_3\in x\setminus\{a_1,a_2\}$.

By Lemma  \ref{inter} and Step \ref{f=4}, one gets $|\Gamma_{2,1}(x)\cap(Y_i(x,w)\cup\{w\})|=2$ for $i\in\{1,2\}$. Since $w\in\Gamma_{1,2}(x)$,  we have $|\Gamma_{2,1}(x)\cap Y_i(x,w)|=2$, which implies that there exists
 $$w'=
\begin{cases}
x(a,b_3),~\mbox{if}~y_2=x(a_1,b_2),\\
x(a_3,b),~\mbox{if}~y_2=x(a_2,b_1)
\end{cases}$$
such that $w'\in\Gamma_{2,1}(x)$, where $a\in x\setminus\{a_1,a_2\}$ and $b\in X\setminus(x\cup\{b_1,b_2\})$. Then $(w,w')\in\Sigma_1$. If $(w,w')\in\Gamma_{1,2}$, from \eqref{e=2step51}, \eqref{e=2step52} and \eqref{e=2step53}, then $(z_2,y_2,w,w')$ and $(w',x,y,z_2)$ are paths in $\Gamma$, which imply $(z_2,w')\in\Gamma_{3,3}\cap\Sigma_3$, and so $p^{(3,3)}_{(1,2),(2,1)}=0$, contrary to Lemma \ref{k33}. Hence, $(w,w')\in\Gamma_{2,1}$.   \eqref{e=2step53} implies that $y_2,w'\in P_{(2,1),(1,2)}(x,w)$ with $(x,w)\in\Gamma_{1,2}$. Since $p^{(1,2)}_{(2,1),(1,2)}=m/4-1$, we get $m>8$, contrary to Step \ref{f=4}.

\section{Proofs of Theorem \ref{main}}

In this section, we shall prove our main result. Before that, we need some auxiliary lemmas.

\begin{lemma}\label{1331}
If $p^{(1,2)}_{(1,3),(1,3)}\neq0$, then  $p^{(1,2)}_{(1,3),(3,1)}=p^{(1,2)}_{(1,3),(2,1)}=0$.
\end{lemma}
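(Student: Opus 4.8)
The plan is to derive a contradiction from the existence of a vertex witnessing $p^{(1,2)}_{(1,3),(3,1)}\neq0$ or $p^{(1,2)}_{(1,3),(2,1)}\neq0$, using the detailed geometric picture built up in Lemma \ref{1313-2*}, Lemma \ref{1313-2}, and Lemma \ref{1313}. Recall that under the hypothesis $p^{(1,2)}_{(1,3),(1,3)}\neq0$ we already know from Lemma \ref{1313-2*} that $(1,2)$ is pure, from Lemma \ref{1313-2} that $p^{(1,2)}_{(1,3),(1,3)}=p^{(1,2)}_{(2,1),(2,1)}=1$, and from Lemma \ref{1313} that $p^{(1,2)}_{(1,s),(1,t)}=0$ whenever $(s,t)\neq(3,3)$. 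So to begin, fix a circuit $(x_0,x_1,x_2)$ consisting of arcs of type $(1,2)$ and the (unique) vertex $y\in P_{(1,3),(1,3)}(x_0,x_1)$, together with the configuration of Lemma \ref{1313-2}: vertices $y_1\in P_{(1,3),(1,3)}(x_2,x_0)\cap\Sigma_2(x_1)$, $y_2\in P_{(1,3),(1,3)}(x_1,x_2)\cap\Sigma_2(x_0)$, with $C(y,x_2)=\{x_0,x_1,y_1,y_2\}$, and the explicit coordinates $x_1=x_0(a_1,b_1)$, $y_2=x_0(\{a_1,a_2\},\{b_1,b_2\})$, $y_1=x_0(a_2,b_2)$, and $\{x_2,y\}=\{x_0(a_1,b_2),x_0(a_2,b_1)\}$.

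First I would treat $p^{(1,2)}_{(1,3),(3,1)}\neq0$: choose a circuit $(w_0,w_1,w_2)$ of type-$(1,2)$ arcs and a vertex $v\in P_{(1,3),(3,1)}(w_0,w_1)$. Then $(w_0,v,w_1,w_2)$ is a circuit of length $4$; examining $\partial_\Gamma(v,w_2)$ via the triangle inequality and the fact that $(1,2)$ is pure forces $(v,w_2)\in\Sigma_1$ or $\Sigma_2$, and in either case one should be able to push the resulting two-way distances into a violation — either producing a short circuit incompatible with $(w_0,v)\in\Gamma_{1,3}$ and $(w_1,v)\in\Gamma_{3,1}$, or (if $(v,w_2)\in\Sigma_2$) using Lemma \ref{22} together with $c_2=4$ and \eqref{a1c2} to overcount $C(v,w_2)$. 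For $p^{(1,2)}_{(1,3),(2,1)}\neq0$ the argument is similar but one extra step: a vertex $v\in P_{(1,3),(2,1)}(w_0,w_1)$ gives a circuit where one arc is of type $(1,2)$; I would analyze $(v,w_2)$, use that $(1,2)$ is pure so $(w_1,w_2),(w_2,w_0)\in\Gamma_{1,2}$ are forced, and then locate $v$ relative to the $Y_i$-structure of Lemma \ref{sigma1} to contradict either Lemma \ref{1313} (by producing an element of $P_{(1,3),(1,t)}(x_0,x_1)$ with $t\neq3$) or Lemma \ref{1313-2} (by producing a second element of $P_{(1,3),(1,3)}(x_0,x_1)$ or of $P_{(2,1),(2,1)}(x_2,x_0)$).

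The main obstacle I anticipate is the case analysis on where the witness vertex $v$ lies in the Johnson-graph neighborhood structure: since $(v,w_2)$ could be at distance $1$ or $2$ in $\Sigma$, and in the distance-$1$ case $v$ could lie in $Y_1$ or $Y_2$ relative to various reference pairs, there will be several sub-cases, each requiring one to track the two-way distances around a length-$3$ or length-$4$ circuit and invoke the right part of Lemma \ref{22} or Lemma \ref{fenlei}. The key leverage in every branch is the uniqueness statements $p^{(1,2)}_{(1,3),(1,3)}=p^{(1,2)}_{(2,1),(2,1)}=1$ from Lemma \ref{1313-2}: any configuration that would exhibit a \emph{second} vertex in one of these relations, or a vertex in $P_{(1,2),(1,3)}(x_2,x_0)$ beyond the ones already accounted for, is immediately contradictory. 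I expect the proof to run two or three pages but to be entirely mechanical once the correct circuit to stare at is identified in each case.
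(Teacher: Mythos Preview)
Your proposal contains a basic misreading in the first case. If $v\in P_{(1,3),(3,1)}(w_0,w_1)$ then by definition $(w_0,v)\in\Gamma_{1,3}$ and $(v,w_1)\in\Gamma_{3,1}$, so \emph{both} $(w_0,v)$ and $(w_1,v)$ are arcs of type $(1,3)$; this is a non-path (Case~C5), and $(w_0,v,w_1,w_2)$ is \emph{not} a circuit. Your subsequent sketch (``short circuit incompatible with\dots'') is built on the wrong picture, and it is not clear the direct analysis of $(v,w_2)$ terminates cleanly.

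The paper takes a shorter route for this case. Rather than analyzing a witness to $p^{(1,2)}_{(1,3),(3,1)}$ directly, it uses Lemma~\ref{intersection numners}\,\ref{intersection numners 2} to convert: $p^{(1,2)}_{(1,3),(3,1)}\neq0$ is equivalent to $p^{(1,3)}_{(1,2),(1,3)}=p^{(1,3)}_{(1,3),(1,2)}\neq0$. One then picks $z_1\in P_{(1,2),(1,3)}(x_0,y)$ and $z_2\in P_{(1,3),(1,2)}(x_0,y)$ along the type-$(1,3)$ arc $(x_0,y)$, shows via Lemmas~\ref{1313-2} and~\ref{1313} that neither $z_i$ can be adjacent to $x_1$, hence $(z_i,x_1)\in\Sigma_2$, and then that both $(z_1,x_1)$ and $(z_2,x_1)$ lie in $\Gamma_{2,3}$. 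The witnesses $y,x_0$ in $C(z_1,x_1)$ and $C(z_2,x_1)$ together force five distinct nonzero values among the $p^{(2,3)}_{\ast,\ast}$, contradicting $c_2=4$ via~\eqref{a1c2}. This is much more direct than a case split on $(v,w_2)$.

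For the second case your outline is closer in spirit, but you miss a key dependency: the paper's argument for $p^{(1,2)}_{(1,3),(2,1)}=0$ \emph{uses} the fact $p^{(1,3)}_{(1,3),(1,2)}=0$ just established in the first case. Concretely, taking $w\in P_{(1,3),(2,1)}(x_0,x_1)$, one notes $x_1\in P_{(1,3),(1,2)}(y,w)$; since $p^{(1,2)}_{(1,3),(1,2)}=0$ (Lemma~\ref{1313}) and $p^{(1,3)}_{(1,3),(1,2)}=0$ (just shown), this forces $(y,w)\in\Sigma_2$. The $Y_i$-structure of $C(x_0,x_1)$ then gives $(w,x_2)\in\Sigma_1$, and finally $x_0\in P_{(1,2),(1,3)}(x_2,w)$ with $(x_2,w)\in\Gamma_{1,t}$, $t\in\{2,3\}$, contradicts either Lemma~\ref{1313} or, again, $p^{(1,3)}_{(1,2),(1,3)}=0$. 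Your proposed endgame (exhibiting a second element of $P_{(1,3),(1,3)}(x_0,x_1)$ or $P_{(2,1),(2,1)}(x_2,x_0)$) is not what closes the argument; the conversion identity from the first part is what makes both halves work.
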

\begin{proof}
According to Lemma \ref{1313-2*}, there exists a circuit $(x_0,x_1,x_2)$    consisting of arcs of type $(1,2)$. Let $y\in P_{(1,3),(1,3)}(x_0,x_1)$. In view of Lemma \ref{1313-2*} \ref{1313-2*2}, we have $(y,x_2)\in\Sigma_2$.

Suppose  $p^{(1,2)}_{(1,3),(3,1)}\neq0$.  By Lemma \ref{intersection numners} \ref{intersection numners 2}, we get $p^{(1,3)}_{(1,2),(1,3)}=p^{(1,3)}_{(1,3),(1,2)}\neq0$. Let $z_1\in  P_{(1,2),(1,3)}(x_0,y)$ and $z_2\in  P_{(1,3),(1,2)}(x_0,y)$.  Since $3=\partial_{\Gamma}(x_1,y)\leq\partial_{\Gamma}(x_1,z_i)+1$ for $i\in\{1,2\}$,  one has $\partial_{\Gamma}(x_1,z_i)=r_i$ for some $r_i\geq2$. If $(z_1,x_1)\in\Gamma_{1,r_1}$, then $z_1\in P_{(1,2),(1,r_1)}(x_0,x_1)$, which implies $p^{(1,2)}_{(1,2),(1,2)}\neq0$ or $p^{(1,2)}_{(1,2),(1,3)}\neq0$ from Proposition \ref{arcpure} \ref{arcpure2}, contrary to Lemma \ref{1313}. If $(z_2,x_1)\in\Gamma_{1,r_2}$, then $z_2\in P_{(1,3),(1,r_2)}(x_0,x_1)$, which implies $p^{(1,2)}_{(1,3),(1,2)}\neq0$, or $z_2,y\in P_{(1,3),(1,3)}(x_0,x_1)$ from Proposition \ref{arcpure} \ref{arcpure2}, contrary to  Lemma \ref{1313} or \ref{1313-2}. Thus, $(z_i,x_1)\in\Sigma_2$ for all $i\in\{1,2\}$.

Since $y\in P_{(1,3),(1,3)}(z_1,x_1)$ and $x_0\in P_{(2,1),(1,2)}(z_1,x_1)$, from Lemma \ref{22} \ref{223} and Lemma \ref{1313-2*}, one obtains $(z_1,x_1)\in\Gamma_{2,3}$. Observe that $y\in P_{(1,2),(1,3)}(z_2,x_1)$ and $x_0\in P_{(3,1),(1,2)}(z_2,x_1)$. It follows from Lemma \ref{22} that  $(z_2,x_1)\in\Gamma_{2,3}$.
Then $p^{(2,3)}_{(1,3),(1,3)}p^{(2,3)}_{(2,1),(1,2)}p^{(2,3)}_{(1,2),(2,1)}p^{(2,3)}_{(3,1),(1,2)}p^{(2,3)}_{(1,2),(3,1)}\neq0$, and so $c_2\geq5$ from \eqref{a1c2}, a contradiction. Therefore, $p^{(1,2)}_{(1,3),(3,1)}=0$.  Lemma \ref{intersection numners} \ref{intersection numners 2} implies $p^{(1,3)}_{(1,3),(1,2)}=p^{(1,3)}_{(1,2),(1,3)}=0$.

Suppose  $p^{(1,2)}_{(1,3),(2,1)}\neq0$. Then there exists $w\in P_{(1,3),(2,1)}(x_0,x_1)$.  Since $3=\partial_{\Gamma}(x_1,y)\leq1+\partial_{\Gamma}(w,y)$, we get $\partial_{\Gamma}(w,y)=s$ for some $s\geq2$. Lemma \ref{1313} implies $p^{(1,2)}_{(1,3),(1,2)}=0$. Since $p^{(1,3)}_{(1,3),(1,2)}=0$ and $x_1\in P_{(1,3),(1,2)}(y,w)$, from Proposition \ref{arcpure} \ref{arcpure2}, we have $(y,w)\notin\Gamma_{1,s}$.
 %
Thus,  $(y,w)\in\Sigma_2$.

Since  $y,w,x_2\in C(x_0,x_1)$ and $w,x_2\in\Sigma_2(y)$, from Lemma \ref{sigma1} \ref{sigma12}, one gets $y\in Y_i(x_0,x_1)$, and $w,x_2\in Y_j(x_0,x_1)$ with $\{i,j\}=\{1,2\}$. Lemma \ref{sigma1} \ref{sigma11} implies $(w,x_2)\in\Sigma_1$.
Since $3=\partial_{\Gamma}(w,x_0)\leq\partial_{\Gamma}(w,x_2)+1$, from Proposition \ref{arcpure} \ref{arcpure2}, we obtain $(x_2,w)\in\Gamma_{1,t}$ for some $t\in\{2,3\}$. The fact $x_0\in P_{(1,2),(1,3)}(x_2,w)$ implies $p^{(1,2)}_{(1,2),(1,3)}\neq0$ or $p^{(1,3)}_{(1,2),(1,3)}\neq0$. Lemma \ref{1313} implies $p^{(1,3)}_{(1,2),(1,3)}\neq0$, a contradiction. 
Therefore, $p^{(1,2)}_{(1,3),(2,1)}=0$.
\end{proof}

\begin{lemma}\label{y1}
Let $T=\{3\}$. If $(x,y)\in\Gamma_{1,2}$, then $Y_1(x,y)\subseteq P_{(2,1),(2,1)}(x,y)$.
\end{lemma}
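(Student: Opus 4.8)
The plan is to use the structural results already obtained for $T=\{3\}$ and observe that the conclusion is then essentially a counting triviality. First I would apply Proposition \ref{e=2}: since $T=\{3\}$, the underlying graph $\Sigma$ is the Johnson graph $J(m,2)$, so $e=2$. Lemma \ref{sigma1} then gives $|Y_1(x,y)|=e-1=1$, i.e.\ $Y_1(x,y)$ consists of a single vertex.

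Next I would invoke Lemma \ref{yi} with $i=1$ for the pair $(x,y)\in\Gamma_{1,2}$, which yields
\[
|P_{(2,1),(2,1)}(x,y)\cap Y_1(x,y)|=|P_{(1,2),(1,2)}(x,y)\cap Y_1(x,y)|+1\geq 1 .
\]
Since $P_{(2,1),(2,1)}(x,y)\cap Y_1(x,y)\subseteq Y_1(x,y)$ and $|Y_1(x,y)|=1$, this inequality forces $|P_{(2,1),(2,1)}(x,y)\cap Y_1(x,y)|=1=|Y_1(x,y)|$, hence $Y_1(x,y)\subseteq P_{(2,1),(2,1)}(x,y)$, which is the claim.

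I do not expect any genuine obstacle here: the only substantive input is Proposition \ref{e=2}, which collapses $Y_1(x,y)$ to a singleton, after which the lower bound ``$\geq 1$'' coming from Lemma \ref{yi} is automatically an equality. If one preferred not to cite Lemma \ref{yi}, one could instead re-run the short argument behind it in the special case $i=1$, $e=2$, using Lemma \ref{inter} and Lemma \ref{iwuguan} directly; but that would only duplicate work already done.
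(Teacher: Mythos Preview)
Your proposal is correct and follows essentially the same approach as the paper's own proof: apply Proposition~\ref{e=2} to reduce to $e=2$ so that $|Y_1(x,y)|=1$ by Lemma~\ref{sigma1}, then use Lemma~\ref{yi} to get $|P_{(2,1),(2,1)}(x,y)\cap Y_1(x,y)|>0$, forcing the containment.
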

\begin{proof}
In view of Proposition \ref{e=2}, we have $\Sigma=J(m,2)$, which implies $|Y_1(x,y)|=1$ from Lemma \ref{sigma1}. By Lemma \ref{yi}, one gets $|P_{(2,1),(2,1)}(x,y)\cap Y_1(x,y)|>0$, and so $Y_1(x,y)\subseteq P_{(2,1),(2,1)}(x,y)$.
\end{proof}

\begin{lemma}\label{k23}
If $T=\{3\},$ then  $(2,3),(2,4)\notin\tilde{\partial}(\Gamma)$.
\end{lemma}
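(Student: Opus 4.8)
The plan is to work entirely inside $\Sigma=J(m,2)$, which is forced by Proposition~\ref{e=2} and which has diameter $2$ by Theorem~\ref{johnson}; write the vertices of $\Gamma$ as the $2$-subsets of an $m$-set $X$. Since $T=\{3\}$, every arc of $\Gamma$ has type $(1,2)$, i.e.\ $\Gamma_{1,2}=A(\Gamma)$, and any path $u\to t\to v$ of length $2$ in $\Gamma$ has $t\in\Sigma_1(u)\cap\Sigma_1(v)$ (arcs are edges of $\Sigma$). The crucial ingredient is a reformulation of Lemma~\ref{y1}: if $u\to v$ is an arc and $w:=(u\cup v)\setminus(u\cap v)$ is the third vertex of the triangle of $J(m,2)$ spanned by $u,v$, then $Y_1(u,v)=\{w\}$, so Lemma~\ref{y1} yields $w\in P_{(2,1),(2,1)}(u,v)$, that is, $v\to w$ and $w\to u$. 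Thus every triangle of $J(m,2)$ coming from a $3$-subset of $X$ is a directed $3$-cycle in $\Gamma$.

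Suppose toward a contradiction that $(2,i)\in\tilde{\partial}(\Gamma)$ for some $i\geq 3$, and fix $(x,z)\in\Gamma_{2,i}$. Since $\partial_{\Gamma}(x,z)=2$ and $\partial_{\Gamma}(z,x)=i\geq 3$, neither $(x,z)$ nor $(z,x)$ is an arc, so $x$ and $z$ are non-adjacent in $\Sigma$; as $\Sigma$ has diameter $2$ this gives $\partial_{\Sigma}(x,z)=2$, i.e.\ $x\cap z=\emptyset$. Write $x=\{a_1,a_2\}$ and $z=\{b_1,b_2\}$; then $C(x,z)=\{\{a_1,b_1\},\{a_1,b_2\},\{a_2,b_1\},\{a_2,b_2\}\}$. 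Since $\partial_{\Gamma}(x,z)=2$, there is a path $x\to w_0\to z$ with $w_0\in C(x,z)$, and after relabelling we may assume $w_0=\{a_1,b_1\}$.

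Now I would propagate orientations around the four triangles on the $3$-sets $\{a_1,a_2,b_1\}$, $\{a_2,b_1,b_2\}$, $\{a_1,a_2,b_2\}$, $\{a_1,b_1,b_2\}$, alternating the directed-$3$-cycle fact with the constraint that, as $\partial_{\Gamma}(z,x)\geq 3$, there is no path of length $2$ from $z$ to $x$. Concretely: from $x\to\{a_1,b_1\}$ the first triangle gives $\{a_1,b_1\}\to\{a_2,b_1\}\to x$; since $\{a_2,b_1\}\to x$ and there is no path $z\to\{a_2,b_1\}\to x$, we get $\{a_2,b_1\}\to z$; then the second triangle gives $z\to\{a_2,b_2\}\to\{a_2,b_1\}$; since $z\to\{a_2,b_2\}$ and $\{a_2,b_2\}\in\Sigma_1(x)$, the same constraint gives $x\to\{a_2,b_2\}$; the third triangle then gives $\{a_2,b_2\}\to\{a_1,b_2\}\to x$; finally, from $\{a_1,b_1\}\to z$ the fourth triangle gives $z\to\{a_1,b_2\}$. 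Combining the last two arcs, $z\to\{a_1,b_2\}\to x$ is a path of length $2$, so $\partial_{\Gamma}(z,x)\leq 2$, contradicting $i\geq 3$. Hence $(2,i)\notin\tilde{\partial}(\Gamma)$ for every $i\geq 3$; in particular $(2,3),(2,4)\notin\tilde{\partial}(\Gamma)$.

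No delicate estimate is involved, so there is no real obstacle beyond bookkeeping: one must translate carefully between the $Y_1(\cdot,\cdot)$ notation of Lemma~\ref{y1} and the concrete triangles of $J(m,2)$, and check at each step that the vertex to which the ``no path of length $2$ from $z$ to $x$'' principle is applied lies in $\Sigma_1(x)$ or $\Sigma_1(z)$ (automatic here, since every vertex used belongs to $C(x,z)$). If one dislikes the relabelling of $a_1,a_2,b_1,b_2$, the whole chain can instead be run symmetrically around the $4$-cycle that $C(x,z)$ induces in $\Sigma$.
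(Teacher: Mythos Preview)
Your proof is correct. Both your argument and the paper's hinge on the same fact derived from Lemma~\ref{y1}: in $J(m,2)$ every $Y_1$-triangle is a directed $3$-cycle. The difference lies in how this is exploited. The paper, after obtaining $y\in P_{(1,2),(1,2)}(x,z)$ and $y_1=x(a_2,b_1)\in P_{(2,1),(1,2)}(x,z)$ via Lemma~\ref{y1}, invokes Lemma~\ref{fenlei} to conclude that $C(x,z)$ must be in case~\ref{fenlei6}, producing vertices $w\in P_{(1,2),(1,2)}(x,z)$ and $w_1\in P_{(1,2),(2,1)}(x,z)$ with $w_1\in Y_1(x,w)$; a second application of Lemma~\ref{y1} then forces $(x,w_1)\in\Gamma_{2,1}$, a contradiction. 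You bypass Lemma~\ref{fenlei} entirely and instead propagate orientations around the four $Y_1$-triangles of the induced octahedron, using only Lemma~\ref{y1} and the constraint $\partial_\Gamma(z,x)\geq 3$ to force each successive arc; this produces an explicit $2$-path $z\to\{a_1,b_2\}\to x$. Your route is slightly longer but more self-contained, and it shows directly that $(2,i)\notin\tilde\partial(\Gamma)$ for every $i\geq 3$ without any case analysis from Section~3.
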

\begin{proof}

Assume the contrary, namely, $(2,s)\in\tilde{\partial}(\Gamma)$ for some $s\in\{3,4\}$. Since $T=\{3\},$ we have $p^{(2,s)}_{(1,2),(1,2)}\neq0$. Let $(x,z)\in\Gamma_{2,s}$ and $y\in P_{(1,2),(1,2)}(x,z)$.
Since $(x,z)\in\Sigma_2$, there exist distinct elements $a_1,a_2\in x$ and $b_1,b_2\in X\setminus x$ such that $z=x(\{a_1,a_2\},\{b_1,b_2\})$. Since $y\in C(x,z)$, without loss of generality, we may assume $y=x(a_1,b_1)$.
Let $y_1=x(a_2,b_1)$. Then $y_1\in Y_1(x,y)$. Lemma \ref{y1} implies $y_1\in P_{(2,1),(2,1)}(x,y)$.

Since $\partial_{\Gamma}(z,x)>2$ and $T=\{3\}$, one gets $P_{(2,1),(2,1)}(x,z)=\emptyset$. The fact $y_1\in C(x,z)$ implies $y_1\in P_{(2,1),(1,2)}(x,z)$. By Lemma \ref{fenlei}, there exist $w\in P_{(1,2),(1,2)}(x,z)$ and $w_1\in P_{(1,2),(2,1)}(x,z)$ such that $C(x,z)=\{y,y_1,w,w_1\}$. Then $w=x(a_i,b_2)$ and $w_1=x(a_j,b_2)$ for $\{i,j\}=\{1,2\}$. Since $w_1\in Y_1(x,w)$, from Lemma \ref{y1}, we have $w_1\in P_{(2,1),(2,1)}(x,w)$, contrary to the fact that $(x,w_1)\in\Gamma_{1,2}$.
\end{proof}

\begin{lemma}\label{k12}
Suppose $T=\{3\}.$ If $m>4$, then $p^{(2,2)}_{(1,2),(1,2)}=p^{(2,2)}_{(1,2),(2,1)}=1$, $k_{1,2}=m-2$,  $k_{2,2}=(m-2)(m-4)/2$ and $k_{3,3}=(m-2)/2$. 
%
\end{lemma}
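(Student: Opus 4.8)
The plan is to exploit the fact that $T=\{3\}$ forces $\Sigma=J(m,2)$ (Proposition~\ref{e=2}), so every pair $(x,z)\in\Sigma_2$ has $c_2=4$ and $C(x,z)$ consists of exactly four vertices $\{y_1,y_2,y_3,y_4\}$ which split into the two "lines" $Y_1(x,z)$ and $Y_2(x,z)$ of sizes $1$ and $m-3$ in $J(m,2)$. Combining Lemma~\ref{k23} (which kills $(2,3)$ and $(2,4)$) with Lemma~\ref{22}, I would first pin down $p^{(2,2)}_{(1,2),(1,2)}$ and $p^{(2,2)}_{(1,2),(2,1)}$. Since $(2,3),(2,4)\notin\tilde\partial(\Gamma)$, any $(x,z)\in\Gamma_{2,i}$ with $i\ge3$ is impossible once we've shown $i\le2$: more precisely, every vertex in $C(x,z)$ for $(x,z)\in\Sigma_2$ must give a path of type $(1,2)\text{--}(1,2)$ or $(1,2)\text{--}(2,1)$ (a non-path of type $(1,r),(s,1)$ would force $(x,z)\in\Gamma_{2,2}$ but with $r,s\ge2$ impossible since $T=\{3\}$). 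So $C(x,z)\subseteq P_{(1,2),(1,2)}(x,z)\cup P_{(1,2),(2,1)}(x,z)\cup P_{(2,1),(1,2)}(x,z)\cup P_{(2,1),(2,1)}(x,z)$, which forces $(2,2)\in\tilde\partial(\Gamma)$ and, via Lemma~\ref{22}, leaves only cases \ref{221} or \ref{222}. Case \ref{221} ($p^{(2,2)}_{(1,1),(1,1)}=4$) is excluded because $T=\{3\}$ means $(1,1)\notin\tilde\partial(\Gamma)$, so $\Gamma$ being non-undirected forces $p^{(2,2)}_{(1,1),(1,1)}=0$. Hence \ref{222} holds: $p^{(2,2)}_{(1,2),(1,2)}=p^{(2,2)}_{(2,1),(2,1)}=2$. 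Wait — this contradicts the claimed value $1$; so I must instead be in a situation where $(x,z)\in\Gamma_{2,2}$ does \emph{not} have all four $C(x,z)$-vertices of pure type. Re-examining: with $T=\{3\}$, a vertex $y\in C(x,z)$ can also be a \emph{non-path} of type $(1,2),(2,1)$ only if both arc-types are $(1,2)$-type, i.e. $r=s=2$; then Lemma~\ref{22} case \ref{224} or \ref{225} with $r=2$ applies, giving $p^{(2,2)}_{(1,2),(1,2)}=1$ (via $p^{(2,2)}_{(1,1),(1,1)}=2$ being killed, leaving \ref{225}: $p^{(2,2)}_{(1,2),(2,1)}=p^{(2,2)}_{(2,1),(1,2)}=1$ and $p^{(2,2)}_{(1,2),(1,2)}=p^{(2,2)}_{(2,1),(2,1)}=1$). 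The main obstacle is carefully showing case \ref{222} cannot occur, which I expect to do using Lemma~\ref{y1}: in $J(m,2)$ the unique vertex of $Y_1(x,z)$ must lie in $P_{(2,1),(2,1)}$, and tracing this through $C(x,z)$ (as in the proof of Lemma~\ref{k23}) shows each pure-type pattern propagates inconsistently unless the mixed/non-path pattern \ref{225} holds.

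Once $p^{(2,2)}_{(1,2),(1,2)}=p^{(2,2)}_{(1,2),(2,1)}=1$ is established, computing $k_{1,2}$ is immediate from Lemma~\ref{inter}: $k_{1,2}=e(m-e)/2$ with $e=2$, so $k_{1,2}=(m-2)$. Next, $k_{2,2}$: I would use Lemma~\ref{intersection numners}\ref{intersection numners 1} applied to $\tilde i=\tilde j=(1,2)$, i.e. $(k_{1,2})^2=\sum_{\tilde h}p^{\tilde h}_{(1,2),(1,2)}k_{\tilde h}$. By Lemma~\ref{k23} the only $\tilde h$ with $\partial_\Gamma$-distance component $2$ contributing are $(2,2)$ (and $(2,1)$); since $T=\{3\}$ the possible $\tilde h$ with nonzero $p^{\tilde h}_{(1,2),(1,2)}$ are among $(1,2),(2,1),(2,2),(3,3)$. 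Plugging $p^{(2,2)}_{(1,2),(1,2)}=1$, $p^{(1,2)}_{(1,2),(1,2)}=m/4-1$, $p^{(2,1)}_{(1,2),(1,2)}=p^{(1,2)}_{(2,1),(2,1)}=m/4+1$ (Lemma~\ref{inter}), $p^{(3,3)}_{(1,2),(1,2)}=0$ (from Lemma~\ref{22} type constraints, since a pure path of type $(1,2)$ cannot produce $(3,3)$; actually $p^{(3,3)}_{(1,2),(1,2)}=0$ because a $(2,2)$-path sum to reach distance $3$ is blocked — I'd verify via girth/type arguments), gives $(m-2)^2 = (m/4-1)(m-2) + (m/4+1)k_{2,1} + 1\cdot k_{2,2}$, and with $k_{2,1}=k_{1,2}=m-2$ this solves to $k_{2,2}=(m-2)(m-4)/2$. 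Finally, for $k_{3,3}$ I would use Lemma~\ref{intersection numners}\ref{intersection numners 1} with $\tilde i=(1,2),\tilde j=(2,1)$: $k_{1,2}k_{2,1}=\sum_{\tilde h}p^{\tilde h}_{(1,2),(2,1)}k_{\tilde h}$, where the sum runs over $(0,0),(1,2),(2,1),(2,2),(3,3)$ (using Lemma~\ref{k23} to exclude $(2,3),(2,4)$ and $(3,2)$). With $p^{(0,0)}_{(1,2),(2,1)}=k_{1,2}=m-2$, $p^{(1,2)}_{(1,2),(2,1)}=p^{(2,1)}_{(1,2),(2,1)}=m/4-1$ (Lemma~\ref{intersection numners}\ref{intersection numners 2} applied to Lemma~\ref{inter}), $p^{(2,2)}_{(1,2),(2,1)}=1$, $p^{(3,3)}_{(1,2),(2,1)}=2$ (Lemma~\ref{k33}), I get $(m-2)^2 = (m-2) + 2(m/4-1)(m-2) + 1\cdot(m-2)(m-4)/2 + 2k_{3,3}$, which solves to $k_{3,3}=(m-2)/2$.

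In short, the skeleton is: (1) use Lemma~\ref{k23} plus the type analysis of Lemmas~\ref{22} and~\ref{y1} to rule out all of Lemma~\ref{22}'s cases except \ref{225} with $r=2$, yielding $p^{(2,2)}_{(1,2),(1,2)}=p^{(2,2)}_{(1,2),(2,1)}=1$; (2) read off $k_{1,2}=m-2$ from Lemma~\ref{inter} with $e=2$; (3) apply $k_ik_j=\sum p^h_{ij}k_h$ twice — once for $(1,2),(1,2)$ to get $k_{2,2}$, once for $(1,2),(2,1)$ to get $k_{3,3}$ — using Lemmas~\ref{k23} and~\ref{k33} to control which relations appear and Lemma~\ref{inter} for the relevant intersection numbers. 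The hardest part is step (1): establishing that the "pure path" alternative $p^{(2,2)}_{(1,2),(1,2)}=2$ is inconsistent, for which I expect to need a careful propagation argument on $C(x,z)$ in $J(m,2)$ of the kind used in Lemma~\ref{k23}, exploiting that $|Y_1(x,z)|=1$ so exactly one neighbour is "forced" to be of type $(2,1),(2,1)$.
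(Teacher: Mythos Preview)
Your proposal is essentially correct and follows the same route as the paper. Both arguments hinge on (a) using Lemma~\ref{22} together with $T=\{3\}$ to reduce the possibilities for $p^{(2,2)}_{(1,2),(1,2)}$ to cases \ref{222} and \ref{225}, (b) eliminating case \ref{222} via a propagation argument that repeatedly applies Lemma~\ref{y1} (the single vertex of $Y_1(x,y)$ in $J(m,2)$ is forced into $P_{(2,1),(2,1)}(x,y)$) until a pair in $\Gamma_{2,3}$ appears, contradicting Lemma~\ref{k23}, and (c) plugging the resulting intersection numbers into Lemma~\ref{intersection numners}\ref{intersection numners 1} once for $(1,2),(1,2)$ and once for $(1,2),(2,1)$ to extract $k_{2,2}$ and $k_{3,3}$.

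The only cosmetic difference is ordering: the paper first derives the relation $p^{(2,2)}_{(1,2),(1,2)}k_{2,2}=(m-2)(m-4)/2$ (which also gives the cleanest reason that $(2,2)\in\tilde\partial(\Gamma)$ when $m>4$) and \emph{then} pins down $p^{(2,2)}_{(1,2),(1,2)}=1$, whereas you argue $(2,2)\in\tilde\partial(\Gamma)$ directly from Lemma~\ref{k23} and determine $p^{(2,2)}_{(1,2),(1,2)}$ before touching any valency equation. Your calculations for $k_{2,2}$ and $k_{3,3}$ check out; note that $p^{(3,3)}_{(1,2),(1,2)}=0$ is immediate since a length-$2$ path forces $\partial_\Gamma(x,z)\le 2$. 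One notational slip: ``$Y_1(x,z)$'' is only defined for $(x,z)\in\Sigma_1$, not $\Sigma_2$; what you actually use is $Y_1(x,y)$ for the arc $(x,y)$, exactly as the paper does.
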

\begin{proof}
In view of Proposition \ref{e=2}, we have $\Sigma=J(m,2)$. According to Lemma \ref{intersection numners} \ref{intersection numners 2} and Lemma \ref{inter}, we get $p^{(1,2)}_{(1,2),(1,2)}=p^{(1,2)}_{(1,2),(2,1)}=p^{(2,1)}_{(1,2),(2,1)}=m/4-1$ and $p^{(2,1)}_{(1,2),(1,2)}=p^{(1,2)}_{(2,1),(2,1)}=m/4+1.$ By    Lemma \ref{k33}, we have $p^{(3,3)}_{(1,2),(2,1)}=2$. 

According to Theorems \ref{johnson} and \ref{folded johnson}, one gets $k_{1,2}=b_0/2=e(m-e)/2=m-2$. By Lemma \ref{intersection numners} \ref{intersection numners 1} and  Lemma \ref{k23}, one has $(k_{1,2})^2=
p^{(1,2)}_{(1,2),(1,2)}k_{1,2}+p^{(2,1)}_{(1,2),(1,2)}k_{2,1}+p^{(2,2)}_{(1,2),(1,2)}k_{2,2}.$ Substituting $k_{1,2}$, $p^{(1,2)}_{(1,2),(1,2)}$ and $p^{(2,1)}_{(1,2),(1,2)}$ into the above equation,  one obtains
$p^{(2,2)}_{(1,2),(1,2)}k_{2,2}=(m-2)(m-4)/2$.

Since $m>4$, one has $(2,2)\in\tilde{\partial}(\Gamma)$, which implies $p^{(2,2)}_{(1,2),(1,2)}=p^{(2,2)}_{(2,1),(2,1)}\neq0$. Let $(x,z)\in\Gamma_{2,2}$ and $y\in P_{(1,2),(1,2)}(x,z)$. Since $(x,z)\in\Sigma_2$, there exist distinct elements $a_1,a_2\in x$ and $b_1,b_2\in X\setminus x$ such that $z=x(\{a_1,a_2\},\{b_1,b_2\})$. Since $y\in C(x,z)$, without loss of generality, we may assume $y=x(a_1,b_1)$.
Then \begin{align}\tag{6.1}\label{k12gongshi1}C(x,z)=\{y,y_1,w,w_1\}~\mbox{with}~y_1=x(a_2,b_1),~w=x(a_1,b_2)~\mbox{and}~w_1=x(a_2,b_2).\end{align} Since $y_1\in Y_1(x,y)$, from Lemma \ref{y1}, we get $y_1\in P_{(2,1),(2,1)}(x,y)$.


Suppose $p^{(2,2)}_{(1,2),(1,2)}>1$. Since $c_2=4$, from \eqref{a1c2} and  Lemma \ref{22} \ref{222}, one has $p^{(2,2)}_{(1,2),(1,2)}=p^{(2,2)}_{(2,1),(2,1)}=2$ and $p^{(2,2)}_{(1,2),(2,1)}=p^{(2,2)}_{(2,1),(1,2)}=0$. Since  $(x,y_1)\in\Gamma_{2,1}$, we get $y_1\in P_{(2,1),(2,1)}(x,z)$.  Since $z=y(a_2,b_2)$ and $w=y(b_1,b_2)$, we have $w\in Y_1(y,z)$, which implies $w\in P_{(2,1),(2,1)}(y,z)$ from Lemma \ref{y1}. Since $y_1=z(b_2,a_1)$ and $w_1=z(b_1,a_1)$,  we get $w_1\in Y_1(z,y_1)$, which implies  $w_1\in P_{(2,1),(2,1)}(z,y_1)$ from Lemma \ref{y1}. Since $(y,w_1)\in\Sigma_2$, from \eqref{k12gongshi1} and Lemma \ref{sigma2}, we get $C(y,w_1)=\{x,y_1,w,z\}$. The fact $z\in P_{(1,2),(2,1)}(y,w_1)$ implies $(y,w_1)\notin\Gamma_{2,2}$. Since $y_1\in P_{(1,2),(1,2)}(y,w_1)$, one gets $(y,w_1)\in\Gamma_{2,3}$, contrary to  Lemma \ref{k23}. Thus, $p^{(2,2)}_{(1,2),(1,2)}=1$.

Since $p^{(2,2)}_{(1,2),(1,2)}k_{2,2}=(m-2)(m-4)/2$,  we have $k_{2,2}=(m-2)(m-4)/2$. Since $p^{(2,2)}_{(1,2),(1,2)}=1$, from  Lemma \ref{22} \ref{225}, we obtain $p^{(2,2)}_{(1,2),(2,1)}=1$.    Note that $p^{(0,0)}_{(1,2),(2,1)}=k_{1,2}=m-2$. By Lemma \ref{intersection numners} \ref{intersection numners 1} and Lemma \ref{k23}, we get $$(k_{1,2})^2=k_{1,2}+p^{(1,2)}_{(1,2),(2,1)}k_{1,2}+p^{(2,1)}_{(1,2),(2,1)}k_{2,1}+p^{(2,2)}_{(1,2),(2,1)}k_{2,2}+
p^{(3,3)}_{(1,2),(2,1)}k_{3,3}.$$ Substituting $k_{1,2}$, $p^{(1,2)}_{(1,2),(2,1)}$, $p^{(2,1)}_{(1,2),(2,1)}$, $p^{(2,2)}_{(1,2),(2,1)}$, $p^{(3,3)}_{(1,2),(2,1)}$ and $k_{2,2}$ into the above equation, we have $k_{3,3}=(m-2)/2$.  This completes the proof of the lemma.
\end{proof}

%

Now we are ready to prove Theorem \ref{main}.

\begin{proof}[Proof of Theorem \ref{main}]
According to Proposition \ref{arcpure}, we have $T=\{3\}$, or $T=\{3,4\}$ or $\{2,3,4\}$. Next we divide the proof into two cases.

\textbf{Case 1.} $T=\{3,4\}$ or $\{2,3,4\}$.

In view of Proposition \ref{arcpure}, one gets $p^{(1,2)}_{(1,3),(1,3)}\neq0$. Let $(x,y)\in\Gamma_{1,2}$. We claim that $$z\in P_{(2,1),(2,1)}(x,y)\cup P_{(1,3),(1,3)}(x,y)~\mbox{ for all}~ z\in C(x,y).$$ Suppose for the contrary that $z\notin P_{(2,1),(2,1)}(x,y)\cup P_{(1,3),(1,3)}(x,y)$ for some $z\in C(x,y)$.  Lemma \ref{1313-2*} implies that $(1,2)$ is pure, and so $P_{(s,1),(t,1)}(x,y)=\emptyset$ for $(s,t)\neq(2,2)$. By \eqref{a1c2} and Lemma \ref{1313}, we get $z\in P_{(1,s),(t,1)}(x,y)$ or $z\in P_{(t,1),(1,s)}(x,y)$ for some $s,t\geq2$. Since $T=\{3,4\}$ or $\{2,3,4\}$, we may assume $z\in P_{(1,s),(t,1)}(x,y)$ for some $s,t\in\{2,3\}$. Lemma \ref{1331} implies  $s=2$. Then $p^{(1,2)}_{(1,2),(t,1)}\neq0$. According to Lemma \ref{intersection numners} \ref{intersection numners 2}, one gets $p^{(1,2)}_{(1,2),(1,t)}\neq0$, contrary to Lemma \ref{1313}. Thus, our claim holds.

By Lemma \ref{1313-2}, we obtain $p^{(1,2)}_{(2,1),(2,1)}=p^{(1,2)}_{(1,3),(1,3)}=1$. The claim implies $a_1=2$. It follows from Theorems \ref{johnson} and \ref{folded johnson} that $m=4$, and so $\Sigma=J(4,2)$.  By \cite{H}, $\Gamma$ is isomorphic to  ${\rm Cay}(\mathbb{Z}_6,\{1,2\})$.

\textbf{Case 2.} $T=\{3\}$.

In view of Proposition \ref{e=2}, one has $\Sigma=J(m,2)$.  If $m=4$, from \cite{H}, then $\Gamma$ is isomorphic to  ${\rm Cay}(\mathbb{Z}_6,\{1,4\})$. Then we only need to prove that the  $m=4$.
Suppose for the contrary that $m>4$. According to Lemma \ref{intersection numners} \ref{intersection numners 2} and Lemma \ref{inter}, we get $$p^{(1,2)}_{(1,2),(1,2)}=p^{(2,1)}_{(2,1),(2,1)}=p^{(1,2)}_{(1,2),(2,1)}=p^{(2,1)}_{(1,2),(2,1)}=
p^{(1,2)}_{(2,1),(1,2)}=p^{(1,2)}_{(2,1),(1,2)}=m/4-1$$ and $p^{(1,2)}_{(2,1),(2,1)}=p^{(2,1)}_{(1,2),(1,2)}=m/4+1.$ By    Lemma \ref{k33}, we have $p^{(3,3)}_{(1,2),(2,1)}=p^{(3,3)}_{(2,1),(1,2)}=2$. Since $c_2=4$, we get  $p^{(3,3)}_{(1,2),(1,2)}=p^{(3,3)}_{(2,1),(2,1)}=0$.

According to Lemma \ref{k12}, one obtains $k_{1,2}=m-2$, $k_{2,2}=(m-2)(m-4)/2$ and $k_{3,3}=(m-2)/2$. In view of Lemma \ref{intersection numners} \ref{intersection numners 2}, one gets $p^{\tilde{l}^*}_{\tilde{i}^*,(r,r)}=p^{\tilde{l}}_{\tilde{i},(r,r)}=k_{r,r}p^{(r,r)}_{\tilde{i}^*,\tilde{l}}/k_{\tilde{l}}$ for all $\tilde{l},\tilde{i},(r,r)\in\tilde{\partial}(\Gamma)$.
By  Lemma \ref{k12}  and Lemma \ref{22} \ref{225},  we have $p^{(2,2)}_{(1,2),(1,2)}=p^{(2,2)}_{(2,1),(2,1)}=p^{(2,2)}_{(1,2),(2,1)}=p^{(2,2)}_{(2,1),(1,2)}=1$.
Then $$p^{(1,2)}_{(1,2),(2,2)}=p^{(2,1)}_{(2,1),(2,2)}=p^{(2,1)}_{(1,2),(2,2)}=p^{(1,2)}_{(2,1),(2,2)}=m/2-2.$$ Since $p^{(3,3)}_{(1,2),(2,1)}=p^{(3,3)}_{(2,1),(1,2)}=2$ and $p^{(3,3)}_{(1,2),(1,2)}=p^{(3,3)}_{(2,1),(2,1)}=0$, one has \begin{align}p^{(1,2)}_{(1,2),(3,3)}=p^{(2,1)}_{(2,1),(3,3)}=p^{(1,2)}_{(3,3),(1,2)}=p^{(2,1)}_{(3,3),(2,1)}=1, \nonumber\\ p^{(2,1)}_{(1,2),(3,3)}=p^{(1,2)}_{(2,1),(3,3)}=p^{(2,1)}_{(3,3),(2,3)}=p^{(1,2)}_{(3,3),(2,1)}=0.\nonumber
\end{align}

In view of Lemma \ref{intersection numners} \ref{intersection numners 2}, one obtains $p^{(r,r)}_{\tilde{i},(s,s)}=p^{(r,r)}_{\tilde{i}^*,(s,s)}$ for all $\tilde{i},(r,r),(s,s)\in\tilde{\partial}(\Gamma)$. Then we may assume
\begin{align}
&p^{(2,2)}_{(1,2),(2,2)}=p^{(2,2)}_{(2,1),(2,2)}=a,~ p^{(3,3)}_{(1,2),(2,2)}=p^{(3,3)}_{(2,1),(2,2)}=b,\nonumber\\ &p^{(3,3)}_{(1,2),(3,3)}=p^{(3,3)}_{(2,1),(3,3)}=p^{(3,3)}_{(3,3),(1,2)}=p^{(3,3)}_{(3,3),(2,1)}=c,\nonumber\\  &p^{(2,2)}_{(1,2),(3,3)}=p^{(2,2)}_{(2,1),(3,3)}=p^{(2,2)}_{(3,3),(1,2)}=p^{(2,2)}_{(3,3),(2,1)}=d.\nonumber
\end{align} By Lemma \ref{intersection numners} \ref{intersection numners 2} again, we get
\begin{align}
&p^{(1,2)}_{(3,3),(2,2)}=p^{(2,1)}_{(3,3),(2,2)}=k_{2,2}p^{(2,2)}_{(1,2),(3,3)}/k_{1,2}=(m/2-2)d,\nonumber\\ &p^{(1,2)}_{(3,3),(3,3)}=p^{(2,1)}_{(3,3),(3,3)}=k_{3,3}p^{(3,3)}_{(1,2),(3,3)}/k_{1,2}=c/2.\nonumber
\end{align}
 Note that $p^{\tilde{l}}_{\tilde{i},(0,0)}=\delta_{\tilde{i},\tilde{l}}$ and $p^{(0,0)}_{\tilde{i},\tilde{l}^*}=k_{\tilde{i}}\delta_{\tilde{i},\tilde{l}}$, where $\delta$ is the Kronecker's delta.

Let $B_{12} = (p^{\tilde{j}}_{(1,2),\tilde{i}})_{\tilde{i},\tilde{j}}$, $B_{21} = (p^{\tilde{j}}_{(2,1),\tilde{i}})_{\tilde{i},\tilde{j}}$, and $B_{33} = (p^{\tilde{j}}_{(3,3),\tilde{i}})_{\tilde{i},\tilde{j}}$ be  matrices with $\tilde{i}, \tilde{j}\in\tilde{\partial}(\Gamma)$. Since $\Sigma=J(m,2)$, from Theorem \ref{johnson}, the diameter of $\Sigma$ is $2$. The fact $T=\{3\}$ implies $$\{(0,0),(1,2),(2,1)\}\cup\{\tilde{i},\tilde{i}^*\mid p_{(1,2),(1,2)}^{\tilde{i}}\neq0~{\rm or}~p_{(1,2),(2,1)}^{\tilde{i}}\neq0\}=\tilde{\partial}(\Gamma).$$
It follows from Lemma \ref{k23} that $\tilde{\partial}(\Gamma)=\{(0,0),(1,2),(2,1),(2,2),(3,3)\}$. Then
~\\
\begin{align*}
B_{12} & = \left(\begin{array}{ccccc}
0 & 1 & 0 & 0 & 0 \\
0 & m/4 - 1 & m/4 + 1 & 1 & 0 \\
m- 2 & m/4 - 1 & m/4 - 1 & 1 & 2 \\
0 & m/2 - 2 & m/2 - 2 & a & b \\
0 & 1 & 0 & d & c
\end{array}\right),\\
~\\
B_{21} & = \left(\begin{array}{ccccc}
0 & 0 & 1 & 0 & 0 \\
m - 2 & m/4 - 1 & m/4 - 1 & 1 & 2 \\
0 & m/4 + 1 & m/4 - 1 & 1 & 0 \\
0 & m/2- 2 & m/2 - 2 & a & b \\
0 & 0 & 1 & d & c
\end{array}\right), \\
~\\
B_{33} & = \left(\begin{array}{ccccc}
0 & 0 & 0 & 0 & 1 \\
0 & 1 & 0 & d & c \\
0 & 0 & 1 & d & c \\
0 & (m/2 - 2)d & (m/2 - 2)d & g & e \\
m/2 - 1 & c/2 & c/2 & h & f
\end{array}\right).
\end{align*}
Hence,
$$B_{12}B_{21} - B_{21}B_{12} = \left(\begin{array}{ccccc}
0 & 0 & 0 & 0 & 0 \\
0 & 0 & 0 & -2d + 2 & -2c \\
0 & 0 & 0 & 2d-2 & 2c \\
0 & -b + m - 4 & b - m + 4 & 0 & 0 \\
0 & -c & c & 0 & 0
\end{array}\right).$$
~\\

In view of Lemma \ref{intersection numners} \ref{intersection numners 4}, we get
\begin{align}
(B_{\tilde{l}}B_{\tilde{m}})_{\tilde{i},\tilde{j}}=\sum_{\tilde{r}\in\tilde{\partial}(\Gamma)}p^{\tilde{r}}_{\tilde{l},\tilde{i}}p^{\tilde{j}}_{\tilde{m},\tilde{r}}=
\sum_{\tilde{r}\in\tilde{\partial}(\Gamma)}p^{\tilde{r}}_{\tilde{i},\tilde{l}}p^{\tilde{j}}_{\tilde{m},\tilde{r}}=
\sum_{\tilde{t}\in\tilde{\partial}(\Gamma)}p^{\tilde{t}}_{\tilde{m},\tilde{i}}p^{\tilde{j}}_{\tilde{t},\tilde{l}}=
\sum_{\tilde{t}\in\tilde{\partial}(\Gamma)}p^{\tilde{t}}_{\tilde{m},\tilde{i}}p^{\tilde{j}}_{\tilde{l},\tilde{t}}=
(B_{\tilde{m}}B_{\tilde{l}})_{\tilde{i},\tilde{j}}\nonumber
 \end{align}
 for all $\tilde{m},\tilde{l},\tilde{i},\tilde{j}\in\tilde{\partial}(\Gamma).$
 Then
 $B_{\tilde{m}}B_{\tilde{l}} = B_{\tilde{l}}B_{\tilde{m}}$
 for all $\tilde{m},\tilde{l}\in\tilde{\partial}(\Gamma)$. It follows that
$c = 0$, $d = 1$ and $b = m-4$.
By substituting these values in $B_{12}$ and $B_{33}$, we have
\begin{align*}
(B_{12}B_{33} - B_{33}B_{12})_{(2,1),(3,3)} & = e+2f\\
(B_{12}B_{33} - B_{33}B_{12})_{(1,2),(3,3)} & = e-m+4.
\end{align*}
Since $B_{12}B_{33}=B_{33}B_{12}$ and both $e$ and $f$ are nonnegative integers, we have $e = f = 0$, which implies $m = 4$, a contradiction.

This completes the proof of the Theorem.
\end{proof}

\section*{Acknowledgements}

We are deeply grateful to Professor Hiroshi Suzuki for his valuable revision suggestions on this paper. Y.~Yang is supported by NSFC (12101575),  K. Wang is supported by the National Key R$\&$D Program of China (No.~2020YFA0712900) and NSFC (12071039, 12131011).

\appendix

\section{Proof of Lemma \ref{arc}}

To prove the lemma, we need some auxiliary results.

\begin{lemma}\label{arckey}
Let $x_1\in P_{(1,q), (1,r)}(x_0,x_2)$ with $r\geq 1$ and $\partial_{\Gamma}(x_2,x_0) = q-1$. Suppose $q \geq 4$ and $p^{(1,q-1)}_{(1,q),(1,h)} = 0$ for all $h\geq 1$. Then $(x_0,x_2)\in\Sigma_2$. Moreover, if $y_1\in C(x_0,x_2)\cap\Sigma_2(x_1)$, then one of the following holds:
\begin{enumerate}
\item\label{arckey1} $r = q$ and $(x_0, y_1, x_2)$ is a path in $\Gamma$;
\item\label{arckey2} $r\neq q$ and $y_1\in P_{(1,r),(1,q)}(x_0,x_2)$.
\end{enumerate}
\end{lemma}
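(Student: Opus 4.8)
The plan is to first establish that $(x_0,x_2)\in\Sigma_2$, then analyze the structure of $C(x_0,x_2)$ using the hypothesis $p^{(1,q-1)}_{(1,q),(1,h)}=0$ together with Lemma \ref{fenlei}. For the first claim: we have $x_1\in C(x_0,x_2)$ by assumption, so $(x_0,x_2)\in\Sigma_1\cup\Sigma_2$. If $(x_0,x_2)\in\Sigma_1$, then since $(x_0,x_1,x_2)$ would be a path or circuit in $\Gamma$ with $\partial_\Gamma(x_2,x_0)=q-1$ and $(x_0,x_1)\in\Gamma_{1,q}$, we would get a circuit of length at most $q$ containing an arc of type $(1,q)$, forcing $\partial_\Gamma(x_1,x_0)<q$; but actually the shorter argument is that $(x_0,x_2)\in\Sigma_1$ combined with $x_1\in\Sigma_1(x_0)\cap\Sigma_1(x_2)$ and the girth/length constraints forces $q\leq 3$ or produces a type-$(1,h)$ path contradicting $p^{(1,q-1)}_{(1,q),(1,h)}=0$. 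I would spell out: $(x_0,x_1)\in\Gamma_{1,q}$, $(x_1,x_2)\in\Gamma_{1,r}$, and if $(x_0,x_2)\in\Gamma_{1,q-1}$ (which it must be, as $\partial_\Gamma(x_0,x_2)\leq\partial_\Gamma(x_0,x_1)+\partial_\Gamma(x_1,x_2)$ forces $\partial_\Gamma(x_0,x_2)\le 2$ while $\partial_\Gamma(x_2,x_0)=q-1\ge 3$, so $(x_0,x_2)\notin\Sigma_1$ unless $q\le 3$)—wait, more carefully, if $(x_0,x_2)\in\Sigma_1$ then $\partial_\Gamma(x_0,x_2)=1$ and $\partial_\Gamma(x_2,x_0)=q-1$, so $(x_0,x_2)\in\Gamma_{1,q-1}$, giving $x_1\in P_{(1,q),(1,r)}(x_0,x_2)$ with... hmm, that is a valid configuration; the contradiction must come from elsewhere. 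I expect the real argument uses that $x_2\in P_{(1,r),(1,q-1)^{-}}$... Let me instead note that $(x_0,x_1,x_2)$ with $(x_2,x_0)\in\Gamma_{1,q-1}$ would be a circuit of length $3$, so by the girth and the $c_2=4$ property of $\Sigma$ it cannot happen when $q\ge 4$; alternatively $x_1\in P_{(1,q-1),(1,?)}$-type relations would be forced. I would settle this by the circuit $(x_0,x_1,x_2,x_0)$ having length $3$ and containing an arc of type $(1,q)$ with $q\ge 4 > 3$, which is impossible since any circuit through an arc of type $(1,q)$ has length at least... no, that's also not quite it. The cleanest route: $(x_2,x_0)\in\Gamma_{1,q-1}$ and $(x_0,x_1)\in\Gamma_{1,q}$ gives a path $(x_2,x_0,x_1)$, whence $\partial_\Gamma(x_2,x_1)\le 2$; combined with $(x_1,x_2)\in\Gamma_{1,r}$ we get $x_0\in P_{(1,q-1),(1,q)}(x_2,x_1)$ or similar, and since $(x_2,x_1)\in\Gamma_{1,?}$ this would witness $p^{(1,?)}_{\cdots}$ — I will need to track the indices to land exactly on $p^{(1,q-1)}_{(1,q),(1,h)}\ne 0$.

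For the moreover part: suppose $y_1\in C(x_0,x_2)\cap\Sigma_2(x_1)$. Since $(x_0,x_2)\in\Sigma_2$ and $\Sigma$ has the property of Section 3, $C(x_0,x_2)$ has size $4$ and by Lemma \ref{sigma2} the four vertices pair up via $\Sigma_2$. Now $x_1\in P_{(1,q),(1,r)}(x_0,x_2)$, so I want to determine the type of $y_1$ relative to $(x_0,x_2)$. The constraint is: $y_1\in\Sigma_2(x_1)$, so by Lemma \ref{sigma2} we have $C(x_1,y_1)=\{x_0,x_2,\ldots\}$, and the two-way distances from $x_1$ to $x_0$ (type $(q,1)$) and $x_1$ to $x_2$ (type $(1,r)$) constrain the types along $y_1$. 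I would consider whether $(x_0,y_1,x_2)$ is a pure path, a mixed path, or a non-path (Cases C1–C6). If $y_1\in P_{(1,a),(1,b)}(x_0,x_2)$ with $a\ne q$ or a mixed/non-path configuration, I use the circuit $(x_0,x_1,x_2,\ldots)$ of appropriate length together with $p^{(1,q-1)}_{(1,q),(1,h)}=0$ to rule out everything except the two stated outcomes. Specifically: if $r=q$, the only consistent possibility should be that $(x_0,y_1,x_2)$ is a path (conclusion \ref{arckey1}); if $r\ne q$, commutativity of $\Gamma$ (which gives $|P_{(1,q),(1,r)}|=|P_{(1,r),(1,q)}|$) together with the pairing structure forces $y_1\in P_{(1,r),(1,q)}(x_0,x_2)$ (conclusion \ref{arckey2}). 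The key lever throughout is that $\partial_\Gamma(x_2,x_0)=q-1$ caps certain distances, and any configuration producing an arc of type $(1,q-1)$ flanked by arcs of type $(1,q)$ and $(1,h)$ contradicts the hypothesis.

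The main obstacle I anticipate is the bookkeeping in the moreover part: carefully enumerating which of the six cases C1–C6 can occur for $y_1$, and for each, extracting the right circuit and computing the two-way distance of some derived pair to land precisely on a forbidden intersection number $p^{(1,q-1)}_{(1,q),(1,h)}$ or on a configuration violating $c_2=4$ via \eqref{a1c2}. In particular the case distinction $r=q$ versus $r\ne q$ interacts with the $\Sigma_2$-pairing in Lemma \ref{sigma2} (which of $\{x_1,y_1\}$ vs. the other two vertices of $C(x_0,x_2)$ are $\Sigma_2$-paired), and I will likely need to invoke Lemma \ref{22} to exclude the sub-case where a derived pair lands in $\Gamma_{2,2}$. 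I would organize this as: (1) reduce to $(x_0,x_2)\in\Sigma_2$; (2) let $C(x_0,x_2)=\{x_1,y_1,y_3,y_4\}$ with the $\Sigma_2$-pairing identified; (3) go through the type of $y_1$ case by case, each time either deriving a contradiction with $p^{(1,q-1)}_{(1,q),(1,h)}=0$ (using a length-$(q-1)$ or length-$q$ circuit) or with $c_2=4$, leaving only \ref{arckey1} and \ref{arckey2}.
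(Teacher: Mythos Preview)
Your argument for the first statement is more convoluted than necessary but essentially points at the right idea: since $x_1\in P_{(1,q),(1,r)}(x_0,x_2)$ and the hypothesis says $p^{(1,q-1)}_{(1,q),(1,h)}=0$ for all $h$, we cannot have $(x_0,x_2)\in\Gamma_{1,q-1}$, so $\partial_\Gamma(x_0,x_2)=2$ and $(x_0,x_2)\in\Gamma_{2,q-1}$; as $q-1\geq 3$, neither coordinate of the two-way distance equals $1$, hence $(x_0,x_2)\in\Sigma_2$. This is exactly the paper's one-line argument, which you should state directly rather than cycling through several aborted attempts.

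The real problem is your plan for the ``moreover'' part when $r\neq q$. You propose that commutativity (which balances $|P_{(1,q),(1,r)}|$ and $|P_{(1,r),(1,q)}|$) together with the $\Sigma_2$-pairing from Lemma~\ref{sigma2} will force $y_1\in P_{(1,r),(1,q)}(x_0,x_2)$. This is far too optimistic: commutativity only tells you that \emph{some} vertex of $C(x_0,x_2)$ lies in $P_{(1,r),(1,q)}(x_0,x_2)$, not that the specific $\Sigma_2$-partner $y_1$ of $x_1$ does. Lemma~\ref{fenlei} by itself does not pin down which of the remaining three vertices plays which role. The paper's proof for $r\neq q$ is a lengthy four-step contradiction argument: assuming $y_1\notin P_{(1,r),(1,q)}(x_0,x_2)$, one first locates an auxiliary $z_1\in P_{(1,r),(1,q)}(x_0,x_2)$ and determines the arc type between $x_1$ and $z_1$; then shows $y_1$ must lie in a non-path set $P_{(s,1),(1,t)}(x_0,x_2)$; then pins down $r=s=2$ and identifies a fourth vertex $z_2$; and finally derives a contradiction by passing to \emph{explicit Johnson-graph coordinates} $x_0(a_i,b_j)$, tracking several further $\Sigma_2$-pairs and their $C(\cdot,\cdot)$ sets. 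Your proposal does not anticipate this coordinate computation at all, and the simpler levers you list (the forbidden intersection number $p^{(1,q-1)}_{(1,q),(1,h)}$, Lemma~\ref{22}, the bound $c_2=4$) are used along the way but do not by themselves close the argument.
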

\begin{proof}
 Since $p^{(1,q-1)}_{(1,q),(1,h)}=0$ for all $h\geq1$, we have $(x_0,x_2)\in\Gamma_{2,q-1}$.  The fact $q\geq4$ implies $\partial_{\Gamma}(x_2,x_0)>2$, and so $(x_0,x_2)\in\Sigma_2$. Hence, the first statement holds.

If $r=q$,   from  Lemma \ref{fenlei}, then \ref{arckey1} holds. Now we consider the case $r\neq q$. Suppose for the contrary that $y_1\notin P_{(1,r),(1,q)}(x_0,x_2)$.  We derive a contradiction in several steps.

\begin{step}\label{key1}
{\rm Show that there exists $z_1\in P_{(1,r),(1,q)}(x_0,x_2)$ such that $(x_1,z_1)\in \Gamma_{q',1}$ for some $q'\geq 2$.}
\end{step}

Since  $y_1\notin P_{(1,r),(1,q)}(x_0,x_2)$ with $q\neq r$, there exists $z_1\in P_{(1,r),(1,q)}(x_0,x_2)$.  Since $x_1,y_1,z_1\in C(x_0,x_2)$ and $(x_1,y_1)\in\Sigma_2$, from Lemma \ref{sigma2}, we get $z_1\in\Sigma_1(x_1)$.

Assume $(x_1,z_1)\notin\Gamma_{q',1}$ for all $q'\geq2$. Then $(x_1,z_1)\in\Gamma_{1,s}$ for some $s\geq1$. Since $\partial_{\Gamma}(x_2,x_0)=q-1$ and $r\neq q$, one gets $r<q$. Since $(x_1,z_1)\in\Gamma_{1,s}$, we have $q=\partial_{\Gamma}(x_1,x_0)\leq 1+\partial_{\Gamma}(z_1,x_0)=1+r$, which implies $r=q-1$. The fact $x_1\in P_{(1,q),(1,s)}(x_0,z_1)$ with $(x_0,z_1)\in\Gamma_{1,r}$ implies $p^{(1,q-1)}_{(1,q),(1,s)}\neq0$, contrary to the fact that $p^{(1,q-1)}_{(1,q),(1,h)}=0$ for all $h$. Thus, $(x_1,z_1)\in \Gamma_{q',1}$ for some $q'\geq 2$.

\begin{step}\label{key2}
{\rm Show that $y_1\in P_{(s,1),(1,t)}(x_0, x_2)$ for some $s,t\geq 2$.}
\end{step}

Assume the contrary, namely, $y_1\notin P_{(s,1),(1,t)}(x_0, x_2)$ for all $s,t\geq 2$.  By Step  \ref{key1}, there exists   $z_1\in P_{(1,r),(1,q)}(x_0,x_2)$ such that  $(x_1,z_1)\in\Gamma_{q',1}$ with $q'\geq2$. Since $\partial_{\Gamma}(x_2,x_0)>2$, we get $y_1\notin P_{(s,1),(t,1)}(x_0,x_2)$ for all $s,t\geq1$.
Then $y_1\in P_{(1,s),(t,1)}(x_0,x_2)$ for some $s,t\geq2$, or $y_1\in P_{(1,s),(1,t)}(x_0,x_2)$ for some $(s,t)\notin\{(r,q),(1,1)\}$.

\textbf{Case 1.} $y_1\in P_{(1,s),(t,1)}(x_0,x_2)$ for some $s,t\geq2$.


Since $(x_0,x_2)\in\Sigma_2$ and $x_1,y_1,z_1\in C(x_0,x_2)$ with $(x_1,y_1)\in\Sigma_2$, from Lemma \ref{sigma2},   we obtain  $x_0,x_2,z_1\in C(x_1,y_1)$. The fact that $4\leq q=\partial_{\Gamma}(x_2,z_1)\leq1+\partial_{\Gamma}(y_1,z_1)$ 
implies  $(z_1,y_1)\in\Gamma_{1,l}$ for $l\geq3$.  Since $x_0\in P_{(q,1),(1,s)}(x_1,y_1)$ and $z_1\in P_{(q',1),(1,l)}(x_1,y_1)$, one has $x_2\in P_{(1,s),(q,1)}(x_1,y_1)$ or $x_2\in P_{(1,l),(q',1)}(x_1,y_1)$, contrary to the fact that $(x_2,y_1)\in\Gamma_{1,t}$ with $t\geq2.$

\textbf{Case 2.} $y_1\in P_{(1,s),(1,t)}(x_0,x_2)$ for some $(s,t)\notin\{(r,q),(1,1)\}$.

Since $(x_0,x_2)\in\Gamma_{2,q-1}$ with $q\geq4$, we get $(x_0,x_2)\notin\Gamma_{2,2}.$ If $s=t$, then Lemma \ref{fenlei} \ref{fenlei5} holds, which implies  $(x_1,z_1)\in\Sigma_2$, contrary to the fact that $(x_1,z_1)\in\Gamma_{q',1}$. Thus, $s\neq t$. Note that $P_{(1,s),(1,t)}(x_0,x_2)=P_{(1,t),(1,s)}(x_0,x_2)$. Since $y_1\in P_{(1,s),(1,t)}(x_0,x_2)$ for  $(s,t)\neq(r,q)$,   there exists $z_2\in P_{(1,t),(1,s)}(x_0,x_2)$ such that $z_2\notin\{x_1,y_1,z_1\}$. The fact $c_2=4$ implies $C(x_0,x_2)=\{x_1,y_1,z_1,z_2\}$.

Since $(x_1,y_1)\in\Sigma_2$, from Lemma \ref{sigma2}, one has $ C(x_1,y_1)=\{x_0,x_2,z_1,z_2\}$. Note that $x_0\in P_{(q,1),(1,s)}(x_1,y_1)$ and $x_2\in P_{(1,r),(t,1)}(x_1,y_1)$ with $(s,t)\neq(r,q)$. Since $(x_1,z_1)\in\Gamma_{q',1}$ for $q'\geq2$,  we get
$z_2\in P_{(1,s),(q,1)}(x_1,y_1)~\mbox{and}~z_1\in P_{(t,1),(1,r)}(x_1,y_1).$
Then $x_2\in P_{(1,q),(s,1)}(z_1,z_2)$,
\begin{align}\tag{A.1}
x_0\in P_{(r,1),(1,t)}(z_1,z_2), ~x_1\in P_{(1,t),(1,s)}(z_1,z_2)
~\mbox{and} ~y_1\in P_{(1,r),(1,q)}(z_1,z_2).\label{4.5gongshi1}
\end{align}
Since $(x_1,y_1)\in\Sigma_2$, from Lemma \ref{sigma2}, we obtain $(z_1,z_2)\in\Sigma_2$ and $ C(z_1,z_2)=\{x_0,x_1,x_2,y_1\}$. Since $x_2\in P_{(1,q),(s,1)}(z_1,z_2)$ and $q\geq4$,  there exists $u\in\{x_0,x_1,y_1\}$ such that $u\in P_{(s,1),(1,q)}(z_1,z_2)$. By \eqref{4.5gongshi1}, we have $(s,t)=(r,q)$, $t=s=q=1$, or $s=r=1$, which implies $s=r$, and so $t\neq q$.

Since $y_1\in P_{(1,s),(1,t)}(x_0,x_2)$ and  $(x_0,x_2)\in\Gamma_{2,q-1}$, we have $t<q$. Since $x_2\in P_{(1,q),(s,1)}(z_1,z_2)$, from \eqref{4.5gongshi1}, one has $q=\partial_{\Gamma}(z_2,y_1)\leq\partial_{\Gamma}(z_2,x_2)+\partial_{\Gamma}(x_2,y_1)=1+t$, 
which implies $t=q-1$. The fact $z_2\in P_{(1,q),(1,s)}(y_1,x_2)$ implies $p^{(1,q-1)}_{(1,q),(1,s)}\neq0$, a contradiction.

\begin{step}\label{key3}
{\rm Show that $r=s=2$ and there exists $z_2\in P_{(1,t), (2,1)}(x_0, x_2)$ such that $C(x_0,x_2)=\{x_1,y_1,z_1,z_2\}$,
$y_1 \in P_{(1,2),(1,q)}(z_1,z_2) \text{ and } x_1\in P_{(1,t),(2,1)}(z_1,z_2).$}
\end{step}

By Step \ref{key1}, there exists $z_1\in P_{(1,r),(1,q)}(x_0,x_2)$ such that $(x_1,z_1)\in \Gamma_{q',1}$ for some $q'\geq 2$.  Step \ref{key2} implies $y_1\in P_{(s,1),(1,t)}(x_0,x_2)$ for some $s,t\geq2$. Since $c_2=4$ and  $x_1\in P_{(1,q),(1,r)}(x_0,x_2)$, there exists $z_2\in P_{(1,t),(s,1)}(x_0,x_2)$ such that $C(x_0,x_2)=\{x_1,y_1,z_1,z_2\}$. 

Since $(x_1,y_1)\in\Sigma_2$, from Lemma \ref{sigma2}, one obtains $(z_1,z_2)\in\Sigma_2$, $C(x_1,y_1)=\{x_0,x_2,z_1,z_2\}$ and $C(z_1,z_2)=\{x_0,x_1,x_2,y_1\}$, depicted in Figure 1 (a). (The black arc, orange arc, red arc, and  green arc represent the arcs of type $(1,q)$,  $(1,r)$,  $(1,t)$, and $(1,s)$, respectively.)

 \begin{center}
\begin{tikzpicture}[scale = 0.65]
\begin{scope}[every node/.style={circle,draw,scale = 0.75}]
\node (x1) at (0,3) {$x_1$};
\node (x0) at (3,6) {$x_0$};
\node (z1) at (2,2) {$z_1$};
\node (x2) at (3,0) {$x_2$};
\node (z2) at (4,4) {$z_2$};
\node (y1) at (6,3) {$y_1$};
\end{scope}
\begin{scope}[every node/.style={font=\tiny,scale=0.9},>= latex, thin]
\draw [->] (x0) to node[above, rotate=45] {$(1,q)$} (x1);
\draw [densely dashed]  (z1) -- (x1);
\draw [->,draw=orange] (x1) to node[below, rotate=-45] {$(1,r)$} (x2);
\draw [densely dashed] (z2) -- (x1);
\draw [->,draw=green] (y1) to node[above, rotate=-45] {$(1,s)$} (x0);
\draw [densely dashed] (z1) -- (y1);
\draw [->,draw=red] (y1) to node[below, rotate=45] {$(1,t)$} (x2);
\draw [densely dashed] (y1) -- (z2);
\draw [->,draw=orange] (x0) to node[above, rotate=70] {$(1,r)$} (z1);
\draw [->] (z1) to node[above, rotate=-70] {$(1,q)$} (x2);
\draw [->,draw=green] (x2) to node[below, rotate=75] {$(1,s)$}(z2);
\draw [->,draw=red] (x0) to node[below, rotate=-60] {$(1,t)$} (z2);
\end{scope}
\end{tikzpicture}
\quad
\begin{tikzpicture}[scale = 0.65]
\begin{scope}[every node/.style={circle,draw,scale = 0.75}]
\node (x1) at (0,3) {$x_1$};
\node (x0) at (3,6) {$x_0$};
\node (z1) at (2,2) {$z_1$};
\node (x2) at (3,0) {$x_2$};
\node (z2) at (4,4) {$z_2$};
\node (y1) at (6,3) {$y_1$};
\end{scope}
\begin{scope}[every node/.style={font=\tiny,scale=0.9},>= latex, thin]
\draw [->] (x0) to node[above, rotate=45] {$(1,q)$} (x1);
\draw [->,draw=red] (z1) to  (x1);
\draw [->,draw=orange] (x1) to node[below, rotate=-45] {$(1,r)$} (x2);
\draw [->,draw=orange] (z2) to  (x1);
\draw [->,draw=green] (y1) to node[above, rotate=-45] {$(1,s)$} (x0);
\draw [->,draw=green] (z1) to  (y1);
\draw [->,draw=red] (y1) to node[below, rotate=45] {$(1,t)$} (x2);
\draw [->] (y1) to  (z2);
\draw [->,draw=orange] (x0) to node[above, rotate=70] {$(1,r)$} (z1);
\draw [->] (z1) to node[above, rotate=-70] {$(1,q)$} (x2);
\draw [->,draw=green] (x2) to node[below, rotate=75] {$(1,s)$}(z2);
\draw [->,draw=red] (x0) to node[below, rotate=-60] {$(1,t)$} (z2);
\end{scope}
\end{tikzpicture}
\quad
\begin{tikzpicture}[scale = 0.65]
\begin{scope}[every node/.style={circle,draw,scale = 0.75}]
\node (x1) at (0,3) {$x_1$};
\node (x0) at (3,6) {$x_0$};
\node (z1) at (2,2) {$z_1$};
\node (x2) at (3,0) {$x_2$};
\node (z2) at (4,4) {$z_2$};
\node (y1) at (6,3) {$y_1$};
\end{scope}
\begin{scope}[every node/.style={font=\tiny,scale=0.9},>= latex, thin]
\draw [->] (x0) to node[above, rotate=45] {$(1,q)$} (x1);
\draw [->,draw=red] (z1) to (x1);
\draw [->,draw=blue] (x1) to node[below, rotate=-45] {$(1,2)$} (x2);
\draw [->,draw=blue] (z2) to  (x1);
\draw [->,draw=blue] (y1) to node[above, rotate=-45] {$(1,2)$} (x0);
\draw [->,draw=blue] (z1) to  (y1);
\draw [->,draw=red] (y1) to node[below, rotate=45] {$(1,t)$} (x2);
\draw [->] (y1) to  (z2);
\draw [->,draw=blue] (x0) to node[above, rotate=70] {$(1,2)$} (z1);
\draw [->] (z1) to node[above, rotate=-70] {$(1,q)$} (x2);
\draw [->,draw=blue] (x2) to node[below, rotate=75] {$(1,2)$}(z2);
\draw [->,draw=red] (x0) to node[below, rotate=-60] {$(1,t)$} (z2);
\end{scope}
\end{tikzpicture}

(a) \quad\quad\qquad\qquad\qquad\qquad (b) \quad\quad\qquad\qquad\qquad\qquad (c)

Figure 1: The induced subdigraph of $\Gamma$ on $C(x_0,x_2)\cup\{x_0,x_2\}$.
\end{center}

Note that 
$x_2\in P_{(1,q),(1,s)}(z_1,z_2)~\mbox{and}~x_0\in P_{(r,1),(1,t)}(z_1,z_2).$
If $s=q\geq4$,  by Corollary \ref{fenlei*}, then $(x_0,x_2)\notin\Sigma_2$, a contradiction. Thus, $s\neq q$. Then
\begin{align} x_1\in P_{(1,s),(1,q)}(z_1,z_2)~&\mbox{and}~  y_1\in P_{(1,t),(r,1)}(z_1,z_2),~\mbox{or}\tag{A.2}\label{s11tgongshi2}\\
y_1\in P_{(1,s),(1,q)}(z_1,z_2)~&\mbox{and}~ x_1\in P_{(1,t),(r,1)}(z_1,z_2).\tag{A.3}\label{s11tgongshi1}
 \end{align}
 Suppose that \eqref{s11tgongshi2} holds. Since $(z_1,y_1,x_0)$ is a circuit in $\Gamma$ with $(z_1,y_1)\in\Gamma_{1,t}$ and $(y_1,x_0)\in\Gamma_{1,s}$, we have $s=t=2$, which implies $q=\partial_{\Gamma}(x_1,x_0)\leq1+\partial_{\Gamma}(z_2,x_0)=1+t=3$, a contradiction. Thus, \eqref{s11tgongshi1} holds, depicted in Figure 1 (b).

 Note that $ C(x_1,y_1)=\{x_0,x_2,z_1,z_2\}$ and $x_0\in P_{(1,s),(1,q)}(y_1,x_1)$, \begin{align}\label{4.6gongshi1}x_2\in P_{(1,t),(r,1)}(y_1,x_1),~z_1\in P_{(s,1),(1,t)}(y_1,x_1),~ z_2\in P_{(1,q),(1,r)}(y_1,x_1).\tag{A.4}\end{align}
Since  $s\neq q$, from \eqref{4.6gongshi1}, there exists $v\in\{x_2,z_1,z_2\}$ such that $v\in P_{(1,q),(1,s)}(y_1,x_1)$, which implies $s=r=1$, $q=s=1$ or $r=s$. The fact $s\geq2$ implies $r=s$.  Since $(x_1,x_2,z_2)$ is a circuit in $\Gamma$ with $(x_1,x_2)\in\Gamma_{1,r}$, one has $r=s=2$, depicted in Figure 1 (c). (The blue arc represents the arc of type $(1,2)$.) By \eqref{s11tgongshi1}, this completes the proof of this step.

%

\begin{step}\label{key4}
{\rm We reach a contradiction.}
\end{step}

By Steps \ref{key1}, \ref{key2} and \ref{key3}, we have $C(x_0,x_2)=\{x_1,y_1,z_1,z_2\}$, and
\begin{align}
x_1\in P_{(1,q),(1,2)}(x_0,x_2),~&y_1\in P_{(2,1),(1,t)}(x_0,x_2),~z_1\in P_{(1,2),(1,q)}(x_0,x_2),\tag{A.5}\label{key41}\\
z_2\in P_{(1,t),(2,1)}(x_0,x_2),~
&y_1\in P_{(1,2),(1,q)}(z_1,z_2),~x_1\in P_{(1,t),(2,1)}(z_1,z_2).\tag{A.6}\label{key42}
\end{align}

Since  $x_1\in P_{(1,q),(t,1)}(x_0,z_1)$ and $y_1\in P_{(2,1),(2,1)}(x_0,z_1)$, there exists a vertex $u\in P_{(t,1),(1,q)}(x_0,z_1)$ such that $u\notin\{x_1,y_1\}$. Note that $x_0\in{X\choose e}$. Since $(x_0,x_2)\in\Sigma_2$ and $x_1\in  C(x_0,x_2)$,  there exist $a_1,a_2\in x_0$ and  $b_1,b_2\in X\setminus x_0$ such that $$x_1=x_0(a_1,b_1)~\mbox{and}~x_2=x_0(\{a_1,a_2\},\{b_1,b_2\}).$$ The fact $y_1\in  C(x_0,x_2)$ with $(x_1,y_1)\in\Sigma_2$   implies $y_1=x_0(a_2,b_2)$. Since $ C(x_0,x_2)=\{x_1,y_1,z_1,z_2\}$, we may set $$z_1=x_0(a_2,b_1)~\mbox{and}~z_2=x_0(a_1,b_2).$$ If $x_0(\{a,a_1\},\{b,b_2\})\in P_{(t,1)(1,q)}(x_2,z_2)$ for  $x_0(a,b)\in P_{(t,1),(1,q)}(x_0,z_1)$, from \eqref{key41} and \eqref{key42}. then
\begin{align}
P_{(t,1),(1,q)}(x_2,z_2)\supseteq \{x_0(\{a,a_1\},\{b,b_2\})\mid x_0(a,b)\in P_{(t,1),(1,q)}(x_0,z_1)\}\cup\{y_1\},\nonumber
\end{align}
a contradiction. It follows that there exist $a\in x_0$ and $b\in X\setminus x_0$ such that \begin{align}u:&=x_0(a,b)\in P_{(t,1),(1,q)}(x_0,z_1),\tag{A.7}\label{qneqr1gongshi81}\\ u_1:&=x_0(\{a,a_1\},\{b,b_2\})\notin P_{(t,1)(1,q)}(x_2,z_2).\tag{A.8}\label{qneqr1gongshi82}\end{align} Since $z_1=x_0(a_2,b_1)$ and $x_1,y_1\notin P_{(t,1),(1,q)}(x_0,z_1)$, from \eqref{qneqr1gongshi81},  we have $a\in x_0\setminus\{a_1,a_2\}$ and $b=b_1$, or $a=a_2$ and $b\in X\setminus(x_0\cup\{b_1,b_2\})$.

\textbf{Case 1.} $a\in x_0\setminus\{a_1,a_2\}$ and $b=b_1$.

By \eqref{qneqr1gongshi81}, one has $(u,z_1)\in\Gamma_{1,q}$ and $(u,x_1)\in\Sigma_1$ since $x_1=x_0(a_1,b_1)$. In view of \eqref{key42}, we get $q=\partial_{\Gamma}(z_1,u)\leq\partial_{\Gamma}(z_1,x_1)+\partial_{\Gamma}(x_1,u)=1+\partial_{\Gamma}(x_1,u)$, which implies $(u,x_1)\in\Gamma_{1,l}$ with $l\geq q-1\geq3$. 

Since $x_2=x_0(\{a_1,a_2\},\{b_1,b_2\})$, from \eqref{qneqr1gongshi81}, we get $(u,x_2)\in\Sigma_2$. It follows from \eqref{qneqr1gongshi82} that $ C(u,x_2)=\{x_1,z_1,u_1,u_2\}$ with $u_2=x_0(\{a_2,a\},\{b_1,b_2\})$. Since $z_1=x_0(a_2,b_1)$, from \eqref{qneqr1gongshi82}, we get $(z_1,u_1)\in\Sigma_2$. By \eqref{key41} and  \eqref{qneqr1gongshi81}, one gets $x_1\in P_{(1,l),(1,2)}(u,x_2)$ with $l\geq3$ and $z_1\in P_{(1,q),(1,q)}(u,x_2)$ with $q\geq4$. It follows from Lemma \ref{fenlei} \ref{fenlei5} that \begin{align}\label{qneqr1gongshi9}u_2\in P_{(1,2),(1,l)}(u,x_2)~ \mbox{and} ~u_1\in P_{(1,i),(1,i)}(u,x_2)~ \mbox{for some}~ i\geq2.\tag{A.9}\end{align}

Since $z_2=x_0(a_1,b_2)$, from \eqref{qneqr1gongshi81}, one has  $(u,z_2)\in\Sigma_2$. It follows from \eqref{qneqr1gongshi82} that $ C(u,z_2)=\{x_0,x_1,u',u_1\}$ with $u'=x_0(a,b_2)$.  \eqref{key42} implies $x_1\in P_{(1,l),(2,1)}(u,z_2)$ with $l\geq3$. It follows from Lemma \ref{22} that $(u,z_2)\notin\Gamma_{2,2}$. By \eqref{key42} and \eqref{qneqr1gongshi81}, we get $x_0\in P_{(1,t),(1,t)}(u,z_2)$. \eqref{qneqr1gongshi82} implies $(x_0,u_1)\in\Sigma_2$. From \eqref{qneqr1gongshi9}, one has $(u,u_1)\in\Gamma_{1,i}$, which implies \begin{align}u'\in P_{(2,1),(1,l)}(u,z_2)~\mbox{and}~u_1\in P_{(1,i),(1,i)}(u,z_2)\tag{A.10}\label{qneqr1gongshi10}\end{align}  from Lemma \ref{fenlei} \ref{fenlei6}.
Then $u_1\in P_{(i,1),(1,i)}(x_2,z_2)$. \eqref{qneqr1gongshi82} implies $i\neq t$ or $i\neq q$.

Since $y_1=x_0(a_2,b_2),$ from \eqref{qneqr1gongshi82}, we get $(y_1,u_1)\in\Sigma_2$ and $ C(y_1,u_1)=\{z_2,x_2,u',u_2\}$. \eqref{key41}, \eqref{key42}, \eqref{qneqr1gongshi9} and \eqref{qneqr1gongshi10} imply $x_2\in P_{(1,i),(t,1)}(u_1,y_1)$ and $z_2\in P_{(1,i),(q,1)}(u_1,y_1)$. Then \begin{align}\tag{A.11}\label{qneqr1gongshi11}(u_2,y_1),(u',y_1)\in\Gamma_{1,i}\end{align}  

Since $y_1=x_0(a_2,b_2)$, from \eqref{qneqr1gongshi81}, we have $(u,y_1)\in\Sigma_2$ and $ C(u,y_1)=\{z_1,x_0,u',u_2\}$. By   \eqref{key41}, \eqref{key42} and \eqref{qneqr1gongshi81}, we get $z_1\in P_{(1,q),(1,2)}(u,y_1)$ and $x_0\in P_{(1,t),(2,1)}(u,y_1)$. Then \begin{align}u'\in P_{(1,2),(1,q)}(u,y_1)&~\mbox{and}~u_2\in P_{(2,1),(1,t)}(u,y_1),~\mbox{or}\nonumber\\u_2\in P_{(1,2),(1,q)}(u,y_1)&~\mbox{and}~u'\in P_{(2,1),(1,t)}(u,y_1).\nonumber\end{align} It follows from \eqref{qneqr1gongshi11} that $q=t=i$, contrary to the fact that $i\neq t$ or $i\neq q$. 

\textbf{Case 2.} $a=a_2$ and $b\in X\setminus(x_0\cup\{b_1,b_2\})$.

By  \eqref{qneqr1gongshi81}, one gets $(u,z_1)\in\Gamma_{1,q}$ and $(u,y_1)\in\Sigma_1$ since $y_1=x_0(a_2,b_2)$.  
In view of \eqref{key42}, we have $q=\partial_{\Gamma}(z_1,u)\leq\partial_{\Gamma}(z_1,y_1)+\partial_{\Gamma}(y_1,u)=1+\partial_{\Gamma}(y_1,u)$, which implies $(u,y_1)\in\Gamma_{1,l}$ for $l\geq q-1\geq3$.



Since $x_2=x_0(\{a_1,a_2\},\{b_1,b_2\})$, from \eqref{qneqr1gongshi81}, we get $(u,x_2)\in\Sigma_2$. Since $z_1=x_0(a_2,b_1)$,  from \eqref{qneqr1gongshi82}, one has  $z_1,u_1\in C(u,x_2)$ and $(z_1,u_1)\in\Sigma_2$. By \eqref{key41} and \eqref{qneqr1gongshi81}, we have $z_1\in P_{(1,q),(1,q)}(u,x_2)$ with $q\geq4$. Corollary \ref{fenlei*} implies \begin{align}\tag{A.12}\label{qneqr1gongshi12}u_1\in P_{(1,i),(1,i)}(u,x_2) ~\mbox{with}~ i\geq2.\end{align}

Since $y_1=x_0(a_2,b_2)$,  from Lemma \ref{sigma2} and \eqref{qneqr1gongshi82}, we get $u,x_2,y_1\in C(z_1,u_1)$.  \eqref{key41}, \eqref{qneqr1gongshi81} and \eqref{qneqr1gongshi12} imply \begin{align}u\in P_{(q,1),(1,i)}(z_1,u_1)~\mbox{and}~ x_2\in P_{(1,q),(i,1)}(z_1,u_1).\nonumber\end{align} If $i\neq q$, 
then $y_1\in P_{(1,i),(q,1)}(z_1,u_1)$ since $(z_1,y_1)\in\Gamma_{1,2}$, which implies $i=2$, and so $q=\partial_{\Gamma}(y_1,u_1)\leq1+\partial_{\Gamma}(x_2,u_1)=1+i=3$,  a contradiction. Hence, $i=q$.

Since $z_2=x_0(a_1,b_2)$, from \eqref{qneqr1gongshi81}, we have $(u,z_2)\in\Sigma_2$.  Since $y_1=x_0(a_2,b_2)$, from  \eqref{qneqr1gongshi82}, one gets $y_1,x_0,u_1\in C(u,z_2)$  and   $(x_0,u_1)\in\Sigma_2$. By  \eqref{key42} and \eqref{qneqr1gongshi81}, we get $y_1\in P_{(1,l),(1,q)}(u,z_2)$ with $q\geq4$ and $x_0\in P_{(1,t),(1,t)}(u,z_2)$.  Lemma \ref{fenlei} \ref{fenlei2} and \ref{fenlei5} imply that $(u,u_1,z_2)$ is a pure path. By \eqref{qneqr1gongshi12}, one has  $u_1\in P_{(1,i),(1,i)}(u,z_2)$, and so $u_1\in P_{(i,1),(1,i)}(x_2,z_2)$. Since $q=i$, from \eqref{qneqr1gongshi82}, one obtains  $t\neq i$.

Since $y_1,x_0,u_1\in C(u,z_2)$  and   $(x_0,u_1)\in\Sigma_2$, from Lemma \ref{sigma2}, we have $z_2,y_1,u\in C(x_0,u_1)$. By \eqref{key42}, \eqref{qneqr1gongshi81} and \eqref{qneqr1gongshi12}, one obtains $z_2\in P_{(1,t),(i,1)}(x_0,u_1)$ and $u\in P_{(t,1),(1,i)}(x_0,u_1)$. The fact $t\neq i$ implies $y_1\in P_{(i,1),(1,t)}(x_0,u_1)$ or $y_1\in P_{(1,i),(t,1)}(x_0,u_1)$, contrary to the fact that $(x_0,y_1)\in\Gamma_{2,1}$ and $i=q\geq3$.
\end{proof}

\begin{cor}\label{arckey*}
Let $x_1\in P_{(1,r_1), (1,r_2)}(x_0,x_2)$  with $r_1,r_2\geq 1$, $q\in \{r_1, r_2\}$ and $q-1=\partial_{\Gamma}(x_2,x_0)$. Suppose $q \geq 4$ and $p^{(1,q-1)}_{(1,q),(1,h)} = 0$ for all $h\geq 1$. Then $(x_0,x_2)\in\Sigma_2$. Moreover, if $y_1\in C(x_0,x_2)\cap\Sigma_2(x_1)$, then $(x_0, y_1, x_2)$ is a path in $\Gamma$ and the following hold:
\begin{enumerate}
\item\label{arckey*1} if $r_1\neq q$, then $y_1\in P_{(1,q),(1,r_1)}(x_0,x_2)$;
\item\label{arckey*2} if $r_2\neq q$, then $y_1\in P_{(1,r_2),(1,q)}(x_0,x_2)$.
\end{enumerate}
\end{cor}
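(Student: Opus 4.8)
The plan is to deduce the corollary from Lemma \ref{arckey} by a case split on which coordinate of $x_1\in P_{(1,r_1),(1,r_2)}(x_0,x_2)$ equals $q$; since $q\in\{r_1,r_2\}$, at least one does. Observe at the outset that if $y_1\in P_{(1,a),(1,b)}(x_0,x_2)$ for some $a,b$, then $\partial_{\Gamma}(x_0,y_1)=\partial_{\Gamma}(y_1,x_2)=1$, so $(x_0,y_1,x_2)$ is automatically a path in $\Gamma$; hence the ``$(x_0,y_1,x_2)$ is a path'' assertion will follow as soon as $y_1$ is placed in the relevant $P$-set.

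If $r_1=q$, then $x_1\in P_{(1,q),(1,r_2)}(x_0,x_2)$ with $r_2\geq 1$ and $\partial_{\Gamma}(x_2,x_0)=q-1$, which is exactly the hypothesis of Lemma \ref{arckey} (take $r=r_2$). That lemma yields $(x_0,x_2)\in\Sigma_2$ and, for $y_1\in C(x_0,x_2)\cap\Sigma_2(x_1)$, either $r_2=q$ with $(x_0,y_1,x_2)$ a path, or $r_2\neq q$ with $y_1\in P_{(1,r_2),(1,q)}(x_0,x_2)$. Clause (i) of the corollary is vacuous here (as $r_1=q$), and the rest is precisely what Lemma \ref{arckey} gives.

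If $r_1\neq q$, then $r_2=q$, and I would pass to the reverse digraph $\Gamma'$, with $A(\Gamma')=\{(y,x)\mid(x,y)\in A(\Gamma)\}$. One checks that $\Gamma'$ is again a commutative weakly distance-regular digraph with the same underlying graph $\Sigma$; that $\tilde{\partial}(\Gamma')=\tilde{\partial}(\Gamma)$ (this set being closed under $\ast$) and $\Gamma'_{\tilde{i}}=\Gamma_{\tilde{i}^{\ast}}$; and that, by commutativity together with the transpose symmetry of association schemes, the intersection numbers of $\Gamma'$ coincide with those of $\Gamma$. In particular the ambient hypotheses ($c_2=4$, the $J(4,2)$-property of $\Sigma$, the value of $T$) and the hypothesis $p^{(1,q-1)}_{(1,q),(1,h)}=0$ all carry over verbatim to $\Gamma'$, while $C(\cdot,\cdot)$ and $\Sigma_2(\cdot)$ are unchanged, being defined through $\Sigma$ alone. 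Since $\partial_{\Gamma'}(a,b)=\partial_{\Gamma}(b,a)$, the datum $x_1\in P_{(1,r_1),(1,q)}(x_0,x_2)$ in $\Gamma$ becomes $x_1\in P_{(1,q),(1,r_1)}(x_2,x_0)$ in $\Gamma'$ with $\partial_{\Gamma'}(x_0,x_2)=q-1$; applying Lemma \ref{arckey} in $\Gamma'$ to the triple $(x_2,x_1,x_0)$ with $r=r_1\neq q$ gives $(x_2,x_0)\in\Sigma_2$, hence $(x_0,x_2)\in\Sigma_2$, and $y_1\in P_{(1,r_1),(1,q)}(x_2,x_0)$ in $\Gamma'$. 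Translating back through the arc-reversal, $y_1\in P_{(1,q),(1,r_1)}(x_0,x_2)$ in $\Gamma$, which is clause (i); clause (ii) is vacuous since $r_2=q$.

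The only delicate point, and the one I would write out in full, is the bookkeeping of the arc-reversal: verifying that each hypothesis of Lemma \ref{arckey} and of the surrounding section survives the passage to $\Gamma'$, and that the dictionary translating $P_{\tilde{i},\tilde{j}}(\cdot,\cdot)$-membership between $\Gamma$ and $\Gamma'$ is applied with the correctly swapped and starred indices. Everything else is a direct citation of Lemma \ref{arckey}.
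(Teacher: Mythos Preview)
Your proof is correct, but the route you take for clause \ref{arckey*1} differs genuinely from the paper's. You reduce clause \ref{arckey*1} to Lemma \ref{arckey} by passing to the arc-reversed digraph $\Gamma'$ and observing that all ambient hypotheses (commutativity, the underlying graph $\Sigma$, the $J(4,2)$ property, the vanishing $p^{(1,q-1)}_{(1,q),(1,h)}$) are symmetric under reversal; your bookkeeping of the dictionary $P_{\tilde{i},\tilde{j}}(x,y)\leftrightarrow (P')_{\tilde{j}^*,\tilde{i}^*}(y,x)$ is sound. The paper instead stays in $\Gamma$ and argues by contradiction: assuming $y_1\notin P_{(1,q),(1,r_1)}(x_0,x_2)$, commutativity produces a vertex $z\in P_{(1,q),(1,r_1)}(x_0,x_2)$ distinct from $y_1$; Lemma \ref{arckey} \ref{arckey2} applied to $z$ forces its $\Sigma_2$-partner $w$ in $C(x_0,x_2)$ to lie in $P_{(1,r_1),(1,q)}(x_0,x_2)$; then with $c_2=4$ the four vertices $\{x_1,y_1,z,w\}$ exhaust $C(x_0,x_2)$, and commutativity of intersection numbers forces $y_1$ into $P_{(1,q),(1,r_1)}(x_0,x_2)$ after all. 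Your approach is cleaner and explains conceptually why the two clauses are symmetric, at the cost of a meta-argument that Lemma \ref{arckey} (as stated for the fixed $\Gamma$ of the paper) applies equally to $\Gamma'$; the paper's approach avoids that meta-step by re-invoking Lemma \ref{arckey} on a second vertex and counting, which is more self-contained but slightly less transparent.
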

\begin{proof}
Since $p^{(1,q-1)}_{(1,q),(1,h)}=p^{(1,q-1)}_{(1,h),(1,q)}=0$ for all $h\geq1$, we get $(x_0,x_2)\in\Gamma_{2,q-1}$.  The fact $q\geq4$ implies $\partial_{\Gamma}(x_2,x_0)>2$, and so $(x_0,x_2)\in\Sigma_2$. Hence, the first statement holds.

If $r_1=r_2=q$, from Lemma \ref{arckey} \ref{arckey1}, then $(x_0, y_1, x_2)$ is a path in $\Gamma$.  If $r_2\neq q$, then $r_1=q$, which implies $y_1\in P_{(1,r_2),(1,q)}(x_0,x_2)$ from Lemma \ref{arckey} \ref{arckey2}. Thus, \ref{arckey*2} holds.

To prove the second statement, it suffices to show that \ref{arckey*1} holds. Note that $r_2=q$.  Suppose for the contrary that $y_1\notin P_{(1,q),(1,r_1)}(x_0,x_2)$. Since $x_1\in P_{(1,r_1),(1,q)}(x_0,x_2)$, there exists $z\in P_{(1,q),(1,r_1)}(x_0,x_2)$ with $z\neq y_1$. Since $(x_1,y_1)\in\Sigma_2$, from Lemma \ref{sigma2}, one gets $x_1,y_1\in\Sigma_1(z)$. By Lemma \ref{sigma2} and Lemma \ref{arckey}   \ref{arckey2}, there exists $w\in C(x_0,x_2)\cap\Sigma_2(u)$ such that $w\in P_{(1,r_1),(1,q)}(x_0,x_2)$. The fact $c_2=4$ implies $C(x_0,x_2)=\{x_1,y_1,z,w\}$. Since $w,x_1\in P_{(1,r_1),(1,q)}(x_0,x_2)$, we get $z,y_1\in P_{(1,q),(1,r_1)}(x_0,x_2)$, a contradiction. Thus, $y_1\in P_{(1,q),(1,r_1)}(x_0,x_2)$, and so \ref{arckey*1} holds.
\end{proof}

\begin{lemma}\label{circuit}
Let $q\geq3$. Suppose $p^{(1,q-1)}_{(1,q),(1,h)}=0$ for all $h\geq1$. Then exactly one of the following holds:
\begin{enumerate}
\item\label{circuit1} $p^{(1,q)}_{(2,q-1),(q,1)}=0$ and each circuit of length $q+1$ contains at most one arc of type $(1,q)$;

\item\label{circuit2} $p^{(1,q)}_{(2,q-1),(q,1)}\neq0$ and there exists a circuit of length $q+1$ consisting of arcs of type $(1,q)$.
\end{enumerate}
\end{lemma}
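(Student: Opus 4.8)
The plan is to analyze what happens to a circuit $(w_0, w_1, \ldots, w_q)$ of length $q+1$ in $\Gamma$ containing an arc of type $(1,q)$, say $(w_0, w_1) \in \Gamma_{1,q}$, under the hypothesis $p^{(1,q-1)}_{(1,q),(1,h)} = 0$ for all $h \geq 1$. The first observation is that this hypothesis forces $(w_0, w_2) \in \Gamma_{2, q-1}$: indeed $C(w_0,w_2)$ contains $w_1 \in P_{(1,q),(1,r)}(w_0,w_2)$ where $r = \partial_\Gamma(w_1,w_2)$, and if $\partial_\Gamma(w_2,w_0) = 1$ we would get $p^{(1,q-1)}_{(1,q),(1,r)} \neq 0$ (after checking $r \geq 1$), a contradiction; so $\partial_\Gamma(w_0,w_2) \geq 2$, and since $(w_1, w_2, \ldots, w_q)$ is a path of length $q-1$ from $w_1$ to $w_0$ through $w_2$, we get $\partial_\Gamma(w_2, w_0) = q-1$ and $\partial_\Gamma(w_0, w_2) = 2$. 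So the data is set up exactly to apply Lemma~\ref{arckey} / Corollary~\ref{arckey*} when $q \geq 4$, and a direct argument when $q = 3$.

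The key dichotomy is governed by whether $p^{(1,q)}_{(2,q-1),(q,1)} = 0$ or not. I would first show these two cases are mutually exclusive: if there is a circuit of length $q+1$ consisting entirely of arcs of type $(1,q)$, say $(v_0, v_1, \ldots, v_q)$ with all arcs of type $(1,q)$, then $\partial_\Gamma(v_0, v_2) = 2$, $\partial_\Gamma(v_2, v_0) = q-1$, so $(v_0, v_2) \in \Gamma_{2,q-1}$, and $v_1 \in P_{(1,q),(1,q)}(v_0, v_2)$ while $v_q \in P_{(2, q-1), (q,1)}(v_0, v_2)$ — wait, I need to check: $(v_0, v_q) \in \Gamma_{2, ?}$... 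Actually $\partial_\Gamma(v_0, v_q) = 2$ (via $v_0 \to v_1 \to \cdots$, no — $(v_0,v_1),\ldots$ gives $\partial(v_0,v_q)$; better: $\partial_\Gamma(v_q, v_0) = 1$ since $(v_q, v_0)$ is an arc, and $\partial_\Gamma(v_0, v_q) = q$ traversing the circuit, but possibly shorter). This needs care: I'd argue $(v_0, v_2) \in \Gamma_{2,q-1}$ and then $v_q \in P_{(2,q-1),(q,1)}(v_0,v_2)$ follows from $\partial_\Gamma(v_0, v_q) = 2$, $\partial_\Gamma(v_q, v_0) = 1$ — so I must first establish $\partial_\Gamma(v_0, v_q) = 2$, which uses the circuit structure together with the fact that a shorter path would violate the girth/length considerations. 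Granting this, $p^{(1,q)}_{(2,q-1),(q,1)} \neq 0$, giving \ref{circuit2}; conversely if \ref{circuit1} holds then no such pure circuit exists and the arc-count bound must be proved.

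The heart of the proof is showing that in case \ref{circuit1} (i.e.\ $p^{(1,q)}_{(2,q-1),(q,1)} = 0$), any circuit of length $q+1$ contains at most one arc of type $(1,q)$. Suppose $(w_0, w_1, \ldots, w_q)$ is such a circuit with $(w_0, w_1) \in \Gamma_{1,q}$ and at least one more arc of type $(1,q)$ among the remaining arcs. Using $(w_0, w_2) \in \Gamma_{2,q-1}$ from the first paragraph, I would apply Corollary~\ref{arckey*} (for $q \geq 4$) to locate $C(w_0, w_2)$ and the vertex $y_1 \in C(w_0,w_2) \cap \Sigma_2(w_1)$, deducing $(w_0, y_1, w_2)$ is a path and controlling the types on it. The presence of a second arc of type $(1,q)$ in the circuit, combined with the structure forced by the corollary, should either produce a vertex realizing $P_{(2,q-1),(q,1)}(w_0, w_2) \neq \emptyset$ (contradicting $p^{(1,q)}_{(2,q-1),(q,1)} = 0$ — note $(w_q, w_0)$ is an arc of type... here I'd track which pair lands in $\Gamma_{2,q-1}$ and which vertex sits in the $(q,1)$ slot) or force a pure circuit of length $q+1$ (contradicting the assumed non-existence). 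For $q = 3$ the corollary is unavailable, so I'd handle it separately using Lemma~\ref{fenlei} and Lemma~\ref{22} directly on the four-vertex set $C(w_0, w_2)$, exploiting that $c_2 = 4$. The main obstacle I anticipate is the bookkeeping in this final step: correctly identifying, among the up to four vertices of $C(w_0, w_2)$, which configurations of arc-types are consistent with the circuit having two arcs of type $(1,q)$, and pushing each surviving configuration to one of the two contradictions; this is the kind of case analysis that Lemma~\ref{arckey}'s proof already illustrates is delicate, and $q = 3$ versus $q \geq 4$ will need to be treated with slightly different tools.
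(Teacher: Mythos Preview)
There is a genuine gap: you never prove case~\ref{circuit2}. What is required is the implication ``$p^{(1,q)}_{(2,q-1),(q,1)}\neq0$ $\Rightarrow$ there exists a circuit of length $q+1$ consisting entirely of arcs of type $(1,q)$.'' You argue only the converse (existence of a pure circuit $\Rightarrow$ $p^{(1,q)}_{(2,q-1),(q,1)}\neq0$), and even that argument is muddled --- the correct witness is $v_2\in P_{(2,q-1),(q,1)}(v_0,v_1)$, not $v_q\in P_{(2,q-1),(q,1)}(v_0,v_2)$. The paper dispatches the needed direction in two lines: Lemma~\ref{intersection numners}~\ref{intersection numners 2} converts $p^{(1,q)}_{(2,q-1),(q,1)}\neq0$ into $p^{(2,q-1)}_{(1,q),(1,q)}\neq0$, and then \cite[Lemma~2.6]{MW24} furnishes the pure circuit. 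Your proposal contains no mechanism for producing a pure circuit from the nonvanishing of the intersection number.

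Your route to case~\ref{circuit1} is also far heavier than needed, and it is not clear the analysis of $C(w_0,w_2)$ via Corollary~\ref{arckey*} would actually close out the situation where the second arc of type $(1,q)$ is not adjacent to the first. The paper's argument avoids all of that machinery and the $q=3$ versus $q\geq4$ split: if two \emph{consecutive} arcs $(x_0,x_1),(x_1,x_2)$ of a length-$(q{+}1)$ circuit are both of type $(1,q)$, then $\partial_\Gamma(x_2,x_0)=q-1$, the hypothesis $p^{(1,q-1)}_{(1,q),(1,h)}=0$ forces $(x_0,x_2)\in\Gamma_{2,q-1}$, and hence $x_2\in P_{(2,q-1),(q,1)}(x_0,x_1)$, contradicting $p^{(1,q)}_{(2,q-1),(q,1)}=0$. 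Commutativity then propagates this along the circuit (since $p^{(2,q-1)}_{(1,q),(1,r)}=p^{(2,q-1)}_{(1,r),(1,q)}$, one may replace $x_1$ by a vertex $x_1'$ with $(x_1',x_2)\in\Gamma_{1,q}$ and repeat), so no case distinction on $q$ and no appeal to the $J(4,2)$ structure of $\{x_0,x_2\}\cup C(x_0,x_2)$ is required.
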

\begin{proof}


Suppose $p^{(1,q)}_{(2,q-1),(q,1)}=0$. Let  $(x_0,x_1)\in\Gamma_{1,q}$. If there exists $x_2\in\Gamma_{1,q}(x_1)$ such that $\partial_{\Gamma}(x_2,x_0)=q-1$,  then $(x_0,x_2)\in\Gamma_{2,q-1}$ since $p^{(1,q-1)}_{(1,q),(1,h)}=0$ for all $h$, which implies $x_2\in P_{(2,q-1),(q,1)}(x_0,x_1)$, a contradiction. By the commutativity of $\Gamma$, (i) holds.

Suppose $p^{(1,q)}_{(2,q-1),(q,1)}\neq0$. According to 
Lemma \ref{intersection numners} \ref{intersection numners 2}, we have $p_{(1,q),(1,q)}^{(2,q-1)}\neq0$. By \cite[Lemma 2.6]{MW24}, (ii) holds.
\end{proof}



Now we are ready to prove Lemma \ref{arc}.

\begin{proof}[Proof of Lemma \ref{arc}]
Suppose for the contrary that $q\geq4$.   Let $(x_0,x_1,\ldots,x_{q})$ be a circuit in $\Gamma$ with $(x_0,x_1)\in\Gamma_{1,q}$ and $(x_i,x_{i+1})\in\Gamma_{1,r_i}$ for $r_i\geq1$ with $1\leq i\leq q$ and $x_{q+1}=x_0$. According to Lemma \ref{circuit}, we may assume $r_i=q$ for all $1\leq i\leq q$, or $r_i\neq q$ for all $1\leq i\leq q$.

In the following, we construct a sequence of vertices $(y_1, y_2, \ldots, y_q)$ using Corollary  \ref{arckey*} inductively such that $y_k\in C(y_{k-1},x_{k+1})\cap\Sigma_2(x_k)\cap\Sigma_2(x_{k+2})$ for $k\in\{1,2,\ldots, q\}$, where $y_0=x_0$.


By the first statement of Corollary  \ref{arckey*}, we have 
$(x_0,x_2)\in\Sigma_2$. In view of Lemma \ref{sigma2}, there exists $y_1\in C(x_0,x_2)$ such that $(x_1,y_1)\in\Sigma_2$. 
If $r_i=q$ for all $1\leq i\leq q$, from Corollary  \ref{arckey*}, then $(x_0,y_1,x_2)$ is a path in $\Gamma$; if $r_i\neq q$ for all $1\leq i\leq q$, from Corollary \ref{arckey*} \ref{arckey*2}, then $y_1\in P_{(1,r_1),(1,q)}(x_0,x_2)$. It follows from the statement of Corollary  \ref{arckey*} that
$(y_1,x_3)\in\Sigma_2$. Since $x_2\in C(y_1,x_3)$, from Lemma \ref{sigma2}, there exists $y_2\in C(y_1,x_3)$ such that $(y_2,x_2)\in\Sigma_2$.

If $r_i\neq q$ for all $1\leq i\leq q$, from Corollary \ref{arckey*} \ref{arckey*2}, then $y_2\in P_{(1,r_2),(1,q)}(y_1,x_3)$ since $x_2\in P_{(1,q),(1,r_2)}(y_1,x_3)$.  Now suppose $r_i=q$ for all $1\leq i\leq q$. If $(y_1,x_2)\in\Gamma_{1,q}$, from Corollary \ref{arckey*}, then $(y_1,y_2,x_3)$ is a path in $\Gamma$; if $(y_1,x_2)\in\Gamma_{1,r}$ with $r\neq q$, from Corollary \ref{arckey*} \ref{arckey*1}, then $y_2\in P_{(1,q),(1,r)}(y_1,x_3)$.
We conclude that $(y_1,y_2,x_3)$ is a path, and $(y_2,x_3)\in\Gamma_{1,q}$ or $(x_3,x_4)\in\Gamma_{1,q}$. It follows from the statement of Corollary  \ref{arckey*} that $(y_2,x_4)\in\Sigma_2$. 
By induction, there exist $y_3,y_4,\ldots,y_q$ satisfy the desired property.



Note that $x_0\in{X\choose e}$. Since  $(x_0,x_2)\in\Sigma_2$ and $x_1\in C(x_0,x_2)$, there exist $a_1,a_2\in x_0$ and  $b_1,b_2\in X\setminus x_0$ such that $x_1=x_0(a_1,b_1)$ and $x_2=x_0(\{a_1,a_2\},\{b_1,b_2\})$, which implies $y_1=x_0(a_2,b_2)$. We claim that, for $2\leq k\leq q$, there exist $u_k\subseteq x_0\setminus\{a_1\}$ and $v_k\subseteq X\setminus(x_0\cup \{b_1\})$ with $|u_k|=|v_k|$ such that $$x_k=x_0(\{a_1\}\cup u_k,\{b_1\}\cup v_k) ~\mbox{and}~ y_{k-1}=x_0(u_k,v_k).$$  We prove this claim by induction on $k$. The case $k=2$ is valid. Suppose that $k>2$. By the inductive hypothesis, one obtains $x_k=x_0(\{a_1\}\cup u_k,\{b_1\}\cup v_k)$ and $y_{k-1}=x_0(u_k,v_k)$ for   $u_k\subseteq x_0\setminus\{a_1\}$ and $v_k\subseteq X\setminus(x_0\cup\{b_1\})$ with $|u_k|=|v_k|$. Since $x_k=y_{k-1}(a_1,b_1)$ and $x_{k+1}\in\Sigma_1(x_k)\cap\Sigma_2(y_{k-1})$, we have $x_{k+1}=y_{k-1}(\{a_1,a\},\{b_1,b\})$ for some $a\in y_{k-1}\setminus\{a_1\}$ and $b\in X\setminus(y_{k-1}\cup\{b_1\}).$ Then $a\in (x_0\setminus(u_k\cup\{a_1\}))\cup v_k$ and $b\in((X\setminus x_0)\setminus(v_k\cup\{b_1\}))\cup u_k$. Since $y_k\in C(y_{k-1},x_{k+1})$ and $(y_k,x_k)\in\Sigma_2$, one gets $y_k=y_{k-1}(a,b)$.

If $a\in x_0\setminus(u_k\cup\{a_1\})$ and $b\in (X\setminus x_0)\setminus(v_k\cup\{b_1\})$, then $$x_{k+1}=x_0(\{a_1,a\}\cup u_k,\{b_1,b\}\cup v_k) ~\mbox{and}~ y_k=x_0(\{a\}\cup u_k,\{b\}\cup v_k).$$
If $a\in x_0\setminus(u_k\cup\{a_1\})$ and $b\in u_k$, then $$x_{k+1}=x_0(\{a_1,a\}\cup(u_k\setminus\{b\}),\{b_1\}\cup v_k)~\mbox{and}~y_k=x_0(\{a\}\cup(u_k\setminus\{b\}),v_k).$$
If $a\in v_k$ and $b\in(X\setminus x_0)\setminus(v_k\cup\{b_1\})$, then  $$x_{k+1}=x_0(\{a_1\}\cup u_k,\{b_1,b\}\cup(v_k\setminus\{a\}))~\mbox{and}~y_k=x_0(u_k,\{b\}\cup(v_k\setminus\{a\})).$$
If  $a\in v_k$ and $b\in u_k$, then $$x_{k+1}=x_0(\{a_1\}\cup(u_k\setminus\{b\}),\{b_1\}\cup(v_k\setminus\{a\}))~\mbox{and}~y_k=x_0(u_k\setminus\{b\},v_k\setminus\{a\}).$$  Since all possible cases for the choices of $a$ and $b$ have been considered, the claim holds.

By the claim, we obtain $x_q=x_0(\{a_1\}\cup x,\{b_1\}\cup y)$ for some $x\subseteq x_0$ and $y\subseteq X\setminus x_0$ with $|x|=|y|$. Since $x_q\in\Sigma_1(x_0)$, we get $x_q=x_0(a_1,b_1)=x_1$, a contradiction.
\end{proof}

\section{Proof of Lemma \ref{1211i}}

To prove Lemma \ref{1211i}, we need an auxiliary lemma.

\begin{lemma}\label{tildei}
%
%
%
%
Let $(x,y)\in\Sigma_1$, $\tilde{h}\in\{(1,2),(2,1)\}$ and $\tilde{i},\tilde{j}\in\{(1,1),(1,2),(2,1)\}$. Suppose $P_{\tilde{i},\tilde{j}}(x,y)\neq\emptyset$.  Then exactly one of the following holds:
\begin{enumerate}

\item\label{tildei1} $\tilde{i}=\tilde{j}$ and $P_{(1,1),\tilde{h}}(x,y)\cup P_{\tilde{h},(1,1)}(x,y)\subseteq Y_i(x,y)$ for some $i\in\{1,2\}$  with $\tilde{h}\neq\tilde{i}$;

\item \label{tildei2} $(\tilde{i},\tilde{j})\in\{((1,1),\tilde{h}),(\tilde{h},(1,1))\}$ and $P_{\tilde{i}^*,\tilde{j}^*}(x,y)\subseteq Y_i(x,y)$ for some $i\in\{1,2\}$;

\item \label{tildei3} $\{\tilde{i},\tilde{j}\}=\{(1,2),(2,1)\}$, and $P_{(1,1),\tilde{h}}(x,y)=\emptyset$ or $P_{(1,1),\tilde{h}}(x,y)\cup P_{\tilde{h},(1,1)}(x,y)\nsubseteq\Sigma_2(z)$ for $z\in P_{\tilde{i},\tilde{j}}(x,y)$.

\end{enumerate}
Moreover, in the case \ref{tildei1}, if $P_{(1,1),\tilde{h}}(x,y)\cup P_{\tilde{h},(1,1)}(x,y)\neq\emptyset$, then $P_{\tilde{i},\tilde{i}}\subseteq Y_i(x,y)$;  in the case \ref{tildei2}, if $P_{\tilde{i}^*,\tilde{j}^*}(x,y)\neq\emptyset$, then $P_{\tilde{i},\tilde{j}}\subseteq Y_i(x,y)$.
\end{lemma}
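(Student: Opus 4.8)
The plan is a local analysis of $\Gamma$ around the $\Sigma$-edge $(x,y)$, built on the two pictures available there: combinatorially, $\Sigma_1(x)\cap\Sigma_1(y)=Y_1(x,y)\dotcup Y_2(x,y)$ with the distances of Lemma~\ref{sigma1}; and, for the attached scheme, the rigidity forced by $c_2=4$ together with Lemmas~\ref{22}, \ref{fenlei} and Corollary~\ref{fenlei*}. The engine is the following dichotomy. If two ``middle'' vertices $z,w$ of configurations $x\to\cdot\to y$ lie in \emph{different} parts $Y_1(x,y),Y_2(x,y)$, then $(z,w)\in\Sigma_2$ by Lemma~\ref{sigma1}~\ref{sigma12}, both $x$ and $y$ lie in $C(z,w)$, and — since $(x,y)\in\Sigma_1$ — Lemma~\ref{sigma2} forces $C(z,w)=\{x,y,u,v\}$ with $(x,u),(y,v)\in\Sigma_2$, so that $\{z,w\}\cup C(z,w)$ induces a $J(4,2)$ (an octahedron) whose three non-edges are $(z,w),(x,u),(y,v)$. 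In this octahedron the two-way distances of the four $\Sigma$-edges $x\to z$, $z\to y$, $x\text{--}w$, $w\text{--}y$ are known, so the vertices $x,y$ pin down intersection numbers $p^{\tilde g}_{\bullet,\bullet}$ with $\tilde g=\tilde\partial_\Gamma(z,w)$; reading these against \eqref{a1c2} (the level-$1$ entries sum to $c_2=4$) and against Lemma~\ref{22} when $\tilde g=(2,2)$, or Lemma~\ref{fenlei} and Corollary~\ref{fenlei*} when $\tilde g\in\{(2,\ell):\ell\geq 3\}$, normally yields $c_2\geq 5$ or a short circuit of an impossible type.

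First I would fix $z\in P_{\tilde i,\tilde j}(x,y)$. Since $\tilde i,\tilde j\in\{(1,1),(1,2),(2,1)\}$, both $(x,z)$ and $(z,y)$ are $\Sigma$-edges, so $z\in\Sigma_1(x)\cap\Sigma_1(y)$ and, by Lemma~\ref{sigma1}, $z$ lies in a unique part $Y_i(x,y)$; this is the $i$ appearing in alternatives \ref{tildei1} and \ref{tildei2}. The three alternatives \ref{tildei1}, \ref{tildei2}, \ref{tildei3} are keyed to the three patterns the pair $(\tilde i,\tilde j)$ can have relative to $(1,1)$ and $\tilde h$ (the two symbols equal; one symbol $(1,1)$ and the other $\tilde h$; the two non-$(1,1)$ symbols), so they are mutually exclusive, and a short inspection shows these patterns exhaust the admissible pairs $(\tilde i,\tilde j)$ compatible with $\tilde h\in\{(1,2),(2,1)\}$ and $P_{\tilde i,\tilde j}(x,y)\neq\emptyset$. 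What remains is the three inclusions and the two ``moreover'' clauses.

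For \ref{tildei1} (pure $z$, with $\tilde h\neq\tilde i$) I would take $w\in P_{(1,1),\tilde h}(x,y)\cup P_{\tilde h,(1,1)}(x,y)$ and suppose $w\notin Y_i(x,y)$: the gadget above applies with $z$ a pure $\tilde i$-$\tilde i$ middle vertex and $w$ a $(1,1)$-mixed one, so one of $x,y$ becomes a vertex of $C(z,w)$ of type $(1,1)$-$(1,1)$ and the other a vertex of type $(1,p)$-$(1,p)$ while $w$ contributes a $(1,1)$-mixed vertex, and the resulting $p^{\tilde g}_{\bullet,\bullet}$-profile is incompatible with every case of Lemma~\ref{22} (resp.\ with Lemma~\ref{fenlei} when $\tilde g\neq(2,2)$); hence $w\in Y_i(x,y)$. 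For \ref{tildei2} I would instead take $w\in P_{\tilde i^{*},\tilde j^{*}}(x,y)$: then $z$ and $w$ are $(1,1)$-mixed configurations with the non-$(1,1)$ symbol reversed, and in $C(z,w)$ the vertices $x,y$ force $p^{\tilde g}_{(1,1),(1,1)}\neq 0$ together with a second, incompatible mixed entry, again contradicting Lemma~\ref{22}/\ref{fenlei}; so $P_{\tilde i^{*},\tilde j^{*}}(x,y)\subseteq Y_i(x,y)$. In \ref{tildei3} (the middle vertex has type $\{(1,2),(2,1)\}$) the same computation is now \emph{consistent} — an octahedron with $(z,w)\in\Gamma_{2,2}$ of pattern \ref{222} of Lemma~\ref{22} does accommodate a $(1,1)$-mixed vertex in the opposite part — so only the stated negative alternative survives. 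The two ``moreover'' clauses are a bootstrap: once the $(1,1)$-mixed set (resp.\ $P_{\tilde i^{*},\tilde j^{*}}(x,y)$) is nonempty and contained in $Y_i(x,y)$, any pure (resp.\ forward) middle vertex lying in the other part would be $\Sigma_2$-adjacent to a member of that set, and the very same gadget argument reruns verbatim to give $P_{\tilde i,\tilde i}(x,y)\subseteq Y_i(x,y)$ (resp.\ $P_{\tilde i,\tilde j}(x,y)\subseteq Y_i(x,y)$).

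The main obstacle is that a single octahedron $\{z,w\}\cup C(z,w)$ usually does \emph{not} by itself pin the two-way distances of its non-edges $(x,u),(y,v)$ nor of $(z,w)$ — the naive ``two known vertices plus $c_2=4$'' count leaves several branches open — so one must propagate type information through a web of overlapping $J(4,2)$-gadgets (using Lemmas~\ref{iwuguan}, \ref{xyz} to relocate the $Y_i$-structure and the $\Sigma_1$-type of $(x,y)$), and rule out each surviving configuration by a suitable sub-case of Lemma~\ref{22} or Lemma~\ref{fenlei}~\ref{fenlei5}--\ref{fenlei6}. Keeping the resulting case tree finite and non-repetitive — collapsing it via the symmetries $x\leftrightarrow y$, $\tilde i\leftrightarrow\tilde j$ (with the matching transposes $*$) and $Y_1\leftrightarrow Y_2$ — is where essentially all the length of the argument lies, which is why it is deferred to the appendix.
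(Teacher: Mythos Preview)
Your strategy is correct and matches the paper's: pick a vertex $w$ in the ``wrong'' part $Y_j(x,y)$, use Lemma~\ref{sigma1}\ref{sigma12} to get $(z,w)\in\Sigma_2$, and read off intersection numbers at $x,y\in C(z,w)$ against Lemma~\ref{22}. But you substantially overestimate the difficulty, and your final paragraph about a ``web of overlapping $J(4,2)$-gadgets'' describes an obstacle that does not exist.

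The point you are missing is that in cases \ref{tildei1} and \ref{tildei2} the two-way distance $\tilde\partial_\Gamma(z,w)$ is pinned to $(2,2)$ immediately: because one of $\tilde i,\tilde j$ or one of the coordinates of the $(1,1)$-mixed type equals $(1,1)$, the paths $w\!-\!x\!-\!z$ and $z\!-\!y\!-\!w$ (or their reverses) give $\partial(w,z)\le 2$ and $\partial(z,w)\le 2$. You never need the remaining octahedron vertices $u,v$, nor Lemma~\ref{fenlei}. With $(z,w)\in\Gamma_{2,2}$ in hand, the two vertices $x,y$ alone give two nonzero entries $p^{(2,2)}_{\bullet,\bullet}$ that coexist in none of the five patterns of Lemma~\ref{22}: in case~\ref{tildei1} with $\tilde i=(1,1)$ you get $p^{(2,2)}_{(1,1),(1,1)}\ne 0$ and $p^{(2,2)}_{(1,1),\tilde h}\ne 0$; with $\tilde i\in\{(1,2),(2,1)\}$ you get $p^{(2,2)}_{(1,1),(1,2)}\ne 0$ and $p^{(2,2)}_{(1,2),(1,2)}\ne 0$; in case~\ref{tildei2} you get $p^{(2,2)}_{(1,1),(1,1)}\ne 0$ and $p^{(2,2)}_{(1,2),(1,2)}\ne 0$. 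Each of these pairs is already a contradiction. The ``moreover'' clauses then follow because the above works for an arbitrary $z\in P_{\tilde i,\tilde j}(x,y)$, so all such $z$ land in the same $Y_i$ as any fixed element of the nonempty companion set. Case~\ref{tildei3} is the only one needing slightly more: here the paper uses both $w_1\in P_{(1,1),\tilde h}(x,y)$ and $w_2\in P_{\tilde h,(1,1)}(x,y)$ (both assumed in $\Sigma_2(z)$), excludes $(2,2)$ via Lemma~\ref{22}, and then the four readings of $x,y$ at $(w_1,z)$ and $(w_2,z)$ together force five nonzero $p^{(2,\ell)}_{\bullet,\bullet}$-entries, exceeding $c_2=4$. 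The whole proof is a paragraph per case; drop the appendix.
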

\begin{proof}
Let $z\in P_{\tilde{i},\tilde{j}}(x,y)$. Then $z\in Y_i(x,y)$ for some $i\in\{1,2\}$.

\textbf{Case 1.} $\tilde{i}=\tilde{j}$.

If  $P_{(1,1),\tilde{h}}(x,y)\cup P_{\tilde{h},(1,1)}(x,y)=\emptyset$, then (i) holds. Now we consider the case that $P_{(1,1),\tilde{h}}(x,y)\cup P_{\tilde{h},(1,1)}(x,y)\neq\emptyset$. Assume the contrary, namely,   $P_{(1,1),\tilde{h}}(x,y)\nsubseteq\Sigma_1(z)$ or $P_{\tilde{h},(1,1)}(x,y)\nsubseteq\Sigma_1(z)$ with $\tilde{h}\neq\tilde{i}$. Since the proofs are similar,  we may assume  $P_{(1,1),\tilde{h}}(x,y)\nsubseteq\Sigma_1(z)$. Let $w\in P_{(1,1),\tilde{h}}(x,y)$ with $w\notin\Sigma_1(z)$. Then $x\in P_{(1,1),\tilde{i}}(w,z)$ and $y\in P_{\tilde{i},\tilde{h}^*}(z,w)$.
If $\tilde{i}=(1,1)$, then $(w,z)\in\Gamma_{2,2}$ and $p^{(2,2)}_{(1,1),(1,1)}p^{(2,2)}_{(1,1),(1,2)}\neq0$; if $\tilde{i}=(1,2)$ or $(2,1)$, then $\tilde{h}=(2,1)$ or $(1,2)$, which implies $(w,z)\in\Gamma_{2,2}$ and $p^{(2,2)}_{(1,1),(1,2)}p^{(2,2)}_{(1,2),(1,2)}\neq0$, both contrary to Lemma \ref{22}. Thus, $P_{(1,1),\tilde{h}}(x,y)\cup P_{\tilde{h},(1,1)}(x,y)\subseteq\Sigma_1(z)$. By Lemma \ref{sigma1} \ref{sigma11}, one gets $P_{(1,1),\tilde{h}}(x,y)\cup P_{\tilde{h},(1,1)}(x,y)\subseteq Y_i(x,y)$. Since $z\in P_{\tilde{i},\tilde{i}}(x,y)$ was arbitrary, from Lemma \ref{sigma1} \ref{sigma11},  we have $P_{\tilde{i},\tilde{i}}(x,y)\cup P_{(1,1),\tilde{h}}(x,y)\cup P_{\tilde{h},(1,1)}(x,y)\subseteq Y_i(x,y)$.

\textbf{Case 2.} $(\tilde{i},\tilde{j})\in\{((1,1),\tilde{h}),(\tilde{h},(1,1))\}$.

If $P_{\tilde{i}^*,\tilde{j}^*}(x,y)=\emptyset$, then (ii) holds. Now we consider the case that  $P_{\tilde{i}^*,\tilde{j}^*}(x,y)\neq\emptyset$. Assume the contrary, namely, $P_{\tilde{i}^*,\tilde{j}^*}(x,y)\nsubseteq\Sigma_1(z)$. Pick a vertex $w\in P_{\tilde{i}^*,\tilde{j}^*}(x,y)$ with $w\notin\Sigma_1(z)$. Then $x\in P_{\tilde{i},\tilde{i}}(w,z)$ and $y\in P_{\tilde{j}^*,\tilde{j}^*}(w,z)$, which implies $(w,z)\in\Gamma_{2,2}$ and $p^{(2,2)}_{(1,1),(1,1)}p^{(2,2)}_{(1,2),(1,2)}\neq0$,  contrary to Lemma \ref{22}. Thus, $P_{\tilde{i}^*,\tilde{j}^*}(x,y)\subseteq\Sigma_1(z)$. By Lemma \ref{sigma1} \ref{sigma11}, we have $P_{\tilde{i}^*,\tilde{j}^*}(x,y)\subseteq Y_i(x,y)$. Since $z\in P_{\tilde{i},\tilde{j}}(x,y)$ was arbitrary, from Lemma \ref{sigma1} \ref{sigma11},  we get $P_{\tilde{i},\tilde{j}}(x,y)\cup P_{\tilde{i}^*,\tilde{j}^*}(x,y)\subseteq Y_i(x,y)$.

\textbf{Case 3.} $\{\tilde{i},\tilde{j}\}=\{(1,2),(2,1)\}$.

Since the proofs are similar,  we may assume $(\tilde{i},\tilde{j})=((1,2),(2,1))$.  Suppose for the contrary that $P_{(1,1),\tilde{h}}(x,y)\neq\emptyset$ and $P_{(1,1),\tilde{h}}(x,y)\cup P_{\tilde{h},(1,1)}(x,y)\subseteq\Sigma_2(z)$  for some $\tilde{h}\in\{(1,2),(2,1)\}$.  Since $P_{(1,1),\tilde{h}}(x,y)\neq\emptyset$ and $P_{\tilde{h},(1,1)}(x,y)\neq\emptyset$, there exist $w_1\in P_{(1,1),\tilde{h}}(x,y)$ and $w_2\in P_{\tilde{h},(1,1)}(x,y)$. Then $x\in P_{(1,1),(1,2)}(w_1,z)\cap P_{\tilde{h}^*,(1,2)}(w_2,z)$ and $y\in P_{\tilde{h},(1,2)}(w_1,z)\cap P_{(1,1),(1,2)}(w_2,z)$. Since $w_1,w_2\in\Sigma_2(z)$, from Lemma \ref{22}, one has $w_1,w_2\in\Gamma_{3,2}(z)$ and $p^{(2,3)}_{(1,1),(1,2)}p^{(2,3)}_{(1,2),(1,1)}p^{(2,3)}_{(1,2),(2,1)}p^{(2,3)}_{(2,1),(1,2)}p^{(2,3)}_{(1,2),(1,2)}\neq0$, which implies $c_2\geq5$ from \eqref{a1c2}, contrary to the fact that $c_2=4$.
%
%
%

This completes the proof of the lemma.
\end{proof}

\begin{proof}[Proof of Lemma \ref{1211i}] Note that $\max T=3$. By way of contradiction, we assume   $p^{(1,2)}_{(1,1),\tilde{i}}\neq0$ for some $\tilde{i}\in\{(1,1),(1,2),(2,1)\}$.

\textbf{Case 1.} $\tilde{i}=(1,2)$.

Note that $p^{(1,2)}_{(1,2),(1,1)}=p^{(1,2)}_{(1,1),(1,2)}\neq0$.  Next, we reach a contradiction step by step.


\begin{step}\label{1-1}
{\rm Show that  $P_{(1,1),(1,2)}(x,y)\cup P_{(1,2),(1,1)}(x,y)\subseteq Y_i(x,y)$ for some $i\in\{1,2\}$ when $(x,y)\in\Gamma_{1,2}$.}
\end{step}

If $p^{(1,2)}_{(a,1),(a,1)}\neq0$ for some $a\in\{1,2\}$, by setting $\tilde{i}=(a,1)$ in Lemma \ref{tildei} \ref{tildei1}, then $P_{(1,1),(1,2)}(x,y)\cup P_{(1,2),(1,1)}(x,y)\subseteq Y_i(x,y)$ for some $i\in\{1,2\}$.
Now we consider the case $p^{(1,2)}_{(2,1),(2,1)}=p^{(1,2)}_{(1,1),(1,1)}=0$. Then each shortest path from $y$ to $x$ consists of an arc of type $(1,2)$ and an arc of type $(1,1)$. Let $(y,w,x)$ be a path in $\Gamma$ such that $w\in P_{(1,1),(2,1)}(x,y)$.  Since $p^{(1,2)}_{(1,1),(1,2)}\neq0$, by setting $\tilde{i}=(1,1)$ and $\tilde{j}=(1,2)$ in Lemma \ref{tildei} \ref{tildei2}, we get $P_{(1,1),(2,1)}(x,y)\subseteq Y_i(x,y)$ for some $i\in\{1,2\}$. Since $P_{(1,1),(2,1)}(x,y)\neq\emptyset,$ from the second statement of Lemma \ref{tildei}, one gets $P_{(1,1),(1,2)}(x,y)\subseteq Y_i(x,y)$.

Suppose $P_{(1,2),(1,1)}(x,y)\nsubseteq Y_i(x,y).$ Pick a vertex $z\in P_{(1,2),(1,1)}(x,y)$ such that $z\in Y_j(x,y)$ with $j\in\{1,2\}\setminus\{i\}$.  Lemma \ref{sigma1} \ref{sigma12} implies $(w,z)\in\Sigma_2$. Since $x\in P_{(1,1),(1,2)}(w,z)$ and $y\in P_{(1,1),(1,2)}(z,w)$, one has $(w,z)\in\Gamma_{2,2}$ and  $p^{(2,2)}_{(1,1),(1,2)}\neq0$. Then Lemma \ref{22} \ref{223} holds.

Since $y\in P_{(1,2),(1,2)}(x,w)$ with $(x,w)\in\Gamma_{1,1}$, there exists $u\in P_{(1,2),(1,2)}(w,x)$ with $u\neq y$. It follows that
\begin{align}\tag{B.1}\label{1-1gongshi1}x\in P_{(1,2),(1,2)}(u,y)~\mbox{and}~w\in P_{(1,2),(1,2)}(y,u).\end{align} If $u\in\Sigma_2(y)$, from \eqref{1-1gongshi1}, then $(u,y)\in\Gamma_{2,2}$, which implies $p^{(2,2)}_{(1,2),(1,2)}\neq0$, contrary to  Lemma \ref{22} \ref{223}. Then $u\in\Sigma_1(y)$.  Since $p^{(1,2)}_{(2,1),(2,1)}=0$, from \eqref{1-1gongshi1}, we get $(y,u)\in\Gamma_{1,1}$.

Since $u\in C(x,y)$ with $(u,w)\in\Sigma_1$, from Lemma \ref{sigma1} \ref{sigma11}, we have $u\in Y_i(x,y)$, and so $(u,z)\in\Sigma_2$.
Since  $y\in P_{(1,1),(1,1)}(u,z)$, we get $(u,z)\in\Gamma_{2,2}$ and $p^{(2,2)}_{(1,1),(1,1)}\neq0$, contrary to Lemma \ref{22} \ref{223}. Thus, $P_{(1,2),(1,1)}(x,y)\subseteq Y_i(x,y),$ and so $P_{(1,1),(1,2)}(x,y)\cup P_{(1,2),(1,1)}(x,y)\subseteq Y_i(x,y)$.

\begin{step}\label{1-2*}
{\rm  Show that $P_{(1,1),(2,1)}(x,y)\cup P_{(2,1),(1,1)}(x,y)\cup P_{(1,2),(2,1)}(x,y)\cup P_{(2,1),(1,2)}(x,y)\subseteq Y_i(x,y)$ when $(x,y)\in\Gamma_{1,2}$ and $P_{(1,1),(1,2)}(x,y)\cup P_{(1,2),(1,1)}(x,y)\subseteq Y_i(x,y)$ for some $i\in\{1,2\}$.}
\end{step}

We prove this step by contradiction. Assume the contrary, namely, $P_{\tilde{a},\tilde{b}}(x,y)\nsubseteq Y_i(x,y)$ for some $(\tilde{a},\tilde{b})\in\{((1,1),(2,1)),((2,1),(1,1)),((1,2),(2,1)),((2,1),(1,2))\}$. Then $P_{\tilde{a},\tilde{b}}(x,y)\neq\emptyset$.

Suppose $(\tilde{a},\tilde{b})\in\{((1,1),(2,1)),((2,1),(1,1))\}$. Since $p^{(1,2)}_{(1,1),(1,2)}=p^{(1,2)}_{(1,2),(1,1)}\neq0$, by setting $(\tilde{i},\tilde{j})=(\tilde{a}^*,\tilde{b}^*)$ in Lemma \ref{tildei} \ref{tildei2}, one obtains $P_{\tilde{a},\tilde{b}}(x,y)\subseteq Y_j(x,y)$ for $j\in\{1,2\}\setminus\{i\}$. Since $P_{\tilde{a},\tilde{b}}(x,y)\neq\emptyset$, from the second statement of Lemma \ref{tildei}, we get $P_{\tilde{a}^*,\tilde{b}^*}(x,y)\subseteq Y_j(x,y)$, contrary to the fact that $P_{(1,1),(1,2)}(x,y)\cup P_{(1,2),(1,1)}(x,y)\subseteq Y_i(x,y)$.

Suppose $(\tilde{a},\tilde{b})\in\{((1,2),(2,1)),((2,1),(1,2))\}$. Then there exists $z\in P_{\tilde{a},\tilde{b}}(x,y)\cap Y_j(x,y)$ with $j\in\{1,2\}\setminus\{i\}$. Note that $p^{(1,2)}_{(1,1),(1,2)}\neq0$. Since $P_{(1,1),(1,2)}(x,y)\cup P_{(1,2),(1,1)}(x,y)\subseteq Y_i(x,y)$, by Lemma \ref{sigma1} \ref{sigma12}, we get $P_{(1,1),(1,2)}(x,y)\cup P_{(1,2),(1,1)}(x,y)\subseteq\Sigma_2(z)$, contrary to Lemma \ref{tildei} \ref{tildei3}. 

\begin{step}\label{1-3}
{\rm   Show that $P_{(1,2),(1,2)}(x,y)\subseteq Y_i(x,y)$ when $(x,y)\in\Gamma_{1,2}$ and $P_{(1,1),(1,2)}(x,y)\cup P_{(1,2),(1,1)}(x,y)\subseteq Y_i(x,y)$ for some $i\in\{1,2\}$.}
\end{step}

Let $z\in P_{(1,1),(1,2)}(x,y)$. Suppose for the contrary that $P_{(1,2),(1,2)}(x,y)\nsubseteq Y_i(x,y)$. Pick  a vertex $w\in P_{(1,2),(1,2)}(x,y)$ with $w\notin Y_i(x,y)$. By Lemma \ref{sigma1} \ref{sigma12}, we have $(z,w)\in\Sigma_2$. Since $c_2=4$, $x\in P_{(1,1),(1,2)}(z,w)$ and $y\in P_{(1,2),(2,1)}(z,w)$, 
there exist
\begin{align}z_1\in P_{(1,2),(1,1)}(z,w)~\mbox{and}~z_2\in P_{(2,1),(1,2)}(z,w)\nonumber
 \end{align} such that $C(z,w)=\{x,y,z_1,z_2\}$. By Lemma \ref{sigma2}, we get $(x,z_2)\in\Sigma_2$ and $C(x,z_2)=\{y,z,w,z_1\}$, or $(x,z_1)\in\Sigma_2$ and $C(x,z_1)=\{y,z,w,z_2\}$.

If  $(x,z_2)\in\Sigma_2$ and $C(x,z_2)=\{y,z,w,z_1\}$, by $w\in P_{(1,2),(2,1)}(x,z_2)$ and $z\in P_{(1,1),(2,1)}(x,z_2)$, then $y\in P_{(2,1),(1,2)}(x,z_2)$ or $y\in P_{(2,1),(1,1)}(x,z_2)$, contrary to the fact that $(x,y)\in\Gamma_{1,2}$. Thus, $(x,z_1)\in\Sigma_2$ and $C(x,z_1)=\{y,z,w,z_2\}$. Since $(z,w)\in\Sigma_2$, from Lemma \ref{sigma2}, we get $(y,z_2)\in\Sigma_2$.

If $(x,z_2)\in\Gamma_{1,1}$, then $z_2\in P_{(1,1),(1,2)}(x,w)$ and $y\in P_{(1,2),(2,1)}(x,w)$ with $(x,w)\in\Gamma_{1,2}$ and $(y,z_2)\in\Sigma_2$, contrary to Step \ref{1-2*} and Lemma \ref{sigma1} \ref{sigma11}. If $(x,z_2)\in\Gamma_{2,1}$, then  $z\in P_{(1,2),(1,1)}(z_2,x)$ and $w\in P_{(1,2),(2,1)}(z_2,x)$ with  $(z,w)\in\Sigma_2$, contrary to Step \ref{1-2*} and Lemma \ref{sigma1} \ref{sigma11}. Since $(x,z_2)\in\Sigma_1$ and $\max T=3$, we get  $(x,z_2)\in\Gamma_{1,2}$.

Since $z\in P_{(1,1),(2,1)}(x,z_2)$ and $w\in P_{(1,2),(2,1)}(x,z_2)$, from Step \ref{1-2*}, we have $z,w\in Y_{k}(x,z_2)$ for some $k\in\{1,2\}$, contrary to that fact that $(z,w)\in\Sigma_2$.

\begin{step}
{\rm We reach a contradiction.}
\end{step}

Let $(x,y)\in\Gamma_{1,2}$. In view of Step \ref{1-1}, we obtain $P_{(1,1),(1,2)}(x,y)\cup P_{(1,2),(1,1)}(x,y)\subseteq Y_i(x,y)$ for some $i\in\{1,2\}$.  Let $\tilde{l}\in\{(1,1),(2,1)\}$. If $P_{\tilde{l},\tilde{l}}(x,y)=\emptyset$, then $P_{\tilde{l},\tilde{l}}(x,y)\subseteq Y_i(x,y)$. Observe that $p^{(1,2)}_{(1,1),(1,2)}\neq0$. If $P_{\tilde{l},\tilde{l}}(x,y)\neq\emptyset$, by setting $\tilde{i}=\tilde{l}$ in Lemma \ref{tildei} \ref{tildei1}, one obtains $P_{\tilde{l},\tilde{l}}(x,y)\subseteq Y_i(x,y)$  from the second statement of Lemma \ref{tildei}. Since $\tilde{l}\in\{(1,1),(2,1)\}$ was arbitrary, we get $P_{(1,1),(1,1)}(x,y)\cup P_{(2,1),(2,1)}(x,y)\subseteq Y_i(x,y)$.



By   Steps \ref{1-2*} and \ref{1-3}, we get $P_{\tilde{i},\tilde{j}}(x,y)\subseteq Y_i(x,y)$ for all $\tilde{i},\tilde{j}\in\{(1,1),(1,2),(2,1)\}$. Since $\max T=3$, one has $C(x,y)\subseteq Y_i(x,y)$, which implies $Y_j(x,y)=\emptyset$ with $j\in\{1,2\}\setminus\{i\}$. By Lemma \ref{sigma1}, one gets  $e=1$ or $m-e=1$, contrary to the fact that $m-e\geq e\geq2$.

\textbf{Case 2.} $\tilde{i}=(2,1)$.

Note that $p^{(1,2)}_{(2,1),(1,1)}=p^{(1,2)}_{(1,1),(2,1)}\neq0$. By Case 1, one has $p^{(1,2)}_{(1,2),(1,1)}=p^{(1,2)}_{(1,1),(1,2)}=0$. Next, we reach a contradiction step by step.

\begin{steppp}\label{2-1}
{\rm  Show that $P_{(1,1),(2,1)}(x,y)\cup P_{(2,1),(1,1)}(x,y)\subseteq Y_i(x,y)$ for some $i\in\{1,2\}$ when $(x,y)\in\Gamma_{1,2}$.}
\end{steppp}

If $p^{(1,2)}_{(1,a),(1,a)}\neq0$ for some $a\in\{1,2\}$, by setting $\tilde{i}=(1,a)$ in Lemma \ref{tildei} \ref{tildei1}, then  $P_{(1,1),(2,1)}(x,y)\cup P_{(2,1),(1,1)}(x,y)\subseteq Y_i(x,y)$ for some $i\in\{1,2\}$. Now we consider the case  $p^{(1,2)}_{(1,1),(1,1)}=p^{(1,2)}_{(1,2),(1,2)}=0$. Lemma \ref{intersection numners} \ref{intersection numners 2} implies $p^{(1,2)}_{(1,2),(2,1)}=0$. Then $p^{(1,2)}_{(1,2),\tilde{h}}=0$ for all $\tilde{h}\in\{(1,1),(1,2),(2,1)\}$.

Suppose that $p^{(1,2)}_{(2,1),(2,1)}=0$.   If there exist $z_1,z_2\in P_{(1,1),(2,1)}(x,y)$ or  $w_1,w_2\in P_{(2,1),(1,1)}(x,y)$ such that $(z_1,z_2)\in\Sigma_1$ or $(w_1,w_2)\in\Sigma_1$, then $z_2\in P_{(1,2),\tilde{h}}(y,z_1)$ or $w_2\in P_{\tilde{h},(1,2)}(w_1,x)$ for some $\tilde{h}\in\{(1,1),(1,2),(2,1)\}$, contrary to the fact that $p^{(1,2)}_{(1,2),\tilde{h}}=p^{(1,2)}_{\tilde{h},(1,2)}=0$ for all $\tilde{h}\in\{(1,1),(1,2),(2,1)\}$. By Lemma \ref{sigma1}, we have $|P_{(1,1),(2,1)}(x,y)\cap Y_i(x,y)|\leq1$ and $|P_{(2,1),(1,1)}(x,y)\cap Y_i(x,x_1)|\leq1$ for all $i\in\{1,2\}$. Since $p^{(1,2)}_{(1,1),(1,2)}=p^{(1,2)}_{(1,1),(1,1)}=0$ and $p^{(1,2)}_{(1,2),\tilde{h}}=0$ for all $\tilde{h}\in\{(1,1),(1,2),(2,1)\}$, by Lemma \ref{intersection numners} \ref{intersection numners 2}, we get $p^{(1,2)}_{\tilde{i},\tilde{j}}\neq0$ for  $\tilde{i},\tilde{j}\in\{(1,1),(1,2),(2,1)\}$ if and only if $(\tilde{i},\tilde{j})\in\{((1,1),(2,1)),((2,1),(1,1))\}$. It follows that $|Y_i(x,y)|\leq2$ for all $i\in\{1,2\}$. By Lemma \ref{sigma1}, one has $e\leq3$ and $m-e\leq3$, which imply that  $m\leq6$ and $|V(\Gamma)|\leq20$, contrary to \cite{H}. Therefore, $p^{(1,2)}_{(2,1),(2,1)}\neq0$.

Let $w\in P_{(2,1),(2,1)}(x,y)$. If there exists a vertex $z\in P_{(1,1),(2,1)}(x,y)\cup P_{(2,1),(1,1)}(x,y)$ such that $(z,w)\in\Sigma_1$, then $z\in P_{(1,2),\tilde{i}}(y,w)$ or $z\in P_{\tilde{i},(1,2)}(w,x)$ for some  $\tilde{i}\in\{(1,1),(1,2),(2,1)\}$,  contrary to the fact that $p^{(1,2)}_{(1,2),\tilde{h}}=p^{(1,2)}_{\tilde{h},(1,2)}=0$ for all $\tilde{h}\in\{(1,1),(1,2),(2,1)\}$. Therefore, $P_{(1,1),(2,1)}(x,y)\cup P_{(2,1),(1,1)}(x,y)\subseteq\Sigma_2(w)$. Let $w\in Y_j(x,y)$ for some $j\in\{1,2\}$.  Lemma \ref{sigma1} \ref{sigma12} implies $P_{(1,1),(2,1)}(x,y)\cup P_{(2,1),(1,1)}(x,y)\subseteq Y_i(x,y)$ with $\{i,j\}=\{1,2\}$.

\begin{steppp}\label{2-2*}
{\rm  Show that $P_{(1,2),(2,1)}(x,y)\cup P_{(2,1),(1,2)}(x,y)\subseteq Y_i(x,y)$ when $(x,y)\in\Gamma_{1,2}$ and $P_{(1,1),(2,1)}(x,y)\cup P_{(2,1),(1,1)}(x,y)\subseteq Y_i(x,y)$ for some $i\in\{1,2\}$.}
\end{steppp}

We prove this step by contradiction. Assume the contrary, namely, $P_{\tilde{a},\tilde{b}}(x,y)\nsubseteq Y_i(x,y)$ for some $(\tilde{a},\tilde{b})\in\{((1,2),(2,1)),((2,1),(1,2))\}$.  Then there exists $z\in P_{\tilde{a},\tilde{b}}(x,y)\cap Y_j(x,y)$ with $j\in\{1,2\}\setminus\{i\}$. Note that $p^{(1,2)}_{(1,1),(2,1)}\neq0$. Since $P_{(1,1),(2,1)}(x,y)\cup P_{(2,1),(1,1)}(x,y)\subseteq Y_i(x,y)$, by Lemma \ref{sigma1} \ref{sigma12}, we get $P_{(1,1),(2,1)}(x,y)\cup P_{(2,1),(1,1)}(x,y)\subseteq\Sigma_2(z)$, contrary to Lemma \ref{tildei} \ref{tildei3}.

\begin{steppp}\label{2-3}
{\rm  Show that $P_{(2,1),(2,1)}(x,y)\subseteq Y_i(x,y)$ when $(x,y)\in\Gamma_{1,2}$ and $P_{(1,1),(2,1)}(x,y)\cup P_{(2,1),(1,1)}(x,y)\subseteq Y_i(x,y)$ for some $i\in\{1,2\}$.}
\end{steppp}

Let $z\in P_{(1,1),(2,1)}(x,y)$. Assume the contrary, namely, $P_{(2,1),(2,1)}(x,y)\nsubseteq Y_i(x,y).$ Pick a vertex $w\in P_{(2,1),(2,1)}(x,y)$ such that $w\notin Y_i(x,y)$. Since $z\in Y_i(x,y)$, from Lemma \ref{sigma1} \ref{sigma12}, we have $(w,z)\in\Sigma_2$. Since $c_2=4$, $x\in P_{(1,2),(1,1)}(w,z)$ and $y\in P_{(2,1),(1,2)}(w,z)$, there exist
\begin{align}z_1\in P_{(1,1),(1,2)}(w,z)~\mbox{and}~z_2\in P_{(1,2),(2,1)}(w,z)\nonumber
 \end{align} such that $ C(w,z)=\{x,y,z_1,z_2\}$. By Lemma \ref{sigma2}, we get $(y,z_1)\in\Sigma_2$ and $C(y,z_1)=\{x,z,w,z_2\}$,  or $(y,z_2)\in\Sigma_2$ and $C(y,z_2)=\{x,z,w,z_1\}$.

Suppose $(y,z_1)\in\Sigma_2$ and $C(y,z_1)=\{x,z,w,z_2\}$. By Lemma \ref{sigma2}, we have $(x,z_2)\in\Sigma_2$.
Since $w\in P_{(2,1),(1,2)}(x,z_2)$ and $z\in P_{(1,1),(1,2)}(x,z_2)$, we get $(x,z_1)\in\Gamma_{1,2}$. Then  $w\in P_{(2,1),(1,1)}(x,z_1)$ and $z\in P_{(1,1),(2,1)}(x,z_1)$. By Step \ref{2-1} and Lemma \ref{sigma1} \ref{sigma11}, one gets $(w,z)\in\Sigma_1$, contrary to the fact that $(w,z)\in\Sigma_2$.

Suppose $(y,z_2)\in\Sigma_2$ and $C(y,z_2)=\{x,z,w,z_1\}$. Note that $p_{(1,1),(1,2)}^{(1,2)}=0$. Since $(w,z_2)\in\Gamma_{1,2}$, one has $x\notin P_{(1,2),(1,1)}(w,z_2)$, and so $(x,z_2)\notin\Gamma_{1,1}$. The fact $z\in P_{(1,1),(1,2)}(x,z_2)$ implies $(x,z_2)\notin\Gamma_{1,2}$. Since $\max T=3$ and $x\in C(y,z_2)$, we get $(x,z_2)\in\Gamma_{2,1}$. Then $w\in P_{(2,1),(1,2)}(z_2,x)$ and $z\in P_{(2,1),(1,1)}(z_2,x)$. By Step \ref{2-2*} and Lemma \ref{sigma1} \ref{sigma11}, one has $(w,z)\in\Sigma_1$, contrary to the fact that $(w,z)\in\Sigma_2$. 

\begin{steppp}
{\rm We reach a contradiction.}
\end{steppp}

Let $(x,y)\in\Gamma_{1,2}$. In view of Step \ref{2-1}, we obtain $P_{(1,1),(2,1)}(x,y)\cup P_{(2,1),(1,1)}(x,y)\subseteq Y_i(x,y)$ for some $i\in\{1,2\}$. Let $\tilde{l}\in\{(1,1),(1,2)\}$. If $P_{\tilde{l},\tilde{l}}(x,y)=\emptyset$, then $P_{\tilde{l},\tilde{l}}(x,y)\subseteq Y_i(x,y)$.  Observe that $p^{(1,2)}_{(1,1),(2,1)}\neq0$. If $P_{\tilde{l},\tilde{l}}(x,y)\neq\emptyset$, by setting $\tilde{i}=\tilde{l}$ in Lemma \ref{tildei} \ref{tildei1}, one obtains $P_{\tilde{l},\tilde{l}}(x,y)\subseteq Y_i(x,y)$  from the second statement of Lemma \ref{tildei}. Since $\tilde{l}\in\{(1,1),(1,2)\}$ was arbitrary, one has $P_{(1,1),(1,1)}(x,y)\cup P_{(1,2),(1,2)}(x,y)\subseteq Y_i(x,y)$.


Since $p^{(1,2)}_{(1,1),(1,2)}=p^{(1,2)}_{(1,2),(1,1)}=0$, from   Steps \ref{2-2*} and \ref{2-3}, we obtain $P_{\tilde{i},\tilde{j}}(x,y)\subseteq Y_i(x,y)$ for all $\tilde{i},\tilde{j}\in\{(1,1),(1,2),(2,1)\}$. Since $\max T=3$, one has $C(x,y)\subseteq Y_i(x,y)$, which implies $Y_j(x,y)=\emptyset$ with $j\in\{1,2\}\setminus\{i\}$. By Lemma \ref{sigma1}, one gets  $e=1$ or $m-e=1$, contrary to the fact that $m-e\geq e\geq2$.

\textbf{Case 3.} $\tilde{i}=(1,1)$.

By  Cases 1 and 2, one has $p^{(1,2)}_{\tilde{i},(1,1)}=p^{(1,2)}_{(1,1),\tilde{i}}=0$ for $\tilde{i}\in\{(1,2),(2,1)\}$. Lemma \ref{intersection numners} \ref{intersection numners 2} implies
\begin{align}\tag{B.2}\label{121111-0}
p^{(1,1)}_{(1,2),(1,2)}=p^{(1,1)}_{(2,1),(2,1)}=p^{(1,1)}_{(1,2),(2,1)}=p^{(1,1)}_{(2,1),(1,2)}=0.
\end{align}

Since $p^{(1,2)}_{(1,1),(1,1)}\neq0$, from Lemma \ref{intersection numners} \ref{intersection numners 2}, one has $p^{(1,1)}_{(1,1),(1,2)}=p^{(1,1)}_{(1,2),(1,1)}=p^{(1.1)}_{(1,1),(2,1)}=p^{(1.1)}_{(2,1),(1,1)}\neq0$. Let $(x,y)\in\Gamma_{1,2}$ and $z\in P_{(1,1),(1,1)}(x,y)$.
Next, we reach a contradiction step by step.

\begin{stepppp}\label{1}
{\rm Show that $p^{(1,1)}_{(1,1),(1,1)}=0.$}
\end{stepppp}

Suppose not. By setting $\tilde{i}=(1,1)$ in Lemma \ref{tildei} \ref{tildei1}, there exist $i,j\in\{1,2\}$ such that $P_{(1,1),(1,2)}(x,z)\cup P_{(1,2),(1,1)}(x,z)\subseteq Y_i(x,z)$ and $P_{(1,1),(2,1)}(x,z)\cup P_{(2,1),(1,1)}(x,z)\subseteq Y_j(x,z)$.  Since  $p^{(1,1)}_{(1,1),(1,2)}=p^{(1,1)}_{(1,1),(2,1)}\neq0$, from the second statement of Lemma \ref{tildei}, one has $P_{(1,1),(1,1)}(x,z)\subseteq Y_i(x,z)$ and $P_{(1,1),(1,1)}(x,z)\subseteq Y_j(x,z)$. The fact $p^{(1,1)}_{(1,1),(1,1)}\neq0$ implies $i=j$.    Since $\max T=3$, from \eqref{121111-0}, we get $C(x,y)\subseteq Y_i(x,y)$, which implies $Y_j(x,z)=\emptyset$ with $j\in\{1,2\}\setminus\{i\}$. By Lemma \ref{sigma1}, one gets  $e=1$ or $m-e=1$, contrary to the fact that $m-e\geq e\geq2$.

\begin{stepppp}\label{zwuv}
{\rm Show that $P_{(1,2),(1,1)}(x,z)\cup P_{(2,1),(1,1)}(x,z)\subseteq Y_i(x,z)$ and $P_{(1,1),(2,1)}(x,z)\cup P_{(1,1),(1,2)}(x,z)\subseteq Y_j(x,z)$ with $\{i,j\}=\{1,2\}$. } 
\end{stepppp}

Note that $p^{(1,1)}_{(1,2),(1,1)}=p^{(1,1)}_{(1,1),(1,2)}\neq0$. Pick vertices $w_1\in P_{(1,2),(1,1)}(x,z)$ and $w_2\in P_{(1,1),(1,2)}(x,z)$. By setting $(\tilde{i},\tilde{j})\in\{((1,2),(1,1)),((1,1),(1,2))\}$ in Lemma \ref{tildei} \ref{tildei2}, we have  $P_{(2,1),(1,1)}(x,z)\subseteq Y_i(x,z)$  and $P_{(1,1),(2,1)}(x,z)\subseteq Y_j(x,z)$ with $i,j\in\{1,2\}$. Since $p^{(1,1)}_{(2,1),(1,1)}=p^{(1,1)}_{(1,1),(2,1)}\neq0$, from the second statement of Lemma \ref{tildei}, we get $P_{(1,2),(1,1)}(x,z)\subseteq Y_i(x,z)$  and $P_{(1,1),(1,2)}(x,z)\subseteq Y_j(x,z)$.   Then $P_{(1,2),(1,1)}(x,z)\cup P_{(2,1),(1,1)}(x,z)\subseteq Y_i(x,z)$ and $P_{(1,1),(2,1)}(x,z)\cup P_{(1,1),(1,2)}(x,z)\subseteq Y_j(x,z)$. If $i=j$, by Step \ref{1} and \eqref{121111-0}, then $Y_k(x,z)=\emptyset$ with $k\in\{1,2\}\setminus\{i\}$, a contradiction. Thus, $i\neq j$. 

\begin{stepppp}\label{122121}
{\rm Show that $p^{(2,2)}_{(1,2),(1,2)}=0$ and there exist $w_1,z_1\in P_{(2,1),(2,1)}(x,y)$ with $(w_1,z_1)\in\Sigma_2$.}
\end{stepppp}

Since  $p_{(2,1),(1,1)}^{(1,1)}\neq0$ and $y\in P_{(1,2),(1,1)}(x,z)$, from Step \ref{zwuv} and Lemma \ref{sigma1} \ref{sigma11}, we have $(w,y)\in\Sigma_1$ for all $w\in P_{(2,1),(1,1)}(x,z)$.
Since $\max T=3$ and $z\in P_{(1,1),(1,1)}(w,y)$ for all $w\in P_{(2,1),(1,1)}(x,z)$, from Step \ref{1}, we obtain $(w,y)\in\Gamma_{1,2}$ or $\Gamma_{2,1}$. If $(w,y)\in\Gamma_{1,2}$ for all $w\in P_{(2,1),(1,1)}(x,z)$, then $w\in P_{(1,1),(1,2)}(z,y)$, which implies $P_{(1,1),(1,2)}(z,y)\supseteq\{x\}\cup P_{(1,1),(1,2)}(z,x)$, a contradiction. Then there exists $w_1\in P_{(2,1),(1,1)}(x,z)$ such that $(w_1,y)\in\Gamma_{2,1}$. Thus, $w_1\in P_{(2,1),(2,1)}(x,y)$.

Let $u\in P_{(1,1),(2,1)}(x,z)$. Since $y\in P_{(1,2),(1,1)}(x,z)$, from Step \ref{zwuv}, one gets $y\in Y_i(x,z)$ and $u\in Y_j(x,z)$ with $\{i,j\}=\{1,2\}$. Lemma \ref{sigma1} \ref{sigma12} implies $(u,y)\in\Sigma_2$. Since $x\in P_{(2,1),(1,1)}(y,u)$ and $z\in P_{(1,1),(1,2)}(y,u)$, one gets $(y,u)\in\Gamma_{2,2}$. Then Lemma \ref{22} \ref{223} holds. Since $c_2=4$, from \eqref{a1c2}, we have $p^{(2,2)}_{(1,2),(1,2)}=0$.

Since $c_2=4$, from Lemma \ref{22} \ref{223}, there exists $$z_1\in P_{(1,2),(1,1)}(y,u)~\mbox{and}~z_2\in P_{(1,1),(2,1)}(y,u)$$ such that $C(y,u)=\{x,z,z_1,z_2\}$. Lemma \ref{sigma2} implies $(x,z_1)\in\Sigma_2$ and $C(x,z_1)=\{y,u,z,z_2\}$, or $(x,z_2)\in\Sigma_2$ and $C(x,z_2)=\{y,u,z,z_1\}$.
If $(x,z_1)\in\Sigma_2$ and $C(x,z_1)=\{y,u,z,z_2\}$, by $u\in P_{(1,1),(1,1)}(x,z_1)$, then $(x,z_1)\in\Gamma_{2,2}$, contrary to Lemma \ref{22} \ref{223}. Thus, $(x,z_2)\in\Sigma_2$ and $C(x,z_2)=\{y,u,z,z_1\}$. Since $(u,y)\in\Sigma_2$, from Lemma \ref{sigma2}, we have $(z,z_1)\in\Sigma_2$.

Since $z,w_1\in C(x,y)$ with $(z,w_1)\in\Sigma_1$ and $z_1\in C(x,y)$ with $(z,z_1)\in\Sigma_2$, from Lemma \ref{sigma1}, we have $z,w_1\in Y_{i'}(x,y)$ and $z_1\in Y_{j'}(x,y)$ with $\{i',j'\}=\{1,2\}$. Lemma \ref{sigma1} \ref{sigma12} implies $(w_1,z_1)\in\Sigma_2$.

Note that $(y,z_2)\in\Gamma_{1,1}$. By \eqref{121111-0}, we have $z_1\notin P_{(1,2),\tilde{i}}(y,z_2)$ for $\tilde{i}\in\{(1,2),(2,1)\}$. Since $z_1\in C(x,z_2)$ and $\max T=3$, we get $(z_1,z_2)\in\Gamma_{1,1}$. Since $y\in P_{(1,2),(1,2)}(x,z_1)$, by \eqref{121111-0}, one obtains $(x,z_1)\in\Gamma_{1,2}$ or $\Gamma_{2,1}$.

Suppose $(x,z_1)\in\Gamma_{1,2}$. Since $C(x,z_2)=\{y,u,z,z_1\}$ and $y,z_1\in P_{(1,2),(1,1)}(x,z_2)$, we have $z,u\in P_{(1,1),(1,2)}(x,z_2)$. Lemma \ref{22} \ref{223} implies $(x,z_2)\in\Gamma_{2,3}$. Since $c_2=4$, from \eqref{a1c2}, we get
\begin{align}\tag{B.3}\label{121111gongshi}
p^{(2,3)}_{(1,2),(1,1)}=p^{(2,3)}_{(1,1),(1,2)}=2.
\end{align}
Since $x\in P_{(1,2),(1,2)}(w_1,z_1)$ and $y\in P_{(2,1),(1,2)}(w_1,z_1)$, from Lemma \ref{22} \ref{223}, one gets $(w_1,z_1)\in\Gamma_{2,3}$, and so $p^{(2,3)}_{(1,2),(2,1)}\neq0$. \eqref{a1c2} and \eqref{121111gongshi} imply $c_2\geq5$, a contradiction. Then $(x,z_1)\in\Gamma_{2,1}$, and so $z_1\in P_{(2,1),(2,1)}(x,y)$. Thus, the desired result follows.

\begin{stepppp}
{\rm We reach a contradiction.}
\end{stepppp}

By Step \ref{122121}, there exist $w_1,z_1\in P_{(2,1),(2,1)}(x,y)$ with $(w_1,z_1)\in\Sigma_2$. Let $w_1\in Y_i(x,y)$ and $z_1\in Y_j(x,y)$ with $\{i,j\}=\{1,2\}$. Suppose $p^{(1,2)}_{(1,2),(1,2)}\neq0$. Then there exists $u\in P_{(1,2),(1,2)}(x,y)$. Since $u\in Y_i(x,y)$ or $Y_j(x,y)$, from Lemma \ref{sigma1} \ref{sigma12}, we have $(u,z_1)\in\Sigma_2$ or $(u,w_1)\in\Sigma_2$. Since $(u,y,z_1,x)$ and $(u,y,w_1,x)$ are circuits consisting of arcs of type $(1,2)$, one gets $(u,z_1)$ or $(u,w_1)\in\Gamma_{2,2}$, and so $p_{(1,2),(1,2)}^{(2,2)}\neq0$, contrary to Step \ref{122121}. Thus, $p^{(1,2)}_{(1,2),(1,2)}=0.$ Lemma \ref{intersection numners} \ref{intersection numners 2} implies $p^{(1,2)}_{(1,2),(2,1)}=p^{(2,1)}_{(1,2),(2,1)}=0$.

Suppose that there exist distinct vertices $u_1,u_2\in P_{(2,1),(1,1)}(x,z)$. Step \ref{zwuv} implies $(u_1,u_2)\in\Sigma_1$. Since $z\in P_{(1,1),(1,1)}(u_1,u_2)$, from Step \ref{1}, we have $(u_1,u_2)\in\Gamma_{1,2}$ or $\Gamma_{2,1}$. Since $x\in P_{(1,2),(2,1)}(u_1,u_2)$, we get    $p^{(1,2)}_{(1,2),(2,1)}\neq0$ or $p^{(2,1)}_{(1,2),(2,1)}\neq0$, a contradiction. Then $|P_{(2,1),(1,1)}(x,z)|\leq1$.  Since $p^{(1,2)}_{(1,1),(1,1)}\neq0$, from Lemma \ref{intersection numners} \ref{intersection numners 2}, one has $p^{(1,1)}_{(2,1),(1,1)}=1$.  By Lemma \ref{intersection numners} \ref{intersection numners 2} again, we get $|P_{(2,1),(1,1)}(x,z)|=|P_{(1,2),(1,1)}(x,z)|=|P_{(1,1),(1,2)}(x,z)|=|P_{(1,1),(2,1)}(x,z)|=1$.

In view of  Step \ref{1} and \eqref{121111-0}, we obtain $p^{(1,1)}_{\tilde{i},\tilde{j}}\neq0$ for  $\tilde{i},\tilde{j}\in\{(1,1),(1,2),(2,1)\}$ if and only if $(\tilde{i},\tilde{j})\in\{((1,1),\tilde{h}),(\tilde{h},(1,1))\mid \tilde{h}\in\{(1,2),(2,1)\}\}$.
By  Step \ref{zwuv} , we have $|Y_i(x,z)|=2$ for all $i\in\{1,2\}$. Lemma \ref{sigma1} implies $e=m-e=3$. Then $\Sigma$ is a Johnson graph, and so $m=6$ and  $|V(\Gamma)|=20$, contrary to \cite{H}.

This completes the proof of the lemma.
\end{proof}

\end{document}